\documentclass[11pt]{preprint}
\usepackage[utf8]{inputenc}
\usepackage[full]{textcomp}
\usepackage[osf]{newtxtext} 
\usepackage[cal=boondoxo]{mathalfa}
\usepackage{colortbl}
\usepackage[all]{xy}
\usepackage{comment}

\usepackage{amssymb}
\usepackage{mathtools}
\usepackage{hyperref}
\usepackage{breakurl}
\usepackage{mhenvs}
\usepackage{mhequ} 
\usepackage{mhsymb}
\usepackage{booktabs}

\usepackage{tikz}
\usepackage{tikz-cd}

\usepackage{tcolorbox}
\usepackage{mathrsfs}

\usepackage{longtable}
\usepackage{wrapfig}

\usepackage{graphicx}
\usepackage{xspace}

\usepackage{microtype}
\usepackage{comment}
\usepackage{wasysym}
\usepackage{centernot}
\usepackage{enumitem}
\usepackage{bm}
\usepackage{stackrel}

\makeatletter
\newcommand{\globalcolor}[1]{%
  \color{#1}\global\let\default@color\current@color
}
\makeatother

\usetikzlibrary{calc}
\usetikzlibrary{decorations}
\usetikzlibrary{positioning}
\usetikzlibrary{shapes}
\usetikzlibrary{external}
\usetikzlibrary{decorations.markings}

 \AtBeginEnvironment{tikzcd}{\tikzexternaldisable}
 \AtEndEnvironment{tikzcd}{\tikzexternalenable}

\newif\ifdark
%\IfFileExists{dark}{\darktrue}{\darkfalse}
\darkfalse

\ifdark

\definecolor{darkred}{rgb}{0.9,0.2,0.2}
\definecolor{darkblue}{rgb}{0.7,0.3,1}
\definecolor{darkgreen}{rgb}{0.1,0.9,0.1}
\definecolor{franck}{rgb}{0,0.8,1}
\definecolor{pagebackground}{rgb}{.15,.21,.18}
\definecolor{pageforeground}{rgb}{.84,.84,.85}
\pagecolor{pagebackground}
\AtBeginDocument{\globalcolor{pageforeground}}
\tikzexternalize[prefix=tikzDark/]
\definecolor{symbols}{rgb}{0,0.7,1}
\colorlet{connection}{red!80!black}
\colorlet{boxcolor}{blue!50}

\else

\definecolor{darkred}{rgb}{0.7,0.1,0.1}
\definecolor{darkblue}{rgb}{0.4,0.1,0.8}
\definecolor{darkgreen}{rgb}{0.1,0.7,0.1}
\definecolor{franck}{rgb}{0,0,1}
\definecolor{pagebackground}{rgb}{1,1,1}
\definecolor{pageforeground}{rgb}{0,0,0}
\tikzexternalize[prefix=tikz/]
\colorlet{symbols}{blue!90!black}
\colorlet{connection}{red!30!black}
\colorlet{boxcolor}{blue!50!black}

\fi

\def\slash{\leavevmode\unskip\kern0.18em/\penalty\exhyphenpenalty\kern0.18em}
\def\dash{\leavevmode\unskip\kern0.18em--\penalty\exhyphenpenalty\kern0.18em}

\newcommand{\eqlaw}{\stackrel{\mbox{\tiny law}}{=}}

\DeclareMathAlphabet{\mathbbm}{U}{bbm}{m}{n}

\DeclareFontFamily{U}{BOONDOX-calo}{\skewchar\font=45 }
\DeclareFontShape{U}{BOONDOX-calo}{m}{n}{
  <-> s*[1.05] BOONDOX-r-calo}{}
\DeclareFontShape{U}{BOONDOX-calo}{b}{n}{
  <-> s*[1.05] BOONDOX-b-calo}{}
\DeclareMathAlphabet{\mcb}{U}{BOONDOX-calo}{m}{n}
\SetMathAlphabet{\mcb}{bold}{U}{BOONDOX-calo}{b}{n}
%\DeclareMathAlphabet{\mathbcalboondox}{U}{BOONDOX-calo}{b}{n}

\setlist{noitemsep,topsep=4pt,leftmargin=1.5em}

\DeclareMathAlphabet{\mathbbm}{U}{bbm}{m}{n}

\DeclareMathAlphabet{\mcb}{U}{BOONDOX-calo}{m}{n}
\SetMathAlphabet{\mcb}{bold}{U}{BOONDOX-calo}{b}{n}
\DeclareFontFamily{U}{mathx}{\hyphenchar\font45}
\DeclareFontShape{U}{mathx}{m}{n}{
      <5> <6> <7> <8> <9> <10>
      <10.95> <12> <14.4> <17.28> <20.74> <24.88>
      mathx10
      }{}
\DeclareSymbolFont{mathx}{U}{mathx}{m}{n}
\DeclareMathSymbol{\bigtimes}{1}{mathx}{"91}

\def\emptyset{{\centernot\ocircle}}

\setlength{\marginparwidth}{3cm}

\def\restr{\mathord{\upharpoonright}}

\providecommand{\figures}{false}
{ \ifthenelse{\equal{\figures}{false}} {#1}{\[ {\rm Figure \ missing !} \]} }{}
\def\id{\mathrm{id}}

\let\graftI\curvearrowright
\def\graftID{\mathrel{\textcolor{connection}{\boldsymbol{\curvearrowright}}}}

\def\CH{\mathcal{H}}
\def\CP{\mathcal{P}}
\def\CW{\mathcal{W}}
\def\CG{\mathcal{G}}

\def\CA{\mathcal{A}}
\def\CE{\mathcal{E}}
\def\CC{\mathcal{C}}
\def\CQ{\mathcal{Q}}
\def\CB{\mathcal{B}}
\def\CM{\mathcal{M}}
\def\CT{\mathcal{T}}
\def\RR{\mathfrak{R}}

\tikzstyle{tinydots}=[dash pattern=on \pgflinewidth off \pgflinewidth]
\tikzstyle{superdense}=[dash pattern=on 4pt off 1pt]

%mathcals

%\newcommand{\G}{\mathcal{G}}

%\newcommand{\mcT}{\mathcal{T}}

\newcommand{\upper}{u}
\newcommand{\low}{\ell}

%\newcommand{\G}{\mcb{Q}}

%mathbbs

%mathbfs

%mathfraks

\newcommand{\mfn}{\mathfrak{n}}

\newcommand{\mfd}{\mathfrak{d}}

\newcommand{\mft}{\mathfrak{t}}
\newcommand{\mfs}{\mathfrak{s}}

\newcommand{\mff}{\mathfrak{f}}

\newcommand{\mfg}{\mathfrak{g}}

\def\Lab{\mathfrak{L}}

\def\Deltam{\Delta^{\!-}}

\def\${|\!|\!|}
\def\DD{\mathscr{D}}

\def\scal#1{{\langle#1\rangle}}
\def\sscal#1{{\langle\mkern-4.5mu\langle#1\rangle\mkern-4.5mu\rangle}}

\def\Trees{\mathfrak{T}}
\def\SS{\mathfrak{S}}
\def\SN{\mathfrak{N}}

\def\Vec{\mathop{\mathrm{Vec}}}

\def\VV{\mathfrak{V}}

\def\hPPi{\rlap{$\hat{\phantom{\PPi}}$}\PPi}

\newenvironment{DIFnomarkup}{}{} % see man latexdiff

\def\Ev{\mathrm{Ev}}

\theorembodyfont{\rmfamily}
\newtheorem{example}[lemma]{Example}

\newfont{\indic}{bbmss12}

\def\PPi{\boldsymbol{\Pi}}

\def\Nabla_#1{\nabla_{\!#1}}

%%%%%%%%%%%%%%%%%%%%%%%%%%%%%%%%%%%%%%%%%%%%%%%%%%%%%%%%
%
%
%              Some tikz code to draw nice trees
%
%
%%%%%%%%%%%%%%%%%%%%%%%%%%%%%%%%%%%%%%%%%%%%%%%%%%%%%%%%

\makeatletter
\pgfdeclareshape{crosscircle}
{
  \inheritsavedanchors[from=circle] % this is nearly a circle
  \inheritanchorborder[from=circle]
  \inheritanchor[from=circle]{north}
  \inheritanchor[from=circle]{north west}
  \inheritanchor[from=circle]{north east}
  \inheritanchor[from=circle]{center}
  \inheritanchor[from=circle]{west}
  \inheritanchor[from=circle]{east}
  \inheritanchor[from=circle]{mid}
  \inheritanchor[from=circle]{mid west}
  \inheritanchor[from=circle]{mid east}
  \inheritanchor[from=circle]{base}
  \inheritanchor[from=circle]{base west}
  \inheritanchor[from=circle]{base east}
  \inheritanchor[from=circle]{south}
  \inheritanchor[from=circle]{south west}
  \inheritanchor[from=circle]{south east}
  \inheritbackgroundpath[from=circle]
  \foregroundpath{
    \centerpoint%
    \pgf@xc=\pgf@x%
    \pgf@yc=\pgf@y%
    \pgfutil@tempdima=\radius%
    \pgfmathsetlength{\pgf@xb}{\pgfkeysvalueof{/pgf/outer xsep}}%  
    \pgfmathsetlength{\pgf@yb}{\pgfkeysvalueof{/pgf/outer ysep}}%  
    \ifdim\pgf@xb<\pgf@yb%
      \advance\pgfutil@tempdima by-\pgf@yb%
    \else%
      \advance\pgfutil@tempdima by-\pgf@xb%
    \fi%
    \pgfpathmoveto{\pgfpointadd{\pgfqpoint{\pgf@xc}{\pgf@yc}}{\pgfqpoint{-0.707107\pgfutil@tempdima}{-0.707107\pgfutil@tempdima}}}
    \pgfpathlineto{\pgfpointadd{\pgfqpoint{\pgf@xc}{\pgf@yc}}{\pgfqpoint{0.707107\pgfutil@tempdima}{0.707107\pgfutil@tempdima}}}
    \pgfpathmoveto{\pgfpointadd{\pgfqpoint{\pgf@xc}{\pgf@yc}}{\pgfqpoint{-0.707107\pgfutil@tempdima}{0.707107\pgfutil@tempdima}}}
    \pgfpathlineto{\pgfpointadd{\pgfqpoint{\pgf@xc}{\pgf@yc}}{\pgfqpoint{0.707107\pgfutil@tempdima}{-0.707107\pgfutil@tempdima}}}
  }
}
\makeatother

\def\symbol#1{\textcolor{symbols}{#1}}

\def\decorate#1#2{
        \ifnum#2>0
    		\foreach \count in {1,...,#2}{
	       	let
				\p1 = (sourcenode.center),
                \p2 = (sourcenode.east),
				\n1 = {\x2-\x1},
				\n2 = {1mm},
				\n3 = {(1.3+0.6*(\count-1))*\n1},
				\n4 = {0.7*\n1}
			in 
        		node[rectangle,fill=symbols,rotate=30,inner sep=0pt,minimum width=0.2*\n2,minimum height=\n2] at ($(sourcenode.center) + (\n3,\n4)$) {}
				}
		\fi
        \ifnum#1>0
    		\foreach \count in {1,...,#1}{
	       	let
				\p1 = (sourcenode.center),
                \p2 = (sourcenode.east),
				\n1 = {\x2-\x1},
				\n2 = {1mm},
				\n3 = {(1.3+0.6*(\count-1))*\n1},
				\n4 = {0.7*\n1}
			in 
        		node[rectangle,fill=symbols,rotate=-30,inner sep=0pt,minimum width=0.2*\n2,minimum height=\n2] at ($(sourcenode.center) + (-\n3,\n4)$) {}
				}
		\fi
}

\tikzset{
    dectriangle/.style 2 args={
        triangle,
        alias=sourcenode,
        append after command={\decorate{#1}{#2}}
    },
    dectriangle/.default={0}{0},
}

\tikzset{
	cross/.style={path picture={ 
  		\draw[symbols]
			(path picture bounding box.south east) -- (path picture bounding box.north west) (path picture bounding box.south west) -- (path picture bounding box.north east);
		}},
root/.style={circle,fill=green!50!black,inner sep=0pt, minimum size=1.2mm},
        dot/.style={circle,fill=pageforeground,inner sep=0pt, minimum size=1mm},
        dotred/.style={circle,fill=pageforeground!50!pagebackground,inner sep=0pt, minimum size=2mm},
        var/.style={circle,fill=pageforeground!10!pagebackground,draw=pageforeground,inner sep=0pt, minimum size=3mm},
        kernel/.style={semithick,shorten >=2pt,shorten <=2pt},
        kernels/.style={snake=zigzag,shorten >=2pt,shorten <=2pt,segment amplitude=1pt,segment length=4pt,line before snake=2pt,line after snake=5pt,},
        rho/.style={densely dashed,semithick,shorten >=2pt,shorten <=2pt},
           testfcn/.style={dotted,semithick,shorten >=2pt,shorten <=2pt},
        renorm/.style={shape=circle,fill=pagebackground,inner sep=1pt},
        labl/.style={shape=rectangle,fill=pagebackground,inner sep=1pt},
        xic/.style={very thin,circle,draw=symbols,fill=symbols,inner sep=0pt,minimum size=1.2mm},
        g/.style={very thin,rectangle,draw=symbols,fill=symbols!10!pagebackground,inner sep=0pt,minimum width=2.5mm,minimum height=1.2mm},
        xi/.style={very thin,circle,draw=symbols,fill=symbols!10!pagebackground,inner sep=0pt,minimum size=1.2mm},
	xies/.style={very thin,rectangle,fill=green!50!black!25,draw=symbols,inner sep=0pt,minimum size=1.1mm},
	xiesf/.style={very thin,rectangle,fill=green!50!black,draw=symbols,inner sep=0pt,minimum size=1.1mm},
        xix/.style={very thin,crosscircle,fill=symbols!10!pagebackground,draw=symbols,inner sep=0pt,minimum size=1.2mm},
        X/.style={very thin,cross,rectangle,fill=pagebackground,draw=symbols,inner sep=0pt,minimum size=1.2mm},
	xib/.style={thin,circle,fill=symbols!10!pagebackground,draw=symbols,inner sep=0pt,minimum size=1.6mm},
	xie/.style={thin,circle,fill=green!50!black,draw=symbols,inner sep=0pt,minimum size=1.6mm},
	xid/.style={thin,circle,fill=symbols,draw=symbols,inner sep=0pt,minimum size=1.6mm},
	xibx/.style={thin,crosscircle,fill=symbols!10!pagebackground,draw=symbols,inner sep=0pt,minimum size=1.6mm},
	kernels2/.style={very thick,draw=connection,segment length=12pt},
	keps/.style={thin,draw=symbols,->},
	kepspr/.style={thick,draw=connection,->},
	krho/.style={thin,draw=symbols,superdense,->},
	krhopr/.style={thick,draw=connection,superdense,->},
	triangle/.style = { regular polygon, regular polygon sides=3},
	not/.style={thin,circle,draw=connection,fill=connection,inner sep=0pt,minimum size=0.5mm},
	diff/.style = {very thin,draw=symbols,triangle,fill=red!50!black,inner sep=0pt,minimum size=1.6mm},
	diff1/.style = {very thin,dectriangle={1}{0},fill=red!50!black,draw=symbols,inner sep=0pt,minimum size=1.6mm},
	diff2/.style = {very thin,dectriangle={1}{1},fill=red!50!black,draw=symbols,inner sep=0pt,minimum size=1.6mm},
		diffmini/.style = {very thin,rectangle,fill=black,draw=black,inner sep=0pt,minimum size=0.75mm},
	 kernelsmod/.style={very thick,draw=connection,segment length=12pt},
	 rec/.style = {very thin,rectangle,fill=black,draw=black,inner sep=0pt,minimum size=2mm},
	cerc/.style={very thin,circle,draw=black,fill=symbols,inner sep=0pt,minimum size=2mm},
	trup/.style={very thin,regular polygon,regular polygon sides=3,shape border rotate=180,draw=black,fill=red,inner sep=0pt,minimum size=3mm},
	stars/.style={very thin,star,star points=6,star point ratio=0.5, draw=black,fill=red,inner sep=0pt,minimum size=0.7mm},
	>=stealth,
        }

\makeatletter
\def\DeclareSymbol#1#2#3{%
	\expandafter\gdef\csname MH@symb@#1\endcsname{\tikzsetnextfilename{symbol#1}%
	\tikz[baseline=#2,scale=0.15,draw=symbols,line join=round]{#3}}%
	\expandafter\gdef\csname MH@symb@#1s\endcsname{\scalebox{0.75}{\tikzsetnextfilename{symbol#1}%
	\tikz[baseline=#2,scale=0.15,draw=symbols,line join=round]{#3}}}%
	\expandafter\gdef\csname MH@symb@#1ss\endcsname{\scalebox{0.65}{\tikzsetnextfilename{symbol#1}%
	\tikz[baseline=#2,scale=0.15,draw=symbols,line join=round]{#3}}}%
	}
\def\<#1>{\ifthenelse{\boolean{mmode}}{\mathchoice{\csname MH@symb@#1\endcsname}{\csname MH@symb@#1\endcsname}{\csname MH@symb@#1s\endcsname}{\csname MH@symb@#1ss\endcsname}}{\csname MH@symb@#1\endcsname}}
\makeatother

\DeclareSymbol{Xi22}{0.5}{\draw (0,0) node[xi] {} -- (-1,1) node[not] {} -- (0,2) node[xi] {};} % 1 not used in the text
\DeclareSymbol{Xi2}{-2}{\draw (-1,-0.25) node[xi] {} -- (0,1) node[xi] {};} % 2
\DeclareSymbol{Xi2b}{-2}{\draw (-1,-0.25) node[xic] {} -- (0,1) node[xic] {};} % 2
\DeclareSymbol{Xi2g}{-2}{\draw (-1,-0.25) node[xies] {} -- (0,1) node[xi] {};} % 2
\DeclareSymbol{Xi2g2}{-2}{\draw (-1,-0.25) node[xi] {} -- (0,1) node[xies] {};} % 2
\DeclareSymbol{cXi2}{-2}{\draw (0,-0.25) node[xi] {} -- (-1,1) node[xic] {};}%3
\DeclareSymbol{Xi3}{0}{\draw (0,0) node[xi] {} -- (-1,1) node[xi] {} -- (0,2) node[xi] {};}%4
\DeclareSymbol{XiIIXi}{0}{\draw (0,0) node[xi] {} -- (-1,1); \draw[kernels2] (-1,1) node[not] {} -- (0,2) node[xi] {};}%4

\DeclareSymbol{Xi4}{2}{\draw (0,0) node[xi] {} -- (-1,1) node[xi] {} -- (0,2) node[xi] {} -- (-1,3) node[xi] {};}%5
\DeclareSymbol{Xi4_1}{2}{\draw (0,0) node[xic] {} -- (-1,1) node[xic] {} -- (0,2) node[xi] {} -- (-1,3) node[xi] {};}%6
\DeclareSymbol{Xi4_2}{2}{\draw (0,0) node[xic] {} -- (-1,1) node[xi] {} -- (0,2) node[xi] {} -- (-1,3) node[xic] {};}%7
\DeclareSymbol{Xi2X}{-2}{\draw (0,-0.25) node[xi] {} -- (-1,1) node[xix] {};}%8
\DeclareSymbol{XXi2}{-2}{\draw (0,-0.25) node[xix] {} -- (-1,1) node[xi] {};}%9
\DeclareSymbol{IIXi}{0}{\draw (0,-0.25) node[not] {} -- (-1,1) node[xi] {} -- (0,2) node[xi] {};}%10
\DeclareSymbol{IXi^2}{-1}{\draw (-1,1) node[xi] {} -- (0,0) node[not] {} -- (1,1) node[xi] {};}%11
\DeclareSymbol{IIXi^2}{-4}{\draw (0,-1.5) node[not] {} -- (0,0);
\draw[kernels2] (-1,1) node[xi] {} -- (0,0) node[not] {} -- (1,1) node[xi] {};}%12
\DeclareSymbol{XiX}{-2.8}{\node[xibx] {};}%13
\DeclareSymbol{tauX}{-2.8}{ \node[X] {};}%14
\DeclareSymbol{Xi}{-2.8}{\node[xib] {};}%15

\DeclareSymbol{IXiX}{-1}{\draw (0,-0.25) node[not] {} -- (-1,1) node[xix] {};}%18
%\DeclareSymbol{Xi3b}{-1}{\draw (-1,1) node[xi] {} -- (0,0) node[xi] {} -- (1,1) node[xi] {};}%19
\DeclareSymbol{IXi3}{2}{\draw (0,-0.25) node[not] {} -- (-1,1) node[xi] {} -- (0,2) node[xi] {} -- (-1,3) node[xi] {};}%20
\DeclareSymbol{IXi}{-2}{\draw (0,-0.25) node[not] {} -- (-1,1) node[xi] {};}%21
\DeclareSymbol{XiI}{-2}{\draw (0,-0.25) node[xi] {} -- (-1,1) node[not] {};}%22

\DeclareSymbol{Xi4b}{0}{\draw(0,1.5) node[xi] {} -- (0,0); \draw (-1,1) node[xi] {} -- (0,0) node[xi] {} -- (1,1) node[xi] {};}%23
\DeclareSymbol{Xi4b'}{0}{\draw(0,1.5) node[xi] {} -- (0,-0.2); \draw (-1,1) node[xi] {} -- (0,-0.2) node[not] {} -- (1,1) node[xi] {};}%24 not used
\DeclareSymbol{Xi4c}{0}{\draw (0,1) -- (0.8,2.2) node[xi] {};\draw (0,-0.25) node[xi] {} -- (0,1) node[xi] {} -- (-0.8,2.2) node[xi] {};}%25
\DeclareSymbol{Xi4d}{-4.5}{\draw (0,-1.5) node[not] {} -- (0,0); \draw (-1,1) node[xi] {} -- (0,0) node[xi] {} -- (1,1) node[xi] {};}%26 not used
\DeclareSymbol{Xi4e}{0}{\draw (0,2) node[xi] {} -- (-1,1) node[xi] {} -- (0,0) node[xi] {} -- (1,1) node[xi] {};}%27
\DeclareSymbol{Xi4e'}{0}{\draw (0,2) node[xi] {} -- (-1,1) node[xi] {} -- (0,-0.2) node[not] {} -- (1,1) node[xi] {};}%28 not used

\DeclareSymbol{Xitwo}%29
{0}{\draw[kernels2] (0,0) node[not] {} -- (-1,1) node[not] {}
-- (-2,2) node[not]{} -- (-3,3) node[xi]  {};
\draw[kernels2] (0,0) -- (1,1) node[xi] {};
\draw[kernels2] (-1,1) -- (0,2) node[xi] {};
\draw[kernels2] (-2,2) -- (-1,3) node[xi] {};}

\DeclareSymbol{IXitwo}%30
{0}{\draw (-.7,1.2) node[xi] {} -- (0,-0.2) -- (.7,1.2) node[xi] {};}
\DeclareSymbol{I1Xitwo}%30
{0}{\draw[kernels2] (0,0) node[not] {} -- (-1,1) node[xi] {};
\draw[kernels2] (0,0) -- (1,1) node[xi] {};}

\DeclareSymbol{I1Xitwobis}%30
{0}{\draw[kernels2] (0,0) node[not] {} -- (-1,1) node[xies] {};
\draw[kernels2] (0,0) -- (1,1) node[xies] {};}

\DeclareSymbol{I1Xitwog}%30
{0}{\draw[kernels2] (0,0) node[not] {} -- (-1,1) node[xies] {};
\draw[kernels2] (0,0) -- (1,1) node[xi] {};}

\DeclareSymbol{cI1Xitwo}%31
{0}{\draw[kernels2] (0,0) node[not] {} -- (-1,1) node[xic] {};
\draw[kernels2] (0,0) -- (1,1) node[xi] {};}

\DeclareSymbol{I1IXi3}{0}{\draw (0,0) node[xi] {} -- (-1,1) ; %32
\draw[kernels2] (-1,1) node[not] {} -- (0,2) node[xi] {};
\draw[kernels2] (-1,1) node[not] {} -- (-2,2) node[xi] {};}

\DeclareSymbol{I1Xi3c}{-1}{\draw[kernels2](0,1.5) node[xi] {} -- (0,0) node[not] {}; \draw (-1,1) node[xi] {} -- (0,0) ; \draw[kernels2] (0,0) -- (1,1) node[xi] {};}%33

\DeclareSymbol{I1Xi3cbis}{-1}{\draw[kernels2](0,1.5) node[xies] {} -- (0,0) node[not] {}; \draw (-1,1) node[xies] {} -- (0,0) ; \draw[kernels2] (0,0) -- (1,1) node[xies] {};}%33

\DeclareSymbol{I1IXi3b}{0}{\draw[kernels2] (0,0) node[not] {} -- (-1,1) ; \draw[kernels2] (0,0)   -- (1,1) node[xi] {} ;
\draw (-1,1) node[xi] {} -- (0,2) node[xi] {};
}%34

\DeclareSymbol{I1IXi3c}{0}{\draw[kernels2] (0,0) node[not] {} -- (-1,1) ; \draw[kernels2] (0,0)   -- (1,1) node[xi] {} ;
\draw[kernels2] (-1,1) node[not] {} -- (0,2) node[xi] {};
\draw[kernels2] (-1,1) node[not] {} -- (-2,2) node[xi] {};}%35

\DeclareSymbol{I1IXi3cbis}{0}{\draw[kernels2] (0,0) node[not] {} -- (-1,1) ; \draw[kernels2] (0,0)   -- (1,1) node[xies] {} ;
\draw[kernels2] (-1,1) node[not] {} -- (0,2) node[xies] {};
\draw[kernels2] (-1,1) node[not] {} -- (-2,2) node[xies] {};}%35

\DeclareSymbol{I1Xi}{0}{\draw[kernels2] (0,0) node[not] {} -- (-1,1)  node[xi] {} ;}%36

\DeclareSymbol{I1Xi4a}{2}{\draw[kernels2] (0,0) node[not] {} -- (-1,1) ; \draw[kernels2] (0,0) node[not] {} -- (1,1) node[xi] {} ;%37
\draw (-1,1) node[xi] {} -- (0,2) node[xi] {} -- (-1,3) node[xi] {};}
%\DeclareSymbol{Xi4}{2}{\draw (0,0) node[xi] {} -- (-1,1) node[xi] {} -- (0,2) node[xi] {} -- (-1,3) node[xi] {};}%38

\DeclareSymbol{cI1Xi4a}{2}{\draw[kernels2] (0,0) node[not] {} -- (-1,1) ; \draw[kernels2] (0,0) node[not] {} -- (1,1) node[xic] {} ;%39
\draw (-1,1) node[xic] {} -- (0,2) node[xi] {} -- (-1,3) node[xi] {};}
%\DeclareSymbol{Xi4}{2}{\draw (0,0) node[xi] {} -- (-1,1) node[xi] {} -- (0,2) node[xi] {} -- (-1,3) node[xi] {};}%40

\DeclareSymbol{I1Xi4b}{2}{\draw (0,0) node[xi] {} -- (-1,1) node[xi] {} -- (0,2) ; \draw[kernels2] (0,2) node[not] {} -- (-1,3) node[xi] {};\draw[kernels2] (0,2)  -- (1,3) node[xi] {};
}%41

\DeclareSymbol{cI1Xi4b}{2}{\draw (0,0) node[xic] {} -- (-1,1) node[xic] {} -- (0,2) ; \draw[kernels2] (0,2) node[not] {} -- (-1,3) node[xi] {};\draw[kernels2] (0,2)  -- (1,3) node[xi] {};
}%42

\DeclareSymbol{I1Xi4c}{2}{\draw (0,0) node[xi] {} -- (-1,1) node[not] {}; \draw[kernels2] (-1,1) -- (0,2) ; 
\draw[kernels2] (-1,1) -- (-2,2) node[xi] {} ;
\draw (0,2) node[xi] {} -- (-1,3) node[xi] {};}%43

\DeclareSymbol{cI1Xi4c}{2}{\draw (0,0) node[xic] {} -- (-1,1) node[not] {}; \draw[kernels2] (-1,1) -- (0,2) ; 
\draw[kernels2] (-1,1) -- (-2,2) node[xic] {} ;
\draw (0,2) node[xi] {} -- (-1,3) node[xi] {};}%44

\DeclareSymbol{I1Xi4ab}{2}{\draw[kernels2] (0,0) node[not] {} -- (-1,1) ; \draw[kernels2] (0,0) node[not] {} -- (1,1) node[xi] {};\draw (-1,1) node[xi] {} -- (0,2) ; \draw[kernels2] (0,2) node[not] {} -- (-1,3) node[xi] {};\draw[kernels2] (0,2)  -- (1,3) node[xi] {}; }%45

\DeclareSymbol{cI1Xi4ab}{2}{\draw[kernels2] (0,0) node[not] {} -- (-1,1) ; \draw[kernels2] (0,0) node[not] {} -- (1,1) node[xic] {};\draw (-1,1) node[xic] {} -- (0,2) ; \draw[kernels2] (0,2) node[not] {} -- (-1,3) node[xi] {};\draw[kernels2] (0,2)  -- (1,3) node[xi] {}; }%46

\DeclareSymbol{I1Xi4bc}{2}{\draw (0,0) node[xi] {} -- (-1,1) node[not] {}; \draw[kernels2] (-1,1) -- (0,2) ; %47
\draw[kernels2] (-1,1) -- (-2,2) node[xi] {} ; \draw[kernels2] (0,2) node[not] {} -- (-1,3) node[xi] {};\draw[kernels2] (0,2)  -- (1,3) node[xi] {};
}

\DeclareSymbol{cI1Xi4bc}{2}{\draw (0,0) node[xic] {} -- (-1,1) node[not] {}; \draw[kernels2] (-1,1) -- (0,2) ; %48
\draw[kernels2] (-1,1) -- (-2,2) node[xic] {} ; \draw[kernels2] (0,2) node[not] {} -- (-1,3) node[xi] {};\draw[kernels2] (0,2)  -- (1,3) node[xi] {};
}

\DeclareSymbol{I1Xi4abcc1}{2}{\draw[kernels2] (0,0) node[not] {} -- (-1,1) node[not] {}%50
-- (-2,2) node[not]{} -- (-3,3) node[xic]  {};
\draw[kernels2] (0,0) -- (1,1) node[xic] {};
\draw[kernels2] (-1,1) -- (0,2) node[xi] {};
\draw[kernels2] (-2,2) -- (-1,3) node[xi] {};
}

\DeclareSymbol{I1Xi4abcc1b}{2}{\draw[kernels2] (0,0) node[not] {} -- (-1,1) node[not] {}%50
-- (-2,2) node[not]{} -- (-3,3) node[xi]  {};
\draw[kernels2] (0,0) -- (1,1) node[xic] {};
\draw[kernels2] (-1,1) -- (0,2) node[xic] {};
\draw[kernels2] (-2,2) -- (-1,3) node[xi] {};
}

\DeclareSymbol{I1Xi4abcc2}{2}{\draw[kernels2] (0,0) node[not] {} -- (-1,1) node[not] {}%51
-- (-2,2) node[not]{} -- (-3,3) node[xic]  {};
\draw[kernels2] (0,0) -- (1,1) node[xi] {};
\draw[kernels2] (-1,1) -- (0,2) node[xi] {};
\draw[kernels2] (-2,2) -- (-1,3) node[xic] {};
}

\DeclareSymbol{I1Xi4ac}{2}{\draw[kernels2] (0,0) node[not] {} -- (-1,1) ; \draw[kernels2] (0,0) node[not] {} -- (1,1) node[xi] {}; 
\draw[kernels2] (-1,1) node[not] {} -- (0,2) ; %52
\draw[kernels2] (-1,1) -- (-2,2) node[xi] {} ;
\draw (0,2) node[xi] {} -- (-1,3) node[xi] {};}

\DeclareSymbol{cI1Xi4ac}{2}{\draw[kernels2] (0,0) node[not] {} -- (-1,1) ; \draw[kernels2] (0,0) node[not] {} -- (1,1) node[xic] {}; 
\draw[kernels2] (-1,1) node[not] {} -- (0,2) ; %53
\draw[kernels2] (-1,1) -- (-2,2) node[xic] {} ;
\draw (0,2) node[xi] {} -- (-1,3) node[xi] {};}

\DeclareSymbol{I1Xi4acc1}{2}{\draw[kernels2] (0,0) node[not] {} -- (-1,1) ; \draw[kernels2] (0,0) node[not] {} -- (1,1) node[xic] {}; 
\draw[kernels2] (-1,1) node[not] {} -- (0,2) ; %54
\draw[kernels2] (-1,1) -- (-2,2) node[xi] {} ;
\draw (0,2) node[xic] {} -- (-1,3) node[xi] {};}

\DeclareSymbol{I1Xi4acc2}{2}{\draw[kernels2] (0,0) node[not] {} -- (-1,1) ; \draw[kernels2] (0,0) node[not] {} -- (1,1) node[xic] {}; 
\draw[kernels2] (-1,1) node[not] {} -- (0,2) ; %55
\draw[kernels2] (-1,1) -- (-2,2) node[xi] {} ;
\draw (0,2) node[xi] {} -- (-1,3) node[xic] {};}

\DeclareSymbol{2I1Xi4}{2}{\draw[kernels2] (0,0) node[not] {} -- (-1,1) node[not] {};
\draw[kernels2] (0,0) -- (1,1) node[not] {};%56
\draw[kernels2] (-1,1) -- (-1.5,2.5) node[xi] {};
\draw[kernels2] (-1,1) -- (-0.5,2.5) node[xi] {};
\draw[kernels2] (1,1) -- (0.5,2.5) node[xi] {};
\draw[kernels2] (1,1) -- (1.5,2.5) node[xi] {};
}

\DeclareSymbol{2I1Xi4dis}{2}{\draw[kernels2] (0,0) node[not] {} -- (-1,1) node[not] {};
\draw[kernels2] (0,0) -- (1,1) node[not] {};%56
\draw[kernels2] (-1,1) -- (-1.5,2.5) node[xies] {};
\draw[kernels2] (-1,1) -- (-0.5,2.5) node[xies] {};
\draw[kernels2] (1,1) -- (0.5,2.5) node[xies] {};
\draw[kernels2] (1,1) -- (1.5,2.5) node[xies] {};
}

\DeclareSymbol{2I1Xi4c1}{2}{\draw[kernels2] (0,0) node[not] {} -- (-1,1) node[not] {};%57
\draw[kernels2] (0,0) -- (1,1) node[not] {};
\draw[kernels2] (-1,1) -- (-1.5,2.5) node[xic] {};
\draw[kernels2] (-1,1) -- (-0.5,2.5) node[xi] {};
\draw[kernels2] (1,1) -- (0.5,2.5) node[xic] {};
\draw[kernels2] (1,1) -- (1.5,2.5) node[xi] {};
}

\DeclareSymbol{2I1Xi4c2}{2}{\draw[kernels2] (0,0) node[not] {} -- (-1,1) node[not] {};%58
\draw[kernels2] (0,0) -- (1,1) node[not] {};
\draw[kernels2] (-1,1) -- (-1.5,2.5) node[xic] {};
\draw[kernels2] (-1,1) -- (-0.5,2.5) node[xic] {};
\draw[kernels2] (1,1) -- (0.5,2.5) node[xi] {};
\draw[kernels2] (1,1) -- (1.5,2.5) node[xi] {};
}

\DeclareSymbol{2I1Xi4b}{2}{\draw[kernels2] (0,0) node[not] {} -- (-1,1) ;%59
\draw[kernels2] (0,0) -- (1,1);
\draw (-1,1) node[xi] {} -- (-1,2.5) node[xi] {};
\draw (1,1)  node[xi] {} -- (1,2.5) node[xi] {};
}

\DeclareSymbol{2I1Xi4bb}{2}{\draw[kernels2] (0,0) node[not] {} -- (-1,1) ;%59
\draw[kernels2] (0,0) -- (1,1);
\draw (-1,1) node[xi] {} -- (-1,2.5) node[xiesf] {};
\draw (1,1)  node[xi] {} -- (1,2.5) node[xic] {};
}

\DeclareSymbol{2I1Xi4c}{2}{\draw[kernels2] (0,0) node[not] {} -- (-1,1);%60
\draw[kernels2] (0,0) -- (1,1) node[not] {};
\draw (-1,1)  node[xi] {} -- (-1,2.5) node[xi] {};
\draw[kernels2] (1,1) -- (0.4,2.5) node[xi] {};
\draw[kernels2] (1,1) -- (1.6,2.5) node[xi] {};
}

\DeclareSymbol{2I1Xi4cc1}{2}{\draw[kernels2] (0,0) node[not] {} -- (-1,1);%61
\draw[kernels2] (0,0) -- (1,1) node[not] {};
\draw (-1,1)  node[xic] {} -- (-1,2.5) node[xi] {};
\draw[kernels2] (1,1) -- (0.4,2.5) node[xic] {};
\draw[kernels2] (1,1) -- (1.6,2.5) node[xi] {};
}

\DeclareSymbol{2I1Xi4cc2}{2}{\draw[kernels2] (0,0) node[not] {} -- (-1,1);%62
\draw[kernels2] (0,0) -- (1,1) node[not] {};
\draw (-1,1)  node[xic] {} -- (-1,2.5) node[xic] {};
\draw[kernels2] (1,1) -- (0.4,2.5) node[xi] {};
\draw[kernels2] (1,1) -- (1.6,2.5) node[xi] {};
}

\DeclareSymbol{Xi4ba}{0}{\draw(-0.5,1.5) node[xi] {} -- (0,0); \draw (-1.5,1) node[xi] {} -- (0,0) node[not] {}; \draw[kernels2] (0,0) -- (1.5,1) node[xi] {};
\draw[kernels2] (0,0) -- (0.5,1.5) node[xi] {} ;}%63

\DeclareSymbol{Xi4badis}{0}{\draw(-0.5,1.5) node[xies] {} -- (0,0); \draw (-1.5,1) node[xies] {} -- (0,0) node[not] {}; \draw[kernels2] (0,0) -- (1.5,1) node[xies] {};
\draw[kernels2] (0,0) -- (0.5,1.5) node[xies] {} ;}%63

\DeclareSymbol{Xi4ba1}{0}{\draw(-0.5,1.5) node[xi] {} -- (0,0); \draw (-1.5,1) node[xi] {} -- (0,0) node[not] {}; \draw[kernels2] (0,0) -- (1.5,1) node[xic] {};
\draw[kernels2] (0,0) -- (0.5,1.5) node[xic] {} ;}%64

\DeclareSymbol{Xi4ba1b}{0}{\draw(-0.5,1.5) node[xic] {} -- (0,0); \draw (-1.5,1) node[xic] {} -- (0,0) node[not] {}; \draw[kernels2] (0,0) -- (1.5,1) node[xi] {};
\draw[kernels2] (0,0) -- (0.5,1.5) node[xi] {} ;}%64

\DeclareSymbol{Xi4ba1bdiff}{0}{\draw(-0.5,1.5) node[xic] {} -- (0,0); \draw (-1.5,1) node[xic] {} -- (0,0) node[not] {}; \draw (0,0) -- (1.5,1) node[xi] {};
\draw (0,0) -- (0.5,1.5) node[xi] {};
\draw(0,0) node[diff] {};}%64

\DeclareSymbol{Xi4ba1bb}{0}{\draw(-0.5,1.5) node[xic] {} -- (0,0); \draw (-1.5,1) node[xiesf] {} -- (0,0) node[not] {}; \draw[kernels2] (0,0) -- (1.5,1) node[xi] {};
\draw[kernels2] (0,0) -- (0.5,1.5) node[xi] {} ;}%64

\DeclareSymbol{Xi4ba2}{0}{\draw(-0.5,1.5) node[xi] {} -- (0,0); \draw (-1.5,1) node[xic] {} -- (0,0) node[not] {}; \draw[kernels2] (0,0) -- (1.5,1) node[xi] {};
\draw[kernels2] (0,0) -- (0.5,1.5) node[xic] {} ;}%65

\DeclareSymbol{Xi4ba2b}{0}{\draw(-0.5,1.5) node[xi] {} -- (0,0); \draw (-1.5,1) node[xic] {} -- (0,0) node[not] {}; \draw[kernels2] (0,0) -- (1.5,1) node[xi] {};
\draw[kernels2] (0,0) -- (0.5,1.5) node[xiesf] {} ;}%65

%\DeclareSymbol{Xi4c}{0}{\draw (0,1) -- (0.8,2.2) node[xi] {};\draw (0,-0.25) node[xi] {} -- (0,1) node[xi] {} -- (-0.8,2.2) node[xi] {};}%66

\DeclareSymbol{Xi4ca}{0}{\draw (0,1) -- (-1,2.2) node[xi] {};\draw (0,-0.25) node[xi] {} -- (0,1) ; \draw[kernels2] (0,1) node[not] {} -- (1,2.2) node[xi] {};%67
\draw[kernels2] (0,1) {} -- (0,2.7) node[xi] {};
}

\DeclareSymbol{Xi4cb}{0}{\draw (-1,1) -- (-2,2) node[xi] {};\draw[kernels2] (0,0)  -- (-1,1) node[xi] {} ; \draw[kernels2] (0,0) node[not] {} -- (1,1) node[xi] {} ; 
\draw (-1,1) node[xi] {} -- (0,2) node[xi] {};}

\DeclareSymbol{Xi4cbb}{0}{\draw (-1,1) -- (-2,2) node[xiesf] {};\draw[kernels2] (0,0)  -- (-1,1) node[xi] {} ; \draw[kernels2] (0,0) node[not] {} -- (1,1) node[xi] {} ; 
\draw (-1,1) node[xi] {} -- (0,2) node[xic] {};}

\DeclareSymbol{Xi4cbc1}{0}{\draw (-1,1) -- (-2,2) node[xic] {};\draw[kernels2] (0,0)  -- (-1,1) node[xic] {} ; \draw[kernels2] (0,0) node[not] {} -- (1,1) node[xi] {} ; 
\draw (-1,1) node[xic] {} -- (0,2) node[xi] {};}

\DeclareSymbol{Xi4cbc2}{0}{\draw (-1,1) -- (-2,2) node[xi] {};\draw[kernels2] (0,0)  -- (-1,1) node[xi] {} ; \draw[kernels2] (0,0) node[not] {} -- (1,1) node[xic] {} ; 
\draw (-1,1) node[xic] {} -- (0,2) node[xi] {};}

\DeclareSymbol{Xi4cab}{0}{\draw (-1,1) -- (-2,2) node[xi] {};\draw[kernels2] (0,0)  -- (-1,1); \draw[kernels2] (0,0) node[not] {} -- (1,1) node[xi] {} ; 
\draw[kernels2] (-1,1)  {} -- (0,2) node[xi] {};
\draw[kernels2] (-1,1) node[not] {} -- (-1,2.5) node[xi] {};
}%69

\DeclareSymbol{Xi4cabdis}{0}{\draw (-1,1) -- (-2,2) node[xies] {};\draw[kernels2] (0,0)  -- (-1,1); \draw[kernels2] (0,0) node[not] {} -- (1,1) node[xies] {} ; 
\draw[kernels2] (-1,1)  {} -- (0,2) node[xies] {};
\draw[kernels2] (-1,1) node[not] {} -- (-1,2.5) node[xies] {};
}%69

\DeclareSymbol{Xi4cabc1}{0}{\draw (-1,1) -- (-2,2) node[xi] {};\draw[kernels2] (0,0)  -- (-1,1); \draw[kernels2] (0,0) node[not] {} -- (1,1) node[xic] {} ; 
\draw[kernels2] (-1,1)  {} -- (0,2) node[xic] {};
\draw[kernels2] (-1,1) node[not] {} -- (-1,2.5) node[xi] {};
}%69

\DeclareSymbol{Xi4cabc2}{0}{\draw (-1,1) -- (-2,2) node[xic] {};\draw[kernels2] (0,0)  -- (-1,1); \draw[kernels2] (0,0) node[not] {} -- (1,1) node[xic] {} ; 
\draw[kernels2] (-1,1)  {} -- (0,2) node[xi] {};
\draw[kernels2] (-1,1) node[not] {} -- (-1,2.5) node[xi] {};
}%69

\DeclareSymbol{Xi4ea}{1.5}{\draw (-1,2.5) node[xi] {} -- (-1,1) node[xi] {} -- (0,0); %71
 \draw[kernels2] (0,0)  -- (1,1) node[xi] {};
\draw[kernels2] (0,0) node[not] {} -- (0,1.5) node[xi] {}; }

\DeclareSymbol{Xi4eac1}{1.5}{\draw (-1,2.5) node[xic] {} -- (-1,1) node[xi] {} -- (0,0); %72
 \draw[kernels2] (0,0)  -- (1,1) node[xic] {};
\draw[kernels2] (0,0) node[not] {} -- (0,1.5) node[xi] {}; }

\DeclareSymbol{Xi4eac1b}{1.5}{\draw (-1,2.5) node[xic] {} -- (-1,1) node[xi] {} -- (0,0); %72
 \draw[kernels2] (0,0)  -- (1,1) node[xiesf] {};
\draw[kernels2] (0,0) node[not] {} -- (0,1.5) node[xi] {}; }

\DeclareSymbol{Xi4eac2}{1.5}{\draw (-1,2.5) node[xic] {} -- (-1,1) node[xic] {} -- (0,0); %73
 \draw[kernels2] (0,0)  -- (1,1) node[xi] {};
\draw[kernels2] (0,0) node[not] {} -- (0,1.5) node[xi] {}; }

\DeclareSymbol{Xi4eact1}{1.5}{\draw (-1,2.5) node[xic] {} -- (-1,1) node[xi] {} -- (0,0); %74
 \draw (0,0)  -- (1,1) node[xic] {};
\draw[rho] (0,0) node[not] {} -- (0,1.5) node[xi] {}; }

\DeclareSymbol{Xi4eact2}{1.5}{\draw[rho] (-1,2.5) node[xic] {} -- (-1,1) node[xi] {} -- (0,0); %75
 \draw (0,0)  -- (1,1) node[xic] {};
\draw (0,0) node[not] {} -- (0,1.5) node[xi] {}; }

\DeclareSymbol{Xi4eabis}{1.5}{\draw (-1,2.5) node[xi] {} -- (-1,1) ; \draw[kernels2] (-1,1) node[xi] {} -- (0,0); 
 \draw (0,0)  -- (1,1) node[xi] {};%76
\draw[kernels2] (0,0) node[not] {} -- (0,1.5) node[xi] {}; }

\DeclareSymbol{Xi4eabisc1}{1.5}{\draw (-1,2.5) node[xic] {} -- (-1,1) ; \draw[kernels2] (-1,1) node[xi] {} -- (0,0); 
 \draw (0,0)  -- (1,1) node[xi] {};%77
\draw[kernels2] (0,0) node[not] {} -- (0,1.5) node[xic] {}; }

\DeclareSymbol{Xi4eabisc1b}{1.5}{\draw (-1,2.5) node[xic] {} -- (-1,1) ; \draw[kernels2] (-1,1) node[xi] {} -- (0,0); 
 \draw (0,0)  -- (1,1) node[xi] {};%77
\draw[kernels2] (0,0) node[not] {} -- (0,1.5) node[xiesf] {}; }

\DeclareSymbol{Xi4eabisc1bis}{1.5}{\draw (-1,2.5) node[xi] {} -- (-1,1) ; \draw[kernels2] (-1,1) node[xi] {} -- (0,0); 
 \draw (0,0)  -- (1,1) node[xi] {};%78
\draw[kernels2] (0,0) node[not] {} -- (0,1.5) node[xi] {};
\draw (-2,1) node[] {\tiny{$i$}};
\draw (-2,2.5) node[] {\tiny{$\ell$}};
\draw (2,1) node[] {\tiny{$k$}};
\draw (0,2.5) node[] {\tiny{$j$}};
 }

\DeclareSymbol{Xi4eabisc1tris}{1.5}{\draw (-1,2.5) node[xi] {} -- (-1,1) ; \draw[kernels2] (-1,1) node[xi] {} -- (0,0); 
 \draw (0,0)  -- (1,1) node[xi] {};%78
\draw[kernels2] (0,0) node[not] {} -- (0,1.5) node[xi] {};
\draw (-2,1) node[] {\tiny{i}};
\draw (-2,2.5) node[] {\tiny{j}};
\draw (2,1) node[] {\tiny{j}};
\draw (0,2.5) node[] {\tiny{i}};
 }

\DeclareSymbol{Xi4eabisc1quater}{1.5}{\draw (-1,2.5) node[xic] {} -- (-1,1) ; \draw[kernels2] (-1,1) node[xi] {} -- (0,0); 
 \draw (0,0)  -- (1,1) node[xic] {};%78
\draw[kernels2] (0,0) node[not] {} -- (0,1.5) node[xi] {};
 }

\DeclareSymbol{Xi4eabisc2}{1.5}{\draw (-1,2.5) node[xic] {} -- (-1,1) ; \draw[kernels2] (-1,1) node[xi] {} -- (0,0); %79
 \draw (0,0)  -- (1,1) node[xic] {};
\draw[kernels2] (0,0) node[not] {} -- (0,1.5) node[xi] {}; }

\DeclareSymbol{Xi4eabisc2l}{1.5}{\draw (-1,2.5) node[xiesf] {} -- (-1,1) ; \draw[kernels2] (-1,1) node[xi] {} -- (0,0); %79
 \draw (0,0)  -- (1,1) node[xic] {};
\draw[kernels2] (0,0) node[not] {} -- (0,1.5) node[xi] {}; }

\DeclareSymbol{Xi4eabisc2r}{1.5}{\draw (-1,2.5) node[xic] {} -- (-1,1) ; \draw[kernels2] (-1,1) node[xi] {} -- (0,0); %79
 \draw (0,0)  -- (1,1) node[xiesf] {};
\draw[kernels2] (0,0) node[not] {} -- (0,1.5) node[xi] {}; }

\DeclareSymbol{Xi4eabisc3}{1.5}{\draw (-1,2.5) node[xic] {} -- (-1,1) ; \draw[kernels2] (-1,1) node[xic] {} -- (0,0); %80
 \draw (0,0)  -- (1,1) node[xi] {};
\draw[kernels2] (0,0) node[not] {} -- (0,1.5) node[xi] {}; }

\DeclareSymbol{Xi4eb}{0}{%81
\draw[kernels2] (0,2) node[xi] {} -- (-1,1) ; \draw[kernels2] (-2,2)  node[xi] {} -- (-1,1) ; \draw (-1,1)  node[not] {} -- (0,0); 
 \draw (0,0) node[xi] {}  -- (1,1) node[xi] {};
}

\DeclareSymbol{Xi4eab}{1.5}{\draw[kernels2] (-1,2.5) node[xi] {} -- (-1,1) ; \draw[kernels2] (-2,2)  node[xi] {} -- (-1,1) ; \draw (-1,1)  node[not] {} -- (0,0); %82
 \draw[kernels2] (0,0)  -- (1,1) node[xi] {};
\draw[kernels2] (0,0) node[not] {} -- (0,1.5) node[xi] {}; 
}

\DeclareSymbol{Xi4eabdis}{1.5}{\draw[kernels2] (-1,2.5) node[xies] {} -- (-1,1) ; \draw[kernels2] (-2,2)  node[xies] {} -- (-1,1) ; \draw (-1,1)  node[not] {} -- (0,0); %82
 \draw[kernels2] (0,0)  -- (1,1) node[xies] {};
\draw[kernels2] (0,0) node[not] {} -- (0,1.5) node[xies] {}; 
}

\DeclareSymbol{Xi4eabc1}{1.5}{\draw[kernels2] (-1,2.5) node[xic] {} -- (-1,1) ; \draw[kernels2] (-2,2)  node[xi] {} -- (-1,1) ; \draw (-1,1)  node[not] {} -- (0,0); %83
 \draw[kernels2] (0,0)  -- (1,1) node[xic] {};
\draw[kernels2] (0,0) node[not] {} -- (0,1.5) node[xi] {}; 
}

\DeclareSymbol{Xi4eabc2}{1.5}{\draw[kernels2] (-1,2.5) node[xi] {} -- (-1,1) ; \draw[kernels2] (-2,2)  node[xi] {} -- (-1,1) ; \draw (-1,1)  node[not] {} -- (0,0); %84
 \draw[kernels2] (0,0)  -- (1,1) node[xic] {};
\draw[kernels2] (0,0) node[not] {} -- (0,1.5) node[xic] {}; 
}

\DeclareSymbol{Xi4eabbis}{1.5}{\draw[kernels2] (-1,2.5) node[xi] {} -- (-1,1) ; \draw[kernels2] (-2,2)  node[xi] {} -- (-1,1) ; \draw[kernels2] (-1,1)  node[not] {} -- (0,0); %85
 \draw (0,0)  -- (1,1) node[xi] {};
\draw[kernels2] (0,0) node[not] {} -- (0,1.5) node[xi] {}; 
}

\DeclareSymbol{Xi4eabbisc1}{1.5}{\draw[kernels2] (-1,2.5) node[xic] {} -- (-1,1) ; \draw[kernels2] (-2,2)  node[xi] {} -- (-1,1) ; \draw[kernels2] (-1,1)  node[not] {} -- (0,0); %86
 \draw (0,0)  -- (1,1) node[xic] {};
\draw[kernels2] (0,0) node[not] {} -- (0,1.5) node[xi] {}; 
}

\DeclareSymbol{Xi4eabbisc1perm}{1.5}{\draw[kernels2] (-1,2.5) node[xi] {} -- (-1,1) ; \draw[kernels2] (-2,2)  node[xic] {} -- (-1,1) ; \draw[kernels2] (-1,1)  node[not] {} -- (0,0); %86
 \draw (0,0)  -- (1,1) node[xic] {};
\draw[kernels2] (0,0) node[not] {} -- (0,1.5) node[xi] {}; 
}

\DeclareSymbol{Xi4eabbisc2}{1.5}{\draw[kernels2] (-1,2.5) node[xi] {} -- (-1,1) ; \draw[kernels2] (-2,2)  node[xi] {} -- (-1,1) ; \draw[kernels2] (-1,1)  node[not] {} -- (0,0); %87
 \draw (0,0)  -- (1,1) node[xic] {};
\draw[kernels2] (0,0) node[not] {} -- (0,1.5) node[xic] {}; 
}

\DeclareSymbol{Xi2cbis}{0}{\draw[kernels2] (0,1) -- (0.8,2.2) node[xi] {};\draw[kernels2] (0,-0.25) node[not] {} -- (0,1); \draw[kernels2] (0,1) node[not] {} -- (-0.8,2.2) node[xi] {};}%88

\DeclareSymbol{Xi2cbis1}{0}{\draw (0,1) -- (-0.8,2.2) node[xi] {};\draw[kernels2] (0,-0.25) node[not] {} -- (0,1) node[xi] {}; }%89

%\DeclareSymbol{Xi2X}{-2}{\draw (0,-0.25) node[xi] {} -- (-1,1) node[xix] {};}%90

\DeclareSymbol{Xi2Xbis}{-2}{\draw[kernels2] (0,-0.25)  -- (-1,1) ; \draw (-1,1) node[xix] {};%91
\draw[kernels2] (0,-0.25) node[not] {} -- (1,1) node[xi] {};}

\DeclareSymbol{XXi2bis}{-2}{\draw[kernels2] (0,-0.25) -- (-1,1) node[xi] {};%92
\draw[kernels2] (0,-0.25) node[X] {} -- (1,1) node[xi] {};}

\DeclareSymbol{I1XiIXi}{0}{\draw[kernels2] (0,-0.25) -- (1,1) node[xi] {};%93
\draw (0,-0.25) node[not] {} -- (-1,1) node[xi] {};}

\DeclareSymbol{I1XiIXib}{0}{\draw  (0,-0.25) node[xi] {} -- (0,1) node[not] {};
\draw[kernels2] (0,1) -- (0,2.25) ; \draw (0,2.25) node[xi]{}; }%94

\DeclareSymbol{I1XiIXic}{0}{%95
\draw[kernels2] (0,0) -- (1,1) node[xi] {} ; 
\draw[kernels2] (0,0) node[not] {}  -- (-1,1) node[not] {} -- (0,2) node[xi] {};
}

\DeclareSymbol{thin}{1.4}{\draw[pagebackground] (-0.3,0) -- (0.3,0); \draw  (0,0) -- (0,2);}
\DeclareSymbol{thin2}{1.4}{\draw[pagebackground] (-0.3,0) -- (0.3,0); \draw[tinydots]  (0,0) -- (0,2);}

\DeclareSymbol{thick}{1.4}{\draw[pagebackground] (-0.3,0) -- (0.3,0); \draw[kernels2]  (0,0) -- (0,2);}
\DeclareSymbol{thick2}{1.4}{\draw[pagebackground] (-0.3,0) -- (0.3,0); \draw[kernels2,tinydots]  (0,0) -- (0,2);}

\DeclareSymbol{Xi4ind}{2}{\draw (0,0) node[xi,label={[label distance=-0.2em]right: \scriptsize  $ i $}]  { } -- (-1,1) node[xi,label={[label distance=-0.2em]left: \scriptsize  $ j $}] {} -- (0,2) node[xi,label={[label distance=-0.2em]right: \scriptsize  $ k $}] {} -- (-1,3) node[xi,label={[label distance=-0.2em]left: \scriptsize  $ \ell $}] {};}%115

\DeclareSymbol{Xi4c1}{2}{\draw (0,0) node[xic] {} -- (-1,1) node[xi] {} -- (0,2) node[xic] {} -- (-1,3) node[xi] {};} %119
\DeclareSymbol{IXi2ex}{0}{\draw (0,-0.25) node[xie] {} -- (-1,1) node[xi] {} ; \draw (0,-0.25)-- (1,1) node[xi] {};}
\DeclareSymbol{IXi2ex1}{0}{\draw (0,-0.25) node[xie] {} -- (-1,1) node[xi] {} -- (0,2) node[xi] {};}

\DeclareSymbol{Xi4b1}{0}{\draw(0,1.5) node[xic] {} -- (0,0); \draw (-1,1) node[xic] {} -- (0,0) node[xi] {} -- (1,1) node[xi] {};}

\DeclareSymbol{Xi4ec1}{0}{\draw (0,2) node[xi] {} -- (-1,1) node[xic] {} -- (0,0) node[xic] {} -- (1,1) node[xi] {};}
\DeclareSymbol{Xi4ec2}{0}{\draw (0,2) node[xic] {} -- (-1,1) node[xi] {} -- (0,0) node[xic] {} -- (1,1) node[xi] {};}
\DeclareSymbol{Xi4ec3}{0}{\draw (0,2) node[xic] {} -- (-1,1) node[xic] {} -- (0,0) node[xi] {} -- (1,1) node[xi] {};}

\DeclareSymbol{I1Xi4ac1}{2}{\draw[kernels2] (0,0) node[not] {} -- (-1,1) ; \draw[kernels2] (0,0) node[not] {} -- (1,1) node[xic] {} ;
\draw (-1,1) node[xi] {} -- (0,2) node[xic] {} -- (-1,3) node[xi] {};}%161

\DeclareSymbol{I1Xi4ac2}{2}{\draw[kernels2] (0,0) node[not] {} -- (-1,1) ; \draw[kernels2] (0,0) node[not] {} -- (1,1) node[xic] {} ;
\draw (-1,1) node[xi] {} -- (0,2) node[xi] {} -- (-1,3) node[xic] {};}

\DeclareSymbol{I1Xi4bp}{2}{\draw (0,0) node[not] {} -- (-1,1) node[xi] {} -- (0,2) ; \draw[kernels2] (0,2) node[not] {} -- (-1,3) node[xi] {};\draw[kernels2] (0,2)  -- (1,3) node[xi] {};
}

\DeclareSymbol{I1Xi4bc1}{2}{\draw (0,0) node[xic] {} -- (-1,1) node[xi] {} -- (0,2) ; \draw[kernels2] (0,2) node[not] {} -- (-1,3) node[xi] {};\draw[kernels2] (0,2)  -- (1,3) node[xic] {};
}%165

\DeclareSymbol{I1Xi4bc2}{2}{\draw (0,0) node[xic] {} -- (-1,1) node[xi] {} -- (0,2) ; \draw[kernels2] (0,2) node[not] {} -- (-1,3) node[xic] {};\draw[kernels2] (0,2)  -- (1,3) node[xi] {};
}

\DeclareSymbol{I1Xi4cp}{2}{\draw (0,0) node[not] {} -- (-1,1) node[not] {}; \draw[kernels2] (-1,1) -- (0,2) ; 
\draw[kernels2] (-1,1) -- (-2,2) node[xi] {} ;
\draw (0,2) node[xi] {} -- (-1,3) node[xi] {};}

\DeclareSymbol{I1Xi4cc1}{2}{\draw (0,0) node[xic] {} -- (-1,1) node[not] {}; \draw[kernels2] (-1,1) -- (0,2) ; 
\draw[kernels2] (-1,1) -- (-2,2) node[xi] {} ;
\draw (0,2) node[xic] {} -- (-1,3) node[xi] {};}%169

\DeclareSymbol{I1Xi4cc2}{2}{\draw (0,0) node[xic] {} -- (-1,1) node[not] {}; \draw[kernels2] (-1,1) -- (0,2) ; 
\draw[kernels2] (-1,1) -- (-2,2) node[xi] {} ;
\draw (0,2) node[xi] {} -- (-1,3) node[xic] {};}

\DeclareSymbol{I1Xi4abc1}{2}{\draw[kernels2] (0,0) node[not] {} -- (-1,1) ; \draw[kernels2] (0,0) node[not] {} -- (1,1) node[xic] {};\draw (-1,1) node[xi] {} -- (0,2) ; \draw[kernels2] (0,2) node[not] {} -- (-1,3) node[xic] {};\draw[kernels2] (0,2)  -- (1,3) node[xi] {}; }

\DeclareSymbol{I1Xi4abc2}{2}{\draw[kernels2] (0,0) node[not] {} -- (-1,1) ; \draw[kernels2] (0,0) node[not] {} -- (1,1) node[xic] {};\draw (-1,1) node[xi] {} -- (0,2) ; \draw[kernels2] (0,2) node[not] {} -- (-1,3) node[xi] {};\draw[kernels2] (0,2)  -- (1,3) node[xic] {}; }%173

\DeclareSymbol{R1}{0}{\draw (-1,1) node[xi] {} -- (0,0) node[not] {};%131
\draw[kernels2] (0,1.5) node[xic] {} -- (0,0) -- (1,1) node[xic] {};}
\DeclareSymbol{R2}{0}{\draw (-1,1) node[xic] {} -- (0,0) node[not] {};
\draw[kernels2] (0,1.5)  {} -- (0,0) -- (1,1)  {};
\draw (0,1.5) node[xi] {};
\draw (1,1) node[xic] {};
}%132
\DeclareSymbol{R3}{1}{\draw[kernels2] (-1,1.5)  {} -- (0,0) node[not] {} -- (1,1.5);%133
\draw (-1,1.5) node[xi] {};
\draw[kernels2] (0,3) {} -- (1,1.5) -- (2,3)  {};
\draw  (0,3) node[xic] {} ;
\draw (2,3) node[xic] {};}
\DeclareSymbol{R4}{1}{\draw[kernels2] (-1,1.5) node[xic] {} -- (0,0) node[not] {} -- (1,1.5);%134
\draw[kernels2] (0,3) {} -- (1,1.5) -- (2,3) node[xic] {};
\draw (0,3) node[xi] {};}

\DeclareSymbol{I1Xi4bcp}{2}{\draw (0,0) node[not] {} -- (-1,1) node[not] {}; \draw[kernels2] (-1,1) -- (0,2) ; %175
\draw[kernels2] (-1,1) -- (-2,2) node[xi] {} ; \draw[kernels2] (0,2) node[not] {} -- (-1,3) node[xi] {};\draw[kernels2] (0,2)  -- (1,3) node[xi] {};
}

\DeclareSymbol{I1Xi4bcc1}{2}{\draw (0,0) node[xic] {} -- (-1,1) node[not] {}; \draw[kernels2] (-1,1) -- (0,2) ; 
\draw[kernels2] (-1,1) -- (-2,2) node[xi] {} ; \draw[kernels2] (0,2) node[not] {} -- (-1,3) node[xi] {};\draw[kernels2] (0,2)  -- (1,3) node[xic] {};
}

\DeclareSymbol{I1Xi4bcc2}{2}{\draw (0,0) node[xic] {} -- (-1,1) node[not] {}; \draw[kernels2] (-1,1) -- (0,2) ; 
\draw[kernels2] (-1,1) -- (-2,2) node[xi] {} ; \draw[kernels2] (0,2) node[not] {} -- (-1,3) node[xic] {};\draw[kernels2] (0,2)  -- (1,3) node[xi] {};
} %177

\DeclareSymbol{2I1Xi4bc1}{2}{\draw[kernels2] (0,0) node[not] {} -- (-1,1) ;
\draw[kernels2] (0,0) -- (1,1);
\draw (-1,1) node[xic] {} -- (-1,2.5) node[xi] {};
\draw (1,1)  node[xic] {} -- (1,2.5) node[xi] {};
}

\DeclareSymbol{2I1Xi4bc2}{2}{\draw[kernels2] (0,0) node[not] {} -- (-1,1) ;
\draw[kernels2] (0,0) -- (1,1);
\draw (-1,1) node[xi] {} -- (-1,2.5) node[xic] {};
\draw (1,1)  node[xic] {} -- (1,2.5) node[xi] {};
}

\DeclareSymbol{diff2I1Xi4bc2}{2}{\draw (0,0) node[diff] {} -- (-1,1) ;
\draw (0,0) -- (1,1);
\draw (-1,1) node[xi] {} -- (-1,2.5) node[xic] {};
\draw (1,1)  node[xic] {} -- (1,2.5) node[xi] {};
}

\DeclareSymbol{2I1Xi4bc3}{2}{\draw[kernels2] (0,0) node[not] {} -- (-1,1) ;
\draw[kernels2] (0,0) -- (1,1);
\draw (-1,1) node[xic] {} -- (-1,2.5) node[xic] {};
\draw (1,1)  node[xi] {} -- (1,2.5) node[xi] {};
}

\DeclareSymbol{Xi41}{0}{\draw (0,1) -- (0.8,2.2) node[xic] {};\draw (0,-0.25) node[xi] {} -- (0,1) node[xi] {} -- (-0.8,2.2) node[xic] {};} 

\DeclareSymbol{Xi42}{0}{\draw (0,1) -- (0.8,2.2) node[xi] {};\draw (0,-0.25) node[xic] {} -- (0,1) node[xi] {} -- (-0.8,2.2) node[xic] {};}

\DeclareSymbol{Xi4ca1}{0}{\draw (0,1) -- (-1,2.2) node[xic] {};\draw (0,-0.25) node[xi] {} -- (0,1) ; \draw[kernels2] (0,1) node[not] {} -- (1,2.2) node[xic] {};
\draw[kernels2] (0,1) {} -- (0,2.7) node[xi] {};
}

\DeclareSymbol{Xi4ca2}{0}{\draw (0,1) -- (-1,2.2) node[xi] {};\draw (0,-0.25) node[xi] {} -- (0,1) ; \draw[kernels2] (0,1) node[not] {} -- (1,2.2) node[xic] {};
\draw[kernels2] (0,1) {} -- (0,2.7) node[xic] {};
}

\DeclareSymbol{Xi4cap}{0}{\draw (0,1) -- (-1,2.2) node[xi] {};\draw (0,-0.25) node[not] {} -- (0,1) ; \draw[kernels2] (0,1) node[not] {} -- (1,2.2) node[xi] {};
\draw[kernels2] (0,1) {} -- (0,2.7) node[xi] {};
}

\DeclareSymbol{Xi3a}{0}{
 \draw (-1,1)  node[xi] {} -- (0,0); 
 \draw (0,0) node[xi] {}  -- (1,1) node[xi] {};
}

\DeclareSymbol{Xi4ebc1}{0}{
\draw[kernels2] (0,2) node[xi] {} -- (-1,1) ; \draw[kernels2] (-2,2)  node[xic] {} -- (-1,1) ; \draw (-1,1)  node[not] {} -- (0,0); 
 \draw (0,0) node[xic] {}  -- (1,1) node[xi] {};
}

\DeclareSymbol{Xi4ebc2}{0}{
\draw[kernels2] (0,2) node[xi] {} -- (-1,1) ; \draw[kernels2] (-2,2)  node[xi] {} -- (-1,1) ; \draw (-1,1)  node[not] {} -- (0,0); 
 \draw (0,0) node[xic] {}  -- (1,1) node[xic] {};
}

\DeclareSymbol{Xi2cbispex}{0}{\draw[kernels2] (0,1) -- (0.8,2.2) node[xi] {};\draw (0,-0.25) node[xie] {} -- (0,1); \draw[kernels2] (0,1) node[not] {} -- (-0.8,2.2) node[xi] {};}

\DeclareSymbol{Xi2cbis1p}{0}{\draw (0,1) -- (-0.8,2.2) node[xi] {};\draw (0,-0.25) node[not] {} -- (0,1) node[xi] {}; }

\DeclareSymbol{Xi2Xp}{-2}{\draw (0,-0.25) node[not] {} -- (-1,1) node[xix] {};} % 229 not used

\DeclareSymbol{I1XiIXib}{0}{\draw  (0,-0.25) node[xi] {} -- (0,1) node[not] {};
\draw[kernels2] (0,1) -- (0,2.25) ; \draw (0,2.25) node[xi]{}; }

\DeclareSymbol{IXi2b}{0}{\draw  (0,-0.25) node[xi] {} -- (0,1) node[not] {};
\draw (0,1) -- (0,2.25) ; \draw (0,2.25) node[xi]{}; }

\DeclareSymbol{IXi2bex}{0}{\draw  (0,-0.25) node[xi] {} -- (0,1) node[xie] {};
\draw (0,1) -- (0,2.25) ; \draw (0,2.25) node[xi]{}; }

\def\sXi{\symbol{\Xi}} 
 \def\1{\mathbf{\symbol{1}}}

\def\one{\mathbf{1}}
\def\eps{\varepsilon}

\DeclareSymbol{diff}{0}{
\node at (0,0.5) [diff] {};
}

\DeclareSymbol{geo}{0}{
\draw (0,0) node[diff] {};
\draw (0.3,0) node[diff] {};
}

\DeclareSymbol{generic}{0}{
\node at (0,0.6) [xi] {};
}

\DeclareSymbol{g}{0}{
\draw (0,0.6) node[g] {};
}

\DeclareSymbol{Ito}{0}{
\draw (0,0.6) node[xies] {};
}

\DeclareSymbol{Itob}{0}{
\draw (0,0.6) node[xiesf] {};
}

\DeclareSymbol{greycirc}{0}{
\draw (0,0.3) node[xi] {};
}

\DeclareSymbol{not}{0}{
\draw (0,0.6) node[not] {};
\draw[tinydots] (0,0.6) circle (0.8);
}

\DeclareSymbol{genericb}{0}{
\node at (0,0.6) [xic] {};
}

\DeclareSymbol{bluecirc}{0}{
\draw (0,0.3) node[xic] {};
}

\DeclareSymbol{genericxix}{0}{
\draw (0,0.6) node[xix] {};
}

\DeclareSymbol{genericX}{0}{
\draw (0,0.6) node[X] {};
}

\DeclareSymbol{diffIto}{1}{
\draw  (0,2.5) -- (0,0) ;
\draw (0,-0.1) node[diff] {};
\draw (0,2.5) node[xies] {};
}
\DeclareSymbol{Itodiff}{2}{
\draw(0,2.9) -- (0,-0.2);
\draw (0,2.9) node[diff] {};
\draw (0,-0.1) node[xies] {};
}

\DeclareSymbol{diffgeneric}{1}{
\draw  (0,2.5) -- (0,0) ;
\draw (0,-0.1) node[diff] {};
\draw (0,2.5) node[xi] {};
}

\DeclareSymbol{genericdiff}{2}{
\draw(0,2.9) -- (0,-0.2);
\draw (0,2.9) node[diff] {};
\draw (0,-0.1) node[xi] {};
}

\DeclareSymbol{diffdot}{2}{
\draw  (0,3) -- (0,-0.1) ;
\draw (0,3) node[not] {};
\draw (0,-0.1) node[diff] {};
}

\DeclareSymbol{diffdotmini}{0}{
\draw  (0,0) -- (0,1.2) ;
\draw (0,1.2) node[not] {};
\draw (0,0) node[diffmini] {};
}

\DeclareSymbol{dotdiff}{2}{
\draw[kernelsmod]  (0,3) -- (0,-0.1) ;
\draw (0,3) node[diff] {};
\draw (0,-0.1) node[not] {};
}

\DeclareSymbol{dotdiff1}{2}{
\draw[kernelsmod]  (0,3) -- (0,-0.1) ;
\draw (0,3) node[diff1] {};
\draw (0,-0.1) node[not] {};
}

\DeclareSymbol{dotdiff1mini}{0}{
\draw[kernelsmod]  (0,1.2) -- (0,0) ;
\draw (0,1.2) node[diffmini] {};
\draw (0,0) node[not] {};
}

\DeclareSymbol{dotdiff2}{2}{
\draw (0,3) -- (0,-0.1) ;
\draw (0,3) node[diff] {};
\draw (0,-0.1) node[not] {};
}

\DeclareSymbol{dotdiff2mini}{0}{
\draw (0,1.2) -- (0,0) ;
\draw (0,1.2) node[diffmini] {};
\draw (0,0) node[not] {};
}

\DeclareSymbol{dotdiffstraight}{0}{
\draw  (0,3) -- (0,-0.1) ;
\draw (0,3) node[diff] {};
\draw (0,-0.1) node[not] {};
}

\DeclareSymbol{arbre1}{0}{
\draw  (0,0) -- (1.5,1.5) ;
\draw (1.5,1.5) node[not] {};
\draw (0,0) node[not] {};
}

\DeclareSymbol{arbre2}{0}{
\draw  (0,0) -- (1.5,1.5) ;
\draw[kernelsmod] (0,0) -- (-1.5,1.5);
\draw (1.5,1.5) node[not] {};
\draw (0,0) node[not] {};
\draw (-1.5,1.5) node[xi] {};
}

\DeclareSymbol{arbre3}{0}{
\draw  (0,0) -- (1.5,1.5) ;
\draw[kernelsmod] (1.5,1.5) -- (0,3);
\draw (0,0) node[not] {};
\draw (1.5,1.5) node[not] {};
\draw (0,3) node[xi] {};
}

\DeclareSymbol{treeeval}{0}{
\draw (0,0) -- (1,1);
\draw (0,0) node[xi] {};
\draw (1.25,1.25) node[xi] {};
\draw (-0.6,0.6) node[]{\tiny{$i$}};
\draw (0.65,1.85) node[]{\tiny{$j$}};
}

\DeclareSymbol{testeval}{0}{
\draw (0,0) -- (1,1);
\draw (0,0) -- (-1,1);
\draw (0,0) node[xi] {};
\draw (1.25,1.25) node[xi] {};
\draw (-1.25,1.25) node[xi] {};
\draw (-0.6,-0.6) node[]{\tiny{$i$}};
\draw (0.65,1.85) node[]{\tiny{$j$}};
\draw (-1.95,1.85) node[]{\tiny{$k$}};
}

\DeclareSymbol{treeeval2}{0}{
\draw[kernelsmod] (-0.25,-1) -- (1,0.5) ;
\draw[kernelsmod] (1,0.5) -- (-0.25,2);
\draw (1,0.5) node[diff2] {};
\draw (-0.25,-1) node[not] {};
\draw (-0.25,2) node[xi] {};
\draw (-0.6,1.2) node[]{\tiny{1}};
}

\DeclareSymbol{arbreact}{1}{
\draw (0,0) node[not] {};
\draw[kernelsmod] (0,0) -- (1,1);
\draw[kernelsmod] (0,0) -- (-1,1);
\draw (-1,1) node[xic] {};
\draw  (0,2) -- (1,1) ;
\draw (0,2) node[xic] {};
\draw (1,1) node[xi] {};
}

\DeclareSymbol{arbreact1}{0}{
\draw (0,-1.5) -- (0,0);
\draw[kernelsmod] (0,0) -- (1,1);
\draw[kernelsmod] (0,0) -- (-1,1);
\draw  (0,2) -- (1,1) ;
\draw (0,-1.5) node[diff] {};
\draw (0,0) node[not] {};
\draw (-1,1) node[xic] {};
\draw (0,2) node[xic] {};
\draw (1,1) node[xi] {};
}

\DeclareSymbol{arbreact2}{0}{
\draw (0,-0.75) -- (-1,0.5); 
\draw (0,-0.75) -- (1,0.5);
\draw (0,1.5) -- (1,0.5);
\draw (0,1.5) node[xic] {};
\draw (1,0.5) node[xi] {};
\draw (-1,0.5) node[xic] {};
\draw (0,-0.75) node[diff] {};
}

\DeclareSymbol{arbreact3}{0}{
\draw[kernelsmod] (0,-0.75) -- (-1,0.5); 
\draw[kernelsmod] (0,-0.75) -- (1,0.5);
\draw (0,1.75) -- (1,0.5);
\draw (2,1.75) -- (1,0.5);
\draw (0,1.75) node[xic] {};
\draw (1,0.5) node[diff] {};
\draw (-1,0.5) node[xic] {};
\draw (2,1.75) node[xi] {};
\draw (0,-0.75) node[not] {};
}

\DeclareSymbol{pre_im_I1Xitwo}{0}{
\draw[kernels2] (0,-0.3) node[not] {} -- (-0.6,0.7) ;
\draw[kernels2] (0,-0.3) -- (0.6,0.7);
\draw (0,0.9) node[g] {};
}

\DeclareSymbol{pre_im_cI1Xi4ab}{2}{
\draw[kernels2] (0,-1) node[not] {} -- (-0.6,0) ;
\draw[kernels2] (0,-1) -- (0.6,0);
\draw (0,0.2) node[g] {};
\draw (0,0.6) -- (0,1.5);
\draw[kernels2] (0,1.5) node[not] {} -- (-0.6,2.5) ;
\draw[kernels2] (0,1.5) -- (0.6,2.5);
\draw (0,2.7) node[g] {};
}%46

\DeclareSymbol{pre_im_I1Xi4acc2}{0}{
\draw[kernels2] (-1,-0.5) node[not] {} -- (-1.6,0.5) ;
\draw[kernels2] (-1,-0.5) -- (-0.4,0.5);
\draw[kernels2] (-1,-0.5) -- (0.2,-1.5) node[not] {} ;
\draw (-1,1.1) -- (-1,2);
\draw[kernels2] (0.2,-1.5) -- (0.2,2);
\draw (-1,0.7) node[g] {};
\draw (-0.3,2.2) node[g] {};
}

\DeclareSymbol{pre_im_I1Xi4abcc2}{2}{
\draw[kernels2] (0,-1) node[not] {} -- (-1,0) node[not] {};
\draw[kernels2] (-1,1.2) node[not] {} -- (-1,0);
\draw[kernels2] (-1,1.2) -- (-1.5,2.5);
\draw[kernels2] (-1,1.2) -- (-0.5,2.5);
\draw[kernels2] (-1,0) -- (0.7,2.5);
\draw[kernels2] (0,-1) -- (1.5,2.5);
\draw (-1,2.7) node[g] {};
\draw (1,2.7) node[g] {};
}

\DeclareSymbol{pre_im_2I1Xi4c1}{2}{
\draw[kernels2] (0,-0.5) node[not] {} -- (-1,0.5) node[not] {};
\draw[kernels2] (0,-0.5) -- (1,0.5) node[not] {};
\draw[kernels2] (-1,0.5) node[not] {}-- (-1.7,2);
%\draw[kernels2]  (-0.7,1.5) -- (-0.7,1) -- (0,1) -- (0,2.5) -- (2.4,2.5) -- (2.4,1) -- (1,0);
\draw[kernels2]  (-1,2) -- (1,0.5);
\draw[kernels2] (-1,0.5) -- (1,2);
\draw[kernels2] (1,0.5) -- (1.7,2);
\draw (-1.2,2.2) node[g] {};
\draw (1.2,2.2) node[g] {};
}

%\DeclareSymbol{pre_im_2I1Xi4c1}{2}{
%\draw[kernels2] (0,-1) node[not] {} -- (-0.1,0) node[not] {};
%\draw[kernels2] (0,-1) -- (1,0) node[not] {};
%\draw[kernels2] (-0.1,0) node[not] {}-- (-1.7,1.5);
%%\draw[kernels2]  (-0.7,1.5) -- (-0.7,1) -- (0,1) -- (0,2.5) -- (2.4,2.5) -- (2.4,1) -- (1,0);
%\draw[kernels2]  (-0.7,1.5) -- (1,0);
%\draw[kernels2] (-0.1,0) -- (0.7,1.5);
%\draw[kernels2] (1,0) -- (1.7,1.5);
%\draw (-1.2,1.7) node[g] {};
%\draw (1.2,1.7) node[g] {};
%}

\DeclareSymbol{pre_im_Xi4eabisc2}{2}{
\draw[kernels2] (1.2,-0.5) node[not] {} -- (-0.7,0.8) ;
\draw[kernels2] (1.2,-0.5) -- (0.4,0.8);
\draw (0,1.4)  -- (0,2.2);
\draw (1.2,2.2) -- (1.2,-0.6);
\draw (0,1) node[g] {};
\draw (0.6,2.4) node[g] {};
}

\DeclareSymbol{pre_im_Xi4eabisc22}{2}{
\draw (1.2,-0.5) node[not] {} -- (-0.7,0.8) ;
\draw[kernels2] (1.2,-0.5) -- (0.4,0.8);
\draw (0,1.4)  -- (0,2.2);
\draw[kernels2] (1.2,2.2) -- (1.2,-0.6);
\draw (0,1) node[g] {};
\draw (0.6,2.4) node[g] {};
}

\DeclareSymbol{pre_im_Xi4eabisc222}{2}{
\draw[kernels2] (0.4,-0.5) node[not] {} -- (-0.6,1) ;
\draw[kernels2] (1.2,0) -- (0.3,1);
\draw (0,1.1)  -- (0,2.5);
\draw[kernels2] (1.2,2.5) -- (1.2,0) node[not] {} -- (0.4,-0.6);
\draw (0,1.2) node[g] {};
\draw (0.6,2.5) node[g] {};
}

\DeclareSymbol{pre_im_Xi4eabc2}{2}{
\draw (0,-0.5) node[not] {} -- (-1,0.5) node[not] {};
\draw[kernels2] (-1,0.5) -- (-1.5,2);
\draw[kernels2] (0,-0.5)  -- (0.7,2);
\draw[kernels2] (-1,0.5) -- (-0.5,2);
\draw[kernels2] (0,-0.5) -- (1.5,2);
\draw (-1,2.2) node[g] {};
\draw (1,2.2) node[g] {};
}

\DeclareSymbol{pre_im_Xi4eabbisc2}{2}{
\draw[kernels2] (0,-0.5) node[not] {} -- (-1,0.5) node[not] {};
\draw[kernels2] (-1,0.5) -- (-1.5,2);
\draw[kernels2] (0,-0.5)  -- (0.7,2);
\draw[kernels2] (-1,0.5) -- (-0.5,2);
\draw (0,-0.5) -- (1.5,2);
\draw (-1,2.2) node[g] {};
\draw (1,2.2) node[g] {};
}

\DeclareSymbol{pre_im_I1Xi4abcc1}{2}{
\draw[kernels2] (0,-1) node[not] {} -- (-1,0) node[not] {};
\draw[kernels2] (0,1.1) node[not] {} -- (-1,0);
\draw[kernels2] (-1,0) -- (-1.5,2.5);
\draw[kernels2] (0,1.1) node[not] {} -- (-0.5,2.5);
\draw[kernels2] (0,1.1) -- (0.5,2.5);
\draw[kernels2] (0,-1) -- (1.5,2.5);
\draw (-1,2.7) node[g] {};
\draw (1,2.7) node[g] {};
}

\DeclareSymbol{pre_im_Xi4eabc1}{2}{
\draw (0,-0.5) node[not] {} -- (-1,0.5) node[not] {};
\draw[kernels2] (-1,0.5) -- (-1.5,2);
\draw[kernels2] (0,-0.5)  -- (-0.5,2);
\draw[kernels2] (-1,0.5) -- (0.8,2);
\draw[kernels2] (0,-0.5) -- (1.5,2);
\draw (-1,2.2) node[g] {};
\draw (1,2.2) node[g] {};
}

\DeclareSymbol{pre_im_Xi4ba1b}{2}{
\draw[kernels2] (0,0) node[not] {}  -- (1.8,1.5);
\draw[kernels2] (0,0) -- (0.8,1.5);
\draw (0,-0.1) -- (-1.8,1.5);
\draw (0,-0.1) -- (-0.8,1.5);
\draw (-1,1.7) node[g] {};
\draw (1,1.7) node[g] {};
}

\DeclareSymbol{pre_im_Xi4ba2}{2}{
\draw (0,-0.1) node[not] {}  -- (1.8,1.5);
\draw[kernels2] (0,0) -- (0.8,1.5);
\draw (0,-0.1) -- (-1.8,1.5);
\draw[kernels2] (0,0) -- (-0.8,1.5);
\draw (-1,1.7) node[g] {};
\draw (1,1.7) node[g] {};
}

\DeclareSymbol{pre_im_Xi4cabc2}{2}{
\draw[kernels2] (0,-0.5) node[not] {} -- (-1,0.5) node[not] {};
\draw[kernels2] (-1,0.5) -- (-1.5,2);
\draw (-1,0.5)  -- (0.7,2);
\draw[kernels2] (-1,0.5) -- (-0.5,2);
\draw[kernels2] (0,-0.5) -- (1.5,2);
\draw (-1,2.2) node[g] {};
\draw (1,2.2) node[g] {};
}

\DeclareSymbol{pre_im_Xi4cabc1}{2}{
\draw[kernels2] (0,-0.5) node[not] {} -- (-1,0.5) node[not] {};
\draw (-1,0.5) -- (-1.5,2);
\draw[kernels2] (-1,0.5)  -- (0.7,2);
\draw[kernels2] (-1,0.5) -- (-0.5,2);
\draw[kernels2] (0,-0.5) -- (1.5,2);
\draw (-1,2.2) node[g] {};
\draw (1,2.2) node[g] {};
}

\DeclareSymbol{pre_im_Xi4eabbisc1}{2}{
\draw[kernels2] (0,-0.5) node[not] {} -- (-1,0.5) node[not] {};
\draw[kernels2] (-1,0.5) -- (-1.5,2);
\draw[kernels2] (0,-0.5)  -- (-0.5,2);
\draw[kernels2] (-1,0.5) -- (0.8,2);
\draw (0,-0.5) -- (1.5,2);
\draw (-1,2.2) node[g] {};
\draw (1,2.2) node[g] {};
}

\DeclareSymbol{pre_im_1}{0}{
\draw[kernels2] (0,-0.5) node[not] {} -- (-0.6,0.5) ;
\draw[kernels2] (0,-0.5) -- (0.6,0.5);
\draw (0,1.1)  -- (-0.55,2);
\draw (0,1.1)  -- (0.55,2);
\draw (0,0.7) node[g] {};
\draw (0,2.2) node[g] {};
}

\DeclareSymbol{disconnect}{0}{
\draw[kernels2] (0,-0.5) node[not] {} -- (-0.6,0.5) ;
\draw[kernels2] (0,-0.5) -- (0.6,0.5);
\draw (-0.55,1.1)  -- (-0.55,2.3);
\draw (0.55,2.3) -- (0.55,1.5) -- (1.2,1.5) -- (1.2,3.5) -- (0.55,3.5) -- (0.55,2.7);
\draw (0,0.7) node[g] {};
\draw (0,2.5) node[g] {};
}

\DeclareSymbol{pre_im_2}{2}{\draw[kernels2] (0,0) node[not] {} -- (-1,1) node[not] {};
\draw[kernels2] (0,0) -- (1,1) node[not] {};
\draw[kernels2] (-1,1) -- (-1.5,2.5);
\draw[kernels2] (-1,1) -- (-0.5,2.5);
\draw[kernels2] (1,1) -- (0.5,2.5);
\draw[kernels2] (1,1) -- (1.5,2.5);
\draw (-1,2.7) node[g] {};
\draw (1,2.7) node[g] {};
}

\DeclareSymbol{CX_rec}{0}{
\draw [black] (-0.3,1) to (-0.3,-0.3);
\draw [black] (0.3,1) to (0.3,-0.3);
\draw [black] (-0.3,1) to (-0.3,2.3);
\draw [black] (0.3,1) to (0.3,2.3);
\draw (0,1) node[rec] {};
}

\DeclareSymbol{CX_cerc}{0}{
\draw [black] (0,1) to (0,-0.3);
\draw (0,1) node[cerc] {};
}

%setting the heading and footer

\pagenumbering{arabic}

\DeclareMathAlphabet{\mathpzc}{OT1}{pzc}{m}{it}

\def\sXi{\symbol{\Xi}}

%%%%%%%%%%%%%%%%%%%%%%%%%%%%%%%%%%%%%%%%%%%%%%%%%%%%%%%%
%
%
%              End of the tikz code for trees
%
%
%%%%%%%%%%%%%%%%%%%%%%%%%%%%%%%%%%%%%%%%%%%%%%%%%%%%%%%%

\let\d\partial
\let\eps\varepsilon
\def\tr{\mathop{\mathrm{tr}}\nolimits}
\def\arginf{\mathop{\mathrm{arginf}}}
\def\eqref#1{(\ref{#1})}
\def\DD{\mathscr{D}}
\def\MM{\mathscr{M}}
\def\TT{\mathscr{T}}

\def\Cut{\mathop{\mathrm{Cut}}\nolimits}

\def\Sym{\mathop{\mathrm{Sym}}\nolimits}
\def\Stab{\mathop{\mathrm{Stab}}\nolimits}

\def\Tr{\mathop{\mathrm{Tr}}\nolimits}

\def\Ito{{\text{\rm\tiny It\^o}}}

\def\acyc{{\text{\rm\tiny acyc}}}
\def\canon{{\text{\rm\tiny canon}}}
\def\both{{\text{\rm\tiny both}}}
\def\geo{{\text{\rm\tiny geo}}}
\def\flt{{\text{\rm\tiny flat}}}
\def\nice{{\text{\rm\tiny nice}}}

\def\inc{{\text{\rm\tiny in}}}
\def\out{{\text{\rm\tiny out}}}

\def\typetop{{\top}}
\def\typebot{{\perp}}

\def\swap{S}

\def\types{\{\text{\rm geo},\text{\rm It\^o}\}}

\makeatletter % Stolen from the internet to make a fat \cdot which isn't as fat as a \bullet
\newcommand*{\bigcdot}{}% Check if undefined
\DeclareRobustCommand*{\bigcdot}{%
  \mathbin{\mathpalette\bigcdot@{}}%
}
\newcommand*{\bigcdot@scalefactor}{.5}
\newcommand*{\bigcdot@widthfactor}{1.15}
\newcommand*{\bigcdot@}[2]{%
  % #1: math style
  % #2: unused
  \sbox0{$#1\vcenter{}$}% math axis
  \sbox2{$#1\cdot\m@th$}%
  \hbox to \bigcdot@widthfactor\wd2{%
    \hfil
    \raise\ht0\hbox{%
      \scalebox{\bigcdot@scalefactor}{%
        \lower\ht0\hbox{$#1\bullet\m@th$}%
      }%
    }%
    \hfil
  }%
}
\makeatother
\def\actint{\mathbin{\hat{\bigcdot}}}

\def\act{\bigcdot}
\def\mult{\bigcdot}
\def\sat{{\sf sat}}

\def\BPHZ{\textnormal{\tiny \textsc{bphz}}}
\def\family{{\mathfrak{b}}}

\DeclareMathOperator{\range}{\mathrm{range}}

\def\Moll{\mathrm{Moll}}

\def\Out{\mathrm{Out}}
\def\Vert{\mathrm{Vert}}

\def\In{\mathrm{In}}
\def\Int{\mathrm{Intern}}
\def\type{\mathrm{Type}}
\def\cod{\mathop{\mathrm{cod}}\nolimits}
\def\dom{\mathop{\mathrm{dom}}\nolimits}
\def\Hom{\mathop{\mathrm{Hom}}\nolimits}

\def\gen#1#2{%
\newdimen\height%
\setbox0=\hbox{$#1#2$}%
\height=1.18\ht0%
\setlength{\tabcolsep}{1.5pt}%
\mathchoice{%
\tcbox{%
\renewcommand{\arraystretch}{0.5}%
\begin{tabular}[b]{r!{\color{boxcolor}\vrule}l}\rule{0pt}{\height}$#1$&$#2$\end{tabular}}}%
{\tcbox{%
\renewcommand{\arraystretch}{0.5}%
\begin{tabular}[b]{r!{\color{boxcolor}\vrule}l}\rule{0pt}{\height}$#1$&$#2$\end{tabular}}}%
{\tcbox{%
\renewcommand{\arraystretch}{0.3}\setlength{\tabcolsep}{1pt}%
\begin{tabular}{r!{\color{boxcolor}\vrule}l}$\scriptstyle#1$&$\scriptstyle#2$\end{tabular}}}%
{\tcbox{%
\renewcommand{\arraystretch}{0.3}%
\begin{tabular}{r!{\color{boxcolor}\vrule}l}$\scriptscriptstyle#1$&$\scriptscriptstyle#2$\end{tabular}}}%
}

\def\red{\<genericb>}

\tcbset
{colframe=boxcolor,colback=symbols!7!pagebackground,coltext=pageforeground,
fonttitle=\bfseries,nobeforeafter,center title,size=fbox,boxsep=1.5pt,
top=0mm,bottom=0mm,boxsep=0mm,tcbox raise base}

\def\two{{\<generic>\kern0.05em\<genericb>}}
\def\twoI{{\<Ito>\kern0.05em\<Itob>}}

\def\mail#1{\burlalt{#1}{mailto:#1}}

\begin{document}

\title{Geometric stochastic heat equations}
\author{Y. Bruned$^1$, F. Gabriel$^2$, M. Hairer$^3$, L. Zambotti$^4$}
\institute{University of Edinburgh\and 
\'Ecole Polytechnique F\'ed\'erale de Lausanne\and
Imperial College London\and
LPSM, Sorbonne Université, Université de Paris, CNRS,
Paris\\
Email:\ \begin{minipage}[t]{\linewidth}
\mail{Yvain.Bruned@ed.ac.uk}, \mail{franckr.gabriel@gmail.com},\\ 
\mail{m.hairer@imperial.ac.uk}, \mail{lorenzo.zambotti@upmc.fr}.
\end{minipage}}

\maketitle

\begin{abstract}
We consider a natural class of $\R^d$-valued one-dimensional stochastic PDEs driven by 
space-time white noise that is formally invariant under the action of the diffeomorphism 
group on $\R^d$. This class contains in particular the KPZ equation, the multiplicative 
stochastic heat equation, the additive stochastic heat equation, and rough Burgers-type 
equations. 
We exhibit a one-parameter family of solution theories with the following properties:
\begin{enumerate}
\item For all SPDEs in our class for which a solution was previously available, 
every solution in our family coincides with the previously constructed solution, whether
that was obtained using It\^o calculus (additive and multiplicative stochastic heat equation),
rough path theory (rough Burgers-type equations), or the Hopf-Cole transform (KPZ equation).
\item Every solution theory is equivariant under the action of the diffeomorphism group, i.e.\ 
identities obtained by formal calculations treating the noise as a smooth function are valid. 
\item Every solution theory satisfies an analogue of It\^o's isometry.
\item The counterterms leading to our solution theories vanish at points where the 
equation agrees to leading order with the additive stochastic heat equation.
\end{enumerate}
In particular, points 2 and 3 show that, surprisingly, our solution theories 
enjoy properties analogous to those holding for both the Stratonovich and It\^o interpretations 
of SDEs \textit{simultaneously}. For the natural noisy perturbation of the harmonic map 
flow with values in an arbitrary Riemannian manifold, we show that all these solution theories 
coincide. In particular, this allows us to conjecturally identify the process associated to the
Markov extension of the 
Dirichlet form corresponding to the $L^2$-gradient flow for the Brownian loop measure.\\[.4em]
\noindent {\scriptsize \textit{Keywords:} Brownian loops, renormalisation, stochastic PDE}\\
\noindent {\scriptsize\textit{MSC classification:} 60H15} 
\end{abstract}

\setcounter{tocdepth}{2}
\tableofcontents

\section{Introduction}

One of the main goals of the present article is to build ``the'' most natural stochastic process
taking values in the space of loops in a compact Riemannian manifold $\CM$\label{manifold page ref}, i.e.\ a random map
$u \colon S^1 \times \R_+ \to \CM$. For a candidate to qualify
for such an admittedly subjective designation, one would like it to be as ``simple'' as possible, all the while 
having as many nice properties as possible. In this article, we interpret this as follows.
\begin{enumerate}
\item The process $u$ should be specified by only using the Riemannian structure on $\CM$ and no
additional data. 
\item The process $u$ should have a purely local specification in the sense that it satisfies the space-time
Markov property.
\item It should be the unique process with these properties among a `large' class of processes.
\end{enumerate}
A natural way of constructing a candidate would be as follows. 
Let $\mu$ be the measure on $L_\CM = \CC(S^1,\CM)$ given by the law of a Brownian loop, i.e.\ the 
Markov process with generator
the Laplace-Beltrami operator on $\CM$, conditioned to return to its starting point at some fixed
time (say $1$). We fix its law at time $0$ to be the probability measure on $\CM$ with 
density proportional to $p_1(x,x)$, which guarantees that $\mu$ is invariant under 
rotations of the circle $S^1$.
One can then consider the Dirichlet form $\CE$ given 
for suitably smooth functions $\Phi\colon L_\CM \to \R$ by
\begin{equ}
\CE(\Phi,\Phi) = \int_{L_\CM} \int_{S^1} \scal{\nabla_x \Phi(u),\nabla_x \Phi(u)}_{u(x)}\,dx\,\mu(du)\;,
\end{equ}
where $\nabla_x \Phi(u) \in T_{u(x)}\CM$ denotes the functional gradient of $\Phi$
and $\scal{\cdot,\cdot}_u$ denotes the scalar product in $T_u\CM$.
Unfortunately, while it is possible to show that $\CE$ is regular and closable 
\cite{LoopsRoeckner}, so that we can construct a (possibly non-conservative) 
Markov process from it, the uniqueness of Markov extensions for $\CE$ is a hard open problem.
Accordingly, point~3 above fails to be verifiable with this approach.

%\martinText{
%Not sure we want to add this, but at the super-formal level the generator of our
%process is given by 
%\begin{equ}
%(\CL \Psi)(u) = \int_{S^1} \bigl(\div_{x}\nabla_{x} \Psi(u) - \beta \scal{\DDelta u, \nabla_{x} \Psi(u)}_{u(x)}\bigr)\,dx
%\end{equ}
%where $\DDelta u$ is the right hand side of the harmonic map flow and
%$\div_x$ is the ``covariant divergence'' associated to the ``covariant gradient $\nabla_x$''. 
%One can define these terms reasonably easily for $\Psi$'s that are of the form
%$F(g_1(u),\ldots, g_n(u))$ where $F$ is a smooth function on $\R^n$ and 
%$g_j(u) = \int_{S^1} G_j(u(x))\,dx$, where $G_j \colon \CM \to \R$ is smooth. 
%We could then add a conjecture stating that the generator of our process is the unique
%Markov extension of $\CL$...
%}

Instead, we will directly make sense of the corresponding stochastic partial differential equation.
At the formal (highly non-rigorous!) level, $\CE$ is expected to represent the Langevin equation
for the measure $\mu$ which, again at a purely heuristic level (but see \cite{Maeda,Driver} for
rigorous interpretations of this identity), is given by
\begin{equ}[e:heuristic]
\mu(du) \propto e^{-H(u)}du\;,\quad H(u) = {1\over 2} \int_{S^1} g_{u(x)}(\d_x u,\d_x u)\,dx\;,
\end{equ}
where ``$du$'' denotes the non-existent ``Lebesgue measure'' on the space of all loops with values in $\CM$.

\begin{remark}
There are various results and heuristic calculations suggesting that the ``correct'' 
formal expression for the Brownian loop
measure should be given by \eqref{e:heuristic}, but with $H$ corrected by a suitable multiple of the 
integral of the scalar curvature along the trajectory $u$. We will revisit this point in more detail in
Remark~\ref{rem:twoparam} and Section~\ref{sec:conjecture}. To some extent, our results explain the
appearance of this term, as well as why there is no consensus in the literature as to the ``correct'' 
constant multiplying it.
\end{remark}

At this point we note that the $L^2$-gradient flow for $H$ (in the intrinsic $L^2$ metric
determined by $g$) is given by the classical curve-shortening
flow (a particular instance of Eells-Sampson's harmonic map flow \cite{Eells}) which, in local coordinates,
can be written as
\begin{equ}
\d_t u^\alpha = \d_x^2 u^\alpha + \Gamma^\alpha_{\beta\gamma}(u)\,\d_x u^\beta\d_x u^\gamma\;,
\end{equ}
where $\Gamma$ \label{gamma page ref} denotes the Christoffel symbols for the Levi-Civita connection and we use Einstein's convention
of summation over repeated indices.
Given that we obtained it as an $L^2$-gradient flow, the natural way of adding noise to 
this equation is to add white noise with a covariance structure that reflects the Riemannian structure 
of $\CM$. In other words, we would like to consider the $\CM$-valued SPDE formally given by
\begin{equ}[e:mainSPDE]
\d_t u^\alpha = \d_x^2 u^\alpha + \Gamma^\alpha_{\beta\gamma}(u)\,\d_x u^\beta\d_x u^\gamma
+ \sigma_i^\alpha(u)\,\xi_i\;,
\end{equ}
where the $\xi_i$ are i.i.d.\ space-time white noises and the $\sigma_i$ \label{sigma page ref} are a collection
of vector fields on $\CM$ that are related to the (inverse) metric tensor by the identity\label{g page ref}
\begin{equ}[e:defMetric]
\sigma_i^\alpha(u)\,\sigma_i^\beta(u) = g^{\alpha\beta}(u)\;.
\end{equ}
(Actually, the invariant measure for \eqref{e:mainSPDE} would, at least formally, be given by
\eqref{e:heuristic} without the factor ${1\over 2}$, which then corresponds to the Brownian loop
measure running at half of its natural speed. We stick to \eqref{e:mainSPDE} as written in order to avoid
having this extra factor $2$ appearing throughout the article.)
In \eqref{e:defMetric} and throughout the paper, we adopt the following convention: every time we have a product of two factors containing an index $i$, a summation over $i$ is implied. For instance:
\[
\sigma_i^\alpha\,\sigma_i^\beta:=\sum_i \sigma_i^\alpha\,\sigma_i^\beta, \quad
\sigma_i^\alpha\,dW_i := \sum_i \sigma_i^\alpha\,dW_i, \quad
\sigma_i^\beta \,\d_\beta\sigma_i^\alpha := \sum_i \sigma_i^\beta \,\d_\beta\sigma_i^\alpha. 
\]

In this article, we will mainly study systems of equations of the type \eqref{e:mainSPDE} without assuming
any relation between the functions $\Gamma^\alpha_{\beta\gamma}$ and $\sigma_i^\alpha$. However, we will
see that in the particular geometric case mentioned above additional cancellations take place (see Lemma \ref{lem:propSigmai} below)
which then allow us to assign in a canonical way one specific Markov process on loop space to every
equation of the type \eqref{e:mainSPDE} in a way that preserves the natural symmetries of this class
of equations and such that the counterterm added to a smoothened version of \eqref{e:mainSPDE}
is `minimal' in a precise sense (see the list of properties on Page~\pageref{properties page ref}). 
It turns out that with this choice of solution theory, we expect ``the'' natural process
we are after (the one that furthermore admits the Brownian loop measure as its invariant measure)
to be given not by \eqref{e:mainSPDE}, but by the equation with an additional term 
${1\over 8}\nabla R(u)$ added to the right hand side, where $R$ denotes the scalar curvature of $\CM$. 
More details are given in Conjecture~\ref{conj:invariant} below, which in fact covers a much larger class 
of equations. The only reason why part of our results is not stated as a theorem is that we are missing
an analogue of the result \cite{Ajay} in the context of discrete regularity structures \cite{ErhardHairerRegularity}, although
such a result is widely expected to hold (see for example \cite{Konstantin,AndersonKhalil} for special cases).

Note that \cite{Funaki} first studied a version of equation \eqref{e:mainSPDE} where the i.i.d.\ noises $\xi_i$ are white in time but coloured in space, and the stochastic integral is in the Stratonovich sense. The version of \eqref{e:mainSPDE} with space-time
white noise had to wait the invention of regularity structures before it could be properly understood and solved.
We also recall that a natural process on loop space leaving invariant the Brownian loop measure on
a Riemannian manifold was constructed by a number of authors in the nineties,
see for example \cite{Driver1,DriverRockner,Driver2,Stroock,Norris}. The process constructed there 
is different from ours since, in the particular case of $\CM = \R^d$, it would
correspond to the Ornstein-Uhlenbeck process from Malliavin calculus, rather than
the stochastic heat equation. In particular, it is not ``local'' in the sense that 
its driving noise has non-trivial spatial correlations. In terms of its interpretation as the Langevin
equation associated to \eqref{e:heuristic}, our construction corresponds to taking gradients
with respect to the intrinsic $L^2$ metric determined by the manifold, while previous results
corresponded to taking gradients in the  metric given by the $H^1$ norm.

\begin{figure}[h]
\begin{center}
\includegraphics[width=4.5cm]{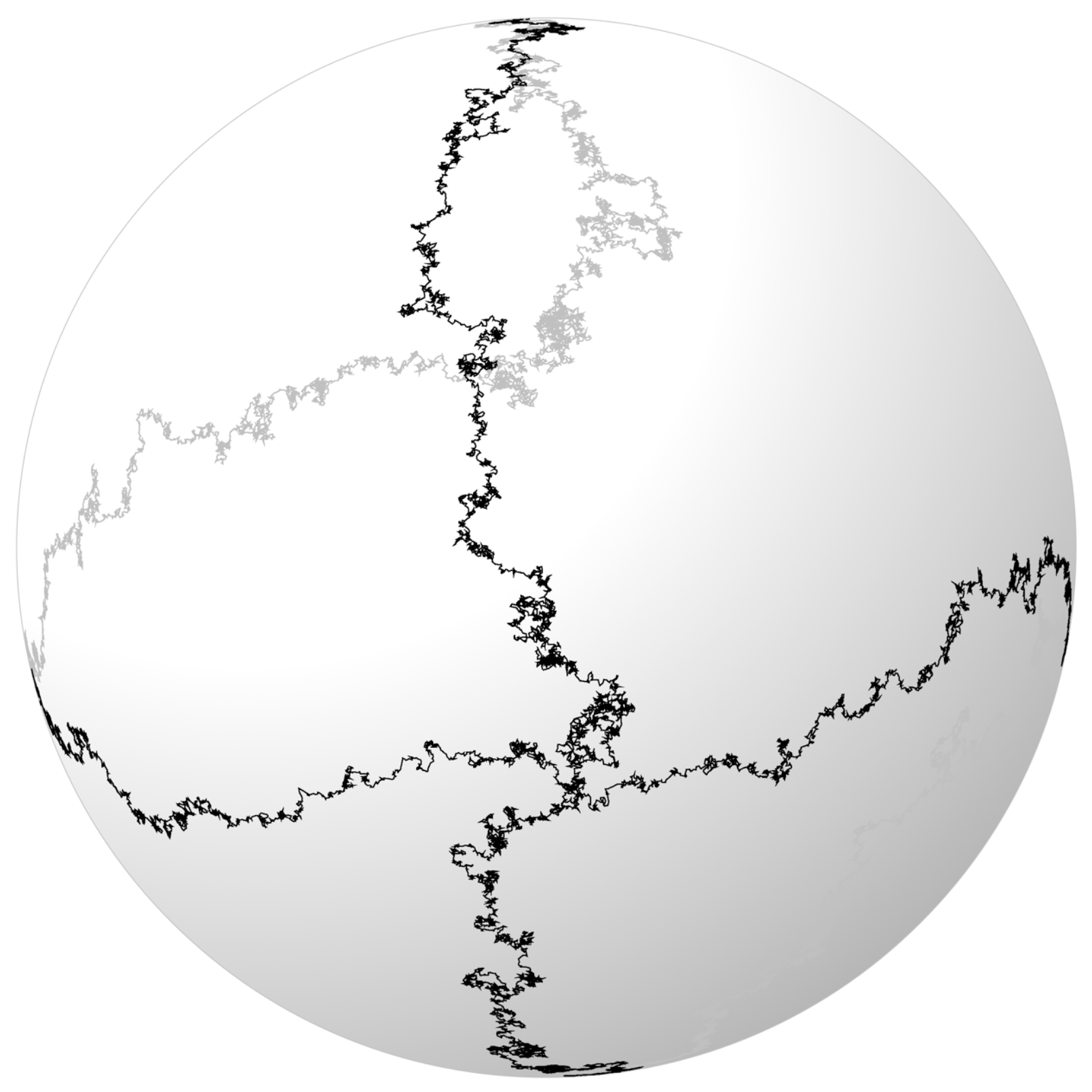}
\hspace{1cm}
\includegraphics[width=4.5cm]{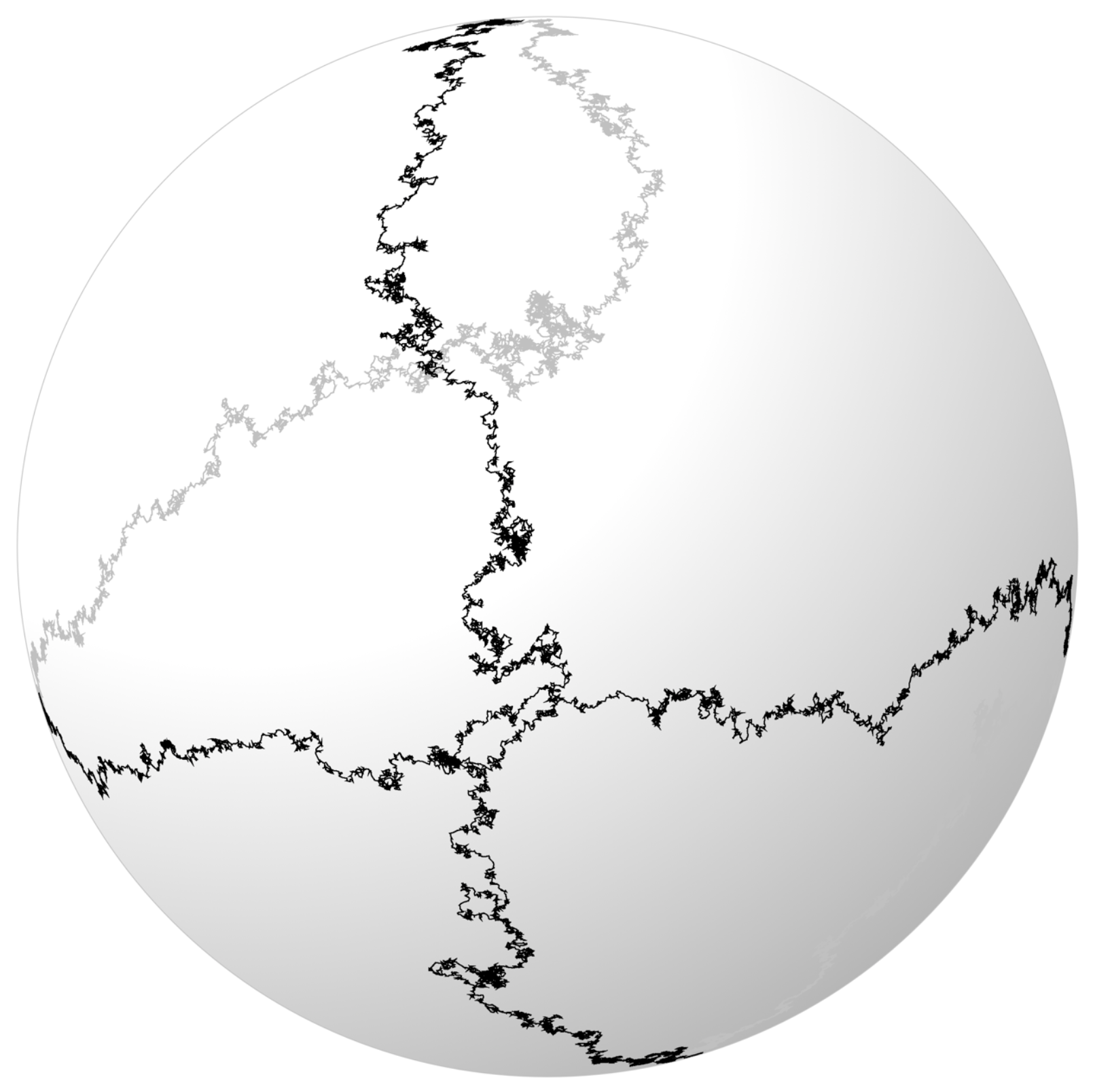}
\end{center}
\caption{Solution to \eqref{e:mainSPDE} on the sphere at two successive times. In this case, all processes in
the canonical family agree since the scalar curvature is constant. We see that the global structure hasn't changed
much, but the local structure is very different between the two times.}
\end{figure}

\subsection{Informal overview of main results}

The recently developed theory of regularity structures \cite{reg} provides a tool to give meaning
to equations of the type \eqref{e:mainSPDE} (see the series of works \cite{BHZ,Ajay,BCCH} for a `black box theorem'
that applies in this case), but instead of giving a single notion of solution it gives a canonical
\textit{family} of notions of solution parametrised by a suitable ``renormalisation group''. 
In the problem under consideration, this group is quite large: it can be identified with $\CS \approx (\R^{54},+)$, even
after taking into account simplifications arising from Gaussianity, the fact that the noises are i.i.d., 
and the $x \leftrightarrow -x$ 
symmetry. The general theory then in principle yields a $54$-dimensional family of candidate solution theories 
parametrised by $\CS$. Furthermore, this parametrisation is not canonical in general.

This should be contrasted with the case of SDEs with smooth coefficients where one has
a one-parameter family of solution theories (interpolating between solutions in the It\^o sense and solutions in
the Stratonovich sense) and there are two distinguished points on that line.
This can be formulated as follows. Write $U_c(\sigma,h)$ for the map sending initial conditions to the law of the 
(It\^o) solution to 
\begin{equ}[e:SDE]
dx^\alpha = h^\alpha(x)\,dt  + \sigma_i^\alpha(x)\,dW_i(t) + c \sigma_i^\beta(x) \,\d_\beta\sigma_i^\alpha(x)\,dt\;,
\end{equ}
with implicit summation over repeated indices and $x$ taking values in $\R^d$. 
The following is then well-known.
\begin{enumerate}
\item The solution theory $U^\Ito \eqdef U_0$ \label{U ito page ref} is the only solution theory (among this one-parameter family)
such that $U_c(\sigma,h) = U_c(\bar \sigma,h)$, whenever $\sigma_i^\alpha \sigma_i^\beta = \bar\sigma_i^\alpha \bar\sigma_i^\beta$.
We will refer to this property by saying that ``$U^\Ito$ satisfies It\^o's isometry''.
\item The solution theory $U^\geo \eqdef U_{1/2}$ \label{U geo page ref} is the only one such that its arguments transform
like vector fields under changes of coordinates.
We will refer to this property by saying that ``$U^\geo$ is equivariant under changes of coordinates''.
\end{enumerate}
Of course the parametrisation of the family $c \mapsto U_c$ of solution theories is completely arbitrary: we 
could have interpreted
\eqref{e:SDE} in the Stratonovich sense, in which case one would have $U^\Ito \eqdef U_{-1/2}$
and $U^\geo \eqdef U_{0}$.

Since the general theory of regularity structures is completely agnostic to the specific structure of our problem, this 
begs the question whether there are solution theories for \eqref{e:mainSPDE} that are equivariant under
 changes of coordinates or that satisfy It\^o's isometry in the above sense.
The goal of this article is to show the following  
\begin{enumerate}\label{properties page ref}
\item Among all natural solution theories for \eqref{e:mainSPDE} there is a $15$-dimensional family,
parametrised by a linear subspace of $\CS$ called $\CS_\geo$, which are all equivariant under changes of coordinates.
\item There is a $19$-dimensional family of solution theories, parametrised by a linear subspace of $\CS$ called $\CS_\Ito$,
which all satisfy It\^o's isometry.
\item There is a two-dimensional family of solution theories that are equivariant under changes of coordinates
and  satisfy It\^o's isometry \textit{simultaneously}. (See Remark~\ref{rem:twoparam} and Proposition~\ref{prop:ItoStratgeo}.)
\item There is a one-dimensional family $(U^\family)_{\family\in\R}$ of solution theories (let us call it the `canonical family', see \eqref{e:mainLimit}) 
which are furthermore obtained as limits to 
equations of the type
\begin{equ}
\d_t u^\alpha = \d_x^2 u^\alpha + \Gamma^\alpha_{\beta\gamma}(u)\,\d_x u^\beta\d_x u^\gamma
+ \sigma_i^\alpha(u)\,\xi_i^{(\eps)} + H_\eps^\alpha(u)\;,
\end{equ}
where the counterterm $H_\eps$ is guaranteed to satisfy $H_\eps(u) = 0$ whenever $u \in \R^d$ is such that 
$\Gamma(u) = 0$ and $D\sigma_i (u) = 0$, and $\xi_i^{(\eps)}:= \rho_\eps * \xi_i$ for an appropriate family of mollifiers, see below.
\item In the particular cases when $\Gamma$ determines the Levi-Civita connection or when the curvature 
tensor determined by $\Gamma$ vanishes, all solution theories in the canonical family do coincide.
\item In all special cases where a canonical notion of solution was previously known to exist, all  
solution theories in the canonical family do coincide with the previously constructed solution.
This includes in particular the KPZ equation (in which case we recover the Hopf-Cole solution)
and the case $\Gamma = 0$ in which case we recover the usual It\^o solutions.
\end{enumerate}

We insist again on the fact that in the classical case of 
finite-dimensional SDEs there does \textit{not} exist any natural notion of solution 
which is equivariant under changes of coordinates
and  satisfies It\^o's isometry simultaneously. The closest we can come to this in the case of SDEs 
would be to define $U(\sigma,h)$ as the solution to
\begin{equs}
dx^\alpha &= h^\alpha(x)\,dt  + \sigma_i^\alpha(x)\,dW_i(t) -{1\over 2}\Gamma^\alpha_{\beta\gamma}(x) \, g^{\beta\gamma}(x)\,dt\;, \\
dx &= h(x)\,dt  + \sigma_i(x)\circ dW_i(t) - {1\over 2} \nabla_{\sigma_i}\sigma_i\,dt\;,
\end{equs} 
where $\Gamma$ are the Christoffel symbols for the (inverse) metric $g^{\beta\gamma} = \sigma_i^\alpha \sigma_i^\beta$.
Indeed, it is a simple exercise to show that these two equations yield the same process. Furthermore, it 
follows immediately from the first expression that the law of this process only depends on $g$ and $h$
(rather than $\sigma$ and $h$), while it follows from the second expression that it is independent of the coordinate
system used to write the equation. However, there does not appear to exist any notion of stochastic integration $\star$
which is defined on a natural class of integrands and such that the process defined above solves 
$dx = h(x)\,dt  + \sigma_i(x)\star dW_i(t)$.

\subsection{Formulation of main result}

We now introduce some notation allowing us to formulate our main results.
We fix some ambient space $\R^d$ (even in the case where $\CM$ is an arbitrary compact manifold
we view it as a submanifold of $\R^d$ by Nash embedding) and we consider arbitrary smooth functions
$\Gamma^\alpha_{\beta\gamma},\sigma_i^\alpha:\R^d\to\R$ as above, the only constraint being
that $\Gamma^\alpha_{\beta\gamma}=\Gamma^\alpha_{\gamma\beta}$. Here and subsequently, 
Greek indices run over $\{1,\ldots,d\}$
while Roman indices run over $\{1,\ldots,m\}$, with $m$ being the number of driving noises.
We also fix a collection of smooth functions $h^\alpha$ and $K^\alpha_\beta$ and, for some arbitrary but fixed
$a \in (0,{1\over 2})$, we denote by $\CC^a_\star$ \label{space solution page ref} a suitable space of parabolic $a$-H\"older
continuous functions $u \colon \R_+\times S^1 \to \R^d$ with possible blow-up at finite time.
(See Section~\ref{sec:BPHZ} below for a precise definition.)
For $\eps > 0$ and $\rho$ a space-time mollifier
(compactly supported, integrating to $1$, and such that $\rho(t,-x)=\rho(t,x)$), we then denote by
$U_\eps^\geo(\Gamma,K,\sigma,h)$ the map from $\CC^a$ to the space of probability measures on $\CC^a_\star$ assigning to $u_0 \in \CC^a$ the law of the maximal solution to 
\begin{equ}[e:genClass]
\d_t u^\alpha = \d_x^2 u^\alpha + \Gamma^\alpha_{\beta\gamma}(u)\,\d_x u^\beta\d_x u^\gamma
+ K^\alpha_\beta(u)\,\d_x u^\beta
+h^\alpha(u) + \sigma_i^\alpha(u)\, \xi_i^{(\eps)}\;,
\end{equ}
where $u:\R_+\times S^1\to\R^d$, $\xi_i^{(\eps)} = \rho_\eps * \xi_i$ and $\rho_\eps(t,x) = \eps^{-3} \rho(t/\eps^2,x/\eps)$.
We henceforth denote by $\CB_\star^a$ \label{law space page ref} the space of continuous maps from $\CC^a(S^1,\R^d)$ into
the space of probability measures on $\CC_\star^a$.

In order to formulate our results, we first note that the class of equations of the 
type \eqref{e:genClass} is invariant under composition by diffeomorphisms in the following way.
We interpret $\Gamma$ as the Christoffel symbols for an arbitrary connection on $\R^d$ and,
for each $i$, the $(\sigma_i^\alpha)_\alpha$ as the components of a vector field on $\R^d$. 
Given a diffeomorphism $\phi$ of $\R^d$, we then act on connections $\Gamma$, vector fields $\sigma$ 
and $(1,1)$-tensors $K$ in the usual way by imposing that
\minilab{e:actions}
\begin{equs} \label{eq:actioncristo}
(\phi \act \Gamma)^\alpha_{\eta\zeta}(\phi(u)) \, \d_\beta \phi^\eta(u) \, \d_\gamma \phi^\zeta(u) &= 
\d_\mu \phi^\alpha(u) \, \Gamma^{\mu}_{\beta\gamma}(u) - \d^2_{\beta\gamma}\phi^\alpha(u)\;, \\
\label{eq:actionvect}
(\phi\act \sigma)^\alpha(\phi(u)) &= \d_\beta \phi^\alpha(u) \, \sigma^\beta(u)\;,\\
 (\phi\act K)^\alpha_\eta(\phi(u)) \, \d_\beta \phi^\eta(u)  &= \d_\mu \phi^\alpha(u) \, K^\mu_\beta(u)\;.\label{eq:actiontens}
\end{equs}
(One can verify that both of these do indeed describe left actions of the group of diffeomorphisms.) Similarly, given a map $U\colon u_0 \mapsto u$, we write $\phi\act U$
for the map that maps $\phi \circ u_0$ to $\phi \circ u$. With these notations, a simple
calculation shows that one has the following equivariance property of $U_\eps^\geo$ 
under the action of the diffeomorphism group:
\begin{equ}
\phi \act U_\eps^\geo(\Gamma,K,\sigma,h)
= U_\eps^\geo(\phi \act \Gamma,\phi \act K,\phi \act \sigma,\phi \act h)\;.
\end{equ}
Recall that the covariant derivative $\nabla_X Y$ \label{covariant derivative page ref} of a vector field $Y$ in the direction of 
another vector field $X$ is the vector field given by 
\begin{equ}[e:covDiff]
(\nabla_X Y)^\alpha (u) = X^\beta(u)\,\d_\beta Y^\alpha(u) + \Gamma^\alpha_{\beta\gamma}(u) \,X^\beta(u) \,Y^\gamma(u)\;.
\end{equ}
It is straightforward to verify that this definition satisfies
\begin{equ}
\phi \act (\nabla_X Y) =  (\phi \act \nabla)_{\phi \act X} (\phi \act Y)\;,
\end{equ}
where $\phi \act \nabla$ denotes the covariant differentiation built as in \eqref{e:covDiff}, but
with $\Gamma$ replaced by $\phi \act \Gamma$.

This allows us to build a number of different vector fields from 
$\Gamma$ and $\sigma$, like for example $\sum_i \nabla_{\sigma_i} \sigma_i$,
which we simply write as $\Nabla_{\<generic>}\<generic>$ (each circle denotes
an instance of $\sigma_i$, with different values of the index corresponding to different
colours and all indices being summed over). With this notation at hand, consider the
following collection of $14$ triple covariant derivatives:\label{VV page ref}
\begin{equs}
\label{eq:covderiv}
\VV =
\big\{ 
\Nabla_{\<generic>}\Nabla_{\<genericb>}&\Nabla_{\<genericb>}\<generic>,
\Nabla_{\<genericb>}\Nabla_{\<genericb>}\Nabla_{\<generic>}\<generic>, 
\Nabla_{\<genericb>}\Nabla_{\Nabla_{\<genericb>}\<generic>}\<generic>, 
\Nabla_{\Nabla_{\<genericb>}\<generic>} \Nabla_{\<genericb>}\<generic>,
 \Nabla_{\Nabla_{\<generic>}\<genericb>} \Nabla_{\<genericb>}\<generic>,
\Nabla_{\Nabla_{\<genericb>}\<genericb>}\Nabla_{\<generic>}\<generic>,
\Nabla_{\Nabla_{\<genericb>}\Nabla_{\<genericb>}\<generic>}\<generic>,\\
&\Nabla_{\Nabla_{\<genericb>}\Nabla_{\<generic>}\<generic>}\<genericb>,
\Nabla_{\Nabla_{\Nabla_{\<genericb>}\<generic>}\<generic>}\<genericb>,
\Nabla_{\Nabla_{\Nabla_{\<genericb>}\<generic>}\<genericb>}\<generic>,
\Nabla_{\<generic>}\Nabla_{\Nabla_{\<genericb>}\<generic>}\<genericb>,
\Nabla_{\Nabla_{\<generic>}\Nabla_{\<genericb>}\<generic>}\<genericb>,
\Nabla_{\Nabla_{\Nabla_{\<generic>}\<generic>}\<genericb>}\<genericb>,
\Nabla_{\<genericb>}\Nabla_{\Nabla_{\<generic>}\<generic>}\<genericb>\ 
\big\}.
\end{equs}
(The only element missing from the list is $\Nabla_{\<genericb>}\Nabla_{\<generic>}\Nabla_{\<genericb>}\<generic>$,
the reason being that it can be written as a linear combination of the $14$ other terms, see \eqref{relationV}
below.)
It will be convenient to view $\VV$ \label{CV page ref} as an `abstract' set of symbols, to write 
$\CV = \scal{\VV}$\label{V page ref} for the vector space
that it generates\footnote{We will stick as much as possible to the convention that the vector space generated by a set
denoted by a Gothic symbol is denoted by the corresponding calligraphic symbol.}, and to write $\Upsilon_{\Gamma,\sigma}\colon \CV \to \CC^\infty(\R^d,\R^d)$ for the map that 
turns each symbol into the corresponding vector field on $\R^d$,
so that for example $\Upsilon_{\Gamma,\sigma}
\Nabla_{\<genericb>}\Nabla_{\Nabla_{\<genericb>}\<generic>}\<generic>
= \sum_{i,j}\Nabla_{\sigma_i}\Nabla_{\Nabla_{\sigma_i}\sigma_j}\sigma_j$.

A special case of \eqref{e:genClass} is given by the case $\Gamma = 0$ and $\sigma$ constant, i.e.\ $D \sigma = 0$, in which case it reduces to the heat equation with additive noise which obviously requires no renormalisation. 
It is then natural to expect that if \eqref{e:genClass} is sufficiently ``close to'' the additive stochastic heat 
equation near some fixed point $u_0 \in \R^d$, then the counterterms required to give meaning to its solutions
vanish at $u_0$. This motivates the introduction of the space $\CV^\nice \subset \CV$ \label{V nice page ref} consisting of those
elements $\tau \in \CV$ such that, for all choices of $d$, $\Gamma$ and $\sigma$, if $u_0$ is a point such that 
$\Gamma(u_0) = 0$ and $D \sigma(u_0) = 0$, then $\bigl(\Upsilon_{\Gamma,\sigma} \tau\bigr)(u_0) = 0$.
We will see in Remark~\ref{rem:codimgeonice} below that $\CV^\nice$ is a $12$-dimensional subspace of $\CV$.

We write $H_{\Gamma,\sigma}$ \label{vector field H page ref}  for the vector field
\begin{equs}
H_{\Gamma,\sigma}^\alpha(u) &= R^\alpha_{\eta\beta\gamma}(u) \, \sigma_i^\beta(u) \left(\nabla_{\sigma_j}\sigma_i-2\nabla_{\sigma_i}\sigma_j\right)^\gamma(u) \, \sigma_j^\eta(u)\\
&= - R^\alpha_{\eta\beta\gamma}(u) \, g^{\beta \zeta}(u) \, (\nabla_\zeta g)^{\gamma\eta}(u)\;, \label{e:fieldCovar}
\end{equs}
see Lemma~\ref{lem:exprtaucstar},
where summation over $i,j$ is implicit, $g$ is given by \eqref{e:defMetric}, and the Riemannian 
curvature tensor $R$ \label{Riemannian curvature tensor page ref} is defined from $\Gamma$ as usual through the identity
\begin{equ}
R^\alpha_{\eta\beta\gamma} X^\beta Y^\gamma Z^\eta = 
\bigl(\nabla_X\nabla_Y Z - \nabla_Y\nabla_X Z - \nabla_{[X,Y]} Z\bigr)^\alpha
\end{equ}
for any three vector fields $X$, $Y$, $Z$ with $\nabla$ defined by \eqref{e:covDiff}. 
The vector field $H$ can be written as a linear combination of elements of $\VV$ and it turns out that this
linear combination belongs to $\CV^\nice$, so that 
it determines a one-dimensional linear subspace $\CV_\star\subset \CV^\nice$ \label{V star page ref} spanned by the element 
$\tau_\star \in \CV^\nice$\label{def taustar} such that
$H_{\Gamma,\sigma} = \Upsilon_{\Gamma,\sigma}\tau_\star$. We also choose an arbitrary complement $\CV_\star^\perp$ \label{V start perp page ref}
so that $\CV^\nice = \CV_\star \oplus \CV_\star^\perp$.
Incidentally, and this is the reason why this subspace is important, we will show in 
Proposition~\ref{prop:ItoStratgeo} below that $\CV_\star$ equals 
all of those elements $\tau \in\CV^\nice$ 
such that $\Upsilon_{\Gamma,\sigma}\tau$ is a vector field that furthermore only depends on the $\sigma_i$ through the 
inverse metric $g$ defined by \eqref{e:defMetric}.

Having introduced all of these auxiliary vector fields,
our main result can be stated as follows (parts of this result were already announced in \cite{proc}).
Throughout this article, we consider $a \in (0,{1\over 2})$ to be a fixed H\"older exponent.
We also write $\Moll$ \label{Moll page ref} for the set of all compactly supported functions $\rho \colon \R^2 \to \R$ integrating
to $1$, such that $\rho(t,-x) = \rho(t,x)$, and such that $\rho(t,x) = 0$ for $t \le 0$ (i.e.\ $\rho$ is
non-anticipative).
Furthermore, we call ``smooth data'' a choice of dimensions $d$ and $m$,
as well as a choice of smooth functions $\Gamma$, $K$, $h$ and $\sigma$ as above.

\begin{theorem}\label{theo:main}
For every mollifier $\rho\in\Moll$ there exists a \textit{unique} choice of constants $\bar c \in \R_+$ and
$c \in \CV_\star^\perp$, as well as a choice of $\hat c \in \R$, such that the following statements are true.
\begin{enumerate}
\item For every $\family \in \R$ and every smooth data, one has
\begin{equs}
\lim_{\eps \to 0} &U_\eps^\geo \Big(\Gamma,K,\sigma,h - {\bar c \over \eps} \Nabla_{\sigma_i}\sigma_i +  H_{\Gamma,\sigma} \Big(\family + \hat c + {\log \eps \over 4\sqrt 3 \pi}\Big)
+ \Upsilon_{\Gamma,\sigma} c \Big) \\
&= U^\family(\Gamma,K,\sigma,h)\;,\label{e:mainLimit}
\end{equs}
in $\CB_\star^a$, for some limit $U^\family$ independent of $\rho$. \label{family solutions page ref}
\item For every $\family \in \R$ and every smooth data,
the process defined by $U^\family(\Gamma,K,\sigma,h)$ is a Feller Markov process on 
$\CC^a(S^1,\R^d)$ with possibly finite explosion time. 
\item For every $\family \in \R$ and every diffeomorphism $\phi$ of $\R^d$, one has the change of variables formula
\begin{equ}[e:equivar]
\phi \act U^\family(\Gamma,K,\sigma,h) = U^\family(\phi \act\Gamma,\phi\act K,\phi \act\sigma,\phi \act h)\;,
\end{equ}
valid for all smooth data.
\item For every $\family \in \R$ and every smooth data, one has
\begin{equ}%[e:Ito]
U^\family(\Gamma,K,\sigma,h) = U^\family(\Gamma,K,\bar \sigma,h)\;,
\end{equ}
for every $\bar \sigma$ such that $\bar \sigma_i^\alpha \bar \sigma_i^\beta = \sigma_i^\alpha \sigma_i^\beta$.
(Implicit summation over $i$.)
\end{enumerate}
Furthermore, for every choice of $\family$, these solutions satisfy the following.
\begin{enumerate}\setcounter{enumi}{4}
\item In the special case $\Gamma = 0$ and $K=0$, $U^\family(0,0,\sigma,h)$ coincides with the mild solution
to the system of stochastic PDEs
\begin{equ}[e:classicalIto]
du^\alpha = \d_x^2 u^\alpha\,dt + h^\alpha(u)\,dt + \sigma_i^\alpha(u)\,dW_i\;,
\end{equ}
where the $W_i$ are independent $L^2(S^1)$-cylindrical Wiener processes.
\item In the special case $\Gamma = 0$, $d=m$, and $\sigma$ a constant multiple of the identity matrix, 
$U^\family(0,K,\sigma,h)$
coincides with the maximal solution constructed in \cite{CPA:CPA20383}.
\end{enumerate}
\end{theorem}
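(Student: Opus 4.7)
The plan is to apply the general renormalisation framework of \cite{BHZ,Ajay,BCCH} to the family \eqref{e:genClass}, which produces, for every choice of renormalisation constants in the space $\CS \cong \R^{54}$, a well-defined limit of $U_\eps^\geo$ in $\CB_\star^a$. The main task is then to show that, once we impose equivariance, It\^o's isometry, and the ``nice'' condition (counterterms vanish wherever $\Gamma = 0$ and $D\sigma = 0$), the admissible counterterms collapse to the stated one-parameter family $(U^\family)_{\family \in \R}$, with the divergent prefactors $\bar c/\eps$ and $(\log \eps)/(4\sqrt{3}\pi)$ uniquely determined by the convergence requirement, and with $\hat c \in \R$ pinned down by the stipulation that the finite counterterm lie in $\CV_\star^\perp$ rather than in $\CV_\star$ (so that the $\family$-freedom is absorbed entirely into $U^\family$).

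First, I would analyse the action of the diffeomorphism group on $\CS$ by writing each counterterm as a vector field via $\Upsilon_{\Gamma,\sigma}$. Equivariance forces this vector field to transform under \eqref{e:actions} as a genuine vector field built from $\Gamma$ and $\sigma$, restricting admissible counterterms to the affine span of $\CV$ together with the mass-type term $\Nabla_{\sigma_i}\sigma_i$ and yielding a 15-dimensional family. Imposing It\^o's isometry then requires the counterterm to depend on $\sigma$ only through the metric $g^{\alpha\beta} = \sigma_i^\alpha \sigma_i^\beta$, which translates into linear constraints on the tree coefficients and produces a 19-dimensional subspace. Intersecting the two constraints yields the 2-dimensional family announced in Remark~\ref{rem:twoparam}, and the ``nice'' condition reduces this further to a one-dimensional family parametrised by $\CV_\star = \scal{\tau_\star}$. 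Properties (5) and (6) then follow by a direct specialisation: under $\Gamma = 0$ (with $K = 0$, or constant $\sigma$ respectively) the ``nice'' condition kills all covariant-derivative counterterms, leaving only the mass renormalisation $\bar c / \eps\,\Nabla_{\sigma_i}\sigma_i$, which matches the standard It\^o counterterm needed to recover the mild solution of \eqref{e:classicalIto}, respectively the solution of \cite{CPA:CPA20383}. The Feller--Markov property (2) follows from local Markovianity of the mollified dynamics combined with the abstract framework of \cite{reg,BCCH} applied to the convergent regularised solutions.

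The main obstacle is the algebraic and combinatorial bookkeeping required to carry through the dimension counts $54 \to 15 \to 19 \to 2 \to 1$: for every tree contributing to the BPHZ renormalisation of \eqref{e:genClass} one must identify how it transforms under the diffeomorphism action \eqref{e:actions}, how It\^o's isometry reduces the associated coefficient, and how the resulting vector field decomposes along $\CV$ and in particular along $\CV^\nice$. Checking that the explicit vector field $H_{\Gamma,\sigma}$ of \eqref{e:fieldCovar} spans precisely the subspace $\CV_\star$ of equivariant, It\^o-isometric, ``nice'' counterterms, together with the explicit evaluation of the logarithmic divergence yielding the universal coefficient $1/(4\sqrt{3}\pi)$, will be the most delicate technical steps, and are the reason why the bulk of the paper is devoted to symbolic computation with the set $\VV$ of triple covariant derivatives rather than to abstract SPDE analysis.
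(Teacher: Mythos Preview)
Your outline captures the combinatorial skeleton but skips the analytic step that makes the whole argument work. You frame the problem as: intersect the symmetry constraints on $\CS$ to cut down from $54$ to $15$ to $2$ to $1$. But this presupposes that \emph{some} choice of equivariant (resp.\ It\^o-isometric) counterterm actually produces a convergent limit. There is no a priori reason why the BPHZ constant $C_{\eps,\geo}^\BPHZ$, which is what guarantees convergence, lies anywhere near $\CS_\geo$; and $U^\BPHZ$ itself is neither equivariant nor It\^o-isometric. The paper closes this gap by running \emph{two} approximations $U_\eps^\geo$ and $U_\eps^\Ito$ with their respective symmetries built in, proving injectivity of $h\mapsto U^\BPHZ(\Gamma,\sigma,h)$ (Theorem~\ref{theo:injective}), and then deducing (Propositions~\ref{prop:geo}, \ref{prop:ito}) that the projections of $C_{\eps,\geo}^\BPHZ$ and $C_{\eps,\Ito}^\BPHZ$ onto the complements of $\CS_\geo$ and $\CS_\Ito$ converge. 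Only then does the dimension counting become relevant, and the crucial identity is not the intersection $\CS_\geo\cap\CS_\Ito$ but the sum $\CS_\both=\CS_\geo+\CS_\Ito$ (Proposition~\ref{prop:ItoStratgeo}): the difference $\tau$ between the geo and It\^o limits lies in $\CS_\both$, so it can be split as $\tau_\geo+\tau_\Ito$ and absorbed back into each side. Your proposal never invokes $\CS_\both$ or the injectivity theorem, so the passage from ``symmetric counterterms exist'' to ``symmetric \emph{and} convergent counterterms exist'' is missing.

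Your argument for property~5 is also incorrect. You claim that under $\Gamma=0$ the ``nice'' condition kills the covariant-derivative counterterms, but $\CV^\nice$ only guarantees vanishing at points where $\Gamma=0$ \emph{and} $D\sigma=0$; with $\Gamma=0$ globally but $\sigma$ non-constant, elements like $\Nabla_{\<generic>}\Nabla_{\<genericb>}\Nabla_{\<genericb>}\<generic>$ reduce to nonzero expressions in derivatives of $\sigma$. The paper instead uses that $U^\family$ differs from $U^\Ito$ by $\Upsilon_{0,\sigma}\tau$ with $\tau\in\CS_\Ito$, and shows (Proposition~\ref{prop:CB}, Theorem~\ref{theo:mainSum}) that every element of $\CS_\Ito$ contains at least one $\<not>$-node, so $\Upsilon_{0,\sigma}$ annihilates it. Combined with Lemma~\ref{lem:meanzero}, which identifies $U^\BPHZ(0,\sigma,h)$ with the classical It\^o mild solution, this gives property~5.
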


The case of a Riemannian manifold $\CM$ is so important that we state it as a separate result.
Given a collection $\{\sigma_i\}_{i=1}^m$ of vector fields on a Riemannian manifold $(\CM,g)$, we 
define $\Upsilon_\sigma \colon \CV \to \Gamma^\infty(T\CM)$ as before as the 
linear map from $\CV$ into the set of smooth vector fields on $\CM$ assigning a symbol to the corresponding
triple covariant derivative of the $\sigma$'s.
The slight difference is that this time we view these as vector fields on $\CM$ rather than $\R^n$
and we use the convention that the covariant differentiation is the one given by
the Levi-Civita connection on $\CM$. Writing $\Gamma_\sigma^\infty(T\CM) = \range \Upsilon_\sigma$,
the following result shows that in the natural geometric context the collection $\VV$ has
quite a lot of redundancies.

\begin{lemma}\label{lem:generality}
Let $(\CM,g)$ be a Riemannian manifold and let  
 $\{\sigma_i\}_{i=1}^m$
be a collection of smooth vector fields such that the tensor $\sum_{i=1}^m (\sigma_i \otimes \sigma_i)$
equals the inverse of the metric tensor and such that furthermore $\sum_{i=1}^m \nabla_{\sigma_i} \sigma_i = 0$.

Then, the space $\Gamma_\sigma^\infty(T\CM)$ is of dimension at most $8$
and $\scal{\nabla R} \subset \bigcap_\sigma \Gamma_\sigma^\infty(T\CM)$, where $R$ denotes the
scalar curvature and the intersection ranges over all choices of $\sigma$ with the above properties.
\end{lemma}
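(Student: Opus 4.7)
The plan rests on two algebraic identities satisfied by $\sigma$: the divergence-free condition $\sum_i \nabla_{\sigma_i}\sigma_i = 0$ and the metric-compatibility identity
\[
\sum_i \bigl((\nabla_X \sigma_i)^\alpha \sigma_i^\beta + \sigma_i^\alpha (\nabla_X \sigma_i)^\beta\bigr) = 0\;,
\]
obtained by differentiating $\sum_i \sigma_i \otimes \sigma_i = g^{-1}$ with the Levi-Civita connection.

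To bound the dimension by $8$, I would first observe that any symbol in $\VV$ containing a subexpression of the form $\Nabla_\bullet\bullet$ with both circles of the same colour is mapped to zero by $\Upsilon_\sigma$ thanks to the divergence-free hypothesis. Inspection of \eqref{eq:covderiv} shows that there are exactly five such elements (those in which $\Nabla_{\<generic>}\<generic>$ or $\Nabla_{\<genericb>}\<genericb>$ appears anywhere as a sub-tree), reducing the image dimension to at most nine. A further linear dependence arises from the antisymmetry encoded in the metric-compatibility identity: using it to swap the roles of a $\sigma_i$ and a $\nabla_{\sigma_j}\sigma_i$ inside an outer covariant derivative yields one relation among the surviving nine symbols, producing the desired bound of $8$.

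For the intersection, the main task is to exhibit a distinguished element of $\CV$ whose image under every admissible $\Upsilon_\sigma$ is proportional to $\nabla R$. I would begin from a symbol in which the outermost structure $\sum_j \nabla_{\sigma_j}\nabla_{\sigma_j}$ appears: under our two hypotheses this agrees with the rough Laplacian $\Delta$ acting on vector fields, since the extra first-order term picked up by the product rule reduces to $\nabla_{\sum_j \nabla_{\sigma_j}\sigma_j} = 0$. Commuting the remaining $\nabla_{\sigma_i}$ past $\Delta$ via the Ricci identity $[\nabla_X,\nabla_Y]Z = R(X,Y)Z + \nabla_{[X,Y]}Z$ produces a contraction of the form $\sum_i\nabla_{\sigma_i}\mathrm{Ric}(\sigma_i,\cdot)$, and the contracted second Bianchi identity $\nabla^\beta\mathrm{Ric}_{\alpha\beta} = \tfrac{1}{2}\nabla_\alpha R$ then turns this into a nonzero multiple of $\nabla R$. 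The remaining error terms produced along the way are linear combinations of the other surviving symbols in $\VV$; subtracting these from the original symbol gives an explicit element of $\CV$ mapping to a constant multiple of $\nabla R$ independently of $\sigma$, proving the inclusion $\langle\nabla R\rangle \subset \bigcap_\sigma \Gamma^\infty_\sigma(T\CM)$.

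The reverse inclusion is the hardest part: I would exhibit at a generic point $p\in\CM$ a family of admissible frames whose $\Upsilon_\sigma$-images at $p$ intersect only in $\langle \nabla R\rangle$. One convenient choice starts from the Nash-embedding frame and composes it pointwise with an $\mathrm{O}(m)$-valued function varying near $p$; the extra rotational parameters change the image in its complement to $\langle \nabla R\rangle$, while $\nabla R$, built purely from the intrinsic metric data, is preserved under every valid choice. The main obstacle is the combinatorial bookkeeping: one must track commutators of covariant derivatives through several Ricci reductions, exhibit the sixth linear relation explicitly, and verify that the $\mathrm{O}(m)$-parametric freedom suffices to span the $7$-dimensional complement of $\langle\nabla R\rangle$ in the image. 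No new ingredient beyond the Ricci, second Bianchi, and metric-compatibility identities is required.
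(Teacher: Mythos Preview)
Your argument for the dimension bound is essentially the paper's: five symbols in $\VV$ contain a factor $\nabla_{\sigma_i}\sigma_i$ and hence vanish, and one further relation comes from $\nabla g = 0$. The paper is more concrete about this sixth relation: it names the element $\tau_\star \in \CV$ and uses the identity $\Upsilon_\sigma \tau_\star = H_{\Gamma,\sigma}^\alpha = -R^\alpha_{\beta\gamma\eta}\,g^{\beta\zeta}(\nabla_\zeta g)^{\gamma\eta}$, which vanishes because the Levi-Civita connection annihilates $g$. Your ``antisymmetry from metric compatibility'' is the same fact in disguise, but you leave the actual relation among the nine survivors unspecified; pointing to $\tau_\star$ would close this immediately.

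For the intersection, note first that the paper's own proof of this lemma addresses \emph{only} the dimension bound; the claim $\bigcap_\sigma \Gamma_\sigma^\infty(T\CM) = \langle\nabla R\rangle$ is not proven there but relies on machinery developed later. Your route to the inclusion $\langle\nabla R\rangle \subset \bigcap_\sigma$ via the rough Laplacian and Ricci commutators is sound in principle but unnecessarily indirect: the paper's element $\tau_c = (\nabla_{\sigma_i} R)(\sigma_j,\sigma_i)\sigma_j$ already gives $\Upsilon_\sigma \tau_c = (\nabla_\zeta R)^\alpha_{\beta\gamma\eta}\,g^{\zeta\gamma}g^{\beta\eta} = \tfrac{1}{2}\nabla^\alpha R$ by one application of the contracted second Bianchi identity, with no commutator bookkeeping needed. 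For the reverse inclusion your idea of varying $\sigma$ by pointwise $\mathrm{O}(m)$-rotations is exactly the right mechanism (it is the It\^o-isometry freedom exploited throughout the paper), but your sketch does not verify that this freedom actually spans the complement of $\langle\nabla R\rangle$; that requires the kind of injectivity argument developed in Section~\ref{sec:Ito}, and is the genuinely nontrivial part left open in your outline.
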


\begin{proof}
It suffices to note that since $\sum_{i=1}^m \nabla_{\sigma_i} \sigma_i = 0$, one has 
$\Upsilon_\sigma \tau = 0$ for every one of the $5$ symbols in $\VV$ that contain
$\Nabla_{\<generic>}\<generic>$ as a subsymbol. To eliminate one more degree of freedom,
we note that since $\nabla g = 0$ by definition of the Levi-Civita connection,
one has $\Upsilon_\sigma \tau_\star = 0$ by \eqref{e:fieldCovar}.
The inclusion $\scal{\nabla R} \subset \bigcap_\sigma \Gamma_\sigma^\infty(T\CM)$ follows from
 \eqref{e:secondTerm} below.
\end{proof}

\begin{remark}
One may wonder whether such collections of vector fields $\sigma_i$ do exist. This is always the case
since it suffices to consider a smooth isometric embedding of $\CM$ into $\R^d$ and to choose for $\sigma_i(p)$
the orthogonal projection of the $i$th canonical basis vector onto $T_p \CM$, see
for example \cite{Hsu} or Lemma~\ref{lem:propSigmai} below.
\end{remark}

\begin{remark}\label{rem:ItoGeod1}
Generically, one expects to have $\scal{\nabla R} = \bigcap_\sigma \Gamma_\sigma^\infty(T\CM)$
as a consequence of \eqref{prop:Itogeo} below, combined with the non-degeneracy results of
Section~\ref{sec:injective}. However, this identity fails for example when $d = 1$ since in this case, while
$\nabla R = 0$, $\nabla_{\<generic>}\<generic> = \sigma \sigma' + \Gamma \sigma^2 = {1\over 2} g' + \Gamma g$
depends only on $g$ and therefore belongs to $\bigcap_\sigma \Gamma_\sigma^\infty(T\CM)$.
\end{remark}

\begin{theorem}\label{thm:riemann}
Let $\CM$, $g$ and $\sigma$ be as in Lemma~\ref{lem:generality}, let $h \in \Gamma^\infty(T\CM)$, 
and let $\rho \in \Moll$. 
For any $V_{\rho,\sigma} \in \Gamma_\sigma^\infty(T\CM)$ and any $\eps \in (0,1]$, we denote by
$u_\eps$ the (local) solution to the random PDE
\begin{equ}[e:localSolManifold]
\d_t u_\eps = \nabla_{\d_x u_\eps} \d_x u_\eps + h(u_\eps) + \sum_{i=1}^m \sigma_i(u_\eps)\,\xi_i^\eps
+ V_{\rho,\sigma}(u_\eps)\;.
\end{equ}
Then, $u_\eps$ converges in probability (up to a possible explosion time) to a 
limit $u$ which may in general depend on $\rho$, $\sigma$ and $V_{\rho,\sigma}$. 

However, there exists a unique choice $(\rho,\sigma) \mapsto V^\canon_{\rho,\sigma}$ 
(corresponding to the `canonical family' of the previous theorem) such that 
\begin{enumerate}
\item The limit $u$ is independent of $\rho$ and $\sigma$.
\item For every choice of $\CM$, $g$ and $\sigma$, whenever 
$p \in \CM$ is such that $\bigl(\Nabla_{\sigma_i}\sigma_j\bigr)(p) = 0$ for all
$i$ and $j$, one has $V^\canon_{\rho,\sigma}(p) = 0$.
\item One can find $\rho \mapsto c_\rho \in \CV$ such that $V^\canon_{\rho,\sigma} = \Upsilon_\sigma c_\rho$.
\end{enumerate}
Finally, every choice $(\rho,\sigma) \mapsto V_{\rho,\sigma}$ satisfying condition 1 
is of the form $V_{\rho,\sigma} = V^\canon_{\rho,\sigma} + c \nabla R$.
\end{theorem}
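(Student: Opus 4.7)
The plan is to reduce the result to Theorem~\ref{theo:main} via an isometric Nash embedding $\CM \hookrightarrow \R^d$, taking for $\sigma_i$ the orthogonal projection of a fixed orthonormal basis onto $T\CM$ as in Lemma~\ref{lem:propSigmai}, and for $\Gamma$ the Christoffel symbols of the Levi-Civita connection. (Alternatively one can work intrinsically via the equivariance~\eqref{e:equivar}.) Under this identification,~\eqref{e:localSolManifold} becomes an instance of~\eqref{e:genClass} with $K = 0$ and drift $h + V_{\rho,\sigma}$.

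The crucial observation is that both divergent counterterms in~\eqref{e:mainLimit} vanish identically here. The $\eps^{-1}$ term $(\bar c/\eps)\sum_i \nabla_{\sigma_i}\sigma_i$ vanishes by the standing hypothesis, and $H_{\Gamma,\sigma}$ vanishes by the second expression in~\eqref{e:fieldCovar} since the Levi-Civita connection satisfies $\nabla g = 0$. In particular the coefficient of the free parameter $\family$ disappears and all members of the canonical family $\{U^\family\}$ collapse to a single limit. Defining $V^\canon_{\rho,\sigma} := \Upsilon_\sigma c$ with $c \in \CV_\star^\perp$ supplied by Theorem~\ref{theo:main}, convergence of $u_\eps$ for arbitrary $V_{\rho,\sigma} \in \Gamma_\sigma^\infty(T\CM)$ then follows from~\eqref{e:mainLimit} by absorbing the smooth residual $V_{\rho,\sigma} - V^\canon_{\rho,\sigma}$ into $h$; by~\eqref{e:Ito} the resulting limit depends on $\sigma$ only through the metric and the effective drift.

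Conditions 1 and 3 on $V^\canon_{\rho,\sigma}$ are immediate by construction. For condition~2, let $p$ be such that $\nabla_{\sigma_i}\sigma_j(p) = 0$ for all $i,j$. Passing to Riemann normal coordinates at $p$ via a diffeomorphism $\phi$, one has $(\phi\act\Gamma)(\phi(p)) = 0$; combining with $\nabla_{\sigma_i}\sigma_j = D\sigma_j\cdot\sigma_i + \Gamma(\sigma_i,\sigma_j)$ and the fact that $\{\sigma_i(p)\}$ spans $T_p\CM$ (because $\sum_i \sigma_i(p)\otimes\sigma_i(p) = g^{-1}(p)$), one deduces $D(\phi\act\sigma)(\phi(p)) = 0$ as well. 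By the defining property of $\CV^\nice$ applied to $c \in \CV_\star^\perp \subset \CV^\nice$, we get $\Upsilon_{\phi\act\Gamma,\phi\act\sigma}c(\phi(p)) = 0$, which by equivariance of $\Upsilon$ is equivalent to $\Upsilon_\sigma c(p) = 0$.

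For the final assertion, let $V_{\rho,\sigma}$ be any choice satisfying condition~1, and set $\delta_{\rho,\sigma} = V_{\rho,\sigma} - V^\canon_{\rho,\sigma} \in \Gamma_\sigma^\infty(T\CM)$. The limit SPDE depends faithfully on its smooth drift (non-degeneracy of the noise, guaranteed by $\sum_i \sigma_i\otimes\sigma_i = g^{-1}$, together with the Feller--Markov property from Theorem~\ref{theo:main}(2), ensures that distinct smooth drifts produce distinct laws), so $\rho,\sigma$-independence of the limit forces $\delta_{\rho,\sigma}$ to be independent of both $\rho$ and $\sigma$ as a vector field on $\CM$. Hence $\delta \in \bigcap_\sigma \Gamma_\sigma^\infty(T\CM) = \scal{\nabla R}$ by Lemma~\ref{lem:generality}, giving $\delta = c\,\nabla R$ for some constant $c$; if $V$ also satisfies condition~2 then $c = 0$ since on a suitably chosen manifold $\nabla R$ is nonzero at some nice point, establishing uniqueness of $V^\canon$. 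The main technical obstacle lies in justifying that $\delta_{\rho,\sigma}$ enters the regularity-structures analysis as a benign perturbation of $h$; this is ensured by condition~3 placing $V^\canon$ in the finite-dimensional space $\range\Upsilon_\sigma$, so that the modification passes through the Theorem~\ref{theo:main} framework as a standard smooth drift.
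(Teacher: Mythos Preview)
Your proof is correct and follows the same overall strategy as the paper: embed $\CM$ isometrically into $\R^d$, recognise \eqref{e:localSolManifold} as an instance of \eqref{e:genClass}, and observe that on $\CM$ both divergent counterterms in \eqref{e:mainLimit} vanish (one by the standing hypothesis $\sum_i\nabla_{\sigma_i}\sigma_i=0$, the other because $\nabla g=0$ for the Levi-Civita connection), so the one-parameter family $U^\family$ collapses to a single solution.

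A few points of comparison are worth noting. Your verification of condition~2 via Riemann normal coordinates is more explicit than the paper's treatment, which leaves this step implicit; your argument is clean and correct, relying on $c\in\CV_\star^\perp\subset\CV^\nice\subset\CS_\geo$ for both the vanishing and the equivariance. For the final assertion you proceed via injectivity of the drift together with the second clause of Lemma~\ref{lem:generality}, whereas the paper instead invokes Remark~\ref{rem:twoparam} and the algebraic identification $\CS_\Ito\cap\CS_\geo=\scal{\tau_\star,\tau_c}$; both routes lead to the same conclusion, but yours is arguably more direct while the paper's stays closer to the renormalisation-group structure already developed. One small imprecision: the faithfulness of the drift does not follow from ``non-degeneracy of the noise plus the Feller--Markov property'' as you suggest; it is the content of Theorem~\ref{theo:injective}, whose proof requires a specific construction with an auxiliary extended regularity structure. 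You should cite that result rather than give an ad hoc justification.
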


\begin{remark}\label{rem:geom1}
It is natural to call the above limit with the  choice $V_{\rho,\sigma} = V^\canon_{\rho,\sigma}$
``the'' solution to the SPDE
\begin{equ}[e:solLoop]
\d_t u = \nabla_{\d_x u} \d_x u + h(u) + \sum_{i=1}^m \sigma_i(u)\,\xi_i\;.
\end{equ}
We do however conjecture that the invariant measure of this process when $h=0$ is \textit{not} the
Brownian loop measure (at half its natural speed) on $\CM$. Instead, that measure 
should be invariant for this process with $h = {1\over 32}\nabla R$, see Conjecture~\ref{conj:invariant} 
below for more details and a semi-heuristic justification of this claim.
(The reason for the value ${1\over 32}$ rather than the value ${1\over 8}$ appearing there is 
the absence of the factor $\sqrt 2$ in front of the noise term in \eqref{e:solLoop}, combined with the fact that $\nabla R$
is $4$-linear in $\sigma$.)
\end{remark}

\begin{remark}
In the particular case of the KPZ equation, the combination of properties~3 and~5 in Theorem \ref{theo:main} shows that
our specific choice of the finite constant $c$ singles out the Hopf-Cole solution.
Furthermore, since the intrinsic curvature of a one-dimensional Riemannian manifold always
vanishes, one has $H_{\Gamma,\sigma} = 0$ in this case, which explains the cancellation of the two
logarithmically divergent constants in \cite{KPZ}.
\end{remark}

\begin{remark}\label{rem:1dcase}
In the case of the generalised KPZ equation in $\R$, i.e. for $d=1$, given by
\begin{equ}
\partial_t u = \partial_x^{2} u + \Gamma(u) (\partial_x u )^{2} + K(u)\,\partial_x u+ h(u) + \sigma(u) \, \xi\;,
\end{equ}
one has $ H_{\Gamma,\sigma} = 0 $ as in the previous remark, which implies the cancellation of many logarithmically divergent constants and in particular that $U \equiv U^\family$ is independent of $\family$ in this case. 
However, if we restricted our class of equations to this smaller class, then properties~1--4 
of Theorem~\ref{theo:main} would \textit{not} be sufficient to determine $U$.
The reason is that properties~1 and 2 remain satisfied if we change $c$ appearing in \eqref{e:mainLimit} 
by any fixed amount. Since  $\Nabla_{\<generic>}\<generic> \in \CV^\nice$, we could in particular add to 
the right hand side of \eqref{e:mainLimit} any fixed multiple of
the vector field $\sum_i\Nabla_{\sigma_i}\sigma_i = {1\over 2}g' + \Gamma g^2$ already mentioned in Remark~\ref{rem:ItoGeod1} without breaking property~3.
Finally, property~4 is preserved since this vector field only depends on $g$ and not the choice of $\sigma_i$'s.

This shows that in $d=1$ we do not get as many constraints
out of the requirement that our solution theory is simultaneously Itô and geometric. 
This a reflection of the fact that the universe of possible equations is simply too
small to adequately reflect the structure of our renormalisation group. 
One could play a similar game by restricting the number $m$ of noises rather than the dimension of
the target space and see how the constraints evolve according to this number, but we have not done this. 
\end{remark}

\begin{remark}\label{rem:localDiffeo}
We will actually show a slightly stronger version of \eqref{e:equivar}, namely that for any 
two open sets $U, V \subset \R^d$ and any diffeomorphism $\phi \colon U \to V$, 
\eqref{e:equivar} also holds if we restrict ourselves to solutions with initial conditions
taking values in $U$ (respect. $V$), stopped whenever they exit these domains. 
The reason why this is slightly stronger than \eqref{e:equivar} is that there are diffeomorphisms
$U \to V$ that cannot be extended to a global diffeomorphism on $\R^d$ due to topological obstructions. 
\end{remark}

\begin{remark}
The only part which is non-canonical in \eqref{e:mainLimit}
is the value of $\hat c$. However, two different choices 
for this component simply correspond to a reparametrisation of the family $\family \mapsto U^\family$ which is
itself perfectly canonical. 
Furthermore, as an immediate consequence of \eqref{e:mainLimit}, the solution 
theories $U^\family$ for different values
of $\family$ are related by $U^{\bar \family}(\Gamma,K,\sigma,h) = U^{\family}\big(\Gamma,K,\sigma,h + (\bar \family-\family)H_{\Gamma,\sigma}\big)$. 
\end{remark}

\begin{remark}
As already mentioned earlier, part~4 of Theorem \ref{theo:main} is a version of It\^o's isometry in
this context (which is consistent with part~5) while part~3 corresponds to the classical change 
of variables formula.
In this sense, each one of the solution theories $U^\family$ for \eqref{e:mainSPDE} behaves
simultaneously like both an `It\^o solution' and a `Stratonovich solution'. 
\end{remark}

\begin{remark}
It is important that one chooses the mollifier $\rho$ in such a way that 
$\rho(t,x) = \rho(t,-x)$, otherwise one may have to subtract an additional
term of the form $f^{\alpha}_\beta(u)\d_x u^\beta$ for suitable $f$ 
in order to get the same limit and to restore in the limit the $x \leftrightarrow -x$ symmetry
which is then broken by the approximation.
\end{remark}

\begin{remark}\label{rem:twoparam}
If, instead of working in Theorem \ref{theo:main} with counterterms belonging to $\CV^\nice$, we had chosen to work with counterterms in
the ``full'' space $\CV$, the degree of freedom $\family$ appearing in the theorem would turn out
to be two-dimensional instead of one-dimensional. The second ``free'' direction  is then given by 
a counterterm proportional to 
\begin{equ}[e:secondTerm]
\hat H^\alpha(u) \eqdef \bigl(\nabla_\zeta R^\alpha_{\beta\gamma\eta}\bigr)(u) g^{\zeta\gamma}(u) g^{\beta\eta}(u)\;,
\end{equ}
see Proposition~\ref{prop:ItoStratgeo} and Lemma~\ref{lem:exprtaucstar}.

In the case when $\Gamma$ is the Levi-Civita connection given by the (inverse) Riemannian metric $g$
determined by $\sigma$ through \eqref{e:defMetric}, a straightforward calculation shows that
\eqref{e:secondTerm} is equivalent to the simpler identity
\begin{equ}
\hat H^\alpha = {1\over 2} \nabla^{\alpha} R(u)\;,
\end{equ}
where $R$ denotes the scalar curvature. 
%\lorenzoText{
%Let us prove this. On this page \url{https://en.wikipedia.org/wiki/Proofs_involving_covariant_derivatives\#Proof_1} there is the proof of the following identity
%\[
%\frac12\nabla_\alpha R =  \nabla_\zeta {R^\zeta}_\alpha.
%\]
%The right-hand side is equal to
%\[
% g^{\zeta\gamma} \nabla_\zeta R_{\gamma\alpha}=  g^{\zeta\gamma} \nabla_\zeta R_{\alpha\gamma}
%= g^{\zeta\gamma} \nabla_\zeta R^\eta_{\alpha\eta\gamma}=  g^{\zeta\gamma}g^{\beta\eta} \nabla_\zeta R_{\beta\alpha\eta\gamma}
%=  g^{\zeta\gamma}g^{\beta\eta} \nabla_\zeta R_{\alpha\beta\gamma\eta}
%\]
%so that
%\[
%\nabla^\alpha R = g^{\alpha\alpha'} \nabla_{\alpha'} R = 2 g^{\zeta\gamma}g^{\beta\eta} \nabla_\zeta R^{\alpha}_{\beta\gamma\eta}
%\]
%}
%\martinText{Shall we keep this calculation or shall we comment it out so we can easily reinstate it in case the
%referee asks for it?}

This suggests that the corresponding
family of stochastic processes has invariant measures given by the Brownian loop measure, weighted by 
the (exponential of) the integral of the scalar curvature along the loop. Since this is the only 
degree of freedom in the renormalisation group associated to our equation that isn't determined by
symmetry considerations, it also suggests that different
elements from this
family of measures can arise from rather natural-looking approximations to the Brownian loop measure.
This is a well-known
fact that has already been pointed out in the physics literature of the early seventies \cite{Cheng,Um} 
and appears in more recent mathematical works on the topic \cite{Darling,Driver}. 
In our setting, it appears very natural to restrict our renormalisation to $\CV^\nice$, which fixes 
an origin for this degree of freedom, albeit in a rather arbitrary way. Another natural choice 
would be to choose it in such a way that the 
Brownian loop measure is indeed invariant for  \eqref{e:mainSPDE}. It is not clear at this stage
whether these two choices of origin coincide. We will revisit this point in more detail in Section~\ref{sec:conjecture}
where we conjecture that they do \textit{not} and that these two natural choices of origin differ by ${1\over 8}$.
\end{remark}

\subsection{Structure of the article}

Our strategy for the proof for Theorem~\ref{theo:main}, which takes up the remainder of this article, 
goes as follows.
After recalling some of the concepts and notations from the theory of regularity structures in
Section~\ref{sec:rules} and~\ref{sec:renorm}, we apply the results of \cite{BHZ,Ajay,BCCH} to our problem in
Sections~\ref{sec:BPHZthm} and~\ref{sec:reduced}. This shows that there exists a finite-dimensional
space $\CS$ of formal expressions (see Section~\ref{sec:reduced} for a precise definition of $\CS$) 
with a canonical embedding $\CV \subset \CS$, together with natural valuations 
$\Upsilon_{\Gamma,\sigma}\colon \CS \to \CC^\infty(\R^d,\R^d)$,
as well as a choice of renormalisation constants $C_{\eps,\geo}^\BPHZ \in \CS$ \label{constant geo page ref} such that
$U_\eps^\geo(\Gamma,K,\sigma,h+ \Upsilon_{\Gamma,\sigma}C_{\eps,\geo}^\BPHZ)$ converges as $\eps \to 0$
to some limit $\bar U$ independent of $\rho$. 
We similarly show that, for a different approximation $U_\eps^\Ito$ to \eqref{e:mainSPDE}
(instead of only hitting the noise with the mollifier, we hit the whole right hand side 
of the equation with it),
one has an analogous result with a different set of renormalisation constants $C_{\eps,\Ito}^\BPHZ$ \label{constant ito page ref}.
The point here is that we know a priori from \cite{Ajay} that, with this specific choice of renormalisation 
constants. the two limits obtained from these two different approximations do coincide,
call it $U^\BPHZ$.

In Section~\ref{sec:symapprox}, we then introduce two subspaces $\CS_\Ito \subset \CS$ and 
$\CS_\geo \subset \CS$ with the property that for any fixed $\eps$,
$U_\eps^\geo(\Gamma,K,\sigma,h+ \Upsilon_{\Gamma,\sigma}\tau)$ satisfies 
Property~3 of Theorem~\ref{theo:main} for any $\tau \in \CS_\geo$, while 
$U_\eps^\Ito(\Gamma,K,\sigma,h+ \Upsilon_{\Gamma,\sigma}\tau)$ satisfies 
Property~4 of Theorem~\ref{theo:main} for any $\tau \in \CS_\Ito$.
In Sections~\ref{sec:geo} and~\ref{sec:Ito1} we then show
that the renormalisation constants  $C_{\eps,\geo}^\BPHZ$ (resp.\ $C_{\eps,\Ito}^\BPHZ$) `almost' belong to 
$\CS_\geo$ (resp.\ $\CS_\Ito$) in the sense that there exist $\hat C_{\eps,\geo}^\BPHZ \in \CS_\geo$ \label{constant hat geo page ref}
such that $C_{\eps,\geo}^\BPHZ - \hat C_{\eps,\geo}^\BPHZ$ converges to a finite limit in $\CS$, and
similarly for $C_{\eps,\Ito}^\BPHZ$.
This relies in a crucial way on the fact that the corresponding two approximation procedures for our equation 
already exhibit the required symmetries at fixed $\eps > 0$. In order to transfer a convergence result at the
level of stochastic processes to the corresponding result at the level of renormalisation constants,
we also rely on the injectivity of the map
$h \mapsto U^\BPHZ(\Gamma,K,\sigma,h)$, which is shown in Section~\ref{sec:injectivelaw}.

In Section~\ref{sec:both}, we combine both properties to show that 
it is possible to find a solution theory that exhibits both Properties~3 and~4
simultaneously. This relies on the definition of a space $\CS_\both\subset \CS$ \label{S both page ref} of elements
$\tau$ such that, given any two $\sigma$, $\bar\sigma$ such that 
$\bar \sigma_i^\alpha \bar \sigma_i^\beta = \sigma_i^\alpha \sigma_i^\beta$, the
term $(\Upsilon_{\Gamma,\sigma}-\Upsilon_{\Gamma,\bar \sigma})\,\tau$ transforms like a vector field (see Definition~\ref{def_S_both}). 
The crucial remark which underpins our argument is the fact that one has the identity
\begin{equ}[e:mainiden]
\CS_\both = \CS_\Ito + \CS_\geo\;.
\end{equ}
The intersection of these two spaces furthermore consists of the subspace generated by $\tau_\star$
as well as the element generating the counterterm \eqref{e:secondTerm}. We eliminate the latter
by restricting ourselves to the subspace $\CS^\nice \subset \CS$ \label{S nice page ref} consisting of those terms
whose $\Upsilon_{\Gamma,\sigma}$-valuation vanishes at points where both $\Gamma$ and $\d \sigma$ vanish (i.e. 
the space
$\CV^\nice$, defined on page \pageref{V nice page ref}, is equal to $\CV\cap\CS^\nice$). It turns out that 
we can do this thanks to the fact that the BPHZ counterterms $C_{\eps,\geo}^\BPHZ$
and $C_{\eps,\Ito}^\BPHZ$ both belong to $\CS^\nice$ for any choice of mollifier in $\Moll$, see Corollary~\ref{cor:both}.
We then show that \eqref{e:mainiden} still holds if each of the spaces appearing there is
replaced by its intersection with $\CS^\nice$, but their intersection  now
consists solely of multiples of $\tau_\star$.

\begin{remark}\label{rem:SDEs}
As already mentioned, the analogous identity to \eqref{e:mainiden} does not hold in the case of SDEs. Indeed, in that case, 
$\CS$ consists of all linear combinations of the terms $\{\sigma_i^\beta \d_\beta \sigma_i^\alpha\}$
($\alpha$ is a free index and summation over $\beta$ and $i$ is implied). Since this term
is not invariant under changes of coordinates and cannot be written in terms of $g$ alone, one has
$\CS_\Ito = \CS_\geo = 0$. On the other hand, this term does belong to $\CS_\both$
since, whenever $\bar \sigma_i^\alpha \bar \sigma_i^\beta = \sigma_i^\alpha \sigma_i^\beta$ (summation over $i$ implied),
\begin{equ}
\sigma_i^\alpha \,\d_\alpha \sigma_i^\beta
- \bar \sigma_i^\alpha \,\d_\alpha \bar \sigma_i^\beta
= \nabla_{\sigma_i} \sigma_i - \nabla_{\bar \sigma_i} \bar \sigma_i
\end{equ}
for any choice of connection and it therefore does transform like a vector field.
\end{remark}

In Section~\ref{sec:conv} we then combine various existing results with a simple explicit
calculation to show that the divergent part $\hat C_{\eps,\geo}^\BPHZ$ is indeed of the form
shown in part~1 of Theorem~\ref{theo:main}, which in particular relies on the fact that
$\CS_\Ito \cap \CS_\geo \cap \CS^\nice$ coincides with $\CV_\star$.
In order to show that the process constructed in this way coincides with the 
classical It\^o solution in the case $\Gamma = K = 0$, we show that in this particular case
the counterterms for the `It\^o approximation' vanish, i.e.
$U^\family(0,0,h,\sigma) = \lim_{\eps \to 0} U_\eps^\Ito(0,0,h,\sigma)$ without any renormalisation 
needed. This is a consequence of the fact that $\CS_\Ito$ has the property that 
$\Upsilon_{0,\sigma} \tau = 0$ for every $\tau \in \CS_\Ito$, see Proposition~\ref{prop:CB} below.
The fact that $U_\eps^\Ito(0,0,h,\sigma)$ converges to the classical It\^o solution is easy to show.
Section~\ref{sec:geometric} is then devoted to a discussion of the natural geometric situation
mentioned as a motivation at the beginning of the introduction and contains the proof
of Theorem~\ref{thm:riemann}.

Sections~\ref{sec:algebra} and~\ref{sec:spaces} are devoted to the proof of \eqref{e:mainiden}.
While the inclusion $\CS_\Ito + \CS_\geo \subset \CS_\both$ is trivial, its converse is not,
as already noted in Remark~\ref{rem:SDEs}.
In order to show that it holds, we need a better understanding of the structure of the space $\CS$.
This is given by the notions of $T$-algebra and $T_\d$-algebra, which we introduce in Section~\ref{sec:algebra}.
In a nutshell, these notions encapsulate the main features of the spaces 
$\bigoplus_{\upper,\low\geq 0} \CV_\low^\upper[V]$, respectively
$\bigoplus_{\upper,\low\geq 0} \CC^\infty(V,\CV_\low^\upper[V])$, with $V$ a finite-dimensional vector space and
$\CV_\low^\upper[V]:=(V^*)^{\otimes \upper} \otimes V^{\otimes \ell}$: 
they both come with an action of two copies
of the symmetric group (allowing to permute the factors $V$ and $V^*$ in $\CV_\low^\upper[V]$),
a product, as well as a partial trace (pairing the last factor $V^*$ with the last factor $V$);
a $T_\partial$-algebra has moreover a notion of derivative.
%(identifying $L(V,(V^*)^{\otimes k} \otimes V^{\otimes \ell})$ with $(V^*)^{\otimes (k+1)} \otimes V^{\otimes \ell}$), 
Our main abstract result is a non-degeneracy result for the 
morphisms from a large class of $T$-algebras into a space of the type
$\bigoplus_{\upper,\low\geq 0} \CV_\low^\upper[V]$,
see Theorem~\ref{theo:injectiveT}, and for the morphisms from a
large class of $T_\d$-algebras into a space 
$\bigoplus_{\upper,\low\geq 0} \CC^\infty(V,\CV_\low^\upper[V])$,
see Theorem~\ref{theo:mainInjective}.

This provides us with a rigorous underpinning for a diagrammatic calculus
on the possible counterterms appearing in the renormalisation of \eqref{e:mainSPDE}, as well as 
the tools required to give a clean characterisation of the spaces $\CS_\geo$ in Section~\ref{sec:geo}
and $\CS_\Ito$ in Section~\ref{sec:Ito}.
The actual proof of \eqref{e:mainiden} is then given in Section~\ref{sec:counting}. Unfortunately, it is 
not as elegant 
as one may wish and relies on a dimension counting argument that appears to be somewhat \textit{ad hoc}
to the situation at hand. It does however rely strongly on the identifications of  $\CS_\geo$  and  $\CS_\Ito$
which appear to have a more universal flavour.

\subsection*{Acknowledgements}

{\small
The authors are very grateful to the referees for their careful reading of the manuscript. We are particularly indebted to the referee who pointed out a gap in the original argument of Section~\ref{sec:Ito}. Bridging this gap lead to substantial improvements in the clarity of the exposition.

MH gratefully acknowledges financial support from the Leverhulme trust via a Leadership Award, 
the ERC via the consolidator grant 615897:CRITICAL, and the Royal Society via a research professorship. 
YB and MH are grateful to the Newton
Institute for financial support and for the fruitful atmosphere fostered during the 
programme ``Scaling limits, rough paths, quantum field theory''.
We would also like to thank Xue-Mei Li for numerous discussions about stochastic 
analysis on manifolds. 
}

\section{The BPHZ solution}
\label{sec:BPHZ}

In this section, we recall how the results of \cite{BHZ,Ajay,BCCH} can be combined to produce a number of natural
candidate ``solution theories'' for the class of SPDEs \eqref{e:mainSPDE} on $\R^d$.

\subsection{Construction of the regularity structure}
\label{sec:rules}

Recall first that a class of stochastic PDEs is naturally associated to a ``rule''
(in the technical sense of \cite{BHZ}) defining a corresponding regularity structure. 
Such a rule describes how the different noises and convolution operators combine via the
non-linearity to form higher-order stochastic processes that provide a good local description 
of the solution which is stable under suitable limits. 

In our case, there is only one convolution operator (convolution against the heat kernel, or rather
a possibly mollified truncation thereof), while there are $m$ noises, all having the same regularity.
This motivates the introduction of a label set $\Lab = \{\<thin>, \<generic>_1,\ldots,\<generic>_m\}$
with corresponding degrees $\deg(\<thin>) = 2-\kappa$ and $\deg(\<generic>_i) = -{3\over 2}-\kappa$ for
$\kappa > 0$ sufficiently small ($\kappa < 1/100$ will do).
Recall that a ``rule'' is then a map $R$ assigning to each element of $\Lab$ a non-empty collection
of tuples in $\Lab \times \N^2$, where the element of $\N^2$ represents a space-time
multiindex. In our case, we identify $\Lab$ with $\Lab \times \{0\} \subset \Lab \times \N^2$
and we use the shorthand $\<thick> = (\<thin>, (0,1))$ (with the ``$1$'' denoting the space
direction). With these notations, the relevant rule describing the class of equations \eqref{e:mainSPDE}
is given by $R(\<generic>_i) = \{()\}$ and
\begin{equ}
R(\<thin>) = \{(\<thin>^k,\<generic>_i), (\<thick>^\ell,\<thin>^k)\,:\, k \ge 0,\, \ell \in \{0,1,2\},\, i\in \{1,\ldots,m\}\}\;,
\end{equ}
where we used $\<thin>^k$ to denote $k$ repetitions of $\<thin>$ and similarly for $\<thick>^\ell$.

As already remarked in \cite[Sec.~5.4]{BHZ}, the rule $R$ is normal, subcritical and complete,
so that it determines a regularity structure $(\CT,\CG)$, together with a 
``renormalisation group'' $\RR$ of extraction \slash contraction
operations acting continuously on its space of admissible models. 
The space $\CT$ is a graded vector space $\CT = \bigoplus_\alpha \CT_\alpha$ with
each $\CT_\alpha$ finite-dimensional. Elements of $\CT$ are formal linear combinations of 
labelled trees.
Here, a labelled tree $T_{\mff}^{\mfn}$ consists of
\begin{itemize}
\item A combinatorial rooted tree $T$ with vertex set $V_{T}$, edge set $E_{T}$ and root $ \rho_T $.
\item An edge decoration $\mff \colon E_{T} \rightarrow \Lab \times \N^2$. 
We call the first component of $\mff(e)$ the `type' of an edge $e$.
\item A vertex decoration $\mfn \colon V_{T} \rightarrow \N^{2}$.
\end{itemize}
We denote by $N_T \subset E_T$ the set of ``noises'', which are the edges of type $\<generic>_i$ for
some $i$.

Furthermore, we restrict ourselves to trees conforming to the rule $R$ in the following way.
Given a vertex $v \in V_T$, we call the (unique) edge $e_v$ adjacent to $v$ and pointing towards the root $\rho_T$ 
the `outgoing' edge and all other edges adjacent to $v$ `incoming' edges. 
The collection of incoming edges then determines a tuple $\CN(v)$ of $\Lab \times \N^2$ by collecting
all of their decorations $\mff$. With this notation, we restrict ourselves to trees such that
$\CN(v) \in R(\mft_v)$, where $\mft_v$ is the type of the outgoing edge $e_v$, with the
convention that $\mft_{\rho_T} = \<thin>$. We furthermore, impose that $\mfn(v) = 0$ 
if $\mft_v \in \{\<generic>_i\}$.

We henceforth denote by $\Trees$ the collection of all labelled trees conforming to $R$.
The notion of degree naturally extends to $\Trees$ by setting 
\begin{equ}
\deg T_{\mff}^{\mfn} = \sum_{v \in V_T} |\mfn(v)| + \sum_{e \in E_T} \deg(\mff(e))\;,
\end{equ}
where $|(k_0,k_1)| = 2k_0 + k_1$ and $\deg(\mft,k) = \deg \mft - |k|$.
With these notations, the space $\CT$ is nothing but the vector space generated by
$\Trees$, with the grading given by the degree.

A particular role in the general theory of regularity structures is played by the trees
of strictly negative degree.
Denoting by $\<genericX>$ a vertex with label $(0,1)$, by $\<generic>$ a vertex with label $0$ and an incoming edge
of type $\<generic>_i$ for some $i$ and finally by $\<genericxix>$ 
a vertex with label $(0,1)$ and an incoming edge
of type $\<generic>_i$ for some $i$, the complete list of elements $\Trees_- \subset \Trees$ 
of strictly negative degree is as follows, provided that $\kappa>0$ is sufficiently small. 
\begin{equ}
\begin{tabular}{rll} \toprule
Degree & Elements of $\Trees_-$\\
\midrule
$ - \frac{3}{2}^-$ & \<Xi>
\\ \rowcolor{symbols!5!pagebackground}
$-1^-$ &  \<Xi2> \,, \<I1Xitwo> 
\\
$ -\frac{1}{2}^- $ & \<Xi3a> \,,
\<Xi3> \,, \<I1IXi3b> \,, \<I1IXi3>\,, \<I1IXi3c> \,, \<I1Xi3c>\,, \<XiX> \,, \<I1Xi>  
\\  \rowcolor{symbols!5!pagebackground}
$0^-$ &  \<Xi4>\,, \<I1Xi4a> \,,  \<I1Xi4b> \,, \<I1Xi4c> \,, \<I1Xi4ab> \,, \<I1Xi4bc> \,, \<I1Xi4ac> \,,  \<Xitwo> \,, \<2I1Xi4> \,, \<2I1Xi4b> \,,  \<2I1Xi4c> \,, \\ \rowcolor{symbols!5!pagebackground} & \<Xi4b>\,, \<Xi4ba>\,, \<Xi4c>\,,   \<Xi4cb>\,, \<Xi4ca>\,, \<Xi4cab>\,, \<Xi4e> \,, \<Xi4ea> \,, \<Xi4eabis> \,,  \<Xi4eb> \,, \<Xi4eab> \,, \<Xi4eabbis>  
\\
$0^-$ &  \<Xi2X>\,, \<XXi2>\,, \<Xi2Xbis> \,, \<XXi2bis> \,, \<Xi2cbis1> \,, \<Xi2cbis> \,, \<I1XiIXi> \,, \<I1XiIXic> \;, \<XiIIXi>
\\
\bottomrule
\end{tabular}
\end{equ}\label{listPage}
Here, we say that an element has degree $\alpha^-$ if it has degree $\alpha - n\kappa$ for some
integer $n$.
Recall that we have dropped for simplicity the indices from $\<generic>$ and $\<genericxix>$, so that
every element of this list stands for a finite collection of basis vectors, for instance 
\begin{equ}%[eq:wiij]
\<Xi2>=\big\{\<treeeval>: i,j=1,\ldots,m\big\}, \quad \<Xi4eabis>=\big\{\<Xi4eabisc1bis>: i,j,k,\ell=1,\ldots,m\big\}.
\end{equ}
The symbols of this list playing the most important role later on are those belonging to the two
lightly shaded rows,
so we write $\SS_{\<generic>}^{(2)}$ \label{SS2 page ref} and $\SS_{\<generic>}^{(4)}$ \label{SS4 page ref} for these two collections of symbols
and set $\SS_{\<generic>} = \SS_{\<generic>}^{(2)} \cup \SS_{\<generic>}^{(4)}$\label{SS page ref}.
More formally, $\SS_{\<generic>}^{(k)}$ consists of those labelled trees having exactly $k$ noises
that satisfy the ``saturated rule''
\begin{equ}
R^\sat(\<thin>) = \{(\<thin>^k,\<generic>_i), (\<thick>^2,\<thin>^k)\,:\, k \ge 0,\, i\in \{1,\ldots,m\}\}\;,
\end{equ}
and for which the vertex label $\mfn$ vanishes.
We also write $\CS_{\<generic>}$ \label{CS page ref} for the real vector space generated by $\SS_{\<generic>}$.

Recall that the space $\CT$ admits two ``integration maps'' $\CI$ and $\CI'$ obtained
by adding to a given labelled tree a new root vertex with zero label and joining
it to the old root vertex by an edge of type $\<thin>$ or $\<thick>$ respectively.
It also admits a ``product'' obtained by joining both trees at their roots and adding their 
root labels.
Note that while $\CI$ and $\CI'$ are defined on all of $\CT$, the product
is only defined on some domain of $\CT\times \CT$.
(One has for example $\<Xi2cbis1> \cdot \<I1Xi> = \<I1IXi3b>$ but the
product of $\<Xi2cbis1>$ with $\<I1Xitwo>$ is not defined in $\CT$ since our rule does not allow for
more than two thick edges to enter any given vertex.)

\subsection{Realisations of the regularity structure and renormalisation}
\label{sec:renorm}

Fix now a decomposition $P = K+R$ of the heat kernel on the real line into 
a kernel $K$ that is even (in the spatial variable), integrates to zero, and is compactly supported in a 
neighbourhood of the origin and a `remainder' $R$ that is globally smooth.
Fix also a mollifier $\rho \in \Moll$ with $\Moll$ as in the introduction just before Theorem~\ref{theo:main}. 
Given $\eps \ge 0$ and an 
arbitrary $m$-uple $\zeta = (\zeta_i)_{i\le m}$ of  continuous functions, 
we then define linear maps $\CL_\eps(\zeta)\colon \CT \to \CD'(\R^2)$ by  setting
$(\CL_\eps(\zeta)X^k)(z) = z^k$,
where $X^k$ denotes the tree consisting of a single vertex with label $k\in\N^{2}$ and $z^k = (t,x)^k
= t^{k_0} x^{k_1}$, as well as
$\CL_\eps(\zeta)\, \<generic>_i = \zeta_i$. 
This is extended to all of $\CT$ inductively by setting
\begin{equ}[e:mult]
\CL_\eps(\zeta) (\tau \cdot \bar \tau) = \CL_\eps(\zeta)\,\tau \cdot \CL_\eps(\zeta)\bar \tau\;,
\end{equ}
as well as
\begin{equ}
\CL_\eps(\zeta) (\CI\tau) = K_\eps * \CL_\eps(\zeta)\,\tau\;,\quad
\CL_\eps(\zeta) (\CI'\tau) = K'_\eps * \CL_\eps(\zeta)\,\tau\;, 
\end{equ}
where $K_\eps = \rho_\eps * K$ and $K_\eps'$ is its spatial derivative.
Note that if $\eps > 0$, then \eqref{e:mult} is well-defined also if $\zeta$ is a distribution 
since $\CL_\eps(\zeta)\,\tau$ is then smooth
for all $\tau \in \Trees \setminus \{\<generic>_i\}_i$ and the $\<generic>_i$'s are never
multiplied.

Defining the (spatially periodic) stationary stochastic processes $\xi_i^{(\eps)}$ as 
in \eqref{e:genClass}, we then define random linear maps $\PPi_\geo^{(\eps)}, \PPi_\Ito^{(\eps)}\colon \CT \to \CD'(\R^2)$
by setting \label{Ito and geo approximations  page ref}
\begin{equ}[e:defPPieps]
\PPi_\geo^{(\eps)} = \CL_0(\xi^{(\eps)}) \;,\qquad \PPi_\Ito^{(\eps)}= \CL_\eps(\xi)\;.
\end{equ}
Recall also from \cite[Secs~5\ \&\ 6]{BHZ} that the free algebra $\sscal{\Trees_-}$ generated by 
$\Trees_-$ admits a natural Hopf algebra structure as well as a coaction 
$\Deltam\colon \CT \to \sscal{\Trees_-} \otimes \CT$ given by a natural ``extraction-contraction''
procedure. Characters of $\sscal{\Trees_-}$ are identified with elements of 
$\CT_-^*$, where $\CT_-\subset \CT$ is the 
subspace spanned by $\Trees_-$. A simple recursion then shows the following.

\begin{proposition}\label{prop:BPHZchar}
Given $K$, $\rho$ and $\eps$ as above, there exists for $i \in \types$ 
a unique character $C_{\eps,i}^\BPHZ$ of $\sscal{\Trees_-}$
such that, setting \label{BPHZ renormalisation of ito geo model page ref}
\begin{equ}[e:defPiepshat]
\hPPi^{(\eps)}_i \tau = \big(C_{\eps,i}^\BPHZ \otimes \PPi_i^{(\eps)} \big)\Deltam \tau\;,
\end{equ}
one has $\E (\hPPi_i^{(\eps)} \tau)(0) = 0$ for every $\tau \in \Trees_-$.
\end{proposition}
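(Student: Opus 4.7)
The plan is to proceed by induction on the number of edges of $\tau \in \Trees_-$, exploiting the triangular structure of the extraction-contraction coproduct $\Deltam$. The key structural input, which I would simply quote from~\cite[Secs.~5--6]{BHZ}, is that for every $\tau \in \Trees_-$ the coproduct decomposes as
$$\Deltam \tau = \tau \otimes 1 + 1 \otimes \tau + \sum_{\mcA} \mcA \otimes (\tau/\mcA),$$
where the remaining sum runs over proper extractable subforests $\mcA$ whose irreducible tree components all belong to $\Trees_-$ and are strictly smaller than $\tau$. This combinatorial fact is the principal input; once accepted, the rest of the argument is essentially a clean unfolding of the definitions.

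Applying $(C_{\eps,i}^\BPHZ \otimes \PPi_i^{(\eps)})$ to the displayed identity and taking the expectation at the origin then yields
$$\E (\hPPi_i^{(\eps)} \tau)(0) = C_{\eps,i}^\BPHZ(\tau) + \E \PPi_i^{(\eps)} \tau(0) + \sum_{\mcA} C_{\eps,i}^\BPHZ(\mcA)\,\E \PPi_i^{(\eps)}(\tau/\mcA)(0).$$
Imposing that the left-hand side vanish uniquely determines $C_{\eps,i}^\BPHZ(\tau)$: the character property forces $C_{\eps,i}^\BPHZ(\mcA)$ to factorise over the irreducible components of $\mcA$, each of which is a strictly smaller element of $\Trees_-$ on which $C_{\eps,i}^\BPHZ$ has already been defined at a previous inductive step, while the contracted tree $\tau/\mcA$ is itself handled by the inductive hypothesis through $\PPi_i^{(\eps)}$. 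The base case is the single-noise tree $\<generic>_i$, for which $\Deltam \<generic>_i = \<generic>_i \otimes 1 + 1 \otimes \<generic>_i$ and centeredness of the noise forces $C_{\eps,i}^\BPHZ(\<generic>_i) = 0$.

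What remains is to confirm that the expectations appearing on the right are finite real numbers independent of the evaluation point. Translation invariance of the driving noise gives the latter, so I would focus on finiteness. For $\PPi_\geo^{(\eps)}$, the mollified noise $\xi^{(\eps)}$ is a smooth centred Gaussian field, so that $\PPi_\geo^{(\eps)}\tau$ lies in a finite sum of Wiener chaoses and has moments of all orders. For $\PPi_\Ito^{(\eps)} = \CL_\eps(\xi)$, every noise entering the inductive construction is first convolved against the smooth kernel $K_\eps$ or $K_\eps'$ before being multiplied with anything else---a consequence of the rule $R$, which admits noises only as leaves attached to thin edges---so $\PPi_\Ito^{(\eps)}\tau$ again lies in a finite Wiener chaos. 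The main ``obstacle,'' if there is one, is thus not analytic but combinatorial: it is the triangularity property of $\Deltam$ together with the statement that extraction of negative subforests yields forests of trees with strictly fewer edges, both of which I would import from~\cite{BHZ} rather than re-prove here.
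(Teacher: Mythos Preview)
Your proposal is correct and is precisely the ``simple recursion'' that the paper invokes without spelling out: the triangular structure of $\Deltam$ from \cite{BHZ} lets one solve for $C_{\eps,i}^\BPHZ(\tau)$ uniquely in terms of its values on strictly smaller trees. One small caveat in the It\^o case: for trees with a noise at the root (e.g.\ $\<Xi2>$), $\PPi_\Ito^{(\eps)}\tau$ is a genuine distribution rather than a smooth function, so ``evaluation at $0$'' of its expectation relies on stationarity (the expectation is a constant distribution) together with the non-anticipativity of $\rho$; the paper handles this via the convention recorded in Remark~\ref{rem:noises}.
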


\begin{remark}
The Hopf algebra structure of $\sscal{\Trees_-}$ endows its space $\RR$ of characters 
with a group structure by setting
\begin{equ}
(f \star g)(\tau) = (f \otimes g)\Deltam \tau\;. 
\end{equ}
Furthermore, given any compactly supported $2$-regularising kernel $K$ in the sense
of \cite[Ass.~5.1]{reg}, the map
\begin{equ}[e:actionRG]
(f, \PPi)\mapsto f \star \PPi \eqdef \big(f \otimes \PPi \big)\Deltam \;,
\end{equ}
yields a continuous action of $\RR$ onto the space $\MM_K$ of 
models that are admissible for $K$.
\end{remark}

\begin{remark}\label{rem:noises}
The BPHZ character $C_{\eps,i}^\BPHZ$ is always of the following form. Given 
any $\tau \in \Trees$, we set
$g_{\eps,i}(\tau) = \E \bigl(\PPi_i^{(\eps)}\tau\bigr)(0)$ (with the convention that
$\E \bigl(\PPi_\Ito^{(\eps)}\tau\bigr)(0) = 0$ if $\tau = \sXi_i \bar \tau$ for some
$i \le m$ and some symbol $\bar \tau\in \Trees$)
and we extend this to an
algebra morphism on $\sscal{\Trees}$. Then, there exists a linear
map $\CA^t\colon \Trees_- \to \sscal{\Trees}$ (the ``twisted antipode'') such that 
$C_{\eps,i}^\BPHZ(\tau) = g_{\eps,i}(\CA^t \tau)$, see \cite[Eq.~6.24]{BHZ} where the twisted
antipode is called $\hat \CA$.
Furthermore, the map $\CA^t$ has the property that, viewing the element $\CA^t \tau\in\sscal{\Trees}$ 
as a linear combination of forests with trees in $\Trees$, the noises
appearing in each of these forests are in canonical one-to-one correspondence with the
noises appearing in $\tau$.
\end{remark}

Denote now by $\MM$ the space of all models for the regularity structure $(\CT,\CG)$,
which we turn into a Polish space by endowing it with the sequence of pseudo-metrics 
given in \cite[Sec.~3]{reg}.
We also write $\MM_\eps \subset \MM$ for the models admissible for $K_\eps$ (with $K_0 = K$)
and $\MM_\star \subset [0,1] \times \MM$ for the closed subset consisting of those 
pairs $(\eps, \PPi)$ such that $\PPi \in \MM_\eps$.
We naturally view $\MM_\eps$ as a subset of $\MM_\star$ via the injection $\PPi \mapsto (\eps,\PPi)$.
We know by \cite{reg,BHZ} that for any $\eps > 0$, $\hPPi_\geo^{(\eps)}$ determines a unique
random $\MM_0$-valued random variable, which we again denote by $\hPPi_\geo^{(\eps)}$.
The same argument also works for $(\eps,\hPPi_\Ito^{(\eps)})$, which we view 
as an $\MM_\star$-valued random variable.
The main result of \cite{Ajay} can then be formulated as follows.

\begin{theorem}\label{theo:Ajay}  \label{BPHZ Model  page ref}
Under the above assumptions, there exists an $\MM_0$-valued random variable $\PPi^\BPHZ$
which possibly depends on the choice of decomposition $P = K+R$ but is independent of 
the choice of mollifier $\rho$, and such that for $i \in \types$, $\hPPi_i^{(\eps)}$ converges
as $\eps \to 0$ to $\PPi^\BPHZ$ in probability in $\MM_\star$. 
\end{theorem}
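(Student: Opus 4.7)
The plan is to prove convergence of each approximation separately using the general analytic BPHZ framework, and then to identify the two limits via a comparison argument. Both $\hPPi_\geo^{(\eps)}$ and $\hPPi_\Ito^{(\eps)}$ are renormalised models built from Gaussian noise, and the crux is to show (i) tightness/convergence of each in the appropriate model topology and (ii) that the limits agree, yielding an $\MM_0$-valued random variable $\PPi^\BPHZ$ which is manifestly independent of the mollifier $\rho$.

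First, I would treat $\hPPi_\geo^{(\eps)}$, where the kernel $K$ is fixed and the noise $\xi^{(\eps)}$ is a smooth Gaussian approximation of $\xi$. Using the twisted antipode formula of Remark~\ref{rem:noises}, for each $\tau \in \Trees$ the renormalised quantity $\hPPi_\geo^{(\eps)}\tau$ decomposes into Wiener chaos components indexed by the non-noise edges paired up through the covariance. After BPHZ subtraction, each chaos component is a sum over labelled Feynman diagrams on $K$ and on the mollified covariance $C_\eps = \rho_\eps * C * \rho_\eps$, and the vanishing of $\E\hPPi_\geo^{(\eps)}\tau(0)$ translates into a cancellation of subdivergences analogous to Bogoliubov recursion. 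The core analytic input, multi-scale bounds on renormalised Feynman integrals in the spirit of Weinberg's theorem, yields uniform moment bounds and a Cauchy property, hence convergence in $\MM_0$ to a limit which (by the explicit form of the subtraction) does not depend on $\rho$.

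Second, for the It\^o approximation $\hPPi_\Ito^{(\eps)} = \CL_\eps(\xi)$, the same machinery applies after replacing $K$ by $K_\eps = \rho_\eps * K$ and keeping $\xi$ unmollified. This places $\hPPi_\Ito^{(\eps)}$ in $\MM_\eps$ rather than in $\MM_0$, which is exactly why we work in the ambient space $\MM_\star$: convergence in $\MM_\star$ is the right notion to compare models built with a family of kernels $K_\eps$ converging to $K$. Since $K_\eps \to K$ in the weighted scaled norms required by \cite[Ass.~5.1]{reg} and the covariance of $\xi$ agrees with the limit of that of $\xi^{(\eps)}$, the same Feynman diagram estimates go through with only the kernel labels replaced, producing uniform bounds and a Cauchy property leading to convergence in $\MM_\star$ to some limiting model also in $\MM_0$.

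Third, for identification, I would argue that any limit point of either family is a random admissible model for $K$ whose expectation against every $\tau \in \Trees_-$ vanishes at the origin, because this vanishing condition is imposed at every fixed $\eps>0$ and is stable under the convergence established above. The BPHZ model is the unique admissible model with this property (this is the content of the uniqueness half of the BPHZ theorem, proven by the same twisted antipode characterisation applied in the limit), so the two limits must coincide. The main obstacle is squarely the uniform multi-scale bounds on renormalised Feynman diagrams with generic parameter $\eps$: one must show that the diagrams appearing in both the geometric and the It\^o renormalisation remain integrable uniformly in $\eps$ after BPHZ subtraction, and that the difference of the two renormalised quantities vanishes in the limit despite the two renormalisation constants $C_{\eps,\geo}^\BPHZ$ and $C_{\eps,\Ito}^\BPHZ$ generically differing at every $\eps$. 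This requires the Hepp sector decomposition combined with the forest formula to isolate sub-divergences, and is where the bulk of the analytic work in \cite{Ajay} is concentrated.
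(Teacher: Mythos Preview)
Your overall plan—establish convergence of each approximation separately and then identify the two limits—is sound, and your treatment of steps one and two is essentially what the paper does (both are direct citations of \cite[Thm.~2.31]{Ajay}; you do not need to reprove multi-scale bounds or forest formulae here). The gap is in your identification step. You assert that ``the BPHZ model is the unique admissible model'' with $\E(\PPi\tau)(0)=0$ on $\Trees_-$. This is not correct: admissibility for $K$ together with the centering condition does not single out a random model. The uniqueness in Proposition~\ref{prop:BPHZchar} is uniqueness of the \emph{character} $C_{\eps,i}^\BPHZ$ acting on a \emph{given} base model $\PPi_i^{(\eps)}$; it says nothing about uniqueness among all admissible random models, and in the limit there is no surviving base model on which a character acts. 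Since $\PPi_\geo^{(\eps)}$ and $\PPi_\Ito^{(\eps)}$ are genuinely different base models (different kernels, different noise regularisations), the centering condition alone cannot force their renormalised limits to agree.

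The paper avoids this entirely via a short two-parameter interpolation. Set $\hPPi_\Ito^{(\eps,\delta)}$ to be the BPHZ renormalisation of $\CL_\eps(\xi^{(\delta)})$. At fixed $\eps>0$ the kernel $K_\eps$ is smooth, so $\CL_\eps(\xi^{(\delta)})\to\CL_\eps(\xi)$ and the BPHZ characters converge (using non-anticipativity of $\rho$ for trees with a noise at the root), whence $\hPPi_\Ito^{(\eps,\delta)}\to\hPPi_\Ito^{(\eps)}$ as $\delta\to0$. In the other direction, $\|K_\eps-K\|_{(2-\kappa),m}\lesssim\eps^\theta$ for some $\theta>0$, so \cite[Thm.~2.31]{Ajay} applied with varying kernel gives $\hPPi_\Ito^{(\eps,\delta)}\to\hPPi_\geo^{(\delta)}$ as $\eps\to0$, \emph{uniformly in $\delta$}. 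Commuting the limits then identifies $\lim_\eps\hPPi_\Ito^{(\eps)}$ with $\lim_\delta\hPPi_\geo^{(\delta)}=\PPi^\BPHZ$. The point is that identification comes not from an abstract uniqueness principle but from the joint continuity of BPHZ renormalisation in both kernel and noise, which is already packaged in \cite{Ajay}.
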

\begin{proof}
The convergence of $\hPPi_\geo^{(\eps)}$ to some limit $\PPi^\BPHZ$ independent of $\rho$ follows from
\cite[Thm.~2.31]{Ajay}.
Let now $\delta > 0$ and write $\hPPi_\Ito^{(\eps,\delta)}$ for the BPHZ
renormalisation of $\CL_\eps(\xi^{(\delta)})$. 
For $\tau$ not containing any factor $\sXi_i$, it follows from the smoothness of
$\PPi_\Ito^{(\eps)}\tau$ (at fixed $\eps > 0$) that
\begin{equ}
\lim_{\delta \to 0} \E \bigl(\PPi_\Ito^{(\eps,\delta)}\tau\bigr)(0)
=
\E \bigl(\PPi_\Ito^{(\eps)}\tau\bigr)(0)\;.
\end{equ}
Furthermore, thanks to the fact that $\rho$ is supported on strictly positive times,
one has $\E \bigl(\PPi_\Ito^{(\eps,\delta)}\tau\bigr)(0) = 0$ for $\tau$ containing a factor $\sXi_i$
and $\delta$ small enough, so that 
$\lim_{\delta \to 0} \hPPi_\Ito^{(\eps,\delta)} = \hPPi_\Ito^{(\eps)}$.
Noting that $\|K_\eps - K\|_{(2-\kappa),m}$ is bounded by some small positive power of $\eps$
and applying again \cite[Thm.~2.31]{Ajay}, we conclude that 
$\lim_{\eps \to 0} \hPPi_\Ito^{(\eps,\delta)} = \hPPi_\geo^{(\delta)}$ and that this convergence 
takes place uniformly in $\delta$, thus concluding the proof.
\end{proof}

This convergence takes place in a sufficiently strong topology that it implies
the corresponding convergence of a suitably renormalised version of our starting problem \eqref{e:mainSPDE},
which is the content of Theorem~\ref{theo:BPHZ} below.
Before we turn to this, we remark the following, which is the reason why we singled out
the set $\SS_{\<generic>}$ above.

\begin{lemma}\label{lem:symx}
For $i \in \types$, the BPHZ characters $C_{\eps,i}^\BPHZ$ satisfy $C_{\eps,i}^\BPHZ(\tau) = 0$ for all 
$\tau \in \Trees_- \setminus \SS_{\<generic>}$.
\end{lemma}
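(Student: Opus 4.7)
The proof will rest on two independent symmetry mechanisms. For each $\tau \in \Trees$ let $n(\tau)$ denote the number of noise edges and set
\[
\pi(\tau) \eqdef \#\{e \in E_T : \mff(e) = \<thick>\} + \#\{v \in V_T : \mfn(v) = (0,1)\} \pmod 2\;.
\]
The first step is a direct inspection of the list on page~\pageref{listPage}: every tree at degrees $-\tfrac{3}{2}^-$ or $-\tfrac{1}{2}^-$ has $n(\tau)\in \{1,3\}$, and every tree in the last (unshaded) row of degree $0^-$ has $n(\tau) = 2$ but carries either an unpaired $\<thick>$ edge or an $X_1$ vertex decoration, making $\pi(\tau)$ odd. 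By contrast, every element of $\SS_{\<generic>} = \SS_{\<generic>}^{(2)} \cup \SS_{\<generic>}^{(4)}$ satisfies both $n(\tau)$ even and $\pi(\tau) = 0$.

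The plan is then to exploit the formula $C_{\eps,i}^\BPHZ(\tau) = g_{\eps,i}(\CA^t\tau)$ from Remark~\ref{rem:noises} and prove by induction on the size of $\tau$ that $g_{\eps,i}(\tau') = \E(\PPi_i^{(\eps)}\tau')(0) = 0$ whenever $n(\tau')$ or $\pi(\tau')$ is odd. Two elementary principles supply the vanishing. First, by Gaussianity, this expectation vanishes whenever $n(\tau')$ is odd, since Wick's theorem requires an even number of noise insertions for a non-zero result. Second, since $\rho(t,-x) = \rho(t,x)$ and $K(t,-x) = K(t,x)$, the kernel $K_\eps$ is even and its spatial derivative $K_\eps'$ is odd under $x\mapsto -x$; combined with the reflection-invariance in law of space-time white noise, a straightforward induction on the tree structure shows that $\PPi_i^{(\eps)}\tau'$ transforms by a global factor $(-1)^{\pi(\tau')}$ under this reflection, so evaluation at $0$ forces $\E(\PPi_i^{(\eps)}\tau')(0) = 0$ whenever $\pi(\tau')$ is odd.

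The main technical step, and the one I expect to be the main obstacle, is to propagate these two parities through the BPHZ extraction-contraction underlying $\CA^t$. For the noise count, this is precisely the canonical bijection of Remark~\ref{rem:noises}, which gives $n(\tau) = n(A) + n(\tau/A)$ on every admissible extraction. For the spatial parity, one observes that each $\<thick>$ edge of $\tau$ that is cut by the extraction has its edge decoration $(0,1)$ transferred as a vertex label to the contracted vertex of $\tau/A$, while the remaining $\<thick>$ edges and $(0,1)$-decorations distribute cleanly between $A$ and $\tau/A$; this yields the additivity $\pi(\tau) = \pi(A) + \pi(\tau/A) \pmod 2$. With both additivity properties in hand, the multiplicativity of $g_{\eps,i}$ together with the inductive hypothesis imply that every forest appearing in $\CA^t\tau$ contains at least one tree factor to which one of the two vanishing principles applies, so $g_{\eps,i}(\CA^t\tau) = 0$ whenever $n(\tau)$ or $\pi(\tau)$ is odd, completing the argument.
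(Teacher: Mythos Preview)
Your proposal is correct and takes essentially the same approach as the paper — Gaussianity for trees with an odd noise count and spatial reflection symmetry for the remaining two-noise symbols of degree $0^-$ — only spelling out in more detail the propagation through $\CA^t$ that the paper leaves implicit. One small imprecision worth flagging: in the tree extraction--contraction of \cite{BHZ} no $\<thick>$ edges are literally ``cut'' (every edge of $\tau$ is assigned intact to either $A$ or $\tau/A$); the additivity $\pi(A)+\pi(\tau/A)\equiv\pi(\tau)\pmod 2$ instead holds because the extra $\mfe_\partial$-summation in $\Deltam$ adds a matching $(0,1)$ to an edge of $\tau/A$ and to a vertex of $A$ simultaneously, contributing $2$ to the combined parity.
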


\begin{proof}
For $\deg\tau \in \{-{3\over 2}^-,-{1\over 2}^-\}$, this
follows from the fact that centred Gaussian random variables have vanishing odd moments.
For the remaining symbols of degree $\deg \tau = 0^-$, but containing only two instances of $\<generic>$, 
this follows from the fact that in this case the corresponding processes
$\hPPi_i^{(\eps)} \tau$ are odd in the sense that the identity
\begin{equ}
\bigl(\hPPi_i^{(\eps)} \tau\bigr)(t,-x) \eqlaw -\big(\hPPi_i^{(\eps)} \tau\bigr)(t,x)\;,
\end{equ}
holds in law as stochastic processes.
\end{proof}

\begin{remark}\label{rem:subgroup}
The set of characters $g$ of $\sscal{\Trees_-}$ such that $g(\tau) = 0$ for 
$\tau \in \Trees_- \setminus \SS_{\<generic>}$ forms a subgroup $\RR_{\<generic>}$ of the (reduced) renormalisation
group $\RR$ studied in \cite[Sec.~6.4.3]{BHZ}. Furthermore, this subgroup is canonically
isomorphic to $(\CS_{\<generic>}^*,+)$.
\end{remark}

In particular, we see from the above list that the vertex-labels $\mfn$ do not play any role
as far as the BPHZ characters $C_{\eps,i}^\BPHZ$ are concerned since they vanish on all trees
$T^\mfn_\mff$ such that $\mfn \not\equiv 0$. This suggests the introduction of the
set $\Trees_0 \subset \Trees$ consisting of those trees $T^\mfn_\mff$ with $\mfn \equiv 0$,
which we then simply denote by $T_\mff$. Finally, the following remark will
be useful later on.

\begin{lemma} \label{const:disconnect}
One also has $C_{\eps,i}^\BPHZ(\tau) = 0$ for
$\tau\in \{\<Xi4b>\,,  \<Xi4c>\,,  \<Xi4ba>\,,  \<Xi4cb>\,, \<Xi4ca>\,, \<Xi4cab>\}$.
\end{lemma}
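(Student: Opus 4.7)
The plan is to use the characterisation from Remark~\ref{rem:noises}, namely $C_{\eps,i}^\BPHZ(\tau)=g_{\eps,i}(\CA^t\tau)$, combined with the recursion
\[
C_{\eps,i}^\BPHZ(\tau)=-g_{\eps,i}(\tau)-\sum_{\emptyset\neq\sigma\subsetneq\tau}C_{\eps,i}^\BPHZ(\sigma)\,g_{\eps,i}(\tau/\sigma)
\]
that follows from the defining property $\E\bigl(\hPPi_i^{(\eps)}\tau\bigr)(0)=0$. The first simplification comes from Lemma~\ref{lem:symx}: any sub-forest $\sigma$ whose expansion into connected components contains an odd-noise tree, or a tree with an odd number of thick edges, satisfies $C_{\eps,i}^\BPHZ(\sigma)=0$ and hence drops out of the sum. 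After this truncation, only a handful of admissible extractions remain.

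For each of the six listed trees, I would identify the key structural feature that makes the remaining contributions telescope: each tree contains a distinguished ``branching vertex'' $v$ at which several noise-carrying sub-branches meet, and the surviving extractions $\sigma$ correspond exactly to the Wick pairings of the noises around $v$. Concretely, for $\<Xi4b>$ and $\<Xi4c>$ the relevant $\sigma$ are the $\<Xi2>$ sub-trees formed by $v$ together with one of its noise-bearing neighbours, yielding a contracted tree of shape $\<I1Xitwo>$ (or $\<I1XiIXib>$); for $\<Xi4ba>$, $\<Xi4cb>$, $\<Xi4ca>$, $\<Xi4cab>$ the relevant $\sigma$ are larger, of $\<I1Xitwo>$-type, and the symmetry $\rho(t,-x)=\rho(t,x)$ (hence $\int K(-y)K'(-y)\,dy=0$) kills all mixed thin/thick Wick pairings.

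The crux of the argument is then the factorisation identity
\[
g_{\eps,i}(\tau)=\sum_{\sigma}g_{\eps,i}(\sigma)\,g_{\eps,i}(\tau/\sigma),
\]
where the sum runs over the distinguished sub-forests identified above. This identity is precisely the statement that the Wick-pairing expansion of $g_{\eps,i}(\tau)$ decomposes disjointly according to which of the noises at $v$ are paired with each other inside $\sigma$ and which are paired among the remaining noises in $\tau/\sigma$; no ``crossing'' pairing survives because such pairings would either involve independent noises of different index or would be killed by the integration-by-parts identity above. Substituting this decomposition into the recursion and using $C_{\eps,i}^\BPHZ(\sigma)=-g_{\eps,i}(\sigma)$ on each distinguished $\sigma$ (itself an easy induction, since the proper sub-forests of each $\sigma$ are all killed by Lemma~\ref{lem:symx}) yields $C_{\eps,i}^\BPHZ(\tau)=-g_{\eps,i}(\tau)+\sum_\sigma g_{\eps,i}(\sigma)\,g_{\eps,i}(\tau/\sigma)=0$.

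The main obstacle I anticipate is the bookkeeping for the layered trees $\<Xi4cb>$ and $\<Xi4cab>$, where the branching vertex lies one level above the root, so that one must carefully track both the placement of the distinguished sub-forest within $\tau$ and the signs coming from nested extractions in $\CA^t$. A related subtlety is that the admissible class of sub-forests in the BHZ framework must be made precise (including sub-trees that are not downward-closed at the branching vertex), since the cancellation would fail if only the most restrictive class were allowed. Once the extraction class is pinned down, the factorisation of the Wick expectation reduces to an explicit computation, case by case, using only the translation invariance of the noise and the parity properties of $K$ and $K'$.
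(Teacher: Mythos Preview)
Your approach is essentially correct and follows the same mechanism as the paper's proof: use the BPHZ recursion and show that the Wick factorisation of $g_{\eps,i}(\tau)$ exactly cancels the subtraction terms. The paper frames this more concisely by observing that for each of these trees and each Wick pairing of the four noises, the resulting Feynman diagram is connected but not two-connected (it has a cut vertex, namely your ``branching vertex''), which is precisely what makes the expectation factor as $g(\sigma)\,g(\tau/\sigma)$ and match the unique surviving BPHZ subtraction. The paper then cites \cite[Sec.~4.3]{wong} for $\<Xi4b>$ and $\<Xi4c>$, works one explicit example ($\<Xi4b1>$), and declares the remaining four cases ``essentially the same'' rather than carrying out the full case analysis you outline.

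Your parity argument for the ``mixed thin/thick'' pairings in $\<Xi4ba>$, $\<Xi4cb>$, $\<Xi4ca>$, $\<Xi4cab>$ is correct but somewhat tangential to the main point: those pairings contribute zero to $g_{\eps,i}(\tau)$ by spatial oddness of $K\cdot K'$, and the corresponding sub-forests (of shape $\<I1XiIXi>$) already have $C^\BPHZ = 0$ by Lemma~\ref{lem:symx} for the same parity reason, so they drop out of both sides of your identity consistently. The paper's ``not two-connected'' observation handles all pairings uniformly without needing to separate these cases. Your anticipated bookkeeping obstacle for the layered trees $\<Xi4cb>$ and $\<Xi4cab>$ is real if you insist on running the recursion explicitly, but it dissolves once you adopt the two-connectedness viewpoint: the cut vertex is immediate to locate in each of the six diagrams, and the factorisation follows at once.
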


\begin{proof}
For $\tau \in \{\<Xi4b>\,,  \<Xi4c>\}$, this follows from \cite[Sec.~4.3]{wong}, the argument for the remaining
elements being essentially the same. The reason why this is the case is that, for each of these trees
and for each way of partitioning the four instances of $\<generic>$ into two pairs that are then connected by 
two new edges, the resulting graph is connected but not two-connected.

For example, it follows from the general prescription \cite{BHZ} for the BPHZ renormalisation that
one has the identity
\begin{equ}
\scal{C_{\eps,\geo}^\BPHZ,\<Xi4b1>}
= \E \big(\PPi_\geo^{(\eps)}\<Xi4b1>\big)(0)
- \E \big(\PPi_\geo^{(\eps)}\<Xi2>\big)(0)\,\E \big(\PPi_\geo^{(\eps)}\<IXitwo>\big)(0) = 0\;,
\end{equ}
where the last identity follows from Wick's formula and we used $\<generic>$ and $\<genericb>$ to denote
two instances of $\sXi_i$ with different values for $i$.
\end{proof}

\subsection{BPHZ theorem}
\label{sec:BPHZthm}

In order to formulate the ``BPHZ theorem'' of \cite{Ajay,BCCH} in our context, we 
first introduce a valuation $\Upsilon_{\Gamma,\sigma}$ which maps
every linear combination of trees in $\Trees_0$ into a smooth function $\R^d \times \R^d \to \R^d$ in the following way. For every tree $T_\mff \in \Trees_0$, we set \label{Evaluationmap1 page ref}
\begin{equs}\label{recursive_Upsilon}
\ & \big(\Upsilon_{\Gamma,\sigma}T_\mff\big)^{\alpha}(u,q) = 
\\ \ & = \sum_{\beta:{\bf E}\to\{1,\ldots,d\}} \prod_{v \in V_{T}} \left[\Big(\prod_{e \in \mathbb{E}^{+}_{\graftI }(v) } \partial_{u^{\beta_e}}\Big)\Big( \prod_{e \in \mathbb{E}^{+}_{\graftID }(v)} \partial_{q^{\beta_e}}\Big)
\bigl(\bar \Upsilon^{\beta_{e_v}}_{\Gamma,\sigma}(v)\bigr)(u,q)\right]
\end{equs}
where 
\begin{itemize}
\item ${\bf E}$ is the set of edges $e\in E_T$ of type $\<thick>$ or $\<thin>$, or in other words not of type $\<generic>_i$.
\item For $v$ a leaf, we set $\bar \Upsilon^{\beta}_{\Gamma,\sigma}(v)(u,q) = 1$,
for $v$ with an incoming edge of type $\<generic>_i$ for some $i$, we set 
$\bar \Upsilon^{\beta}_{\Gamma,\sigma}(v)(u,q) = \sigma_i^\beta(u)$ (the index $i$ is uniquely determined since
our rules $R$ don't allow to have more than one incoming edge of type $\<generic>$ for the same vertex), while we
set
\begin{equ}[e:substGamma]
\bar \Upsilon^{\beta}_{\Gamma,\sigma}(v)(u,q) = \Gamma^\beta_{\gamma \eta}(u)\, q^\gamma q^\eta\;,
\end{equ}
otherwise.
\item $\mathbb{E}^{+}_{\graftI }(v)$ and $\mathbb{E}^{+}_{\graftID }(v)$ are the sets of edges with 
decorations $\<thin>$ and $\<thick>$ respectively coming into $v\in V_T$.
\item We use the convention $\beta_{e_v} =\alpha$ for $v = \rho_T$, the root of $T$.
\end{itemize}

For instance, writing $\<generic>$ for $\sXi_i$ and $\<genericb>$ for $\sXi_j$, one has
\begin{equ}
\big(\Upsilon_{\Gamma,\sigma}\<Xi4eabisc1quater>\big)^{\alpha}
= 2\d_\eta \Gamma^\alpha_{\beta\gamma}(u)\,\sigma_j^\eta(u)\,\sigma_i^\gamma(u)\, \d_\zeta \sigma_i^\beta(u)\, \sigma_j^\zeta(u)\;. 
\end{equ}
The reason for the factor $2$ appearing here is that \eqref{e:substGamma}
is differentiated (twice) with respect to the $q$-variable.
By linearity, we extend the definition of $\Upsilon_{\Gamma,\sigma}$ to the linear vector space generated by  $\Trees_0$.

\begin{remark}
Note that for all $T_\mff \in \SS_{\<generic>}$,
whenever a vertex isn't a leaf and has no incoming edge of type $\<generic>_i$, then
it has exactly two incoming edges of type \<thick>. As a consequence, $\Upsilon_{\Gamma,\sigma}T_\mff$
only depends on $u$, in which case we drop the $q$-dependence from our notations.
\end{remark}

We also assign to any $\tau \in \Trees_0$ a symmetry factor $S(\tau)$ given by the number
of tree automorphisms of $\tau$ (i.e.\ the number of graph isomorphisms of the corresponding tree $T$ which 
furthermore preserve its root and all of its labels). For example, assuming that all instances of
$\<generic>$ have the same index, we have
\begin{equ}
S(\<Xi2>) = 1\;,\quad S(\<I1Xitwo>) = 2\;,\quad S(\<Xitwo>) = 2\;, \quad S(\<2I1Xi4>) = 8\;,\quad S(\<Xi4b>) = 6\;.
\end{equ}
This endows $\CS_{\<generic>}$ with a Hilbert space structure by postulating that,
for $\tau, \sigma \in \Trees_0$, one has
$\scal{\tau,\sigma} = 0$ for $\tau \neq \sigma$
and $\scal{\tau,\tau} = S(\tau)$ \label{scalar product page ref}. 

\begin{remark}\label{rem:Sdual}
This allows us to identify $\CS_{\<generic>}$ with its dual space
$\CS_{\<generic>}^*$ in the usual way. Combining this with Lemma~\ref{lem:symx} and Remark~\ref{rem:subgroup},
we also identify $\RR_{\<generic>}$ with $(\CS_{\<generic>},+)$. This is in particular the reason
why in Theorem \ref{theo:BPHZ} we can apply $\Upsilon_{\Gamma,\sigma}$ to the functional $C\in\RR_{\<generic>}$.
\end{remark}

For some arbitrary but fixed 
$a \in (0,{1\over 2})$, we introduce the space $\CC^a_\star$ of parabolic $a$-H\"older
continuous functions $u \colon \R_+\times S^1 \to \R^d$ with possible blow-up at finite time.
Formally, we define $\CC^a_\star$ as the set of pairs $(f,t_\star)$ where 
$t_\star \in (0,\infty]$ and 
$f \colon [0,t_\star) \times S^1 \to \R^d$ is a locally $\alpha$-H\"older continuous 
function such that, if $t_\star < \infty$, its $\alpha$-H\"older norm on $[0,t]\times S^1$ 
diverges as $t \uparrow t_\star$. 
This is a Polish space when endowed with the system of pseudo-metrics given by
 $\{d_L \}_{L \ge 1}$,
where $d_L(F,G) = \|S_L (F)-S_L (G)\|_{\CC^a([0,L])}$ with $S_L (f,t_\star)$ equal to  the function $f$, 
stopped when either time or its space-time
parabolic $\CC^a$ norm exceeds $L$. (If $t_\star <\infty$, then this happens before time $t_\star$ by assumption, 
so this  is a well-defined operation.) We also introduce the space $\tilde \CC_\star^a$ of
maps $\CC^a(S^1,\R^d) \to \CC_\star^a$ that are uniformly continuous on bounded
sets, endowed with the topology of uniform convergence
on bounded sets, as well as the space $\CB_\star^a$ \label{CBstar page def} of
continuous maps from $\CC^a(S^1,\R^d)$ to the space of probability measures on $\tilde \CC_\star^a$.

With this notation, the main results of \cite{reg,Ajay,BHZ,BCCH} can be combined into the following statement
where $\rho_\eps * G$ should be read as a somewhat cumbersome way of simply writing $G$ in the case $\eps = 0$.
\begin{theorem}\label{theo:BPHZ}
For every 
choice of smooth $\Gamma$, $\sigma$, $K$, $h$ and every $a \in (0,{1\over 2}-\kappa)$, 
there exists a continuous ``solution map''
$\CA(\Gamma,\sigma,K,h) \colon \MM_\star \to \tilde \CC_\star^a$ 
such that, for every $C \in \RR_{\<generic>}$, every continuous $\zeta$, and setting
\begin{equ}
\MM_\star \ni Z = \bigl(\eps, C \star \CL_\eps(\zeta)\bigr)\;,
\end{equ} 
(with $\star$ as in \eqref{e:actionRG}), $\CA(\Gamma,\sigma,K,h)(Z)(u_0)$ solves the equation
\begin{equs}
\d_t u^\alpha &= \d_x^2 u^\alpha + \rho_\eps * \bigl[\Gamma^\alpha_{\beta\gamma}(u)\,\d_x u^\beta \d_x u^\gamma + K^\alpha_\beta(u)\,\d_x u^\beta \\
&\qquad + h^\alpha(u) +  \sigma^\alpha_i(u)\,\zeta_i + \big(\Upsilon^\alpha_{\Gamma,\sigma}C\big)(u)\bigr]\;,\label{e:generalEquation}
\end{equs}
for $t > 0$ and $u^\alpha(t,\cdot) = u_0^\alpha$ for $t \le 0$. 
Furthermore, $\CA(\Gamma,\sigma,K,h)(Z)$ is jointly continuous in $\Gamma$, $\sigma$, $K$, $h$ and $Z$
provided that the first four functions on $\R^d$ are equipped with the topology of convergence in the 
$\CC^6$ topology on compact sets.
\end{theorem}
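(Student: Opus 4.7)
The statement is essentially a specialisation of the combined machinery of \cite{reg,BHZ,Ajay,BCCH} to our particular regularity structure, so the plan is to set up the relevant abstract fixed point problem and invoke these results systematically, checking that the counterterm predicted by the general BPHZ/BCCH formalism matches our valuation $\Upsilon_{\Gamma,\sigma}$.

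The first step is to recast \eqref{e:generalEquation} as an abstract fixed point in a space of modelled distributions over $(\CT,\CG)$. Writing $U = (U^\alpha)$ for a vector-valued modelled distribution taking values in the sector generated by symbols conforming to the rule $R$, and $V^\alpha = \d_x U^\alpha$ (defined via the abstract differentiation operator corresponding to edges of type $\<thick>$), we look for $U$ solving
\begin{equs}
U^\alpha &= \CP \mathbf{1}_{t>0}\Bigl(\Gamma^\alpha_{\beta\gamma}(U)\,V^\beta V^\gamma + K^\alpha_\beta(U)\,V^\beta + h^\alpha(U) \\
&\qquad + \sigma^\alpha_i(U)\,\sXi_i + \Upsilon^\alpha_{\Gamma,\sigma}(C)(U)\Bigr) + G u_0^\alpha\;,
\end{equs}
where $\CP$ denotes the abstract integration operator associated to the heat kernel decomposition $P = K+R$ and $G$ the harmonic extension of the initial condition. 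Here $\Gamma^\alpha_{\beta\gamma}(U)$ and similar nonlinear expressions are interpreted via composition of smooth functions with modelled distributions, and the multiplication is the one on $\CT$ (which is well-defined on all relevant products by subcriticality and the choice of rule).

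The second step is to apply the abstract local existence and continuity theorem (\cite[Thm~7.8]{reg}, extended in \cite[Thm~2.21]{BCCH} to systems): since $a \in (0,{1\over 2}-\kappa)$ the relevant sector has positive regularity, the rule $R$ is subcritical and normal with all products admissible, so we obtain a unique maximal modelled distribution solution depending continuously on the underlying model (and Lipschitzly on $\Gamma$, $K$, $\sigma$, $h$, $u_0$ in $\CC^6$). Reconstruction then yields a distribution on $[0,t_\star)\times S^1$ which is automatically a continuous function in $\CC^a_\star$ by the Schauder-type estimates and the degree of the lowest-order symbols. The blow-up alternative follows by standard bootstrap: iterating the local fixed point until the $\CC^a$ norm diverges defines $t_\star$, which agrees with the definition of $\CC_\star^a$ above.

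The third and most delicate step is to verify that, when the input model is $C \star \CL_\eps(\zeta)$ with $\zeta$ continuous (so the model is the canonical lift of smooth functions twisted by $C \in \RR_{\<generic>}$), the reconstruction of $U$ coincides with the classical solution to \eqref{e:generalEquation}. For $C$ the identity character this is automatic from the classical Schauder theory: the canonical lift commutes with multiplication and convolution against $K_\eps$, and the remainder $R$ is handled by postprocessing as in \cite[Sec.~7]{reg}. For general $C \in \RR_{\<generic>}$, one invokes the BCCH renormalisation formula \cite[Thm~2.21]{BCCH}: the effect of acting on the model by $C$ is precisely to modify the right-hand side of the equation by a local counterterm given by contracting $C$ against the elementary differentials of the nonlinearity, and a direct inspection of \eqref{recursive_Upsilon} shows that these elementary differentials are exactly $\Upsilon^\alpha_{\Gamma,\sigma}(C)$ (the identification of Remark~\ref{rem:Sdual} between $\RR_{\<generic>}$ and $\CS_{\<generic>}$ is what allows one to apply $\Upsilon_{\Gamma,\sigma}$ to $C$). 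This identification is the main technical point; it is somewhat routine but requires care, since one must check that each tree in $\SS_{\<generic>}$ corresponds to the claimed partial derivative pattern, the symmetry factor $S(\tau)$ from the scalar product on $\CS_{\<generic>}$ matches the combinatorial factor arising in BCCH, and that trees involving polynomial labels (excluded from $\Trees_0$) genuinely do not contribute, as guaranteed by Lemma~\ref{lem:symx}.

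Finally, joint continuity in $(\Gamma,K,\sigma,h,Z)$ follows from the Lipschitz continuity provided by the abstract fixed point combined with continuity of composition by smooth functions in the $\CC^6$ topology (needed because the number of derivatives consumed in the relevant Taylor expansions is controlled by the number of edges in trees of $\Trees_-$, which is bounded). Defining $\CA(\Gamma,\sigma,K,h)(Z)(u_0)$ as the reconstruction of $U$ thus gives the desired map, with the stated continuity properties. The main obstacle I anticipate is bookkeeping for the BCCH identification above, in particular checking that the combinatorial and symmetry factors line up, but this is purely mechanical once the correspondence between symbols in $\SS_{\<generic>}$ and differential expressions has been tabulated.
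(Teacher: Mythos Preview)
Your approach is essentially the paper's: invoke \cite[Thm.~2.20]{BCCH} to identify the abstract fixed point solution with the classical PDE, noting that the symmetry factor $S(\tau)$ in the BCCH formula is absorbed by the identification of $\CS_{\<generic>}$ with its dual (Remark~\ref{rem:Sdual}). The paper does not spell out your three steps but simply cites the BCCH result directly, so your expansion is correct in spirit.

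There is, however, one genuine point you have glossed over. Your fixed point formulation tacitly treats the problem as a standard abstract fixed point in $\CD^\gamma$ over the model $C\star\CL_\eps(\zeta)$, but for $\eps>0$ the target equation \eqref{e:generalEquation} is not local in time: the convolution $\rho_\eps*[\ldots]$ on the right-hand side means the evolution at time $t$ depends on the solution over a window of width $\CO(\eps^2)$ in the past. Equivalently, the indicator $\mathbf{1}_{t>0}$ does not commute with $\rho_\eps*$, so the identification of the abstract integration (built from $K_\eps=\rho_\eps*K$) with the mild formulation of \eqref{e:generalEquation} is not automatic. The paper handles this by noting that one must carry a short strip of memory of order $\eps^2$ when restarting the local fixed point, and that the continuity of the solution map as $\eps\to0$ then follows from the time continuity built into $\CC_\star^a$. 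This is a minor modification of the standard argument, but it is the only place where the case $Z\in\MM_\eps$ with $\eps>0$ differs from $Z\in\MM_0$, and your proposal does not mention it.
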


\begin{proof}
For $Z \in \MM_0$, this is the content of \cite[Thm.~2.20]{BCCH} (the symmetry factor $S(\tau)$
appearing in \cite[Eq.~2.19]{BCCH} is created by the identification of $\CS_{\<generic>}$ with $\CS_{\<generic>}^*$ 
as already mentioned in Remark~\ref{rem:Sdual}).
The proof for general $Z \in \MM_\star$ is virtually identical. The only difference is that the evolution
now has some small amount of memory, so that one needs to keep track of it over a time interval
of order $\eps^2$ when restarting the fixed point argument. The continuity as $\eps \to 0$ is
taken care of by the time continuity built into the definition of the spaces $\CC_\star^a$.
\end{proof}

\begin{remark}
The third argument $K$ of $\CA$ does not play much of a role from the point of view of the arguments
in this article since it neither contributes to the counterterm $\Upsilon^\alpha_{\Gamma,\sigma}C$ nor
is affected by it. We will therefore from now on only consider the case $K=0$ and drop this argument
in order to keep notations shorter.
\end{remark}

\subsection{Reduced trees}

\label{sec:reduced}

Define now $\SS_{0}$ \label{one noise type  page ref} in the same way as $ \SS_{\<generic>}  $, see Page \pageref{SS4 page ref},
but with $m = 1$, so that there
is only one ``noise type'', and denote by $S_0(\tau)$ the corresponding symmetry factor. 
%We also define $\SS_{0}^{(k)}$ as the subset of trees in $ \SS_{0} $ containing $ k  $ noises where $ k \in \lbrace 2,4\rbrace $ and  $  \SS_0 = \SS_{0}^{(2)} \sqcup  \SS_{0}^{(4)} $.
Given $\tau \in \SS_0$, we write $\CP_\tau^{(2)}$ for the set of all partitions of its noise set
$N_\tau$ consisting of only two-elements blocks.
%(If $\tau$ happens to have an odd number of noises, then $\CP_\tau^{(2)}$ is empty.)
We then define $\SS$\label{SShat page ref} %(resp.  $\hat \SS_{2}$,  $\hat \SS_{4}$) 
as the set of equivalence classes of pairs $(\tau,P)$ 
with $\tau \in \SS_{0}$ %(resp. $\SS^{(2)}_{0}$, $\SS^{(4)}_{0}$ ) 
and $P \in \CP_\tau^{(2)}$, where $(\tau,P)\sim (\tau',P')$ if there exists
a tree isomorphism mapping $\tau$ to $\tau'$ and $P$ to $P'$. For example, we want to make sure that
we identify partitions of the form $\<Xi4eabbisc1s>$ and $\<Xi4eabbisc1perms>$, where noises belong
to the same block if and only if they have the same colour; moreover there are three inequivalent pairings for $\<Xi4s>$, given by $\<Xi4_1s>$, $\<Xi4c1s>$ and $\<Xi4_2s>$,
while all pairings of $\<Xi4bs>$ are equivalent to $\<Xi4b1s>$.
The full list of elements of $\SS$ obtained in this way is given by
\begin{equs}[e:SS]
{} & \<Xi2s>  \,, \<I1Xitwos> \,,  \<Xi4_1s> \,, \<Xi4c1s>\,, \<Xi4_2s>\,, \<cI1Xi4as> \,,   \<I1Xi4ac1s> \,, \<I1Xi4ac2s> \,, \<cI1Xi4bs> \,, \<I1Xi4bc1s> \,, \<cI1Xi4cs> \,,  \<I1Xi4cc1s> \,, \<I1Xi4cc2s>     \,, \<cI1Xi4abs> \,, \<I1Xi4abc1s> \,,  \<cI1Xi4bcs> \,, \<I1Xi4bcc1s> \,, \<cI1Xi4acs> \,, \<I1Xi4acc1s> \,,   \<I1Xi4acc2s> \,,
\\ & \<I1Xi4abcc2s> \,, \<I1Xi4abcc1s> \,,   \<2I1Xi4c2s> \,,    \<2I1Xi4c1s> \,,  \<2I1Xi4bc3s> \,,  \<2I1Xi4bc1s> \,,  \<2I1Xi4bc2s> \,,  \<2I1Xi4cc2s> \,,  \<2I1Xi4cc1s> \,, \<Xi4b1s> \,, \<Xi4ba1s> \,, \<Xi4ba2s> \,, \<Xi41s> \,, \<Xi42s> \,,   
  \<Xi4cbc1s>\,,  \<Xi4cbc2s>\,, \\ & \<Xi4ca1s> \,, \<Xi4ca2s> \,,    \<Xi4cabc1s>\,, \<Xi4cabc2s>\,, \<Xi4ec3s> \,,  \<Xi4ec1s> \,,  \<Xi4ec2s> \,, \<Xi4eac2s> \,,    \<Xi4eac1s> \,,  \<Xi4eabisc3s> \,, \<Xi4eabisc1s> \,, \<Xi4eabisc2s> \,, \<Xi4ebc2s> \,,  \<Xi4ebc1s> \,,\<Xi4eabc2s> \,,  \<Xi4eabc1s> \,, \<Xi4eabbisc2s> \,, \<Xi4eabbisc1s>\;.
\end{equs}
We denote by $\CS$ %(resp. $\CS_2$, $\CS_4$) 
\label{CS page ref2} the vector space generated by those elements $(\tau,P) \in \SS$. %(resp. $\hat \SS_2$, $\hat \SS_4$) . 

Given $\tau \in \SS_0$, we write $\CL_\tau$ for the set
of all maps $f \colon N_\tau \to \{\<generic>_1,\ldots,\<generic>_m\}$
and, for $f \in \CL_\tau$, we write
$f(\tau) \in \SS_{\<generic>}$ for the tree identical to $\tau$, but with the noise type
of each leaf $u \in N_\tau$ changed into $f(u)$. 
Given a partition $P$ of $N_\tau$ and  $f \colon N_\tau \to \{\<generic>_1,\ldots,\<generic>_m\}$,
we also write $f \succ P$ 
%to denote that ``$P$ refines $f$'' in the sense that 
%the partition of $N_\tau$ given by all preimages of singletons under $f$ is coarser than $P$, or in other words that 
if $f$ is constant on each block of $P$.

The main result of this section is then that we can view our renormalisation constants as 
elements of  $\CS$.
To see this, we first define $\iota:\CS\to\CS_{\<generic>}$
\begin{equ} \label{def:iota}
\iota (\tau,P) \eqdef \sum_{f\,:\, f\succ P} f(\tau)\;,
\end{equ}
so that for example
\[
\iota \big(\<Xi4eabisc1quater>\big)=\sum_{i,j} \<Xi4eabisc1tris>\;.
\]

\begin{lemma}\label{lem:iota}
For every $\eps > 0$ one has $\{C^\BPHZ_{\eps,\geo},C^\BPHZ_{\eps,\Ito}\} \subset \iota \CS$.
\end{lemma}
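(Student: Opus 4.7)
The plan is to show that the character $C^{\BPHZ}_{\eps,i}$ (for $i\in\types$), viewed as an element of $\CS_{\<generic>}$ via the identification of Remark~\ref{rem:Sdual}, depends on the noise colouring only through the partition it induces on the noise set, and is moreover symmetric under permutations of colours. Using the structure of $\iota$, this is exactly what is needed to place it in $\iota\CS$.

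First, by Remark~\ref{rem:noises}, one has $C^{\BPHZ}_{\eps,i}(\tau)=g_{\eps,i}(\CA^t\tau)$ where $\CA^t\tau=\sum_{j} c_j\, F_j$ is a linear combination of forests in $\sscal{\Trees}$, and the noises of each $F_j$ are in canonical bijection with those of $\tau$. Since the noise colouring does not affect the combinatorics of $\CA^t$ but only decorates the resulting leaves, we may fix $\tau_0\in\SS_0$ (with a single noise type), write $\CA^t\tau_0=\sum_j c_j\,F_j^{(\tau_0)}$, and, for any colouring $f\in\CL_{\tau_0}$, obtain $\CA^t(f(\tau_0))=\sum_j c_j\,F_j^{(\tau_0),f}$, where $F_j^{(\tau_0),f}$ carries the colouring transferred via the canonical bijection.

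Next, for each forest $F$ with coloured noise leaves at space-time points $(z_u)_{u\in N_F}$, since $g_{\eps,i}$ is an algebra morphism built from the Gaussian expectations $\E[\PPi_i^{(\eps)}\,\cdot\,](0)$ and the noises $\xi_i^{(\eps)}$ are i.i.d.\ centred Gaussians with $\E[\xi_i^{(\eps)}(z)\xi_j^{(\eps)}(z')]=\delta_{ij}\,\kappa_\eps(z-z')$, Wick's theorem gives
\begin{equ}
g_{\eps,i}(F)\;=\;\sum_{P\in\CP^{(2)}(N_F)} \mathbbm{1}_{f\succ P}\; I_{\eps,i}(F,P)\;,
\end{equ}
where $\CP^{(2)}(N_F)$ consists of pairings of $N_F$ compatible with the factorisation of $F$ into trees, and $I_{\eps,i}(F,P)$ is a deterministic integral independent of $f$. (All odd moments vanish, matching the fact that $\iota\CS$ only contains trees with an even number of noises.) Combining this with the decomposition of $\CA^t\tau_0$ and transporting the pairings to $N_{\tau_0}$ via the canonical bijection, we obtain
\begin{equ}[e:wickform]
C^{\BPHZ}_{\eps,i}\bigl(f(\tau_0)\bigr)\;=\;\sum_{P\in\CP^{(2)}(N_{\tau_0})}\mathbbm{1}_{f\succ P}\;\tilde I_{\eps,i}(\tau_0,P)\;,
\end{equ}
with $\tilde I_{\eps,i}(\tau_0,P)\eqdef\sum_j c_j\,I_{\eps,i}(F_j^{(\tau_0)},P_j)$, summed over those $P_j$ projecting to $P$. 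Crucially, the right-hand side of \eqref{e:wickform} depends on $f$ only through the relation $f\succ P$, and is invariant under $\Aut(\tau_0)$ acting diagonally on $(f,P)$, so $\tilde I_{\eps,i}$ descends to a well-defined function on $\hat\SS$.

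It remains to assemble these data into an element of $\iota\CS$. Under the identification of Remark~\ref{rem:Sdual}, the character $C^{\BPHZ}_{\eps,i}$ corresponds to $\sum_{\sigma\in\SS_{\<generic>}} S(\sigma)^{-1} C^{\BPHZ}_{\eps,i}(\sigma)\,\sigma$. Inserting \eqref{e:wickform} and swapping the order of summation gives an expression of the form $\sum_{[(\tau_0,P)]\in\hat\SS} a_{\eps,i}(\tau_0,P)\,\iota(\tau_0,P)$ for some scalars $a_{\eps,i}(\tau_0,P)\in\R$, once one tracks the combinatorial factors relating the symmetry group $\Aut(\tau_0,P)$ to the orbit sizes appearing in $\iota(\tau_0,P)=\sum_{f\succ P}f(\tau_0)$. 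Since this manipulation is entirely character-theoretic and applies equally to $i=\geo$ and $i=\Ito$ (because Wick's formula is the same in both cases, only the kernel $\kappa_\eps$ changes), the statement follows for both BPHZ characters simultaneously. The main technical obstacle in writing this out in full is a careful bookkeeping of the automorphism-group factors to deduce the scalar $a_{\eps,i}(\tau_0,P)$ from $\tilde I_{\eps,i}(\tau_0,P)$; once the conceptual reduction to \eqref{e:wickform} is in place, this is a routine combinatorial check.
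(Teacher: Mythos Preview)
Your proposal is correct and follows essentially the same route as the paper: both hinge on the observation (via Remark~\ref{rem:noises}) that the twisted antipode only transports noise colourings, so that Wick's formula for the i.i.d.\ Gaussian noises reduces $C^\BPHZ_{\eps,i}(f(\tau_0))$ to a sum over pairings of $N_{\tau_0}$ that depends on $f$ only through $f\succ P$. The paper packages your \eqref{e:wickform} as a separate Lemma (Lemma~\ref{lem:Wick}) and carries out the symmetry-factor bookkeeping explicitly via the passage $S(f(\tau))\to S_0(\tau)$ using an orbit-stabiliser count, which is exactly the ``routine combinatorial check'' you defer.
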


\begin{proof}
Fix $\eps > 0$, $c \in \{C^\BPHZ_{\eps,\geo},C^\BPHZ_{\eps,\Ito}\}$.
As a consequence of the definition of the scalar product on $\CS_{\<generic>}$ given on page \pageref{scalar product page ref}, we have the identity
\begin{equ}[e:Parseval]
c = \sum_{\tau \in \SS_{\<generic>}} \tau{\scal{c,\tau} \over S(\tau)}\;,
\end{equ}
where $S(\tau)$ denotes the symmetry factor defined in the previous section.

We then claim that one can rewrite \eqref{e:Parseval} as
\begin{equ}[e:Parseval2]
c = \sum_{\tau \in \SS_{0}} \sum_{f \in \CL_\tau} f(\tau) {\scal{c,f(\tau)} \over S_0(\tau)}\;.
\end{equ}
Indeed, note that given $\tau_{\<generic>} \in \SS_{\<generic>}$ there exists a \textit{unique}
$\tau \in \SS_0$ for which there exists $f \in \CL_\tau$ such that $f(\tau) = \tau_{\<generic>}$.
The choice of $f$ on the other hand is \textit{not} unique since elements of $\SS_{\<generic>}$ are
really equivalence classes modulo tree isomorphisms.

Writing $G(\tau)$ for the group of tree isomorphisms of $\tau$ and
$G_f(\tau) \subset  G(\tau)$ for those isomorphisms $g$ such that $f \circ g = f$, \eqref{e:Parseval2}
then follows from the fact that $S_0(\tau) = |G(\tau)|$ and $S(f(\tau)) = |G_f(\tau)|$, so that 
$S_0(\tau)/S(f(\tau))$ counts precisely the number of distinct maps $\bar f \in  \CL_\tau$
such that ${\bar f}(\tau) = f(\tau)$ (since $({f\circ g})(\tau) = f(\tau)$ for every $g \in G(\tau)$).

We further note that, given any $P \in \CP_\tau$ and any $f \sim P$, i.e.\ such that the partition generated
by $f$ is \textit{equal} to $P$, the exchangeability of the noises implies that 
$\scal{c,f(\tau)}$ is independent of the particular choice of such an $f$, so we denote it by $\scal{c,(\tau,P)}$
instead. With this notation, Wick's formula then implies the following result, the proof of which
is given below.

\begin{lemma}\label{lem:Wick}
For every $\tau \in \SS_0$, every $\eps > 0$, every $c \in \{C^\BPHZ_{\eps,\geo},C^\BPHZ_{\eps,\Ito}\}$, 
and every $f \in \CL_\tau$, one has the identity
\begin{equ}
\scal{c,f(\tau)} = \sum_{P\in \CP_\tau^{(2)}\,:\, f\succ P} \scal{c,(\tau,P)}\;.
\end{equ}
\end{lemma}
As a consequence of this lemma, we conclude that 
\begin{equ}[e:expansion]
c = \sum_{\tau \in \SS_{0}} \sum_{P\in \CP_\tau^{(2)}} {\scal{c,(\tau,P)} \over S_0(\tau)} \sum_{f\,:\, f\succ P} f(\tau)
=\sum_{\tau \in \SS_{0}} \sum_{P\in \CP_\tau^{(2)}} {\scal{c,(\tau,P)} \over S_0(\tau)} \iota (\tau,P) \;,
\end{equ}
as claimed. This concludes the proof of Lemma \ref{lem:iota}.
\end{proof}

\begin{proof}[of Lemma~\ref{lem:Wick}]
Consider the case $i = \mathrm{geo}$ first and fix $\tau$ and $f$ as in the statement, as well as $\eps > 0$. 
Given a subset $K$ of $N_\tau$, we denote by 
$E_f^K$ the function on $(\R^2)^K$ given by 
\begin{equ}[e:prodE]
E_f^K(z) = \E \Big(\prod_{u \in K} \xi^{(\eps)}_{f(u)}(z_u)\Big)\;.
\end{equ}
Similarly, for any  $P \in \CP^{(2)}(K)$, the set of partitions of $K$ into sets of size two,
we write $E_P^K = E_g^K$ for any $g$ which generates the partition $P$.
Wick's formula can then be stated as
\begin{equ}
E_f^K = \sum_{P \in \CP^{(2)}(K)\,:\, f \succ P} E_P^K\;.
\end{equ}
(Note that this sum is empty if $|K|$ is odd.)

It also follows from Remark~\ref{rem:noises} that there exist finitely many
distributions $\eta_j$ on $(\R^2)^{N_\tau}$ and partitions $O_j$ of $N_\tau$ such that 
\begin{equ}
\scal{c,f(\tau)} = \sum_j \int \Big(\prod_{K \in O_j} E_f^K(z|K)\Big)\,\eta_i(dz)\;,
\end{equ} 
where, for $z \in (\R^2)^{N_\tau}$, we denote by $z|K$ its restriction to $K$, so that 
\begin{equ}
\scal{c,f(\tau)} = \sum_j \sum_{P \in \CP_\tau^{(2)} \atop P \prec O_j \,\&\, P \prec f} \int \Big(\prod_{K \in O_j} E_{P}^K(z|K)\Big)\,\eta_i(dz)\;,
\end{equ} 
where $P \prec O_j$ means that $P$ refines $O_j$. It now suffices to note that if, given 
$P \in \CP_\tau^{(2)}$, we write $f_P$ for any one function with $f_P \sim P$, then we have
\begin{equ}
\scal{c,f(\tau)} = \sum_{P \in \CP_\tau^{(2)}\,:\, P \prec f} \sum_j  \int \Big(\prod_{K \in O_j} E_{f_P}^K(z|K)\Big)\,\eta_i(dz)
= \sum_{P \in \CP_\tau^{(2)}}\scal{c,(\tau,P)}\;,
\end{equ} 
as desired. This is because, if there happens to be some $j$ for which $P$ does not
refine $O_j$, then one of the factors $E_{f_P}^K(z|K)$ for $K \in O_j$ necessarily vanishes.
The proof for $i = \textrm{It\^o}$ is identical, the only difference being that 
$E_f^K$ is now a distribution (a sum of products of Dirac distributions since the $\xi^{(\eps)}$ in
\eqref{e:prodE} should be replaced by $\xi$'s), while 
the $\eta_i$ are smooth $\eps$-dependent functions. 
\end{proof}

\begin{remark}\label{rmq:symfactpairing}
Given a tree $\tau \in \SS_0$ and a pairing $P$ of its noises, we have a natural symmetry factor
$S(\tau,P)$ which counts the number of tree isomorphisms of $\tau$ that keep the pairing $P$ intact.
With this notation, we can then rewrite \eqref{e:expansion} as
\begin{equ}[e:expansionNice]
c = \sum_{(\tau,P)} \iota(\tau,P) {\scal{c,(\tau,P)} \over S(\tau,P)}\;.
\end{equ}
Indeed, if we denote by $N(\tau,P)$ the number of distinct pairings $Q$ of the
noises of $\tau$ such that $(\tau,Q) = (\tau,P)$ modulo tree isomorphism, then one has
$N(\tau,P)/S(\tau) = 1/S(\tau,P)$. This is of course equivalent to $N(\tau,P) = S(\tau)/S(\tau,P)$,
which follows from the fact that $S(\tau,P)$ is the stabiliser of $S(\tau)$ with respect to $P$ if
we view the group of tree isomorphisms as acting on the set of all pairings of the noises of $\tau$
while $N(\tau,P)$ is precisely the size of the orbit of $P$ under that action.
\end{remark}

\begin{remark}\label{rem:scalEVal}
The space $\CS$ is naturally endowed with the scalar product such that elements of $\SS$ are 
orthogonal and such that $\|(\tau,P)\|^2 = S(\tau,P)$. This definition of the scalar product on $\CS$
is consistent with \eqref{e:expansionNice}. 
We also extend the valuation 
$\Upsilon_{\Gamma,\sigma}$ defined in Section~\ref{sec:BPHZthm} to $\CS$ by composition with $\iota$.
\end{remark}

\section{Symmetry properties of the solution map}
\label{sec:symmetry}

The goal of this section is to show that it is possible to construct solution maps having the 
various symmetries laid out in Theorem~\ref{theo:main}.
We do this by analysing two different approximation schemes corresponding to the
models $\PPi_\geo^{(\eps)}$ and $\PPi_\Ito^{(\eps)}$ constructed above. 
We therefore define 
\begin{enumerate}
\item $U_\eps^\geo(\Gamma,\sigma,h) \in \CB_\star^a$ as the law of $\CA(\Gamma,\sigma,h)(\PPi^{(\eps)}_\geo)(u_0)$
\item $U_\eps^\Ito(\Gamma,\sigma,h) \in \CB_\star^a$
as the law of $\CA(\Gamma,\sigma,h)(\PPi^{(\eps)}_\Ito)(u_0)$
\item $U^\BPHZ(\Gamma,\sigma,h) \in \CB_\star^a$
as the law of $\CA(\Gamma,\sigma,h)(\PPi^\BPHZ)(u_0)$
\end{enumerate}
\label{U solutions page ref}
where $\CB_\star^a$ is defined on page~\pageref{CBstar page def}, $\PPi_\geo^{(\eps)}$ and $\PPi_\Ito^{(\eps)}$
are defined in \eqref{e:defPPieps}, and $\PPi^\BPHZ$ is the limit of both $\hPPi_\geo^{(\eps)}$ and $\hPPi_\Ito^{(\eps)}$ given by Theorem \ref{theo:Ajay}, where $\hPPi_\geo^{(\eps)}$ and $\hPPi_\Ito^{(\eps)}$ are defined by \eqref{e:defPiepshat}.

\subsection{Symmetry properties of the approximations}
\label{sec:symapprox}

The reason for considering these two different approximations is that each of them satisfies 
one of the two symmetries, as formulated in the following result.

\begin{proposition}\label{prop:equivariance}
For every diffeomorphism $\phi$ of $\R^d$, the identity
\begin{equ}[e:geo]
\phi \act U_\eps^\geo(\Gamma,\sigma,h) = U_\eps^\geo(\phi \act\Gamma,\phi \act\sigma,\phi \act h)\;,
\end{equ}
holds for all smooth choices of $\Gamma$, $\sigma$ and $h$.
Furthermore, for all smooth $\bar \sigma$ such that 
\begin{equ}[e:sigmabar]
\bar \sigma_i^\alpha \bar \sigma_i^\beta = \sigma_i^\alpha \sigma_i^\beta
\end{equ}
holds, one has
$U_\eps^\Ito(\Gamma,\sigma,h) = U_\eps^\Ito(\Gamma,\bar\sigma,h)$.
\end{proposition}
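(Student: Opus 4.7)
The plan is to treat the two claims independently, exploiting the fact that both approximations in question are classical at fixed $\eps > 0$. For $U_\eps^\geo$, the driving noise $\xi^{(\eps)} = \rho_\eps * \xi$ is a smooth random field, so \eqref{e:genClass} is a classical random PDE and the equivariance reduces to the ordinary chain rule. For $U_\eps^\Ito$, Theorem~\ref{theo:BPHZ} (applied with $Z = (\eps, \CL_\eps(\xi))$) shows that the solution is the classical It\^o mild solution to a genuine SPDE whose stochastic term has the form $\int K_\eps(t-s, x-y)\,\sigma_i(u(s,y))\,\xi_i(ds,dy)$; this integral is well defined as an It\^o stochastic integral because $K_\eps = \rho_\eps * K$ is non-anticipative (as $\rho$ is supported on positive times) and $u$ is adapted to the filtration generated by $\xi$. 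The It\^o isometry identity will follow from a coupling argument that represents the same driving noise in two different ways.

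For the equivariance, let $u$ solve \eqref{e:genClass} with data $(\Gamma,K,\sigma,h)$, set $v^\alpha = \phi^\alpha \circ u$, and compute using the chain rule:
\begin{equ}
\d_t v^\alpha = \d_\mu\phi^\alpha(u)\,\d_t u^\mu\;,\quad \d_x^2 v^\alpha = \d_\mu\phi^\alpha(u)\,\d_x^2 u^\mu + \d^2_{\beta\gamma}\phi^\alpha(u)\,\d_x u^\beta\,\d_x u^\gamma\;.
\end{equ}
Substituting the equation for $u$ into the first identity, the inhomogeneous second-derivative term above combines with $\d_\mu\phi^\alpha(u)\,\Gamma^\mu_{\beta\gamma}(u)\,\d_x u^\beta\,\d_x u^\gamma$ to yield exactly $(\phi\act\Gamma)^\alpha_{\eta\zeta}(v)\,\d_x v^\eta\,\d_x v^\zeta$ by the very definition \eqref{eq:actioncristo} of $\phi\act\Gamma$. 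Similarly, \eqref{eq:actionvect} absorbs the $\sigma$ and $h$ terms while \eqref{eq:actiontens} handles the $K$ term, so $v$ solves the equation with data $(\phi\act\Gamma,\phi\act K,\phi\act\sigma,\phi\act h)$ and initial condition $\phi\circ u_0$. Uniqueness of classical solutions to the random PDE then delivers \eqref{e:geo} pathwise, hence in law.

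For the It\^o statement, the plan is to enlarge the probability space by an independent i.i.d.\ family $(\eta_j)_{j=1}^{\bar m}$ of space-time white noises and define, given the solution $u$ of the $\sigma$-SPDE,
\begin{equ}
\bar\xi_j \eqdef (\bar\sigma^+\sigma)_{ji}(u)\,\xi_i + \bigl((I-\bar\sigma^+\bar\sigma)^{1/2}\bigr)_{jk}(u)\,\eta_k\;,
\end{equ}
pointwise in $(s,y)$, where $\bar\sigma^+$ denotes the Moore--Penrose pseudoinverse of $\bar\sigma$ viewed as a $d\times\bar m$ matrix. Using \eqref{e:sigmabar} and the standard properties $\bar\sigma\bar\sigma^+\bar\sigma = \bar\sigma$ and $(\bar\sigma^+\bar\sigma)^T = \bar\sigma^+\bar\sigma$, a direct covariance computation shows that $(\bar\xi_j)_j$ is an adapted i.i.d.\ family of space-time white noises and that $\sigma_i(u)\,\xi_i = \bar\sigma_j(u)\,\bar\xi_j$ as adapted integrals, so $u$ itself solves the mild SPDE with data $(\Gamma,K,\bar\sigma,h)$ driven by $\bar\xi$. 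Since mild solutions to the classical SPDE are unique at fixed $\eps>0$ and $\bar\xi \eqlaw \xi$, this yields $U_\eps^\Ito(\Gamma,\sigma,h) = U_\eps^\Ito(\Gamma,\bar\sigma,h)$.

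The main obstacle is that $\bar\sigma^+$ need not be smooth at points where the rank of $\bar\sigma$ drops, which complicates the definition of $\bar\xi$ as a genuine adapted white noise. The cleanest way around this is to bypass the explicit coupling in favour of a uniqueness-in-law argument: at fixed $\eps > 0$ the classical SPDE is equivalent to a martingale problem whose generator depends on $\sigma$ only through $g^{\alpha\beta} = \sigma_i^\alpha\sigma_i^\beta$, so two sets of data satisfying \eqref{e:sigmabar} produce the same law. Alternatively, one may regularise $\bar\sigma$ to have locally constant rank and pass to the limit, or localise via stopping times to open sets on which $\bar\sigma^+$ is smooth; either route is standard but slightly technical and I would relegate it to a short lemma.
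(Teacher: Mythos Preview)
Your treatment of the geometric equivariance is the same as the paper's: at fixed $\eps>0$ the equation is a classical random PDE with smooth noise, and the chain rule does the job.

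For the It\^o identity your approach is correct but differs from the paper's and you misidentify the obstacle. The paper observes that whenever $\sigma\sigma^\top=\bar\sigma\bar\sigma^\top$ one can find (by reduced LQ factorisation) a map $\Sigma\colon\CQ\to SO(m)$ with $\sigma\,\Sigma(\sigma,\bar\sigma)=\bar\sigma$, chooses it Borel measurable via the Kuratowski--Ryll--Nardzewski selection theorem, sets $Q(t)(x)=\Sigma\bigl(\sigma(u(t,x)),\bar\sigma(u(t,x))\bigr)^\top$, and then $\bar W=\int Q\,dW$ is again a cylindrical Wiener process because $Q$ is progressively measurable and orthogonal-valued. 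No auxiliary noise is introduced. Your pseudoinverse-plus-independent-noise construction also works, and your covariance computation is right; the point you are missing is that smoothness of $\bar\sigma^+$ is irrelevant. The maps $x\mapsto\bar\sigma^+(x)$ and $x\mapsto(I-\bar\sigma^+\bar\sigma)^{1/2}(x)$ are Borel (e.g.\ as pointwise limits of $(\bar\sigma^\top\bar\sigma+\delta I)^{-1}\bar\sigma^\top$), and since $u$ is adapted and continuous the resulting coefficients are progressively measurable and bounded (indeed $\bar\sigma^+\sigma$ is a partial isometry), which is all that is needed to define $\bar\xi$ as a genuine cylindrical Wiener process. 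So your ``main obstacle'' dissolves once you realise that only measurability, not smoothness, is required for the stochastic integral; the martingale-problem or regularisation detours you propose are unnecessary. The paper's route is slightly cleaner because the orthogonal rotation avoids the auxiliary noise altogether, but both arguments are complete once this point is recognised.
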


\begin{proof}
The definition of $U_\eps^\geo$ implies that $U_\eps^\geo(\Gamma,\sigma,h)(u_0)$ is the law of 
the random PDE \eqref{e:genClass}. The
identity \eqref{e:geo} then simply states that the solutions to this random PDE behave as
expected under changes of variables, which follows immediately from the usual rules of calculus
which can be applied pathwise since all objects under consideration are smooth. We
therefore only need to consider the claim for $U_\eps^\Ito$.

Write $\CQ$ \label{CQ} for the set of pairs $(\sigma, \bar \sigma)$ of $d\times m$ real-valued matrices
such that $\sigma \sigma^\top = \bar \sigma \bar \sigma^\top$. It is a simple consequence of the reduced
LQ factorisation that there exists a map $\Sigma\colon \CQ \to SO(m)$ such that, for every $(\sigma, \bar \sigma)\in \CQ$,
one has $\sigma \Sigma(\sigma, \bar \sigma) = \bar \sigma$. By the Kuratowski--Ryll--Nardzewski measurable selection theorem (see for example \cite[Vol.~II, p.~36]{Bogachev}),
we can furthermore choose $\Sigma$ to be Borel measurable.

For $u_0$ fixed, let now $u$ be the maximal solution to \eqref{e:generalEquation} as in 
the proof of Theorem~\ref{theo:BPHZ}. Write $\CH = L^2(S^1,\R^m)$,
and let $W$ by the $\CH$-cylindrical Wiener process such that, for every smooth test function $\phi \colon [-1,\infty)\times S^1 \to \R^m$
(canonically identified with a function $[-1,\infty) \to \CH$),
one has
\begin{equ}
\int_\CR \scal{\phi(t,\cdot),dW(t)} = \xi_i(\phi_i) \;.
\end{equ}
Since we assume that $\rho$ is
non-anticipative, the solution $u$ is smooth and adapted,
so that the distributional products $\sigma_i^\alpha(u)\,\xi_i$ coincide with their interpretations as It\^o
integrals with respect to $W$ as above. 

Fix $L>0$ and let $\tau$ be the stopping time given by the first time at which the space-time $\alpha$-H\"older norm
of $u$ exceeds $L$. For $\Sigma\colon \CQ \to SO(m)$ as before, we then set
\begin{equ}
Q(t)(x) = 
\left\{\begin{array}{cl}
	\Sigma(\sigma(u(t,x)), \sigma(u(t,x)))^\top & \text{if $t \le \tau$,} \\
	\id & \text{otherwise,}
\end{array}\right.
\end{equ} 
so that $t \mapsto Q(t)$ is a progressively measurable process with values in $\CO_m \eqdef \CB_b(S^1,SO(m))$.
(For $t \le 0$, we use the convention $u(t) = u_0$.)
Since pointwise multiplication by an element of $\CO_m$ yields a unitary operator on $\CH$, the process
\begin{equ}
\bar W(t) \eqdef \int_{-1}^t Q(s)\,dW(s)
\end{equ}
is again a cylindrical Wiener process on $\CH$, see \cite{DPZ}, so that in particular the laws of $W$ and of $\bar W$ coincide. 

Denote by $\bar \xi_i$ the corresponding distribution and note that one then has the almost sure distributional identity
\begin{equ}
\bar \sigma^\alpha_i(u) \, \bar \xi_i  = \sigma^\alpha_i(u) \, \xi_i\;,
\end{equ}
(implied summation over $i$), as a straightforward consequence of the fact that, for any bounded $\CH$-valued progressively measurable 
function $A$ one has the identity $\int_0^t \scal{A(s),d\bar W(s)} = \int_0^t \scal{Q^*(s) A(s) ,dW(s)}$.
The claim now follows at once since we are working with the model $\PPi^{(\eps)}_\Ito$ given by
the canonical lift $\CL_\eps(\xi)$, so that the reconstruction applied to any product is simply given by the 
product of the reconstructions. 
\end{proof}

It is then natural to define the following two subspaces of $\CS$
which correspond
to the two invariance \slash equivariance properties appearing in this statement.
(Recall that $\Upsilon_{\Gamma,\sigma}$ is defined on $\CS$ by Remark~\ref{rem:scalEVal}.)

\begin{definition}\label{def:geoIto}
The space $\CS_\geo \subset \CS$ consists of those elements $\tau$ such that,
for all $d, m \ge 1$ and all choices of $\Gamma$ and $\sigma$ as above and all diffeomorphisms $\phi$ 
of $\R^d$ that are homotopic to the identity, 
one has the identity
\begin{equ}[e:idendiff]
\phi \act(\Upsilon_{\Gamma,\sigma}\tau) = \Upsilon_{\phi\act \Gamma,\phi\act \sigma}\tau\;, 
\end{equ}
where the action appearing on the left is that given by \eqref{eq:actionvect}. 
Similarly, the space $\CS_\Ito \subset \CS$ consists of those elements $\tau$ such that,
for all $d$, $m$, $\Gamma$, $\sigma$ and $\bar \sigma$ as above
with $\sigma \sigma^\top = \bar \sigma \bar \sigma^\top$, the identity
$\Upsilon_{\Gamma,\sigma}\tau = \Upsilon_{\Gamma,\bar \sigma}\tau$
holds.
\end{definition}

\begin{remark}\label{rem:restrict}
We do for the moment restrict ourselves to equivariance under diffeomorphisms homotopic to 
the identity. We will however see in Remark~\ref{rem:geo} below that, for $\tau \in \CS_\geo$,
the identity \eqref{e:idendiff} automatically holds for all diffeomorphisms.
\end{remark}

With these notations at hand, we have the following immediate corollary of Proposition~\ref{prop:equivariance}.

\begin{corollary}\label{cor:equivariant}
Assume that, for $i \in \types$, there exist sequences $\tau_i^{(\eps)} \in \CS_i$
such that $\tau_i^{(\eps)} - C_{\eps,i}^\BPHZ$ converges to a finite limit $\alpha_i$ as $\eps \to 0$.
Then the limits
\begin{equ}
U^i \eqdef \lim_{\eps \to 0} \hat U_\eps^i\;,\quad
\hat U_\eps^i(\Gamma,\sigma,h) = U_\eps^i\big(\Gamma,\sigma,h + \Upsilon_{\Gamma,\sigma}\tau_i^{(\eps)}\big)\;,
\end{equ}
exist, satisfy their respective equivariance properties as in Proposition~\ref{prop:equivariance} (except that 
the equivariance under the diffeomorphisms group is a priori restricted to those homotopic to the 
identity), and are equal to
\begin{equ}[e:U=U]
U^i(\Gamma,\sigma,h)=U^\BPHZ\left(\Gamma,\sigma,h+\Upsilon_{\Gamma,\sigma}\alpha_i\right).
\end{equ}
\end{corollary}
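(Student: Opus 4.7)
The plan is to reduce the corollary to Theorems~\ref{theo:Ajay} and~\ref{theo:BPHZ} by recasting the modification ``$h \rightsquigarrow h + \Upsilon_{\Gamma,\sigma}\tau_i^{(\eps)}$'' as an action of the renormalisation group $\RR_{\<generic>}$ on the underlying model. Concretely, given a character $c \in \RR_{\<generic>} \subset \RR$, the statement of Theorem~\ref{theo:BPHZ} identifies, for every continuous $\zeta$, the solution of \eqref{e:generalEquation} with $\CA(\Gamma,\sigma,h)(c \star \CL_\eps(\zeta))$. Applied with $\zeta = \xi$ (resp.\ $\zeta = \xi^{(\eps)}$), this means that at fixed $\eps > 0$,
\begin{equ}
\hat U_\eps^i(\Gamma,\sigma,h) = \CA(\Gamma,\sigma,h)\big(\tau_i^{(\eps)} \star \PPi_i^{(\eps)}\big)_* \;,
\end{equ}
where the subscript indicates the pushforward to a probability measure. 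Using associativity of the $\star$-action together with the definition $\hPPi_i^{(\eps)} = C_{\eps,i}^\BPHZ \star \PPi_i^{(\eps)}$, this can be rewritten as
\begin{equ}
\hat U_\eps^i(\Gamma,\sigma,h) = \CA(\Gamma,\sigma,h)\big((\tau_i^{(\eps)} - C_{\eps,i}^\BPHZ) \star \hPPi_i^{(\eps)}\big)_* \;,
\end{equ}
where subtraction in $\CS$ corresponds to the group operation in $\RR_{\<generic>}$ via the identification from Remark~\ref{rem:subgroup}.

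Next I would invoke the hypothesis $\tau_i^{(\eps)} - C_{\eps,i}^\BPHZ \to \alpha_i$ together with Theorem~\ref{theo:Ajay}, which gives $\hPPi_i^{(\eps)} \to \PPi^\BPHZ$ in probability in $\MM_\star$. Since $\star$ is continuous on $\MM_\star$ and $\CA(\Gamma,\sigma,h)$ is continuous from $\MM_\star$ into $\tilde \CC_\star^a$, we obtain
\begin{equ}
\hat U_\eps^i(\Gamma,\sigma,h) \longrightarrow \CA(\Gamma,\sigma,h)\big(\alpha_i \star \PPi^\BPHZ\big)_*
= \CA(\Gamma,\sigma,h + \Upsilon_{\Gamma,\sigma}\alpha_i)(\PPi^\BPHZ)_*\;,
\end{equ}
where the second equality uses once more Theorem~\ref{theo:BPHZ} to re-absorb the renormalisation shift $\alpha_i$ into the $h$-slot. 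The right-hand side is precisely $U^\BPHZ(\Gamma,\sigma,h + \Upsilon_{\Gamma,\sigma}\alpha_i)$, which proves both the existence of the limits $U^i$ and the identity \eqref{e:U=U}.

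For the equivariance properties, I would verify them at the level of the approximations $\hat U_\eps^i$ and pass to the limit. For $i = \geo$ and $\phi$ homotopic to the identity, Proposition~\ref{prop:equivariance} gives $\phi \act U_\eps^\geo(\Gamma,\sigma,h') = U_\eps^\geo(\phi \act \Gamma, \phi \act \sigma, \phi \act h')$ for every $h'$; applying this with $h' = h + \Upsilon_{\Gamma,\sigma}\tau_\geo^{(\eps)}$ and using $\tau_\geo^{(\eps)} \in \CS_\geo$ to rewrite $\phi \act \Upsilon_{\Gamma,\sigma}\tau_\geo^{(\eps)} = \Upsilon_{\phi\act\Gamma,\phi\act\sigma}\tau_\geo^{(\eps)}$ yields
\begin{equ}
\phi \act \hat U_\eps^\geo(\Gamma,\sigma,h) = \hat U_\eps^\geo(\phi\act\Gamma,\phi\act\sigma,\phi\act h)\;.
\end{equ}
An identical argument, using $\tau_\Ito^{(\eps)} \in \CS_\Ito$ (so $\Upsilon_{\Gamma,\sigma}\tau_\Ito^{(\eps)} = \Upsilon_{\Gamma,\bar\sigma}\tau_\Ito^{(\eps)}$ whenever $(\sigma,\bar\sigma) \in \CQ$) together with the Itô invariance $U_\eps^\Ito(\Gamma,\sigma,h') = U_\eps^\Ito(\Gamma,\bar\sigma,h')$ gives $\hat U_\eps^\Ito(\Gamma,\sigma,h) = \hat U_\eps^\Ito(\Gamma,\bar\sigma,h)$. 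Passing to the limit $\eps \to 0$ using the first part of the proof preserves both identities. The main technical check along the way is that composition of the renormalisation action with the solution map behaves as claimed, but this is exactly the content of Theorem~\ref{theo:BPHZ}; once that is in hand, the argument is a formal manipulation and continuity statement, with no remaining obstacle.
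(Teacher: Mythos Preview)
Your proof is correct and follows essentially the same approach as the paper. The only cosmetic difference is that you route the argument through the renormalisation group action on models (writing $\hat U_\eps^i$ as $\CA(\Gamma,\sigma,h)$ applied to $\tau_i^{(\eps)}\star\PPi_i^{(\eps)}$ and then regrouping), whereas the paper stays on the $h$-slot side throughout, writing $\hat U_\eps^i(\Gamma,\sigma,h) = U_\eps^i\big(\Gamma,\sigma,h+\Upsilon_{\Gamma,\sigma}C_{\eps,i}^\BPHZ+\Upsilon_{\Gamma,\sigma}(\tau_i^{(\eps)}-C_{\eps,i}^\BPHZ)\big)$ and invoking joint continuity in $(h,Z)$ from Theorem~\ref{theo:BPHZ}. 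These are two sides of the same identity encoded in Theorem~\ref{theo:BPHZ}; your equivariance argument is also the same as the paper's, just spelled out in more detail.
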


\begin{proof}
The fact that the limits exist follows from the continuity with respect to $(h,Z)$ stated in 
Theorem~\ref{theo:BPHZ}, combined with our assumption and Theorem~\ref{theo:Ajay}.
The fact that $U^\geo$ is equivariant under the diffeomorphism group follows immediately from the 
fact that this is true for $\hat U_\eps^\geo$ for every $\eps$, combined with the fact that 
the diffeomorphism group acts continuously on $\CB_\star^a$. The argument for $U^\Ito$ is similar.

Finally, it follows from Theorem~\ref{theo:BPHZ} and the definition of $U^\BPHZ$ that
\[
\begin{split} U^\BPHZ(\Gamma,\sigma,h) & =\lim_{\eps \to 0} U_\eps^i\left(\Gamma,\sigma,h + \Upsilon_{\Gamma,\sigma}C_{\eps,i}^\BPHZ\right) 
\\ & = \lim_{\eps \to 0} \hat U_\eps^i\left(\Gamma,\sigma,h + \Upsilon_{\Gamma,\sigma}(C_{\eps,i}^\BPHZ-\tau_i^{(\eps)})\right) =U^i\left(\Gamma,\sigma,h-\Upsilon_{\Gamma,\sigma}\alpha_i\right)
\end{split}
\]
and \eqref{e:U=U} is proved. 
\end{proof}

Therefore, in order to construct a single solution map satisfying both relevant symmetry properties, we need to show that we can find such sequences in a way such that 
$\tau_\geo^{(\eps)} - C_{\eps,\geo}^\BPHZ$ and $\tau_\Ito^{(\eps)} - C_{\eps,\Ito}^\BPHZ$ both converge to the \textit{same} limit. Given the additional constraint that these sequences need
to lie in the (potentially quite small) subspaces $\CS_\geo$ and $\CS_\Ito$, it is not clear at
all that this is possible. (It is not even clear a priori that one can find sequences as
in the statement of the corollary!) 
One ingredient of our argument is the fact that
the limiting solution map $U^\BPHZ$ is injective in its last argument, which is the content of the
next section.

\subsection{Injectivity of the law}
\label{sec:injectivelaw}

In this section, we show that 
for any fixed $(\Gamma,\sigma)$, the map $h \mapsto U^\BPHZ(\Gamma,\sigma,h)$ is injective.
Note that this is a strictly stronger notion of injectivity than the one for the map 
$\CA(\Gamma,\sigma,h)$ obtained in Theorem~\ref{theo:BPHZ}
that maps the pair $(u_0,\PPi)$ to the corresponding solution. 
Indeed, as already announced in the introduction, the map $(\sigma,h) \mapsto U^\BPHZ(\Gamma,\sigma,h)$
is \textit{not} injective since we get the same law for any two choices of $\sigma$ that define the same $g$
provided that $h$ is adjusted accordingly, while we expect $(\sigma,h) \mapsto \CA(\Gamma,\sigma,h)$ to be injective.

\begin{theorem}\label{theo:injective}
For all dimensions $d$ and $m$ and smooth choices of $\Gamma$ and $\sigma$, the map
\begin{equ}
h \mapsto U^\BPHZ(\Gamma,\sigma,h)
\end{equ}
is injective as a map from $\CC^6$ to $\CB_\star^a$.
\end{theorem}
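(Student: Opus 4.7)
The plan is to recover the pointwise values of $h$ from the short-time expectations of the law $U^\BPHZ(\Gamma,\sigma,h)$ started from a constant initial condition $u_0\equiv x_0$, for each $x_0\in\R^d$. Write $u^{(i)}$ for the processes with respective laws $U^\BPHZ(\Gamma,\sigma,h_i)(x_0)$, obtained via Theorems~\ref{theo:BPHZ} and~\ref{theo:Ajay} as the $\eps\to 0$ limits of mollified approximations $u_\eps^{(i)}$; the renormalisation character $C_{\eps,\geo}^\BPHZ$ is common to $i=1,2$ since it depends only on $(\Gamma,\sigma,\rho,\eps)$. Non-anticipativity of $\rho$ ensures that each $u_\eps^{(i)}$ is adapted to the Brownian filtration, so the stochastic integral against $\xi^{(\eps)}$ has vanishing expectation. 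Setting $V^{(i)}(t)=\int_{S^1}u^{(i)}(t,x)\,dx$ and integrating the approximating PDE against a constant $v\in\R^d$ on $[0,t]\times S^1$ (the Laplacian vanishes by periodicity) yields, after taking $\eps\to 0$ (with uniform integrability coming from moment bounds inherent in $\CC^a_\star$-convergence and a standard localisation at the blow-up time),
\begin{equ}
\E[v\cdot V^{(i)}(t)] \;=\; v\cdot x_0\,|S^1| + \int_0^t \E\!\int_{S^1}\! v\cdot h_i(u^{(i)}(s,x))\,dx\,ds + T^{(i)}(t),
\end{equ}
where $T^{(i)}(t)$ is defined as the $\eps\to 0$ limit of $\int_0^t \E\!\int_{S^1} v\cdot\big[\Gamma(u_\eps^{(i)})(\d_x u_\eps^{(i)})^{\otimes 2} + \Upsilon_{\Gamma,\sigma}C_{\eps,\geo}^\BPHZ(u_\eps^{(i)})\big]\,dx\,ds$ and exists by subtraction.

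The main claim is that $T^{(i)}(t)$ depends only on the law of $u^{(i)}$ (and on $\Gamma,\sigma$, but not on $h_i$). Granting this and using the hypothesis $U^\BPHZ(\Gamma,\sigma,h_1)=U^\BPHZ(\Gamma,\sigma,h_2)$, one has $\E V^{(1)}(t)=\E V^{(2)}(t)$ and $T^{(1)}(t)=T^{(2)}(t)$; subtracting the two identities and using the law equality once more to rewrite $\E h_2(u^{(2)}) = \E h_2(u^{(1)})$ produces
\begin{equ}
\int_0^t \E\!\int_{S^1}\! v\cdot (h_1-h_2)(u^{(1)}(s,x))\,dx\,ds = 0,\qquad\forall\, t>0.
\end{equ}
Differentiating at $t=0^+$ and using the continuity $u^{(1)}(s,\cdot)\to u_0\equiv x_0$ in $\CC^a(S^1,\R^d)$ (encoded in the topology of $\CC^a_\star$) together with continuity of $h_1-h_2$ yields $v\cdot(h_1(x_0)-h_2(x_0))\,|S^1|=0$. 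Since $v\in\R^d$ and $x_0\in\R^d$ are arbitrary, $h_1=h_2$.

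The technical heart of the proof lies in the main claim: at fixed $\eps>0$ the integrand of $T_\eps^{(i)}(t)$ is a deterministic continuous functional of the path $u_\eps^{(i)}$ alone (since $C_{\eps,\geo}^\BPHZ$ is a fixed constant), so $T_\eps^{(i)}(t)$ depends only on $\mathrm{law}(u_\eps^{(i)})$. However, the individual pieces $\E\!\int \Gamma(u_\eps)(\d_x u_\eps)^{\otimes 2}\,dx$ and $\E\!\int \Upsilon_{\Gamma,\sigma}C_{\eps,\geo}^\BPHZ(u_\eps)\,dx$ diverge as $\eps\to 0$, and the cancellation of these divergences must be interpreted through the regularity-structure reconstruction of the renormalised composite operator $[\Gamma(u)(\d_x u)^{\otimes 2}]_{\mathrm{ren}}$ of \cite{BHZ,Ajay,BCCH}: its integrated expectation can be expressed as a convergent sum involving the joint moments of $u$ at multiple space-time points, quantities that are determined by the law of $u$ alone. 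Making this argument rigorous in a way that is compatible with the passage to the $\eps\to 0$ limit is the principal obstacle.
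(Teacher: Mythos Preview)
Your overall strategy --- integrate the equation, use that the noise term has zero expectation, and recover $h$ from what remains --- is exactly the same as the paper's. The paper also tests against a space-time function (rather than a spatial constant followed by a time derivative at $t=0^+$, but this is cosmetic), and also relies on Lemma~\ref{lem:meanzero} to make the expectation of the noise contribution vanish.

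However, you have correctly identified but not resolved the central difficulty: why is the renormalised nonlinear contribution $T^{(i)}(t)$ a measurable functional of the path $u^{(i)}$ alone, independent of $h_i$? Your last paragraph sketches an argument via ``joint moments of $u$ at multiple space-time points'', but this is not correct as stated: the reconstruction of the modelled distribution $\Gamma(V)(\DD V)^2$ for the BPHZ model is \emph{not} a functional of $u=\CR V$ alone --- it genuinely uses the full model $\PPi^\BPHZ$, which encodes information beyond the law of $u$. At fixed $\eps>0$ you are fine (everything is a function of $u_\eps^{(i)}$), but the hypothesis only gives equality of the \emph{limiting} laws, not of the laws of the approximations $u_\eps^{(i)}$, so you cannot simply pass to the limit in the statement ``$T_\eps^{(i)}$ depends only on $\law(u_\eps^{(i)})$''.

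The paper closes this gap by an additional construction. It builds an extended regularity structure carrying \emph{two} integration kernels, $K$ and $K_\eps=\rho_\eps*K$, and an extension map $\CE_\eps$ from models for the original structure to models for the extended one. The crucial Lemma~\ref{lem:extend} shows that (after a further renormalisation lying in a subgroup that leaves the original sector invariant) the extended model $\hPPi_\eps=C_\eps\star\CE_\eps(\PPi^\BPHZ)$ converges to $\iota\PPi^\BPHZ$ as $\eps\to 0$. On this extended structure one solves a second fixed-point problem $\bar V_\eps$ whose reconstruction satisfies the exact identity $\CR\bar V_\eps=\rho_\eps*u$, i.e.\ a mollification of the \emph{limiting} solution itself. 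The reconstruction of $\Gamma(\bar V_\eps)(\DD\bar V_\eps)^2$ then takes the form $\Gamma(u_\eps)(\d_x u_\eps)^2+G(u_\eps,u)$ with $u_\eps=\rho_\eps*u$ and $G$ depending on $(\Gamma,\sigma)$ but not on $h$ (by the triangular structure of the extended system). Continuity of the solution map on the extended structure then gives convergence to the renormalised nonlinearity for the limiting model. This exhibits the renormalised nonlinear term as an explicit $\eps\to 0$ limit of functionals of the path $u$ alone, which is precisely your ``main claim'', and is the substantive content you are missing.
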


Our main ingredient in the proof of this result is the following.

\begin{lemma}\label{lem:meanzero}
Let $\PPi^\BPHZ$ denote the BPHZ model as in Theorem \ref{theo:Ajay}, let $u_0 \in \CC^a(S^1)$, and let $V$ be 
the solution in $\CD^\gamma$ to the fixed point problem
\begin{equ}%[e:FP]
V^\alpha = Pu_0^\alpha + \CP \one_+ \bigl(\Gamma^\alpha_{\beta,\gamma}(V) \,\DD V^\beta\, \DD V^\gamma
+ h^\alpha(V) + \sigma_i^\alpha(V)\,\sXi_i\bigr)\;.
\end{equ}
Write $u^\alpha = \CR V^\alpha$, which is nothing but the continuous function 
given by the $\one$-component of $V^\alpha$.
Fix furthermore some $r > 0$ and set 
\begin{equ}[e:taudelta]
\tau_r = \inf \bigl\{t > 0\,:\, \|u - P u_0\|_{\CC^a_t} \ge r\bigr\}\;,
\end{equ}
where $\|\cdot\|_{\CC^a_t}$
denotes the space-time H\"older norm on $[0,t] \times S^1$. Then, the random $\R^d$-valued distribution
$\eta_r$ on $\R\times S^1$ given by 
\begin{equ}%[e:defEta]
\eta_r = \CR \one_{[0,\tau_r]}\bigl(\sigma_i^\alpha(V)\,\sXi_i\bigr)
\end{equ}
is such that, for every (deterministic) test function $\psi$, $\eta_r(\psi)$ coincides
with the It\^o integral
\begin{equ}[e:ItoInt]
\int_0^{\tau_r} \scal{\sigma_i^\alpha(v_s)\psi,dW_i(s)}\;,
\end{equ}
with $(t,x) \mapsto v_t(x)$ given by $\CR V$ (which is guaranteed to be a continuous function and can therefore
be evaluated at fixed time slices) 
and $W_i$ the $L^2$-cylindrical Wiener process associated to the noise $\xi_i = \CR \sXi_i$ as in the
proof of Proposition~\ref{prop:equivariance}.
In particular, these random variables have vanishing expectation.
\end{lemma}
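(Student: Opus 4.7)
The plan is to exploit the fact that, by Theorem~\ref{theo:Ajay}, the BPHZ model $\PPi^\BPHZ$ is the in-probability limit of the It\^o approximations $\hPPi_\Ito^{(\eps)}$, which are built by representing each abstract noise symbol $\sXi_i$ by the \emph{unmollified} space-time white noise $\xi_i$. For each $\eps > 0$, let $V^{(\eps)}$ denote the solution in $\CD^\gamma$ to the fixed point problem \eqref{e:FP} associated to the model $\hPPi_\Ito^{(\eps)}$ (including the BPHZ counterterms absorbed into $h$), set $u^{(\eps)} = \CR V^{(\eps),\alpha}$ and $v^{(\eps)}$ its time-slicing, and let $\tau_r^{(\eps)}$ and $\eta_r^{(\eps)}$ be defined as in \eqref{e:taudelta}--\eqref{e:defEta} but for this approximate model. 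Because $K$ and $\rho$ are non-anticipative, $v^{(\eps)}$ is progressively measurable with respect to the filtration generated by $W$.

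The core step is to show that for each $\eps>0$ one has the identification
\[
\eta_r^{(\eps)}(\psi)\;=\;\int_0^{\tau_r^{(\eps)}} \scal{\sigma_i^\alpha(v_s^{(\eps)})\,\psi(s,\cdot),\,dW_i(s)}\;,
\]
where the right-hand side is an It\^o integral. This is where the It\^o approximation plays its role: since $\hPPi_\Ito^{(\eps)}$ realises $\sXi_i$ as the genuine white noise $\xi_i$, the reconstruction of a modelled distribution of the form $\one_{[0,\tau]}\sigma_i^\alpha(V^{(\eps)})\,\sXi_i$ tested against $\psi$ can be computed by writing $\sigma_i^\alpha(V^{(\eps)})$ in its Taylor-like local expansion, pairing with $\sXi_i$, and using the uniqueness statement of the reconstruction theorem. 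For the zeroth order term this produces the adapted Riemann-sum approximation of $\int\sigma_i^\alpha(v^{(\eps)})\psi\,dW$, which converges in $L^2$ to the It\^o integral by adaptedness; the higher-order corrections have strictly positive regularity (after being tested against~$\psi$) and therefore vanish in the reconstruction limit. The stopping-time cutoff $\one_{[0,\tau_r^{(\eps)}]}$ is handled by the fact that $\tau_r^{(\eps)}$ is a stopping time (its defining functional of $u^{(\eps)}$ is adapted), so the It\^o integral remains well defined.

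The limit $\eps \to 0$ is then straightforward. Continuity of $\CA(\Gamma,\sigma,h)$ from Theorem~\ref{theo:BPHZ} combined with Theorem~\ref{theo:Ajay} gives $V^{(\eps)}\to V$ in $\CD^\gamma$ and $u^{(\eps)}\to u$ in $\CC^a_\star$ in probability, hence $\tau_r^{(\eps)}\to\tau_r$ in probability by \eqref{e:taudelta}. Continuity of the reconstruction operator yields $\eta_r^{(\eps)}(\psi)\to\eta_r(\psi)$ in probability, while the It\^o isometry combined with $L^2$ convergence of $\sigma_i^\alpha(v^{(\eps)})\one_{[0,\tau_r^{(\eps)}]}$ to $\sigma_i^\alpha(v)\one_{[0,\tau_r]}$ identifies the limit of the It\^o integrals with \eqref{e:ItoInt}. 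The vanishing expectation statement is then immediate from the martingale property of It\^o integrals against cylindrical Wiener processes.

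The main obstacle is the identification at fixed $\eps > 0$. One has to verify that the BPHZ renormalisation of the It\^o model does not modify the interpretation of the noise leaf itself: the twisted antipode formula of Remark~\ref{rem:noises} shows that every counterterm $C^\BPHZ_{\eps,\Ito}(\tau)$ comes from a tree $\tau$ and is absorbed into the drift of the fixed point equation, so that $\sXi_i$ continues to be realised by $\xi_i$ and its multiplication against an adapted modelled distribution retains its It\^o interpretation. A minor secondary obstacle is the sharp indicator $\one_{[0,\tau_r]}$, which is not a smooth modelled distribution; this is dealt with by an approximation of $\one_{[0,\tau_r]}$ by smooth cut-offs and using that both sides of the identity pass to the limit.
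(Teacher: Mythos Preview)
Your overall strategy coincides with the paper's: approximate $\PPi^\BPHZ$ by the It\^o models $\hPPi_\Ito^{(\eps)}$, identify $\eta_r^{(\eps)}(\psi)$ with an It\^o integral at fixed $\eps$, and pass to the limit using continuity of the solution map and stability of the It\^o integral. The limit step is fine.

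The gap is in the identification at fixed $\eps$. You work with the \emph{renormalised} model $\hPPi_\Ito^{(\eps)} = C_{\eps,\Ito}^\BPHZ \star \PPi_\Ito^{(\eps)}$, so reconstructing $\one_{[0,\tau_r]}\sigma_i^\alpha(V^{(\eps)})\sXi_i$ does not simply give the naive product: the renormalisation inserts counterterms for every negative-degree subtree with a noise edge incident to the root. Your claim that ``higher-order corrections have strictly positive regularity and therefore vanish'' applies to the canonical lift, not to the BPHZ-shifted one, and the assertion that all counterterms are ``absorbed into the drift of the fixed point equation'' is not the relevant mechanism here. The paper makes this explicit: by \cite{BCCH} one has
\[
\eta_r^{(\eps)}(\psi) = \int_0^{\tau_r^{(\eps)}} \scal{\sigma_i^\alpha(v_s^{(\eps)})\psi,dW_i(s)}
+ \int_0^{\tau_r^{(\eps)}} \scal{\psi,\Upsilon^\alpha_{\Gamma,\sigma} P_{\<generic>} C_{\eps,\Ito}^\BPHZ}\,ds\;,
\]
where $P_{\<generic>}$ projects onto symbols with a noise at the root. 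The correction term vanishes not for structural reasons but because $P_{\<generic>} C_{\eps,\Ito}^\BPHZ = 0$: this is the observation, made in the proof of Theorem~\ref{theo:Ajay}, that for non-anticipative $\rho$ one has $\E\bigl(\PPi_\Ito^{(\eps,\delta)}\tau\bigr)(0)=0$ whenever $\tau$ contains a factor $\sXi_i$ at the root. Once you invoke this, your argument closes; without it, the identification at fixed $\eps$ is incomplete.
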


\begin{proof}
Note first that the quantity \eqref{e:ItoInt} is well-defined since
$s \mapsto v_s$ is continuous and adapted.

To show that this is the case, we use the fact that
$\eta_r(\psi) = \lim_{\eps \to 0}  \eta_r^\eps(\psi)$, where $\eta_r^{(\eps)}$ is defined 
as above, but with both the reconstruction operator $\CR$, the modelled distribution $V$,
and the stopping time $\tau_r$ 
obtained from the model $\hPPi^{(\eps)}_\Ito$. It then follows from \cite{BCCH} 
that for any fixed $\eps >0$ one has the identity
\begin{equ}
\eta_r^{(\eps)}(\psi) = \int_0^{\tau_r} \scal{\sigma_i^\alpha(v_s)\psi,dW_i(s)}
+ \int_0^{\tau_r} \scal{\psi, \Upsilon^\alpha_{\Gamma,\sigma} P_{\<generic>} C_\Ito^{\BPHZ,\eps}}\,ds\;,
\end{equ}
where $P_{\<generic>} \colon \CS \to \CS$ is the projection onto the subspace spanned by those symbols
that have a noise edge incident to their root.

On the other hand, we already noted in the proof of Theorem~\ref{theo:Ajay}
that $P_{\<generic>} C_\Ito^{\BPHZ,\eps} = 0$, whence the claim
follows by continuity and the stability of the It\^o integral.
\end{proof}

Before we turn to the proof of Theorem~\ref{theo:injective}, we introduce a larger regularity structure than
the one considered so far. The difference is that we allow for two different edge types,
\<thin> and \<thin2>, and we will consider models that are admissible in the sense that 
edges of type \<thin> represent the kernel $K$ and edges of type \<thin2> represent $K_\eps$.
As in Section~\ref{sec:rules}, we identify \<thin> with $(\<thin>,0)$ and we write 
$\<thick2> = (\<thin2>,(0,1))$.
We then extend our rule $R$ to this larger type set by setting
\begin{equs}
\hat R(\<thin>) &= \{(\<thin>^k,\<generic>_i), (\<thick>^\ell,\<thin>^k)\,:\, k \ge 0,\, \ell \in \{0,1,2\},\, i\in \{1,\ldots,m\}\}\;,\\
\hat R(\<thin2>) &= \{(\<thin>^{k},\<thin2>^{\bar k},\<generic>_i), (\<thick>^\ell,\<thick2>^{\bar \ell},\<thin>^k,\<thin2>^{\bar k})\,:\, k,\bar k \ge 0,\, \ell+\bar \ell \in \{0,1,2\},\, i\in \{1,\ldots,m\}\}\;.
\end{equs}
We write $\hat \Trees$ for the corresponding set of labelled trees and $\hat \CT$ for 
the vector space generated by $\hat \Trees$. This is turned into a regularity structure $\hat \TT$
as in \cite[Sec.~6.4]{BHZ} (the ``reduced regularity structure'' in that lingo) and, similarly to above, 
we write $\hat \MM$ for the space of all models for $\hat \TT$ and $\hat \MM_\eps$ for the corresponding space of
\textit{admissible} models with \<thin> assigned to $K$ and \<thin2> assigned to $K_\eps$.
Since $\hat R$ extends $R$, it follows that we have a canonical inclusion $\CT \subset \hat \CT$,
as well as a canonical projection $\pi_0 \colon \hat \MM_\eps \to \MM_0$ obtained by restricting
a given model to $\CT$.
One also has a canonical projection $\hat \pi \colon \hat \CT \to \CT$ obtained by changing all edges of
type \<thin2> or \<thick2> into edges of type \<thin> or \<thick> respectively. It is immediate from the definition
of $\hat R$ that if $\tau$ is a labelled tree conforming to the rule $\hat R$, then $\hat \pi \tau$ conforms
to the rule $R$, so that $\hat \pi$ does indeed map $\hat \CT$ to $\CT$. Since furthermore
both \<thin> and \<thin2> are assigned to $K$ in the space $\hat \MM_0$, this yields
a canonical inclusion $\iota \colon \MM_0 \hookrightarrow \hat \MM_0$ obtained by right composing with $\hat \pi$.
The inclusion $\iota$ is a right inverse for the projection $\pi_0$.

For $\eps > 0$, we also have a natural extension map $\CE_\eps \colon \MM_0 \to \hat \MM_\eps$, also forming
a right inverse for $\pi_0$, which is defined
as follows. Given a model $\PPi \in \MM_0$, we define $\hat \PPi = \CE_\eps(\PPi)$ in such a way that 
$\hat \PPi$ is admissible and such that, whenever $\tau \in \hat \CT$ is of the form
$\tau = \bar \tau \cdot \tau_1\cdots \tau_k$ for $\bar \tau \in \CT$ 
and $\tau_i \in \hat \CT$ such that their root is only incident to exactly one edge of type
\<thin2> or \<thick2> (the cases $k=0$ and \slash or $\bar \tau = \one$ are allowed), then  
\begin{equ}
\hat \PPi \tau = \PPi \bar \tau \cdot \hat \PPi \tau_1\cdots \hat \PPi \tau_k\;.
\end{equ}
Since $K_\eps$ is smooth, it follows that $\hat \PPi \tau_i$ is smooth for any such symbol,
so this product is simply a product between a distribution and a number of smooth functions. 
Furthermore, this covers all of $\hat \CT$, so that it determines $\hat \PPi$ uniquely.

Since one can easily verify that $\hat R$ is complete (as a consequence of the fact that $R$ is complete),
we also have a renormalisation group $\hat \RR$ associated to $\hat \TT$, with elements of
$\hat \RR$ identified with characters of the free algebra $\sscal{\hat \Trees_-}$
generated by the trees $\tau \in \hat \Trees_-$ of strictly negative degree, as well
as a subgroup $\hat \RR_{\<generic>}$ defined analogously to $\RR_{\<generic>}$. The canonical
inclusion $\Trees_-\subset \hat \Trees_-$ allows to identify $\RR$ with
a subgroup of $\hat \RR$. We also have a natural normal subgroup $\RR^\perp \subset \hat \RR$ consisting
of those group elements which leave $\CT$ invariant. This corresponds precisely to the 
linear functionals that vanish on $\Trees_-$.
Similarly, we define the subgroup
$\RR_{\<generic>}^\perp = \hat \RR_{\<generic>} \cap \RR^\perp$.

Note that the extension operator $\CE_\eps$ is only well-defined for $\eps > 0$ and does \textit{not} 
converge to the operator $\iota$ as $\eps = 0$. However, we have the following result which is the other main
ingredient in the proof of Theorem~\ref{theo:injective}.

\begin{lemma}\label{lem:extend}
Let $\PPi^\BPHZ$ be the (random) BPHZ model on $\TT$ constructed in Theorem~\ref{theo:Ajay}. Then,
there exists a sequence of elements $C_\eps \in \RR_{\<generic>}^\perp$, so that the sequence of models
\begin{equ}
\hPPi_\eps = C_\eps \star \CE_\eps(\PPi^\BPHZ) \;,
\end{equ} 
converges in $\hat \MM$ to the model $\iota \PPi^\BPHZ$.
\end{lemma}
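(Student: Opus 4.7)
The plan is to construct $C_\eps$ by a BPHZ-type renormalisation on the enlarged regularity structure $\hat \TT$, constrained to vanish on $\Trees_-$. This constraint ensures $C_\eps \in \RR^\perp$ automatically, while on the ``new'' trees $\tau \in \hat \Trees_- \setminus \Trees_-$, which necessarily contain at least one edge of type \<thin2> or \<thick2>, the character $C_\eps$ takes care of the extra divergences introduced by the extension operator $\CE_\eps$ as $K_\eps \to K$.

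More precisely, for each $\eps' > 0$, I would consider the canonical lift $\CL^{(\eps,\eps')}$ of the mollified noise $\xi^{(\eps')}$ to $\hat \MM_\eps$, where edges of type \<thin>/\<thick> represent convolution against $K$ and edges of type \<thin2>/\<thick2> represent convolution against $K_\eps$. Let $\hat C_{\eps,\eps'}$ denote its BPHZ character on $\sscal{\hat \Trees_-}$. Since the restriction of the rule $\hat R$ to \<thin>-only trees coincides with $R$, the restriction $\hat C_{\eps,\eps'}|_{\Trees_-}$ equals the usual BPHZ character $C_{\eps',\geo}^\BPHZ$ from Proposition~\ref{prop:BPHZchar}. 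I would then let $C_{\eps,\eps'} \in \hat \RR$ be the unique character that agrees with $\hat C_{\eps,\eps'}$ on $\hat \Trees_- \setminus \Trees_-$ and vanishes on $\Trees_-$; by construction $C_{\eps,\eps'} \in \RR^\perp$, and the symmetry arguments of Lemma~\ref{lem:symx} (vanishing of odd Wick moments together with the $x \leftrightarrow -x$ invariance of $\rho$) upgrade this to $C_{\eps,\eps'} \in \RR_{\<generic>}^\perp$.

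The next step is to take the limit $\eps' \to 0$ at fixed $\eps > 0$. The rule $\hat R$ is again normal, subcritical and complete, and the tree degrees are inherited from $R$, so the convergence theorem of \cite{Ajay} used in the proof of Theorem~\ref{theo:Ajay} extends verbatim to $\hat \TT$: the renormalised model $C_{\eps,\eps'} \star \CL^{(\eps,\eps')}$ converges to some $\hPPi_\eps \in \hat \MM_\eps$. Because $C_{\eps,\eps'}$ vanishes on $\Trees_-$, one has $\pi_0(\hPPi_\eps) = \PPi^\BPHZ$, and uniqueness of the admissible extension from $\MM_0$ to $\hat \MM_\eps$ via $\CE_\eps$ identifies the limit as $C_\eps \star \CE_\eps(\PPi^\BPHZ)$ for some $C_\eps \in \RR_{\<generic>}^\perp$. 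Finally, sending $\eps \to 0$ and using $\|K_\eps - K\|_{(2-\kappa),m} \to 0$, a joint application of the same convergence result identifies $\lim_{\eps \to 0}\hPPi_\eps$ with $\iota \PPi^\BPHZ$, since $\iota \PPi^\BPHZ$ is the unique admissible extension of $\PPi^\BPHZ$ to $\hat \MM_0$ once both edge types represent $K$.

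The main obstacle will be verifying that the analytic estimates underpinning the convergence theorem of \cite{Ajay} apply uniformly in $\eps \in [0,1]$ to the enlarged regularity structure. Concretely, one needs to bound the Feynman diagrams associated with the trees $\tau \in \hat \Trees_-$ that mix the two kernel types $K$ and $K_\eps$ uniformly in the mollification parameter, so that the joint limit $(\eps, \eps') \to 0$ can be taken; since $K_\eps$ satisfies kernel bounds uniform in $\eps$ and the combinatorics of $\hat \Trees_-$ is a finite extension of that of $\Trees_-$, this is expected to be a routine but nontrivial extension of the analysis of \cite{Ajay}, with all ``new'' divergences precisely those captured by the renormalisation $C_\eps$ constructed above.
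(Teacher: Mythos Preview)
Your overall strategy—doubly mollify, BPHZ-renormalise on $\hat\TT$, then pass to the limit—matches the paper, but there is a genuine gap in the way you split the character. You define $C_{\eps,\eps'}$ to agree with $\hat C_{\eps,\eps'}$ on $\hat\Trees_-\setminus\Trees_-$ and to vanish on $\Trees_-$, and you then apply it to the \emph{unrenormalised} canonical lift $\CL^{(\eps,\eps')}$. But a character that vanishes on $\Trees_-$ acts trivially on $\CT\subset\hat\CT$, so $\pi_0\bigl(C_{\eps,\eps'}\star\CL^{(\eps,\eps')}\bigr)=\PPi_\geo^{(\eps')}$, the unrenormalised model. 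This does not converge as $\eps'\to 0$, contradicting both your convergence claim and the subsequent assertion that $\pi_0(\hPPi_\eps)=\PPi^\BPHZ$. Note also that ``agree on one piece, set to zero on the other'' need not even define a character: $\hat\RR$ is not a direct product but only an extension $\RR^\perp\lhd\hat\RR$, so the splitting you want is a group-theoretic factorisation $\hat C_{\eps,\eps'}=C'_{\eps,\eps'}\star C_{\eps',\geo}^\BPHZ$ with $C'_{\eps,\eps'}\in\RR^\perp$, and one would further need $\CE_\eps$ to intertwine the action of $\RR\subset\hat\RR$.

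The paper sidesteps all of this by reversing the order of operations: it first extends the \emph{already} renormalised model, setting $\PPi_{\eps,\delta}=\CE_\eps(\hPPi_\geo^{(\delta)})$, and only then applies the full BPHZ character $C_{\eps,\delta}^\BPHZ$ on $\hat\TT$. Because the base model already has vanishing expectations on $\Trees_-$ and the twisted antipode $\hat\CA^t$ coincides with $\CA^t$ there, one obtains $C_{\eps,\delta}^\BPHZ\in\RR_{\<generic>}^\perp$ for free, with no splitting required. Convergence of $\hPPi_{\eps,\delta}$ to $\iota\PPi^\BPHZ$ then follows directly from stability of the BPHZ model, and the only remaining work is to check that $g_{\eps,\delta}(\tau)=\E(\PPi_{\eps,\delta}\tau)(0)$ has a limit as $\delta\to 0$ at fixed $\eps$; this holds since $\CE_\eps(\PPi^\BPHZ)$ is a well-defined model, with the non-stationary trees handled via the $x\leftrightarrow-x$ symmetry. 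Your argument can be repaired along these lines: renormalise on $\CT$ \emph{before} extending to $\hat\CT$, not after.
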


\begin{proof}
We consider the models $\PPi_{\eps,\delta} = \CE_\eps(\hPPi_\geo^{(\delta)})$ and look at
the sequence 
\begin{equ}
\hPPi_{\eps,\delta} = C_{\eps,\delta}^\BPHZ \star \PPi_{\eps,\delta} \;,
\end{equ} 
where $C_{\eps,\delta}^\BPHZ$ is the corresponding BPHZ character which is characterised by the fact
that $\hPPi_{\eps,\delta}$ vanishes on elements of negative degree.

As in Remark~\ref{rem:noises}, we have the explicit formula
\begin{equ}
C_{\eps,\delta}^\BPHZ = \hat \CA^t g_{\eps,\delta}\;,
\end{equ}
where $\hat \CA^t \colon \hat \Trees_- \to \sscal{\hat \Trees}$ is the twisted antipode for the
regularity structure $\hat \TT$ and $g_{\eps,\delta}$ is given by
\begin{equ}
g_{\eps,\delta}(\tau) = \E \bigl(\PPi_{\eps,\delta}\tau\bigr)(0)\;.
\end{equ}
It follows immediately from the 
definition of the twisted antipode \cite[Eq.~6.24]{BHZ} that $\hat \CA^t$ coincides with $\CA^t$ on 
$\Trees_- \subset \hat \Trees_-$. Since we furthermore have $g_{\eps,\delta}(\tau) = 0$
for $\tau \in \Trees_-$ (this is precisely what characterises $\hPPi_\geo^{(\delta)}$,
which furthermore coincides with $\PPi_{\eps,\delta}$ on those elements), it follows that 
$C_{\eps,\delta}^\BPHZ \in \RR_{\<generic>}^\perp$.

By stability of the BPHZ model, $\hPPi_{\eps,\delta}$ converges to $\iota \PPi^\BPHZ$ as $\eps,\delta \to 0$
in whichever way one takes these limits,
so it only remains to show that $g_{\eps,\delta}$ (and therefore also $C_{\eps,\delta}^\BPHZ$) 
has a limit as $\delta \to 0$ for any fixed $\eps$.
For this, we note that the only elements $\tau \in \hat \Trees_-$ for which 
$\PPi_{\eps,\delta}\tau$ is not a stationary process are $\<XiX>_i$, as well as those 
modelled (in the sense that some lines may be replaced by dotted lines) on the first 
four elements of the last line of the table of symbols on page~\pageref{listPage}. These however are
odd (in law) in the spatial variable, so that 
\begin{equ}
 g_{\eps,\delta}(\tau) = 0 = \E \bigl(\PPi_{\eps,\delta}\tau\bigr)(\phi)\;,
\end{equ}
for any test function $\phi$ that is even in the spatial variable. It follows that,
for any such test function integrating to $1$, one has
\begin{equ}
g_{\eps,\delta}(\tau) = \E \bigl(\PPi_{\eps,\delta}\tau\bigr)(\phi)\;.
\end{equ}
Since the model $\PPi_{\eps,\delta}$ converges to the finite limit $\CE_\eps(\PPi^{\BPHZ})$ as $\delta \to 0$,
the claim follows.
\end{proof}

\begin{remark}
Since $C_\eps \in \RR_{\<generic>}^\perp$ and since $(\pi_0 \circ \CE_\eps)(\PPi^\BPHZ) = \PPi^\BPHZ$, one has
$\pi_0 (\hPPi_\eps) = \PPi^\BPHZ$ for every $\eps > 0$.
\end{remark}
We now have all the ingredients in place to give a proof of Theorem~\ref{theo:injective}.

\begin{proof}[of Theorem~\ref{theo:injective}]
Our main ingredient is the fact that we can find a measurable function $F_{\Gamma, h} \colon \CC^a_\star \to \CD'$ with the property that, if we denote by $\bar u$ a random variable with
law $U^\BPHZ(\Gamma,\sigma, \bar h)(u_0)$, then the law of
$F_{\Gamma, h}(\bar u)$ coincides with that of the random variable
\begin{equ}[e:laweta]
\eta_r + \one_{[0,\tau_r]} \bigl( h(\bar u) - \bar h(\bar u)\bigr)\;.
\end{equ}
Assume for the moment that this is indeed the case and, for given $\Gamma$ and $\sigma$,
choose any two vector fields $h \neq \bar h$. In particular there exist $v,w \in \R^d$ and $r > 0$
such that 
\begin{equ}[e:choicevw]
\scal{w, h(v') - \bar h(v')} \ge 1\;,
\end{equ}
for all $v'$ such that $|v'-v| \le r$. 
Write $u$ for a random variable with law $U^\BPHZ(\Gamma,\sigma,h)(v)$, 
$\bar u$  for a random variable with law $U^\BPHZ(\Gamma,\sigma,\bar h)(v)$, and 
let $\tau_r(u)$ be defined as in \eqref{e:taudelta} with $u_0 = v$.

Since $\tau_r > 0$ almost surely, we can find $\delta$ sufficiently small so that 
$\P(\tau_r(u) > \delta) > {1\over 2}$ and $\P(\tau_r(\bar u) > \delta) > {1\over 2}$.
Choose then a smooth real-valued test function $\phi$ integrating to $1$ and 
supported on $[0,\delta] \times S^1$.
By \eqref{e:laweta}, \eqref{e:choicevw}, the fact that $\eta_r$ has vanishing expectation, 
and our choice of $\delta$, it then follows that
\begin{equ}
\scal{F_{\Gamma, h}(u),w\phi} = 0\;,\qquad
\scal{F_{\Gamma, h}(\bar u),w\phi} > {1\over 2}\;,
\end{equ}
so that the laws of $u$ and $\bar u$ are indeed distinct as claimed.

It remains to show that one can construct an $F_{\Gamma,h}$ such that \eqref{e:laweta} holds.
For this, consider the (unique) solutions to the following system of equations
\begin{equs}
V^\alpha &= Pu_0^\alpha + \CP \one_{[0,\tau_r]} \bigl(\Gamma^\alpha_{\beta,\gamma}(V) \,\DD V^\beta\, \DD V^\gamma
+ h^\alpha(V) + \sigma_i^\alpha(V)\,\sXi_i\bigr)\;,\label{e:system}\\
\bar V_\eps^\alpha &= \rho_\eps * Pu_0^\alpha + \CP_\eps \one_{[0,\tau_r]} \bigl(\Gamma^\alpha_{\beta,\gamma}(V) \,\DD V^\beta\, \DD V^\gamma
+ h^\alpha(V) + \sigma_i^\alpha(V)\,\sXi_i\bigr)\;,
\end{equs}
where $\tau_r$ is as above, based on $u = \CR V$.
It follows from the definitions that, for any random model $\hat \PPi \in \hat \MM_\eps$ such that 
$\pi_0 (\hat \PPi) = \PPi^\BPHZ$ (in law), the law of $\CR V^\alpha$ 
coincides with the law of $U^\BPHZ(\Gamma,\sigma, \bar h)(u_0)$ (and $V$ is indeed independent of $\eps$). 
Furthermore, the identity
$\CR \bar V_\eps^\alpha = \rho_\eps * \CR V^\alpha$ holds for every model $\hat \PPi \in \hat \MM_\eps$.
Finally, for any model of the for $\iota \PPi$ for some $\PPi \in \MM_0$, 
$\CR \bar V_0 = \CR V$. In fact, a stronger statement is true, namely
\begin{equ}[e:equalVVbar]
\hat \pi \bar V_0(t,x) = V(t,x)\;,
\end{equ}
and the reconstruction operator $\CR$ is such that $\CR \hat \pi U = \CR U$ for any modelled
distribution $U$.

It then follows from \eqref{e:system} and the fact that we only consider
admissible models that $u = \CR V$ satisfies the distributional identity
\begin{equ}[e:diffequ]
\d_t u^\alpha = \d_x^2 u^\alpha + u_0 \delta_t + \CR \one_{[0,\tau_r]}\bigl(\Gamma^\alpha_{\beta,\gamma}(V) \,\DD V^\beta\, \DD V^\gamma
+ h^\alpha(V)\bigr) + \eta_r^\alpha\;,
\end{equ}
with $\eta_r$ as above, where $\delta_t$ denotes the Dirac distribution centred at the origin in the 
time variable.
Consider now the modelled distributions
\begin{equ}
X_\eps^\alpha = \one_{[0,\tau_r]} \bigl(\Gamma^{\alpha}_{\beta\gamma}(\bar V_\eps)\,\CD \bar V_\eps^\beta\,\CD \bar V_\eps^\gamma \bigr)\;, \qquad
Y^\alpha = \one_{[0,\tau_r]} \bigl(\Gamma^{\alpha}_{\beta\gamma}(V)\,\CD  V^\beta\,\CD V^\gamma\bigr)\;.
\end{equ}
We make use of the following facts.
\begin{enumerate}
\item For any model of the type $\iota \PPi$ with $\PPi \in \MM_0$, one has
$\hat \pi X_0 = Y$ as a consequence of \eqref{e:equalVVbar}. In particular, for any such model
one has $\CR X_0 = \CR Y$.
\item The map mapping the underlying model to $(\CR X_\eps, \CR Y)$ is continuous from
$\hat \MM_\eps$ to $\CD'$. This follows from the results of \cite{BCCH} in virtually the same way as 
the continuity statement of Theorem~\ref{theo:BPHZ}.
\end{enumerate}
Applying this to the specific choice of model given by $\hPPi_\eps$ as in Lemma~\ref{lem:extend},
we conclude that for this choice one has $\CR X_\eps \to \CR Y$ in probability.
On the other hand, it follows from the results of \cite{BCCH} that there exists a function 
$G\colon \R^d \times \R^d \to \R^d$ depending on $\Gamma$ and $\sigma$ but not on $h$ and such that 
one has the identity
\begin{equ}
\CR X_\eps = \one_{[0,\tau_r]} \bigl(\Gamma^{\alpha}_{\beta\gamma}(u_\eps)\,\d_x u_\eps^\beta\,\d_x u_\eps^\gamma + G(u_\eps, u)\bigr)\;.
\end{equ}
(Indeed, it suffices to view $\hat V_\eps = \CP X_\eps$ as an additional component of \eqref{e:system} and to note that
the triangular structure of this system guarantees that the corresponding renormalisation term does not
involve $\hat V_\eps$ itself. The reason why the spatial derivatives of $u$ and $u_\eps$ do not appear
is the same as previously.)

Combining these facts together with \eqref{e:diffequ}, it follows that one has
the almost sure identity
\begin{equ}
\eta_r^\alpha = \d_t u^\alpha - \d_x^2 u^\alpha - u_0 \delta_t - h^\alpha(u)- \lim_{\eps \to 0} \one_{[0,\tau_r]} \bigl(\Gamma^{\alpha}_{\beta\gamma}(u_\eps)\,\d_x u_\eps^\beta\,\d_x u_\eps^\gamma + G(u_\eps, u)\bigr)\;,
\end{equ}
where the convergence of the last term takes place in probability in $\CD'$ and therefore almost surely
along a suitable sequence $\eps \to 0$. The fact that we obtain \eqref{e:laweta} if we replace $h$ by $\bar h$
in \eqref{e:system} is immediate from the construction.  
\end{proof}

\subsection{Equivariance of solutions}
\label{sec:geo}

Recall the definition of $U^\geo_\eps$ above and set
$\bar U(\Gamma,h) = U^\geo_\eps(\Gamma,0,h)$,
which is of course deterministic and independent of $\eps$.
Recall also that $\CS_\geo \subset \CS$ from Definition~\ref{def:geoIto} denotes the space of all `geometric' counterterms.
The aim of this section is to show that the counterterm $C_{\eps,\geo}^\BPHZ$ is ``mostly'' contained
in $\CS_\geo$ in the sense that its projection onto the complement of $\CS_\geo$ converges to a finite limit.

For this, we first fix some complement $\CS_\geo^\perp$ of $\CS_\geo$ in $\CS$ so that 
$\CS = \CS_\geo \oplus \CS_\geo^\perp$. For definiteness, we could take the orthogonal
complement with respect to the scalar product introduced in Section~\ref{sec:reduced}, but this is 
of no particular importance. 
We also fix a mollifier $\rho \in \Moll$ and decompose the corresponding ``geometric'' BPHZ counterterm 
$C_{\eps,\geo}^\BPHZ$
as $C_{\eps,\geo}^\BPHZ = C_\eps^g + C_\eps^c$ with $C_\eps^g \in \CS_\geo$ and $C_\eps^c \in \CS_\geo^\perp$. 
The main result of this section is the following.

\begin{proposition}\label{prop:geo}
There exists $v_\geo \in \CS_\geo^\perp$ such that $\lim_{\eps \to 0} C_\eps^c = v_\geo$.
Furthermore, $v_\geo$ is independent of the choice of mollifier $\rho$.
\end{proposition}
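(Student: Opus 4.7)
The strategy is to combine three ingredients already established: the diffeomorphism-equivariance of $U^\geo_\eps$ at fixed $\eps$ (Proposition~\ref{prop:equivariance}), the joint continuity of the BPHZ-renormalised solution map in $h$ (Theorem~\ref{theo:BPHZ}) together with the convergence of the renormalised model (Theorem~\ref{theo:Ajay}), and the injectivity of $h\mapsto U^\BPHZ(\Gamma,\sigma,h)$ (Theorem~\ref{theo:injective}). Together these will force $\lim_\eps C_\eps^c$ to exist and to be characterised by an identity involving only the mollifier-independent object $U^\BPHZ$, which in turn gives the claimed $\rho$-independence of the limit for free.

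Concretely, fix smooth data $(\Gamma,\sigma,h)$ and a diffeomorphism $\phi$ of $\R^d$ homotopic to the identity, and introduce the ``equivariance defect''
\begin{equ}
L_{\phi,\Gamma,\sigma}(\tau)\eqdef \phi\act\Upsilon_{\Gamma,\sigma}\tau-\Upsilon_{\phi\act\Gamma,\phi\act\sigma}\tau\;,
\end{equ}
which is linear in $\tau\in\CS$ and vanishes on $\CS_\geo$ by Definition~\ref{def:geoIto}. Applying \eqref{e:geo} with $h$ replaced by $h+\Upsilon_{\Gamma,\sigma}C^\BPHZ_{\eps,\geo}$, decomposing $C^\BPHZ_{\eps,\geo}=C_\eps^g+C_\eps^c$, and using that $L_{\phi,\Gamma,\sigma}$ kills $C_\eps^g$ yields
\begin{equ}[e:plangeo]
\phi\act U^\geo_\eps\bigl(\Gamma,\sigma,h+\Upsilon_{\Gamma,\sigma}C^\BPHZ_{\eps,\geo}\bigr)
=U^\geo_\eps\bigl(\phi\act\Gamma,\phi\act\sigma,\phi\act h+L_{\phi,\Gamma,\sigma}(C_\eps^c)+\Upsilon_{\phi\act\Gamma,\phi\act\sigma}C^\BPHZ_{\eps,\geo}\bigr)\;.
\end{equ}
As $\eps\to 0$ the left-hand side converges to $\phi\act U^\BPHZ(\Gamma,\sigma,h)$, and Theorem~\ref{theo:BPHZ} together with Theorem~\ref{theo:Ajay} says that whenever $L_{\phi,\Gamma,\sigma}(C_\eps^c)\to g$ in $\CC^6$ the right-hand side converges to $U^\BPHZ(\phi\act\Gamma,\phi\act\sigma,\phi\act h+g)$. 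Theorem~\ref{theo:injective} then pins $g$ down uniquely as the shift such that $U^\BPHZ(\phi\act\Gamma,\phi\act\sigma,\phi\act h+g)=\phi\act U^\BPHZ(\Gamma,\sigma,h)$.

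It remains to promote this subsequential uniqueness to a genuine limit of $C_\eps^c$ inside the finite-dimensional space $\CS_\geo^\perp$. By the very definition of $\CS_\geo$, the family $\{L_{\phi,\Gamma,\sigma}|_{\CS_\geo^\perp}\}_{\phi,\Gamma,\sigma}$ has trivial joint kernel, so by finite-dimensionality I can fix a finite collection $\{(\phi_i,\Gamma_i,\sigma_i)\}_{i=1}^N$ for which $\tau\mapsto (L_{\phi_i,\Gamma_i,\sigma_i}(\tau))_i$ is injective on $\CS_\geo^\perp$. Once each $L_{\phi_i,\Gamma_i,\sigma_i}(C_\eps^c)$ is known to be $\CC^6$-bounded as $\eps\to 0$, compactness plus the uniqueness above produces a single limit for every $i$, joint injectivity upgrades this to $C_\eps^c\to v_\geo\in\CS_\geo^\perp$, and mollifier-independence of $v_\geo$ is automatic because the characterising identities involve only the $\rho$-independent object $U^\BPHZ$. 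The hard part of the plan, which I expect to be the main obstacle, is this a priori $\eps$-uniform boundedness: the natural attack is by contradiction, extracting along a subsequence with $\|C_{\eps_n}^c\|_{\CS_\geo^\perp}\to\infty$ a unit-norm direction $\hat\tau\in\CS_\geo^\perp$ (possible by finite-dimensionality), so that for some $i$ the effective drift in the right-hand side of \eqref{e:plangeo} diverges in the fixed direction $L_{\phi_i,\Gamma_i,\sigma_i}(\hat\tau)\neq 0$ while the left-hand side remains bounded and convergent to $\phi_i\act U^\BPHZ(\Gamma_i,\sigma_i,h)$; the contradiction is to be extracted from the continuity statement of Theorem~\ref{theo:BPHZ} applied to the suitably rescaled sequence.
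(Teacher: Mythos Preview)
Your strategy for the \emph{uniqueness} of the accumulation point, and for the $\rho$-independence, is essentially the same as the paper's: pass to the limit in \eqref{e:plangeo}, obtain an identity of the form
\begin{equ}
\phi\act U^\BPHZ(\Gamma,\sigma,h)=U^\BPHZ\bigl(\phi\act\Gamma,\phi\act\sigma,\phi\act h+L_{\phi,\Gamma,\sigma}(v)\bigr)
\end{equ}
for any accumulation point $v$, and conclude by Theorem~\ref{theo:injective}. That part is fine.

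The gap is in the boundedness step, and it is a genuine one. You want to argue that if $\|C_{\eps_n}^c\|\to\infty$ with unit direction $\hat\tau$, then the right-hand side of \eqref{e:plangeo} cannot stay bounded because its drift diverges in the direction $L_{\phi_i,\Gamma_i,\sigma_i}(\hat\tau)$. But Theorem~\ref{theo:BPHZ} is only a \emph{continuity} statement in $h$; it says nothing about what happens when $h\to\infty$. There is no properness or coercivity available, so a diverging drift does not by itself produce a diverging (or even non-convergent) solution. Your phrase ``applied to the suitably rescaled sequence'' does not specify what is being rescaled or why that rescaling is compatible with the BPHZ machinery.

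The paper's device is quite different and worth internalising: rather than rescaling $C_\eps^c$, one rescales the \emph{noise coefficient} $\sigma\mapsto\alpha_\eps\sigma$ with $\alpha_\eps\to 0$. The key algebraic input is the homogeneity $\Upsilon_{\Gamma,r\sigma}\tau=r^k\Upsilon_{\Gamma,\sigma}\tau$ for $\tau\in\CS^{(k)}$, together with the splitting $\CS=\CS^{(2)}\oplus\CS^{(4)}$. Choosing $\alpha_\eps=1/r_\eps$ with $r_\eps=|C^c_{\eps,2}|^{1/2}+|C^c_{\eps,4}|^{1/4}$ achieves two things simultaneously: (i) the rescaled counterterm $\Upsilon_{\Gamma,\alpha_\eps\sigma}C_\eps^c=\alpha_\eps^2\Upsilon_{\Gamma,\sigma}C^c_{\eps,2}+\alpha_\eps^4\Upsilon_{\Gamma,\sigma}C^c_{\eps,4}$ becomes \emph{bounded} with a nonzero accumulation point $\Upsilon_{\Gamma,\sigma}v$, $v\in\CS_\geo^\perp\setminus\{0\}$; and (ii) the noise $\alpha_\eps\sigma$ vanishes, so the limit is the \emph{deterministic} solution map $\bar U(\Gamma,\cdot)$. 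One can now run equivariance cleanly on both sides to obtain $\bar U(\phi\act\Gamma,-\phi\act\Upsilon_{\Gamma,\sigma}v)=\bar U(\phi\act\Gamma,-\Upsilon_{\phi\act\Gamma,\phi\act\sigma}v)$, and injectivity of $\bar U$ in its second argument (a special case of Theorem~\ref{theo:injective} at $\sigma=0$) forces $v\in\CS_\geo$, the desired contradiction. The $\sigma$-scaling is the missing idea in your proposal.
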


\begin{remark}
Here and below, although the constants are independent of the choice of mollifier, they do
in general depend on the choice of cutoff $K$ of the heat kernel used in constructing our models.
\end{remark}

\begin{proof}
Consider the decomposition $\CS = \CS^{(2)} \oplus \CS^{(4)}$ according to
how many noises appear in a given symbol. Since $C_{\eps,\geo}^\BPHZ\in\iota\CS$ by Lemma \ref{lem:iota}, we can write $C_{\eps,\geo,k}^\BPHZ$ for the component
of $C_{\eps,\geo}^\BPHZ$ in $\CS^{(k)}$ and similarly for $C_{\eps,k}^g$ and $C_{\eps,k}^c$.

We first show that $C_\eps^c$ is bounded. Assume by contradiction that it is not
so that, at least along a some subsequence $\eps \to 0$, 
\begin{equ}
\lim_{\eps \to 0} r_\eps = +\infty \;,\quad r_\eps = r_{\eps,2} + r_{\eps,4}\;,\quad r_{\eps,k} =  |C_{\eps,k}^c|^{1/k}\;. 
\end{equ}
Set furthermore $\alpha_\eps = 1/r_\eps$. 
It is immediate that the pair $(\alpha_\eps^2 C_{\eps,2}^c,\alpha_\eps^4 C_{\eps,4}^c) \in (\CS_\geo^\perp)^2$
remains uniformly bounded as $\eps \to 0$. It also remains uniformly bounded away from the origin since 
one has either $r_{\eps,2} \ge r_{\eps,4}$ in which case $|\alpha_\eps^2 C_{\eps,2}^c| \ge 1/4$ or
$r_{\eps,4} \ge r_{\eps,2}$ in which case $|\alpha_\eps^4 C_{\eps,4}^c| \ge 1/16$. Modulo extracting a further
subsequence, we can therefore assume that the pair converges to a non-degenerate limit $(\hat C_2,\hat C_4) \in (\CS_\geo^\perp)^2$.
Note that we also have 
\begin{equ}
v = \hat C_2 + \hat C_4 \neq 0\;,
\end{equ}
since $\hat C_k \in \CS^{(k)}$ and these spaces are transverse.

It follows from the definition of $\Upsilon_{\Gamma,\sigma}$ that, for $\tau \in \CS^{(k)}$ and $r \in \R$, one has
\begin{equ}
\Upsilon_{\Gamma, r\sigma} \tau = r^k\Upsilon_{\Gamma, \sigma} \tau\;.
\end{equ}
Combining this with Theorem~\ref{theo:BPHZ} shows that, 
for every sequence $\alpha_\eps \to 0$, we have
the convergence in probability in $\CB_\star^a$
\begin{equ}
\lim_{\eps \to 0} U_\eps^\geo(\Gamma,\alpha_\eps \sigma,h + \alpha_\eps^2 \Upsilon_{\Gamma,\sigma} C_{\eps,\geo,2}^\BPHZ + \alpha_\eps^4\Upsilon_{\Gamma,\sigma} C_{\eps,\geo,4}^\BPHZ)
=  \bar U(\Gamma,h)\;.
\end{equ}
With our particular choice of $\alpha_\eps$, this immediately implies that 
(along the subsequence chosen above)
\begin{equ}[e:limitDet]
\lim_{\eps \to 0} U_\eps^\geo(\Gamma,\alpha_\eps \sigma, \Upsilon_{\Gamma,\alpha_\eps \sigma} C_\eps^g)
=  \bar U(\Gamma,-\Upsilon_{\Gamma,\sigma}v)\;.
\end{equ}
Given any diffeomorphism $\phi$ homotopic to the identity, we now conclude from applying \eqref{e:limitDet} 
twice and the fact that the deterministic solution map is equivariant for the diffeomorphism group that
\begin{equs}
\bar U(\phi \act \Gamma,-\phi \act \Upsilon_{\Gamma,\sigma}v)
&=
\phi \act \bar U(\Gamma,-\Upsilon_{\Gamma,\sigma}v) 
= 
\lim_{\eps \to 0} \phi \act U_\eps^\geo(\Gamma,\alpha_\eps \sigma, 
\Upsilon_{\Gamma,\alpha_\eps\sigma} C_\eps^g) \\
&= 
\lim_{\eps \to 0} U_\eps^\geo(\phi\act\Gamma,\alpha_\eps \phi\act\sigma, 
\Upsilon_{\phi \act\Gamma,\alpha_\eps\phi \act\sigma} C_\eps^g) \\
&=
 \bar U(\phi \act\Gamma,-\Upsilon_{\phi \act\Gamma,\phi \act\sigma}v)\;.
\end{equs}
It follows that $\phi \act \Upsilon_{\Gamma,\sigma}v = \Upsilon_{\phi \act\Gamma,\phi \act\sigma}v$
as a consequence of the injectivity of $\bar U$ in its second argument
which, since $d$, $\phi$, $\Gamma$ and $\sigma$ were arbitrary, implies that 
$v \in \CS_\geo$ by definition. This however is in contradiction with the fact that
$v \in \CS_\geo^\perp$ and $|v| \neq 0$.

Having shown that $C_\eps^c$ is bounded, it remains to show that it can only have one
accumulation point. 
Recall that, by the definition of $U^\BPHZ$ and Theorem~\ref{theo:Ajay}, one has
\begin{equ}
U^\BPHZ(\Gamma,\sigma,h) = \lim_{\eps \to 0} U_\eps^\geo(\Gamma,\sigma, h+ \Upsilon_{\Gamma,\sigma} C_{\eps,\geo}^\BPHZ) \;.
\end{equ}
Acting again with an arbitrary diffeomorphism $\phi$ and applying Proposition~\ref{prop:equivariance}, we conclude that
\begin{equs}
\phi&\act U^\BPHZ(\Gamma,\sigma,0)
=
\lim_{\eps \to 0} U_\eps^\geo(\phi\act\Gamma,\phi\act\sigma, \phi\act \Upsilon_{\Gamma,\sigma} C_{\eps,\geo}^\BPHZ) \\
&=
\lim_{\eps \to 0} U_\eps^\geo(\phi\act\Gamma,\phi\act\sigma, \Upsilon_{\phi\act \Gamma,\phi\act \sigma} C_{\eps,\geo}^\BPHZ + \bigl(\phi\act \Upsilon_{\Gamma,\sigma}- \Upsilon_{\phi\act\Gamma,\phi\act\sigma}\bigr) C_\eps^c)\;,
\end{equs}
which implies that the identity
\begin{equ}[e:magic]
\phi \act U^\BPHZ(\Gamma,\sigma,0)
= U^\BPHZ(\phi \act \Gamma,\phi \act \sigma, \bigl(\phi\act \Upsilon_{\Gamma,\sigma}- \Upsilon_{\phi\act\Gamma,\phi\act\sigma}\bigr)v)\;,
\end{equ}
holds for any accumulation point $v$ of $\{C_\eps^c\}_{\eps \le 1}$. Since
$U^\BPHZ$ is injective in its last argument by Theorem~\ref{theo:injective}
and since this argument holds for any choice of $\Gamma$, $\sigma$ and $\phi$ homotopic to the identity, 
we conclude that
any two such accumulation points $v$ and $\bar v$ necessarily satisfy
\begin{equ}
\bigl(\phi\act \Upsilon_{\Gamma,\sigma}- \Upsilon_{\phi\act\Gamma,\phi\act\sigma}\bigr)(v-\bar v) = 0\;,
\end{equ}
for all such $\Gamma$, $\sigma$ and $\phi$. This is precisely the definition of $\CS_\geo$, so that
$v-\bar v \in \CS_\geo$, but since $v, \bar v \in \CS_\geo^\perp$ by construction, 
we conclude that $v = \bar v$ as claimed.

To show that $v$ does not depend on the choice of mollifier either, we use \eqref{e:magic} again in
the same way.
\end{proof}

\subsection{It\^o isometry}
\label{sec:Ito1}

We now show a statement analogous to Proposition~\ref{prop:geo}, but this time regarding the ``It\^o isometry''.
Again, we fix a mollifier $\rho \in \Moll$ (symmetric, compactly supported, and non-anticipative) and decompose 
the corresponding ``It\^o'' BPHZ counterterm $C_{\eps,\Ito}^\BPHZ$
as $C_{\eps,\Ito}^\BPHZ = C_\eps^I + C_\eps^c$ with $C_\eps^I \in \CS_\Ito$ and $C_\eps^c \in \CS_\Ito^\perp$
for some fixed choice of complement $\CS_\Ito^\perp$ of $\CS_\Ito$ in $\CS$ so that 
$\CS = \CS_\Ito \oplus \CS_\Ito^\perp$. 
The main result of this section is then the following.

\begin{proposition}\label{prop:ito}
There exists $v_\Ito \in \CS_\Ito^\perp$ independent of the mollifier $\rho$ 
such that $\lim_{\eps \to 0} C_\eps^c = v_\Ito$.
\end{proposition}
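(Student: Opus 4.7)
The plan is to adapt the two-step argument (boundedness, then uniqueness of the accumulation point) used in Proposition~\ref{prop:geo}, replacing the diffeomorphism equivariance of $U_\eps^\geo$ and $\bar U$ by the It\^o invariance established in the second part of Proposition~\ref{prop:equivariance}. Decompose $\CS = \CS^{(2)} \oplus \CS^{(4)}$ and write $C_{\eps,\Ito,k}^\BPHZ$, $C_{\eps,k}^I$, $C_{\eps,k}^c$ for the $k$-noise components. The key scaling ingredient remains $\Upsilon_{\Gamma,r\sigma}\tau = r^k\Upsilon_{\Gamma,\sigma}\tau$ for $\tau \in \CS^{(k)}$, together with the continuity statement of Theorem~\ref{theo:BPHZ} which implies that for any deterministic sequence $\alpha_\eps \to 0$ one has
\begin{equ}
\lim_{\eps \to 0} U_\eps^\Ito\bigl(\Gamma,\alpha_\eps\sigma,h+\Upsilon_{\Gamma,\alpha_\eps\sigma}C_{\eps,\Ito}^\BPHZ\bigr) = U^\BPHZ(\Gamma,0,h) = \bar U(\Gamma,h)\;,
\end{equ}
since $\alpha_\eps\sigma \to 0$ and the (random) BPHZ model converges to $\PPi^\BPHZ$.

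For boundedness, I would argue by contradiction: assume $|C_\eps^c|\to\infty$ along a subsequence and set $r_{\eps,k} = |C_{\eps,k}^c|^{1/k}$, $\alpha_\eps = 1/(r_{\eps,2}+r_{\eps,4})$. Extract a further subsequence so that $\alpha_\eps^k C_{\eps,k}^c \to \hat C_k \in \CS_\Ito^\perp$, and observe that $v = \hat C_2 + \hat C_4 \neq 0$ for the same transversality reason as in the geometric case. Inserting the decomposition $C_{\eps,\Ito}^\BPHZ = C_\eps^I + C_\eps^c$ and using the scaling identity gives
\begin{equ}[e:keyLimit]
\lim_{\eps\to 0} U_\eps^\Ito\bigl(\Gamma,\alpha_\eps\sigma,\Upsilon_{\Gamma,\alpha_\eps\sigma}C_\eps^I\bigr) = \bar U\bigl(\Gamma,-\Upsilon_{\Gamma,\sigma}v\bigr)\;.
\end{equ}
Now pick any $\bar\sigma$ with $\bar\sigma_i^\alpha\bar\sigma_i^\beta = \sigma_i^\alpha\sigma_i^\beta$, so that also $(\alpha_\eps\bar\sigma)$ and $(\alpha_\eps\sigma)$ satisfy this relation. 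By Proposition~\ref{prop:equivariance}, the left-hand side of \eqref{e:keyLimit} is unchanged if $\sigma$ is replaced by $\bar\sigma$ in the second argument of $U_\eps^\Ito$; since $C_\eps^I \in \CS_\Ito$, the counterterm $\Upsilon_{\Gamma,\alpha_\eps\sigma}C_\eps^I$ equals $\Upsilon_{\Gamma,\alpha_\eps\bar\sigma}C_\eps^I$, so the same argument applied to $\bar\sigma$ shows that the right-hand side equals $\bar U(\Gamma,-\Upsilon_{\Gamma,\bar\sigma}v)$. Injectivity of $\bar U$ in its second argument (which holds in the deterministic setting by classical PDE theory, and is in any case a trivial instance of Theorem~\ref{theo:injective} at $\sigma=0$) yields $\Upsilon_{\Gamma,\sigma}v = \Upsilon_{\Gamma,\bar\sigma}v$. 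Since $\Gamma$, $\sigma$ and $\bar\sigma$ were arbitrary, this means $v \in \CS_\Ito$, contradicting $0 \neq v \in \CS_\Ito^\perp$.

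For uniqueness of the accumulation point, given any two accumulation points $v,\bar v \in \CS_\Ito^\perp$ of $\{C_\eps^c\}$, I would repeat the analogue of \eqref{e:magic}: starting from
\begin{equ}
U^\BPHZ(\Gamma,\sigma,h) = \lim_{\eps\to 0} U_\eps^\Ito\bigl(\Gamma,\sigma,h+\Upsilon_{\Gamma,\sigma}C_\eps^I + \Upsilon_{\Gamma,\sigma}C_\eps^c\bigr)\;,
\end{equ}
and applying Proposition~\ref{prop:equivariance} to replace $\sigma$ by $\bar\sigma$ in the second slot, together with $\Upsilon_{\Gamma,\sigma}C_\eps^I = \Upsilon_{\Gamma,\bar\sigma}C_\eps^I$, one obtains
\begin{equ}
U^\BPHZ(\Gamma,\sigma,h) = U^\BPHZ\bigl(\Gamma,\bar\sigma,h + (\Upsilon_{\Gamma,\sigma}-\Upsilon_{\Gamma,\bar\sigma})v\bigr)\;,
\end{equ}
and similarly with $v$ replaced by $\bar v$. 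Equating the two expressions and invoking Theorem~\ref{theo:injective} gives $(\Upsilon_{\Gamma,\sigma}-\Upsilon_{\Gamma,\bar\sigma})(v-\bar v) = 0$ for all admissible $(\Gamma,\sigma,\bar\sigma)$, whence $v-\bar v \in \CS_\Ito \cap \CS_\Ito^\perp = 0$. The very same identity makes no reference to the mollifier $\rho$ beyond the fact that $v$ arises as a limit of BPHZ constants for that choice of $\rho$, so applying it to two different mollifiers shows that the limit $v_\Ito$ is mollifier-independent. The main obstacle, just as in Proposition~\ref{prop:geo}, is to ensure that the It\^o-invariance of $U_\eps^\Ito$ can be transferred through the scaling limit \eqref{e:keyLimit} to a statement about $v$ alone; this hinges on the fact that only the $\CS_\Ito^\perp$-component survives the cancellation $\Upsilon_{\Gamma,\sigma}C_\eps^I - \Upsilon_{\Gamma,\bar\sigma}C_\eps^I = 0$, which is exactly the definition of $\CS_\Ito$.
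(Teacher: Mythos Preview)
Your proposal is correct and follows essentially the same approach as the paper's proof, which is merely a sketch indicating that the argument of Proposition~\ref{prop:geo} goes through with diffeomorphism equivariance replaced by the It\^o invariance from Proposition~\ref{prop:equivariance}. Your write-up is in fact more detailed than the paper's, and the key identity you derive for the uniqueness step, $U^\BPHZ(\Gamma,\sigma,h) = U^\BPHZ\bigl(\Gamma,\bar\sigma,h + (\Upsilon_{\Gamma,\sigma}-\Upsilon_{\Gamma,\bar\sigma})v\bigr)$, coincides (at $h=0$) with the one the paper displays.
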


\begin{proof}
The proof is virtually identical to that of Proposition~\ref{prop:geo}, so we only sketch it.
One first shows that $C_\eps^c$ is bounded since otherwise one can again find
$\alpha_\eps \to 0$ and $v \in \CS_\Ito^\perp$ with $v \neq 0$ such that, along some subsequence,
\begin{equ}[e:limit1]
\lim_{\eps \to 0} U_\eps^\Ito(\Gamma,\alpha_\eps \sigma, \Upsilon_{\Gamma,\alpha_\eps \sigma} C_\eps^I)
=  \bar U(\Gamma,-\Upsilon_{\Gamma,\sigma}v)\;.
\end{equ}
Choosing any $\bar \sigma$ such that $\bar \sigma_i^\alpha \bar \sigma_i^\beta = \sigma_i^\alpha \sigma_i^\beta$, we similarly have
\begin{equ}[e:limit2]
\lim_{\eps \to 0} U_\eps^\Ito(\Gamma,\alpha_\eps \bar \sigma, \Upsilon_{\Gamma,\alpha_\eps \bar \sigma} C_\eps^I)
=  \bar U(\Gamma,-\Upsilon_{\Gamma,\bar \sigma}v)\;.
\end{equ}
On the other hand, one has $U_\eps^\Ito(\Gamma,\alpha_\eps \sigma, \Upsilon_{\Gamma,\alpha_\eps \sigma} C_\eps^I)
= U_\eps^\Ito(\Gamma,\alpha_\eps \bar \sigma, \Upsilon_{\Gamma,\alpha_\eps \bar \sigma} C_\eps^I)$ by the
second part of Proposition~\ref{prop:equivariance}
and the definition of $\CS_\Ito$, so that one must have $\Upsilon_{\Gamma,\sigma}v = \Upsilon_{\Gamma,\bar\sigma}v$,
which is in contradiction with the fact that $v \in  \CS_\Ito^\perp$ does not vanish.

The argument that there can be only one accumulation point and that its value is independent of the
mollifier is very similar, except that this time one obtains the identity 
\begin{equ}
U^\BPHZ(\Gamma,\sigma,0)
= U^\BPHZ(\Gamma,\bar \sigma, \bigl(\Upsilon_{\Gamma,\sigma}- \Upsilon_{\Gamma,\bar\sigma}\bigr)v)\;,
\end{equ}
from \eqref{e:limit1} and \eqref{e:limit2}, so we omit it for conciseness.
\end{proof}

We recall that we defined on page \pageref{S nice page ref} the subspace $\CS^\nice \subset \CS$ consisting of those
elements $\tau \in \CS$ such that, whenever $\Gamma$, $\sigma$ and $x$ are such that 
$\Gamma(x) = 0$ and $\d \sigma(x) = 0$, one has $\bigl(\Upsilon_{\Gamma,\sigma}\tau\bigr)(x) = 0$.
This space is characterised as follows.

\begin{proposition}\label{prop:ortho}
The space $\CS^\nice$ consists precisely of those elements $\tau \in \CS$ such that 
\begin{equ}[e:charSnice]
\scal{\<Xi4ba1b>,\tau} = \scal{\<Xi4ba2>,\tau} = \scal{\<Xi4b1>,\tau} = 0\;.
\end{equ}
Furthermore, one has $\<Xi4b1> \perp \CS_\Ito$, ${1\over 2}\<Xi4b1> - \<Xi4ba2> -  {1\over 2} \<Xi4ba1b> \perp \CS_\geo$.
\end{proposition}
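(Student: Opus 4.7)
My plan is to prove the four assertions in turn. For the characterisation of $\CS^\nice$, I evaluate $(\Upsilon_{\Gamma,\sigma}\tau)(x)$ via the recursive formula \eqref{recursive_Upsilon} at a point $x$ with $\Gamma(x) = 0$ and $\partial\sigma(x) = 0$ for each 2- and 4-noise tree appearing in the list on page~\pageref{listPage}. A term in that formula survives such an evaluation iff every non-noise internal vertex has at least one incoming thin edge (otherwise the factor $\Gamma^\alpha_{\eta\zeta} q^\eta q^\zeta$ attached to it vanishes at $x$) and every noise-vertex has a number of thin children different from~$1$ (exactly one thin child would produce a $\partial\sigma$ factor, which vanishes at $x$). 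Running through the list, this leaves only $\<Xi4b>$ and $\<Xi4ba>$ among the 4-noise trees, while the 2-noise trees $\<Xi2>$, $\<I1Xitwo>$ both vanish. Modulo tree automorphisms, $\<Xi4b>$ admits the unique pairing $\<Xi4b1>$ (with $S = 2$) and $\<Xi4ba>$ admits the two pairings $\<Xi4ba1b>$ (with $S = 4$) and $\<Xi4ba2>$ (with $S = 2$); the three resulting tensor valuations $g^{\beta_1\beta_2}\sum_i(\partial^3_{\beta_1\beta_2\beta_3}\sigma_i^\alpha)\sigma_i^{\beta_3}$, $2g^{\beta_1\beta_2}g^{\gamma_1\gamma_2}\partial^2_{\beta_1\beta_2}\Gamma^\alpha_{\gamma_1\gamma_2}$ and $2g^{\beta_1\gamma_1}g^{\beta_2\gamma_2}\partial^2_{\beta_1\beta_2}\Gamma^\alpha_{\gamma_1\gamma_2}$ are manifestly linearly independent as functionals of $(\Gamma,\sigma)$, which yields \eqref{e:charSnice}.

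For $\<Xi4b1> \perp \CS_\Ito$, I specialise the above analysis further to $\Gamma \equiv 0$ (so no tree containing a thick edge can contribute) together with $\partial\sigma(x) = 0$, under which the first step isolates $\<Xi4b1>$ as the unique basis vector giving a non-vanishing contribution. Writing $c_b$ for the coefficient of $\<Xi4b1>$ in $\tau \in \CS$, this yields $(\Upsilon_{0,\sigma}\tau)(x) = c_b\,(\Upsilon_{0,\sigma}\<Xi4b1>)(x)$, and the $\CS_\Ito$ invariance requires this to be a function of $g$ alone. I realise this constraint by the rotating frame $\sigma_1 = (\cos\theta,\sin\theta)$, $\sigma_2 = (-\sin\theta,\cos\theta)$ on $\R^2$, for which $g \equiv \Id$ for any choice of $\theta$; a direct computation gives $(\Upsilon_{0,\sigma}\<Xi4b1>)^\alpha(x) \propto (\Delta\partial_1\theta)(x)\delta^\alpha_2 - (\Delta\partial_2\theta)(x)\delta^\alpha_1$ whenever $\partial\theta(x) = 0$, and this genuinely varies with $\theta$ while $g$ does not, forcing $c_b = 0$.

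For $v = \tfrac12\<Xi4b1> - \<Xi4ba2> - \tfrac12\<Xi4ba1b> \perp \CS_\geo$, with the symmetry factors above the assertion becomes the linear identity $c_b = 2(c_{ab} + c_{ac})$ on the three bad coefficients of any $\tau \in \CS_\geo$. To derive it I apply an infinitesimal diffeomorphism $\phi$ with $\phi(x) = x$, $D\phi(x) = \Id$ and $\partial^2\phi^\alpha_{\beta\gamma}(x) = \eps\,S^\alpha_{\beta\gamma}$ to a reference $(\Gamma,\sigma)$ with $\Gamma(x) = 0 = \partial\sigma(x)$: by \eqref{eq:actioncristo}--\eqref{eq:actionvect} this shifts $\Gamma(x)$ by $-\eps S$ and $\partial\sigma_i(x)$ by $\eps\,S^\alpha_{\beta\gamma}\sigma_i^\gamma(x)$, while $D\phi(x) = \Id$ forces the first-order variation of $(\Upsilon\tau)(x)$ to vanish for $\tau \in \CS_\geo$. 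Expanding in $\eps$, the order-$\eps$ contribution gathers two classes of terms: perturbations of the three bad basis elements (whose $\partial^3\sigma$ or $\partial^2\Gamma$ factor is shifted) and perturbations of the trees previously killed by having a single vanishing factor, namely those with a noise vertex having one thin child ($\<Xi4>, \<Xi4c>, \<Xi4e>, \<Xi4ea>, \<Xi4eabis>$) and those with an undifferentiated $\Gamma$ factor ($\<Xi4cb>, \<Xi4cab>, \<Xi4eb>, \<Xi4eab>, \<Xi4eabbis>$). Matching coefficients against the free tensor $S$ and gathering them yields exactly $c_b - 2c_{ab} - 2c_{ac} = 0$.

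Finally, $\scal{\<Xi4ba1b>,\tau_c} \neq 0$ is a direct computation from \eqref{eq:valeurC}: the two pairings $\<Xi4ba1>$ and $\<Xi4ba1b>$ of the tree $\<Xi4ba>$ differ only by a relabelling of the two noise colours and hence coincide in $\CS$, while every other basis element appearing in \eqref{eq:valeurC} is built on a tree distinct from $\<Xi4ba>$ and is therefore orthogonal to $\<Xi4ba1b>$; one finds $\scal{\<Xi4ba1b>,8\tau_c} = -4\,S(\<Xi4ba1b>) = -16$. The principal obstacle is the third part: identifying all trees that contribute to the first-order variation of $(\Upsilon\tau)(x)$ under the infinitesimal diffeomorphism and verifying that the combinatorial bookkeeping yields exactly the coefficients $\tfrac12, -1, -\tfrac12$ of $v$; the other three claims reduce either to a case-by-case inspection of the symbol list or, for the fourth, to reading off an already-established explicit expansion.
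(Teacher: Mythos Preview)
Your approach to the first, second, and fourth assertions is correct and more self-contained than the paper's: where the paper invokes the general injectivity theorem for $T$-algebras (Theorem~\ref{theo:injectiveAlg}) and the characterisation $\CS_\Ito = \CS \cap \range\phi_\Ito$ (Proposition~\ref{prop:rangeIto}), you give direct arguments. In particular, the rotating-frame counterexample for $\<Xi4b1>\perp\CS_\Ito$ is clean, and your computation $\scal{\<Xi4ba1b>,8\tau_c}=-16$ from \eqref{eq:valeurC} is exactly what the paper does.

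The third assertion is where your argument is incomplete and has errors. Your strategy --- linearise the equivariance identity at a reference point with $\Gamma(x)=0$, $\partial\sigma(x)=0$ under a diffeomorphism with $D^2\phi(x)=\eps S$ --- is precisely the concrete version of the paper's computation of $\hat\phi_\geo^*(\<Xi4ba1bdiff>)$ (carried out in Proposition~\ref{prop:geoDim}). However, your enumeration of contributing trees is wrong in two places: $\<Xi4>$ has \emph{three} vanishing factors (each of the three non-leaf noise vertices has exactly one thin child), so it does not contribute at order $\eps$; conversely $\<Xi4ca>$ (noise root with one thin child, a $\<not>$ with two thick and one thin incoming edges) has exactly one vanishing factor and must be included. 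More seriously, you neither specify the higher derivatives $D^k\phi(x)$ for $k\ge 3$ nor discuss the ``type (A)'' contributions: under the diffeomorphism, the factors $\partial^3\sigma(x)$ in $\<Xi4b1>$ and $\partial^2\Gamma(x)$ in $\<Xi4ba1b>,\<Xi4ba2>$ themselves shift by $O(\eps)$ terms involving $\partial^2\sigma(x)$ and $\partial\Gamma(x)$, which are free parameters of the reference configuration. All of these feed into the coefficient matching against $S$, and you assert the outcome $c_b = 2(c_{ab}+c_{ac})$ without performing it. The paper sidesteps this bookkeeping by working in the algebraic framework: one tests $\hat\phi_\geo(\tau)$ against the single basis element $\<Xi4ba1bdiff>$, observes that only the three trees $\<Xi4b1>,\<Xi4ba2>,\<Xi4ba1b>$ can produce it (via the $-2\,\d^2\<diff>$ and Lie-bracket terms in $\hat\phi_\geo$), and reads off the coefficients $\tfrac12,-1,-\tfrac12$ from the symmetry factors $|\<Xi4b1>|^2=|\<Xi4ba2>|^2=2$, $|\<Xi4ba1b>|^2=4$.
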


\begin{proof}
We start by noting that $\<Xi4b>$ and $\<Xi4ba>$ are the only two symbols
in the list on page~\pageref{listPage} which contain neither a node $\<generic>$ with exactly one
incoming edge (corresponding to a factor $\d \sigma$), nor a node $\<not>$ with no
incoming thin edge (corresponding to a factor $\Gamma$). The three trees appearing in \eqref{e:charSnice}
are the only inequivalent ways of pairing the noises of these two symbols.
It follows immediately that, denoting by $\hat \CS^\nice \subset \CS$ the subspace space such that 
\eqref{e:charSnice} holds, one has $\hat \CS^\nice \subset \CS^\nice$.
The converse inclusion relies on the injectivity results obtained in Theorem~\ref{theo:injectiveT} below
and the description of $\CS$ used in the proof of Proposition~\ref{prop:rangeIto}. 
We postpone its proof to Corollary~\ref{cor:charSnice} below.

In Theorem \ref{theo:mainSum} below it is proven that $\CS_\Ito=\scal{\CB_\Ito}$, with $\CB_\Ito$ given in \eqref{Bito}; from this
it is easy to see that $\<Xi4b1> \perp \CS_\Ito$.
The fact that ${1\over 2}\<Xi4b1> - \<Xi4ba2> -  {1\over 2} \<Xi4ba1b> \perp \CS_\geo$ is shown in Proposition \ref{prop:geoDim} below.
\end{proof}

\begin{remark}\label{rem:codimgeonice}
An immediate consequence of this and of Proposition~\ref{prop:geoDim} is that 
$\CS^\nice \cap \CS_\geo$ is a subspace of $\CS_\geo$ that is of codimension $2$.
\end{remark}

\begin{corollary}\label{cor:both}
The assumption of Corollary~\ref{cor:equivariant} holds for every choice of 
mollifier $\rho \in \Moll$. Furthermore, for $i \in \types$ and  for every choice of $\rho \in \Moll$,
we have $C_{\eps,i}^\BPHZ \in \CS^\nice$,
and we can also choose $\tau_i^{(\eps)} \in \CS^\nice$.
\end{corollary}

\begin{proof} \label{subspace of CS nice}
We first note that both $C_{\eps,\geo}^\BPHZ$ and $C_{\eps,\Ito}^\BPHZ$ do indeed belong to 
$\CS^\nice$ irrespective of the choice of mollifier $\rho$ by Lemma~\ref{const:disconnect}. 

Now, fix $i \in \types$ and write $\pi_i \colon \CS^\nice \to \CS_i^\nice$ for the 
projection associated to the decomposition 
$\CS^\nice=\CS_i^\nice\oplus(\CS_i^\nice)^\perp$, where $\CS_i^\nice = \CS^\nice \cap \CS_i$.
It then suffices to set $\tau_i^{(\eps)} = \pi_i C_{\eps,i}^\BPHZ + \tilde \tau_i$
for any fixed element $\tilde \tau_i \in \CS_i^\nice$.
\end{proof}

\begin{remark}
We could of course have simply set $\tilde \tau_i=0$, but leaving these two elements 
free will allow us to 
adjust them in the proof of Theorem \ref{thm:bullet} below in such a way 
that $\tau_\geo^{(\eps)} - C_{\eps,\geo}^\BPHZ$ and 
$\tau_\Ito^{(\eps)} - C_{\eps,\Ito}^\BPHZ$ converge to the same limit as $\eps \to 0$
as already announced at the end of Section~\ref{sec:symapprox}.
\end{remark}

\subsection{Combining both}
\label{sec:both}

Consider now the solution maps $U^\geo$ and $U^\Ito$ that 
are given by combining Corollaries~\ref{cor:equivariant} and
\ref{cor:both}. We already know from \eqref{e:U=U} and Propositions~\ref{prop:geo} and~\ref{prop:ito} that 
both notions of solution differ from the BPHZ solution by a fixed element in $\CS$, so that there
exists $\tau_0 \in \CS$ (equal to $\alpha_\geo-\alpha_\Ito$, in the notation of \eqref{e:U=U}) such that
\begin{equ}[e:deftau]
U^\geo(\Gamma,\sigma,h) = U^\Ito\big(\Gamma,\sigma,h + \Upsilon_{\Gamma,\sigma} \tau_0\big)\;,
\end{equ}
for every choice of $d$, $\Gamma$, $\sigma$, and $h$.
Since $U^\Ito$ isn't covariant under changes of variables, there is however
no a priori reason for $\Upsilon_{\Gamma,\sigma} \tau_0$ to transform like a vector field. 

Take now a different collection of vector fields $\bar \sigma$ such that \eqref{e:sigmabar} holds.
Then, one has 
\begin{equs}
U^\geo(\Gamma,\bar \sigma,h) &= U^\Ito\big(\Gamma,\bar \sigma,h + \Upsilon_{\Gamma,\bar \sigma} \tau_0\big)
 = U^\Ito\big(\Gamma, \sigma,h + \Upsilon_{\Gamma,\bar \sigma} \tau_0\big) \\
&= U^\geo\big(\Gamma, \sigma,h + (\Upsilon_{\Gamma,\bar \sigma} - \Upsilon_{\Gamma,\sigma})\,\tau_0\big)\;.
\end{equs}
On the other hand, we know that, for any diffeomorphism $\phi$ of $\R^d$, if $\sigma$ and $\bar \sigma$
satisfy \eqref{e:sigmabar}, then one also has
\begin{equ}
(\phi \act \bar \sigma_i)^\alpha (\phi \act \bar \sigma_i)^\beta = 
(\phi \act \sigma_i)^\alpha (\phi \act \sigma_i)^\beta\;.
\end{equ}
As a consequence,
\begin{equs}
\phi \act U^\geo(\Gamma,\bar \sigma,h)
&= U^\geo(\phi\act \Gamma,\phi\act \bar \sigma,\phi\act h) \\
&= U^\geo\big(\phi\act \Gamma, \phi\act \sigma,\phi\act h + (\Upsilon_{\phi\act\Gamma,\phi\act \bar \sigma} - \Upsilon_{\phi\act\Gamma,\phi\act\sigma})\,\tau_0\big)\;,
\end{equs}
as well as
\begin{equs}
\phi \act U^\geo(\Gamma,\bar \sigma,h)
&= \phi\act U^\geo\big(\Gamma, \sigma,h + (\Upsilon_{\Gamma,\bar \sigma} - \Upsilon_{\Gamma,\sigma})\,\tau_0\big) \\
&= U^\geo\big(\phi\act \Gamma, \phi\act \sigma,\phi\act h + \phi\act (\Upsilon_{\Gamma,\bar \sigma} - \Upsilon_{\Gamma,\sigma})\,\tau_0\big)\;.
\end{equs}
Since $U^\geo$ is injective in its last argument, we conclude that $\tau_0$ is such that,
for any choice of $d$, $\Gamma$, any pair $\sigma, \bar \sigma$ such that 
\eqref{e:sigmabar} holds, and any diffeomorphism $\phi$ of $\R^d$ homotopic to the identity, one has
\begin{equ}[e:proptau]
\phi \act (\Upsilon_{\Gamma,\bar \sigma} - \Upsilon_{\Gamma,\sigma})\, \tau_0
= 
(\Upsilon_{\phi \act\Gamma,\phi \act\bar \sigma} - \Upsilon_{\phi \act\Gamma,\phi \act\sigma})\,\tau_0\;.
\end{equ}
These calculations suggest the introduction of a space which combines the two properties of our 
solution theories.

\begin{definition} \label{def_S_both}
Denote by $\CS_\both \subset \CS$ the subspace of elements such that \eqref{e:proptau} holds for
every choice of $d$, $\Gamma$, $\sigma$, $\bar \sigma$ and $\phi$ as above. 
\end{definition}

With $R$ denoting the Riemann curvature tensor 
\begin{equ}[e:R]
R(X,Y)Z = \nabla_X \nabla_Y Z - \nabla_Y \nabla_X Z - \nabla_{[X, Y]} Z\;,
\end{equ}
we define elements $\tau_\star, \tau_c \in \CS$ by\label{def tauc}\label{def taustar2}
\begin{equs}
\tau_\star &= R(\<generic>,\Nabla_{\<genericb>}\<generic> - 2\Nabla_{\<generic>}\<genericb> )\,\<genericb> \;,\label{e:taustar}\\
\tau_c &= 
\Nabla_{\<generic>} \bigl(R(\<genericb>,\<generic>)\,\<genericb>\bigr)
- R(\Nabla_{\<generic>}\<genericb>,\<generic>)\,\<genericb>
- R(\<genericb>,\Nabla_{\<generic>}\<generic>)\,\<genericb>
- R(\<genericb>,\<generic>)\Nabla_{\<generic>}\<genericb>\;\label{e:tauc}.
\end{equs}
Here, given $\tau, \bar \tau \in \Trees_0$, $\nabla_\tau \bar \tau \in \Vec(\Trees_0)$ is 
the element such that 
$\Upsilon_{\Gamma,\sigma}(\nabla_\tau \bar \tau) = \nabla_{\Upsilon_{\Gamma,\sigma} \tau}
\Upsilon_{\Gamma,\sigma} \bar \tau$. For a precise algebraic definition and a suggestive
pictorial representation, see \eqref{e:nabla} and \eqref{e:covar} below.

An explicit calculation shows that  $8\tau_\star$ is equal to 
\begin{equ}[eq:valeurR]
4\,(\<Xi4eac1>+\<Xi4eabisc1>) - 8\,\<Xi4eabisc2> - 2\,\<Xi4eabbisc1>  + 2\,\<Xi4eabc1>+ 2\,\<I1Xi4acc1>-4\,\<I1Xi4acc2>- \<I1Xi4abcc1>+ 2\,\<2I1Xi4cc1> + \<2I1Xi4c1>\;
\end{equ}
and $8\tau_c$ is equal to 
\begin{equ}[eq:valeurC]
4 \<Xi4ba2> - 4 \<Xi4ba1> +4 \<Xi4cabc1> -4 \<Xi4cabc2> +2  \<Xi4eabc2> -2  \<Xi4eabc1> -4 \<Xi4eabbisc2> +4 \<Xi4eabbisc1> +2\<I1Xi4abcc1> - 2 \<I1Xi4abcc2>     
+  \<2I1Xi4c2>   -  \<2I1Xi4c1>\;.
\end{equ}

\begin{lemma}\label{lem:exprtaucstar}
We have the identities
\begin{equs}
\Upsilon^\alpha_{\Gamma,\sigma} \tau_\star = - R^\alpha_{\eta\beta\gamma} \, g^{\beta\zeta} 
\, (\nabla_{\zeta}g)^{\gamma\eta}\;,\qquad
\Upsilon^\alpha_{\Gamma,\sigma} \tau_c = - (\nabla_\zeta R)^\alpha_{\beta\gamma\eta} \, g^{\zeta\gamma} g^{\beta\eta}\;.
\end{equs}
\end{lemma}

\begin{proof}
For the first identity, we start from the expression \eqref{e:taustar}, which yields
\begin{equs}
\Upsilon^\alpha_{\Gamma,\sigma} \tau_\star &= R^\alpha_{\eta\beta\gamma} \left( \sigma_j^\beta (\nabla_{\sigma_i}\sigma_j)^\gamma \sigma_i^\eta -2\sigma_i^\beta  (\nabla_{\sigma_i}\sigma_j)^\gamma \sigma_j^\eta\right) \\
&= R^\alpha_{\eta\beta\gamma} \left(- \sigma_j^\gamma (\nabla_{\sigma_i}\sigma_j)^\eta \sigma_i^\beta- \sigma_j^\eta (\nabla_{\sigma_i}\sigma_j)^\beta \sigma_i^\gamma -2\sigma_i^\beta  (\nabla_{\sigma_i}\sigma_j)^\gamma \sigma_j^\eta\right) \\
&= R^\alpha_{\eta\beta\gamma} \left(- \sigma_j^\gamma (\nabla_{\sigma_i}\sigma_j)^\eta \sigma_i^\beta + \sigma_j^\eta (\nabla_{\sigma_i}\sigma_j)^\gamma \sigma_i^\beta -2\sigma_i^\beta  (\nabla_{\sigma_i}\sigma_j)^\gamma \sigma_j^\eta\right) \\
&= R^\alpha_{\eta\beta\gamma} \left(- \sigma_i^\beta(\nabla_{\sigma_i}\sigma_j)^\eta \sigma_j^\gamma   -\sigma_i^\beta  (\nabla_{\sigma_i}\sigma_j)^\gamma \sigma_j^\eta\right) \\
&= - R^\alpha_{\eta\beta\gamma} \sigma_i^\beta (\nabla_{\sigma_i}g)^{\gamma\eta}  
= - R^\alpha_{\eta\beta\gamma} \sigma_i^\beta \sigma_i^\zeta (\nabla_{\zeta}g)^{\gamma\eta} 
= - R^\alpha_{\eta\beta\gamma} g^{\beta\zeta} (\nabla_{\zeta}g)^{\gamma\eta}\;,
\end{equs}
where we first used the first Bianchi identity, then the antisymmetry of the curvature tensor
and finally the definition of covariant derivative of $g$.
Regarding the second identity, the definition of the covariant derivative of a tensor field
implies that $\tau_c = (\Nabla_{\<generic>} R)(\<genericb>,\<generic>)\,\<genericb> = - (\Nabla_{\<generic>} R)(\<generic>,\<genericb>)\,\<genericb>$, and the claim follows.
\end{proof}

Then, we have the following two identities.

\begin{proposition}\label{prop:ItoStratgeo}
With $\CS_\both$, $\CS_\Ito$ and $\CS_\geo$ defined above, we have
\begin{equs}%\label{prop:ItoStrat}
\CS_\both &= \CS_\Ito + \CS_\geo \;,\\
\CS_\Ito \cap \CS_\geo &= \scal{\{\tau_\star, \tau_c\}}\;.\label{prop:Itogeo}
\end{equs}
\end{proposition}

The proof of this statement is postponed to Section~\ref{sec:counting} below, see Theorem~\ref{theo:mainSum}.
Before we proceed, we argue that there is a natural way of ``eliminating'' the vector
$\tau_c$ above by restricting to the space $\CS^\nice$ that we have characterised in Proposition~\ref{prop:ortho}. 
We first have the following elementary lemma.

\begin{lemma}\label{lem:ortho}
Let $B$ be a Banach space and let $V \subset B$ be a closed subspace of the form
$V = \bigoplus_{i \in I} V_i$ for finitely many closed subspaces $V_i$. Let furthermore
$W \subset B^*$ be a closed subspace of the dual space such that $W = \bigoplus_{i\in I} W_i$
where each closed subspace $W_i$ satisfies $W_i \perp V_j$ for $i \neq j$.
Then, one has
\begin{equ}
V \cap W^\perp = \bigoplus_{i \in I} (V_i \cap W_i^\perp) = \bigoplus_{i \in I} (V_i \cap W^\perp)\;.
\end{equ}
\end{lemma}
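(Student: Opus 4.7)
The plan is to prove the two equalities separately, starting with the easier second one.

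First I would establish that $V_i \cap W_i^\perp = V_i \cap W^\perp$ for each $i \in I$. The inclusion $V_i \cap W^\perp \subset V_i \cap W_i^\perp$ is immediate since $W_i \subset W$. For the reverse inclusion, take $v \in V_i \cap W_i^\perp$ and any $w \in W$; using the decomposition $w = \sum_{j} w_j$ with $w_j \in W_j$, I would write $w(v) = \sum_j w_j(v)$, observe that $w_j(v) = 0$ for $j \neq i$ by the hypothesis $W_j \perp V_i$ (which is the statement $W_i \perp V_j$ with the roles of $i$ and $j$ swapped), and $w_i(v) = 0$ because $v \in W_i^\perp$. Hence $v \in W^\perp$.

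Next I would prove the first equality $V \cap W^\perp = \bigoplus_{i \in I}(V_i \cap W^\perp)$. The inclusion $\supset$ is clear since each summand sits inside $V \cap W^\perp$ and the sum inherits directness from $V = \bigoplus_i V_i$. For $\subset$, take $v \in V \cap W^\perp$ and decompose $v = \sum_i v_i$ with $v_i \in V_i$. To show $v_i \in W^\perp$, by the previous step it suffices to check $v_i \in W_i^\perp$. For any $w_i \in W_i$, expand
\begin{equ}
w_i(v) = \sum_{j \in I} w_i(v_j) = w_i(v_i),
\end{equ}
where all the other terms vanish because $W_i \perp V_j$ for $j \neq i$. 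Since $v \in W^\perp$ and $w_i \in W$, the left-hand side equals $0$, so $w_i(v_i) = 0$, proving $v_i \in W_i^\perp$ as required.

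There is no serious obstacle here: the argument is purely formal and uses only the direct sum decompositions together with the cross-orthogonality hypothesis $W_i \perp V_j$ for $i \neq j$. The only mild subtlety is to keep straight that the hypothesis is symmetric in the sense that it gives us $W_j \perp V_i$ as well (by relabelling), which is exactly what is needed to collapse the sum $\sum_j w_i(v_j)$ to its diagonal term.
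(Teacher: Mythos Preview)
Your proof is correct and complete. The paper itself gives no proof beyond the remark ``The proof is a simple exercise,'' so your argument is a faithful and correct filling-in of that exercise, proceeding exactly as one would expect: first reducing $V_i \cap W^\perp$ to $V_i \cap W_i^\perp$ via the cross-orthogonality hypothesis, then using that same hypothesis to show the components of any $v \in V \cap W^\perp$ land in the respective $W_i^\perp$.
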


\begin{proof}
The proof is a simple exercise.
\end{proof}

\begin{corollary}\label{cor:spacesNice}
Writing $\CS_i^\nice = \CS_i \cap \CS^\nice$ for $i\in\{{\rm both},{\rm geo},$ {\rm It\^o}$\}$, we
have 
\begin{equ}
\CS_\both^\nice = \CS_\Ito^\nice + \CS_\geo^\nice \;,\qquad
\CS_\Ito^\nice \cap \CS_\geo^\nice = \scal{\{\tau_\star\}}\;.
\end{equ}
\end{corollary}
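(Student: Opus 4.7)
The second identity is the cleaner one, so I would handle it first. By Proposition~\ref{prop:ItoStratgeo} we have $\CS_\Ito \cap \CS_\geo = \scal{\{\tau_\star,\tau_c\}}$, so
\begin{equ}
\CS_\Ito^\nice \cap \CS_\geo^\nice \;=\; (\CS_\Ito \cap \CS_\geo)\cap \CS^\nice \;=\; \scal{\{\tau_\star,\tau_c\}} \cap \CS^\nice.
\end{equ}
Inspecting the explicit expansion \eqref{eq:valeurR} of $8\tau_\star$, none of the three trees $\<Xi4b1>$, $\<Xi4ba2>$, $\<Xi4ba1b>$ appears, so by the characterisation of $\CS^\nice$ in Proposition~\ref{prop:ortho} we have $\tau_\star \in \CS^\nice$. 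On the other hand Proposition~\ref{prop:ortho} also tells us $\scal{\<Xi4ba1b>,\tau_c}\neq 0$, so $\tau_c\notin\CS^\nice$, and hence $\scal{\{\tau_\star,\tau_c\}}\cap\CS^\nice = \scal{\{\tau_\star\}}$.

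For the first identity, the inclusion $\CS_\Ito^\nice + \CS_\geo^\nice \subset \CS_\both^\nice$ is trivial (both summands lie in $\CS_\both$ by Proposition~\ref{prop:ItoStratgeo} and in $\CS^\nice$ by construction). For the reverse inclusion I would take $\tau\in\CS_\both^\nice$, choose an initial decomposition $\tau = \tau_I + \tau_g$ with $\tau_I\in\CS_\Ito,\ \tau_g\in\CS_\geo$ (which exists by Proposition~\ref{prop:ItoStratgeo}), and then exploit the two-dimensional ambiguity $(\tau_I,\tau_g)\mapsto(\tau_I+\kappa_1\tau_\star+\kappa_2\tau_c,\,\tau_g-\kappa_1\tau_\star-\kappa_2\tau_c)$ (which leaves $\tau$ unchanged since $\tau_\star,\tau_c\in\CS_\Ito\cap\CS_\geo$) in order to force both summands to lie in $\CS^\nice$. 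By Proposition~\ref{prop:ortho}, the condition $\tau_I\in\CS^\nice$ reduces to three linear equations, of which one ($\scal{\<Xi4b1>,\tau_I}=0$) is automatic since $\<Xi4b1>\perp\CS_\Ito$. Using also $\tau\in\CS^\nice$, the conditions $\tau_I,\tau_g\in\CS^\nice$ collapse to the two equations
\begin{equs}
\scal{\<Xi4ba2>,\tau_I + \kappa_1\tau_\star + \kappa_2\tau_c} &= 0,\\
\scal{\<Xi4ba1b>,\tau_I + \kappa_1\tau_\star + \kappa_2\tau_c} &= 0,
\end{equs}
for the unknowns $(\kappa_1,\kappa_2)$, together with the constraint on $\tau_g$ which is automatically equivalent to these thanks to $\eta := {1\over 2}\<Xi4b1> - \<Xi4ba2> - {1\over 2}\<Xi4ba1b>$ satisfying $\eta\perp\CS_\geo$. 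Since $\tau_\star\in\CS^\nice$, this system is triangular with $\kappa_1$ free and $\kappa_2$ determined by $\scal{\<Xi4ba1b>,\tau_I} + \kappa_2\scal{\<Xi4ba1b>,\tau_c}=0$; the assumption $\scal{\<Xi4ba1b>,\tau_c}\neq 0$ from Proposition~\ref{prop:ortho} ensures solvability.

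\textbf{Main obstacle.} The only real bookkeeping is verifying that the two equations one actually has to solve after exploiting the orthogonality relations $\<Xi4b1>\perp\CS_\Ito$ and $\eta\perp\CS_\geo$ are genuinely compatible; this requires checking that the ``redundant'' equation (obtained by writing the $\<Xi4b1>$-condition on $\tau_g$) is consistent with the two equations above, which amounts to the identity $\scal{\<Xi4b1>,\tau_c}=0$ (automatic since $\tau_c\in\CS_\Ito$) together with the relation $\scal{\<Xi4ba2>,\tau_c} = -\tfrac12\scal{\<Xi4ba1b>,\tau_c}$ (which follows from $\tau_c\in\CS_\geo$ together with $\eta\perp\CS_\geo$). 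Beyond that, everything is an exercise in linear algebra using the explicit expansions \eqref{eq:valeurR}--\eqref{eq:valeurC} and Proposition~\ref{prop:ortho}; in principle Lemma~\ref{lem:ortho} can be used to package this bookkeeping, but given the low dimensions involved it is quicker to argue by hand as above.
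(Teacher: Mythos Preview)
Your approach is correct and uses exactly the same three ingredients from Proposition~\ref{prop:ortho} as the paper does; the paper simply packages the linear algebra via Lemma~\ref{lem:ortho}, whereas you unwind it by hand. The treatment of the intersection $\CS_\Ito^\nice\cap\CS_\geo^\nice$ is fine as written.

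There is, however, a genuine wrinkle in your compatibility step for the sum. Once you observe that $\tau_\star\in\CS^\nice$, the parameter $\kappa_1$ drops out of \emph{both} remaining equations, so you are left with two equations in the single unknown $\kappa_2$. You then assert compatibility by appealing to the relations $\scal{\<Xi4b1>,\tau_c}=0$ and $\scal{\<Xi4ba2>,\tau_c}=-\tfrac12\scal{\<Xi4ba1b>,\tau_c}$. But these only make the \emph{coefficient} column $(\scal{\<Xi4ba2>,\tau_c},\scal{\<Xi4ba1b>,\tau_c})$ rank one; for the system to be solvable you also need the right-hand side $(\scal{\<Xi4ba2>,\tau_I},\scal{\<Xi4ba1b>,\tau_I})$ to lie in that same line, i.e.\ $\scal{\<Xi4ba2>+\tfrac12\<Xi4ba1b>,\tau_I}=0$. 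This is \emph{not} true for an arbitrary element of $\CS_\Ito$ (take $\tau_I=\<Xi4ba2>\in\CB_\Ito$), so it cannot follow from properties of $\tau_c$ alone.

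What saves you is that your $\tau_I$ is not arbitrary: $\tau_g=\tau-\tau_I\in\CS_\geo$, so $\scal{\eta,\tau_g}=0$. Since moreover $\scal{\<Xi4b1>,\tau_g}=\scal{\<Xi4b1>,\tau}-\scal{\<Xi4b1>,\tau_I}=0$ (using $\tau\in\CS^\nice$ and $\<Xi4b1>\perp\CS_\Ito$), the relation $\scal{\eta,\tau_g}=0$ becomes $\scal{\<Xi4ba2>+\tfrac12\<Xi4ba1b>,\tau_g}=0$, and hence the same identity holds for $\tau_I=\tau-\tau_g$. This is the missing line; with it, your direct argument is complete and is precisely the hand-unrolled version of the paper's application of Lemma~\ref{lem:ortho} to the decomposition $W=\R\<Xi4b1>\oplus\R(\<Xi4ba2>+\tfrac12\<Xi4ba1b>-\tfrac12\<Xi4b1>)\oplus\R\<Xi4ba1b>$.
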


\begin{proof}
Since $\CS^\nice = W^\perp$ for $W = \scal{\{\<Xi4b1>, \<Xi4ba2>, \<Xi4ba1b>\}}$, it suffices to find
decompositions of $W$ and $V = \CS_\both$ satisfying the assumption of Lemma~\ref{lem:ortho}. 
We decompose $W$ according to
\begin{equ}
W = \R \<Xi4b1> \oplus \R(\<Xi4ba2> + {\textstyle {1\over 2}}\<Xi4ba1b>- {\textstyle {1\over 2}}\<Xi4b1>) \oplus \R(\<Xi4ba1b>) = W_1 \oplus W_2 \oplus W_3\;. 
\end{equ}
We then choose $V_3 = \CS_\Ito \cap \CS_\geo$, $V_2 \subset \CS_\Ito$ any complement of $V_3$ in $\CS_\Ito$
which is orthogonal to $\<Xi4ba1b>$ (this is possible since $\<Xi4ba1b>$ does not
annihilate $\tau_c \in V_3$),
and finally $V_1$ any complement of $V_3$ in $\CS_\geo$ which is orthogonal to $\<Xi4ba1b>$.
Proposition~\ref{prop:ortho} guarantees that these choices do satisfy the assumptions of
Lemma~\ref{lem:ortho}.

We show now that $\CS_\Ito^\nice \cap \CS_\geo^\nice = \scal{\{\tau_\star\}}$, namely that
$\scal{\{\tau_\star, \tau_c\}}\cap\CS^\nice=\scal{\{\tau_\star\}}$, using \eqref{prop:Itogeo}.
First, $\tau_\star$
belongs to $\CS^\nice$ by Proposition \ref{prop:ortho} and to $\CS_\Ito \cap \CS_\geo$ by \eqref{prop:Itogeo}.
In order to show that $\tau_c\notin\CS^\nice$, note that $\scal{\<Xi4ba1b>, \tau_c} \neq 0$ by the explicit expression of $\tau_c$ given in \eqref{eq:valeurC}, while $\<Xi4ba1b>\perp\CS^\nice$ by Proposition \ref{prop:ortho}.
\end{proof}

We now have all the ingredients in place to show that it is possible to construct a solution map
which satisfies both points 3 and 4 of Theorem~\ref{theo:main} simultaneously.

\begin{theorem}\label{thm:bullet}
For both $i \in \types$, there exists a choice of constants $\tilde \tau_i \in \CS_i^\nice$ independent 
of the mollifier $\rho \in \Moll$
such that, defining $\tau_i^{(\eps)}$ as in the proof of Corollary~\ref{cor:both}, one has
\begin{equ}%[e:geoIto]
U^\geo = U^\Ito\;.
\end{equ}
Furthermore, any two choices of $\tilde \tau_i$ having the same properties differ by a multiple
of $\tau_\star$.
\end{theorem}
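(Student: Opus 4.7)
The proof reduces to a single linear-algebraic identity in $\CS$. By Corollary~\ref{cor:equivariant} and the shift formula \eqref{e:U=U}, each solution map can be written as $U^i(\Gamma,\sigma,h) = U^\BPHZ\bigl(\Gamma,\sigma,h+\Upsilon_{\Gamma,\sigma}\alpha_i\bigr)$ with $\alpha_i = \tilde\tau_i - v_i^\nice$, where $v_i^\nice \in \CS^\nice$ is the mollifier-independent limit of the complement $(1-\pi_i)C_{\eps,i}^\BPHZ$ supplied by Propositions~\ref{prop:geo} and~\ref{prop:ito} applied inside the decomposition of $\CS^\nice$ used in the proof of Corollary~\ref{cor:both}. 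The injectivity of $h\mapsto U^\BPHZ(\Gamma,\sigma,h)$ (Theorem~\ref{theo:injective}) together with the algebraic non-degeneracy of $\Upsilon$ (Theorem~\ref{theo:injectiveAlg}) then turns the equality $U^\geo = U^\Ito$ into the single identity $\tilde\tau_\geo - \tilde\tau_\Ito = v_\geo^\nice - v_\Ito^\nice$ in $\CS$.

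For existence, the point is that $w := v_\geo^\nice - v_\Ito^\nice$ lies in $\CS_\both^\nice$. To see this, I would first instantiate the provisional choice $\tilde\tau_i = 0$: then $\alpha_i = -v_i^\nice$, and the resulting auxiliary solution maps are linked by \eqref{e:deftau} with $\tau = v_\Ito^\nice - v_\geo^\nice = -w$. The argument of Section~\ref{sec:both}, which uses only the diffeomorphism-equivariance of $U^\geo$, the Itô-type invariance of $U^\Ito$, and the injectivity of $U^\geo$ in its last argument inherited from Theorem~\ref{theo:injective}, shows that $-w$ (hence $w$) lies in $\CS_\both$; combined with $w\in\CS^\nice$, we conclude $w \in \CS_\both^\nice$. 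Corollary~\ref{cor:spacesNice} then provides $a \in \CS_\geo^\nice$ and $b \in \CS_\Ito^\nice$ with $w = a - b$, and setting $\tilde\tau_\geo := a$, $\tilde\tau_\Ito := b$ solves the required identity. Mollifier-independence of the $\tilde\tau_i$ is inherited from that of $w$, the decomposition being purely algebraic.

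Uniqueness is immediate from the reduction above: if two valid pairs $(\tilde\tau_\geo,\tilde\tau_\Ito)$ and $(\tilde\tau'_\geo,\tilde\tau'_\Ito)$ both satisfy $\tilde\tau_\geo - \tilde\tau_\Ito = w = \tilde\tau'_\geo - \tilde\tau'_\Ito$, subtracting yields $\tilde\tau_\geo - \tilde\tau'_\geo = \tilde\tau_\Ito - \tilde\tau'_\Ito$, with the left-hand side in $\CS_\geo^\nice$ and the right-hand side in $\CS_\Ito^\nice$, so that this common element lies in $\CS_\geo^\nice \cap \CS_\Ito^\nice = \scal{\tau_\star}$ by the second identity of Corollary~\ref{cor:spacesNice}. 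Thus both $\tilde\tau_\geo$ and $\tilde\tau_\Ito$ are defined up to the \emph{same} multiple of $\tau_\star$, as claimed. The main obstacle in this plan is clearly the decomposition $\CS_\both^\nice = \CS_\geo^\nice + \CS_\Ito^\nice$ from Corollary~\ref{cor:spacesNice}, which is a refinement of the algebraic identity $\CS_\both = \CS_\Ito + \CS_\geo$ of Proposition~\ref{prop:ItoStratgeo}; its proof is deferred to Section~\ref{sec:counting} and rests on the $T$-algebra framework developed in Sections~\ref{sec:algebra}--\ref{sec:spaces}. Once that identity is available, everything above is essentially linear algebra applied to the injectivity statements already in hand.
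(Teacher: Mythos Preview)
Your proof is correct and takes essentially the same route as the paper's: both use Section~\ref{sec:both} and Lemma~\ref{const:disconnect} to place the discrepancy $\tau$ of \eqref{e:deftau} (equivalently your $w=v_\geo^\nice - v_\Ito^\nice$) in $\CS_\both^\nice$, and then invoke both halves of Corollary~\ref{cor:spacesNice} for existence and for uniqueness modulo $\tau_\star$. Your version is a bit more explicit in spelling out the linear-algebraic reduction and in invoking Theorem~\ref{theo:injectiveAlg} for the step from $\Upsilon_{\Gamma,\sigma}\alpha_\geo=\Upsilon_{\Gamma,\sigma}\alpha_\Ito$ to $\alpha_\geo=\alpha_\Ito$ in $\CS$, which the paper leaves implicit.
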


\begin{proof}
We have already seen in Corollary \ref{cor:both} that both $C_{\eps,\geo}^\BPHZ$ and $C_{\eps,\Ito}^\BPHZ$ belong to 
$\CS^\nice$ for every mollifier $\rho\in\Moll$ and that we can choose $\tau_i^{(\eps)}\in\CS^\nice$.
In particular, we know that the element $\tau_0$ defined in \eqref{e:deftau} belongs to $\CS^\nice$
and is given by
\begin{equs}
\tau_0 &= \lim_{\eps \to 0} \bigl(\tau_\geo^{(\eps)} - C_{\eps,\geo}^\BPHZ - \tau_\Ito^{(\eps)}+C_{\eps,\geo}^\BPHZ\bigr) = \tilde \tau_\geo - v_\geo - \tilde \tau_\Ito + v_\Ito\;,
\end{equs}
for $v_i\in\CS_i^\perp$ as in Propositions~\ref{prop:geo} and~\ref{prop:ito} and $\tilde\tau_i\in\CS_i^\nice$ as in the proof
of Corollary~\ref{cor:both}.
We know furthermore that $\tau \in \CS_\both^\nice$ by \eqref{e:proptau}, so that by Corollary~\ref{cor:spacesNice}
we can choose $\tilde \tau_\geo \in \CS_\geo^\nice$ and $\tilde \tau_\Ito \in \CS_\Ito^\nice$ in such a way
that $\tau_0 = 0$, as desired. 
The second statement is immediate from the second part of Corollary~\ref{cor:spacesNice} and the 
injectivity of the solution maps in their last argument.
\end{proof}

\section{Proof of main results}

We now have almost all of the ingredients in place to prove our main result. Before we
turn to it however, we need one more result on the behaviour of the BPHZ renormalisation constants.

\subsection{Convergence of constants}
\label{sec:conv}

Throughout this section, we write $C_\eps$ for the BPHZ character $C_{\eps,\geo}^\BPHZ$ 
from Proposition~\ref{prop:BPHZchar}.
We interpret this character as an element of $\CS^\nice$ in such a way that the counterterm 
is given by $\Upsilon_{\Gamma,\sigma}C_\eps$. Recall also the definition of 
$\tau_\star$ from \eqref{e:taustar}.
The main result in this section is as follows.

\begin{theorem}\label{theo:constants}
There exists an element $\tau_0 \in \CS^\nice$ and a constant $\bar c > 0$ such that 
\begin{equ}
\lim_{\eps \to 0} \Bigl(C_{\eps} + {\bar c \over \eps} \Nabla_{\<generic>}\<generic> - {\log \eps \over 4\sqrt 3 \pi} \tau_\star \Bigr) = \tau_0\;.
\end{equ}
\end{theorem}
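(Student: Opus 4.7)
The plan is to analyze $C_\eps = C_{\eps,\geo}^\BPHZ$ directly, exploiting the structure already established in the paper. From Lemma~\ref{const:disconnect} we know $C_\eps \in \CS^\nice$ for every $\eps$, and Proposition~\ref{prop:geo} gives $C_\eps = C_\eps^g + v_\geo + o(1)$ with $C_\eps^g \in \CS_\geo$ and $v_\geo \in \CS_\geo^\perp$. Choosing the complement $\CS_\geo^\perp$ compatibly with the decomposition $\CS^\nice = (\CS^\nice \cap \CS_\geo)\oplus(\CS^\nice \cap \CS_\geo^\perp)$, we may assume $C_\eps^g \in \CS_\geo^\nice$ and $v_\geo \in \CS_\geo^\perp \cap \CS^\nice$. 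It therefore suffices to establish the claimed asymptotics for $C_\eps^g$, which I would split as $C_\eps^g = C_\eps^{g,2} + C_\eps^{g,4}$ according to the number of noises in the supporting tree.

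The two-noise part $C_\eps^{g,2}$ is supported on trees of degree $-1^-$ in $\SS_\<generic>^{(2)}$, hence diverges at most as $\eps^{-1}$. A direct Wick calculation combined with the spatial symmetry and non-anticipativity of $\rho$ shows that the only non-vanishing $\eps^{-1}$ contribution comes from the pairing of $\<Xi2>$ whose valuation under $\Upsilon_{\Gamma,\sigma}$ equals $\Nabla_{\sigma_i}\sigma_i$; its coefficient is an integral of the form $\int K(-z)(\rho_\eps * \rho_\eps)(z)\,dz$, which scales as $\bar c/\eps$ for some $\bar c > 0$, the positivity being manifest since the integrand is a convolution of non-negative functions. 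All other would-be $\eps^{-1}$ contributions from $\SS_\<generic>^{(2)}$ either vanish by parity or are killed by the fact that $K$ integrates to zero on each time slice.

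The four-noise part $C_\eps^{g,4}$ is supported on trees of degree $0^-$, hence diverges at most logarithmically. To pin down the direction of this log divergence, I would combine Proposition~\ref{prop:ito} (giving that the divergent part of $C_{\eps,\Ito}^\BPHZ$ lies in $\CS_\Ito^\nice$) with an interpolation argument in the spirit of the proof of Theorem~\ref{theo:Ajay}, using a double mollifier with $\delta \ll \eps$ to show that, modulo the $\eps^{-1}$ piece handled above and a bounded remainder, the restrictions of $C_{\eps,\geo}^\BPHZ$ and $C_{\eps,\Ito}^\BPHZ$ to the four-noise sector agree. Combined with Proposition~\ref{prop:geo}, this forces the log-divergent part of $C_\eps^{g,4}$ to lie in $\CS_\geo^\nice \cap \CS_\Ito^\nice$, which by Corollary~\ref{cor:spacesNice} is one-dimensional and spanned by $\tau_\star$. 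Convergence of the remainder is then automatic by finite-dimensionality of $\CS^\nice$.

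The main obstacle is the explicit evaluation of the logarithmic coefficient $1/(4\sqrt{3}\pi)$, which is not determined by the abstract structure. This requires computing the $\log \eps$ part of a specific Feynman-type integral attached to a four-noise tree contributing to the $\tau_\star$ direction (for instance a pairing of $\<Xi4eabis>$ from the list \eqref{eq:valeurR}); the $\sqrt{3}$ in the denominator ultimately originates from the parabolic scaling, through the covariance of three independent heat kernels evaluated at a common time. The universality of this constant (its independence of $\rho$) is however guaranteed by the abstract arguments above, so it is enough to carry out the computation for a single convenient choice of mollifier.
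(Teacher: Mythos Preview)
Your route through $\CS_\geo^\nice \cap \CS_\Ito^\nice = \scal{\tau_\star}$ is attractive, but the step that carries all the weight is not justified: you assert that ``the restrictions of $C_{\eps,\geo}^\BPHZ$ and $C_{\eps,\Ito}^\BPHZ$ to the four-noise sector agree'' modulo a bounded remainder, and cite the double-mollifier argument of Theorem~\ref{theo:Ajay}. That argument, however, shows convergence of the \emph{renormalised models} $\hPPi_i^{(\eps)}$ to a common limit; it says nothing directly about the proximity of the two \emph{characters}. Concretely, for any tree with a noise at the root one has $g_{\eps,\Ito}(\tau)=0$ by the non-anticipativity convention while $g_{\eps,\geo}(\tau)$ carries the full expectation, so the equality of divergent parts cannot be read off from the model convergence and needs a genuine kernel estimate (a telescoping bound showing that each swap $K\leftrightarrow K_\eps$ or $\rho_\eps*\tilde\rho_\eps\leftrightarrow\delta$ gains a positive power of $\eps$ against the remaining integral). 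That is plausible but is a separate analytic argument, not something ``in the spirit of'' Theorem~\ref{theo:Ajay}.

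The paper avoids this entirely and works only with $C_{\eps,\geo}^\BPHZ$. Knowing from Proposition~\ref{prop:geo} that $C_\eps$ lies within $o(1)$ of the $13$-dimensional affine space $v_\geo+\CS_\geo^\nice$, it exhibits (from Proposition~\ref{prop:geoDim}) eleven functionals in $\CB_\geo$ that annihilate both $\Nabla_{\<generic>}\<generic>$ and $\tau_\star$ and are independent on $\CS_\geo^\nice$, and then checks $\scal{C_\eps,\tau}$ converges for each: six vanish by Lemma~\ref{const:disconnect}, four are finite limits already computed in \cite{wong}, and the combination $\<Xi4eac1>-\<Xi4eabisc1>$ is handled by a short dedicated kernel estimate (Lemma~\ref{lemmadiffconst}). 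The two divergent directions are then identified by testing against $\<I1Xitwo>$ (giving $\bar c=\int(\d_x P*\rho)^2$, manifestly positive) and $\<I1Xi4abcc1>$, whose $\log\eps$ coefficient is computed in Lemma~\ref{lem:logdiv} via an integration by parts reducing to $\int K_{\eps,\rho}^3$ and the heat-kernel identity $\int_\R P^3(t,x)\,dx=1/(4\sqrt 3\pi t)$.

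Two minor points on your two-noise paragraph: both $\<Xi2>$ and $\<I1Xitwo>$ carry an $\eps^{-1}$ contribution (their combination $\<Xi2>+\tfrac12\<I1Xitwo>=\Nabla_{\<generic>}\<generic>$ is the unique two-noise direction in $\CS_\geo$), and $\Upsilon_{\Gamma,\sigma}\<Xi2>=\sigma_i^\beta\d_\beta\sigma_i^\alpha$ is only the partial-derivative piece, not the full $\Nabla_{\sigma_i}\sigma_i$. Also, positivity of $\bar c$ is not ``manifest'' from the integral $\int K(-z)(\rho_\eps*\tilde\rho_\eps)(z)\,dz$ you wrote (the mollifier $\rho$ need not be non-negative); the paper's formula $\int(\d_xP*\rho)^2$ obtained via $\<I1Xitwo>$ makes it immediate.
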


\begin{proof}
By Proposition~\ref{prop:geo}, there exists $v_\geo\in \CS_\geo^\perp\subset\CS^\nice$ such that 
the distance between $C_{\eps} - v_\geo$ and the $13$-dimensional
`geometric' subspace $\CV^\nice=\CS_\geo^\nice$, see Corollary~\ref{cor:spacesNice}
and Proposition~\ref{prop:geoDim} below, converges to $0$ as $\eps \to 0$. 
Using \eqref{eq:valeurR}, we have that
%: $8\tau_\star$ is equal to
%\begin{equ}
%4\,(\<Xi4eac1>+\<Xi4eabisc1>) - 8\,\<Xi4eabisc2> - 2\,\<Xi4eabbisc1>  + 2\,\<Xi4eabc1>+ 2\,\<I1Xi4acc1>-4\,\<I1Xi4acc2>- \<I1Xi4abcc1>+ 2\,\<2I1Xi4cc1> + \<2I1Xi4c1>\;. 
%\end{equ}
 $\scal{\tau, \Nabla_{\<generic>}\<generic>} = \scal{\tau,\tau_\star} = 0$ for 
\begin{equ}[e:listtau]
\tau \in  \Big\{\<Xi4_1> \,, \<Xi4c1>\,, \<Xi4_2>\,,  \<Xi4ec3> \,,  \<Xi4ec1> \,,  \<Xi4ec2> \,, \<Xi41> \,, \<Xi42> \,,    \<Xi4eac2>, 
\<Xi4eac1> - \<Xi4eabisc1> ,   \<Xi4ca2> \Big\}\;,
\end{equ}
and by Proposition~\ref{prop:geoDim}, these form 
$11$ linearly independent linear functionals on $\CS_\geo^\nice$.
In order to complete the proof, it therefore suffices to show that 
$\scal{C_\eps,\tau}$ converges to a finite limit as $\eps \to 0$ for every $\tau$ 
as in \eqref{e:listtau} and, since $\scal{\<I1Xitwo>,\Nabla_{\<generic>}\<generic>} = \scal{8\<I1Xi4abcc1s>,\tau_\star} = 1$, 
that the limits
\begin{equ}[e:taulimits]
\lim_{\eps \to 0} \Bigl(\scal{C_\eps, \<I1Xitwo>} + {\bar c \over \eps}\Bigr)\;,\qquad
\lim_{\eps \to 0} \Bigl(\scal{C_\eps, \<I1Xi4abcc1s>} - {\log \eps \over 32\sqrt 3 \pi}\Bigr)
\end{equ}
exist and are finite.

By Lemma~\ref{const:disconnect}, $\scal{C_\eps,\tau} =0$ for
$\tau \in \{\<Xi4_1s> \,,  \<Xi4ec3s> \,, \<Xi41s> \,, \<Xi42s> \,,   \<Xi4eac2s>, 
   \<Xi4ca2s>\}$. In \cite{wong}, it was furthermore shown that 
$\scal{C_\eps,\tau} $ converges to finite limits for $\tau \in \{\<Xi4c1s>\,, \<Xi4_2s>\,,   \<Xi4ec1s> \,,  \<Xi4ec2s>\}$, while Lemma~\ref{lemmadiffconst} below shows that 
$\scal{C_\eps, \<Xi4eabisc1s>- \<Xi4eac1s>}$ converges to a finite limit.
 
The fact that the first limit in \eqref{e:taulimits} exists for 
\begin{equ}
\bar c = \int_{\R^2} ((\d_x P\star \rho)(t,x))^2\,dt\,dx
\end{equ}
is a simple calculation exploiting the scale invariance of the heat kernel.
The fact that the second limit in \eqref{e:taulimits} is also finite is the content of Lemma~\ref{lem:logdiv}
below, which concludes the proof.
\end{proof}

\begin{remark}
Note that there is a sign error in \cite{Hao} before Eq.~3.6: the factors $\log \eps$ should read $|\log \eps|$,
which then makes it consistent with \eqref{e:taulimits}.
\end{remark}
   
We now show that the BPHZ renormalisation constant yields a finite limit on $\<Xi4eabisc1s>- \<Xi4eac1s>$
and compute the prefactor of the log-divergence.

\begin{lemma} \label{lemmadiffconst}
For every mollifier $\rho$ as above, there exists a constant $c$ such that 
\begin{equ}
\lim_{\eps \to 0} C_\eps^\BPHZ\Big(\<Xi4eabisc1>- \<Xi4eac1>\Big) = c\;.
\end{equ}
\end{lemma}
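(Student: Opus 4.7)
Write $\tau_1 = \<Xi4eabisc1s>$ and $\tau_2 = \<Xi4eac1s>$. Both trees have degree $0^-$, carry exactly four noise leaves paired into two same-colour pairs by the Wick contraction, and share the same underlying unlabelled tree topology with two internal vertices; they differ only in which one of two edges is of type $\<thin>$ versus $\<thick>$ and in the colour assignment of one pair of noises. The strategy is to show that although each of $C_\eps^\BPHZ(\tau_1)$ and $C_\eps^\BPHZ(\tau_2)$ diverges as $\log\eps$, their divergent parts coincide pointwise under the integral, leaving a finite limit for the difference.

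First, I would use Remark~\ref{rem:noises} to write $C_\eps^\BPHZ(\tau_i) = g_{\eps,\geo}(\CA^t\tau_i)$ and, combining this with Wick's theorem (Lemma~\ref{lem:Wick}), express each constant as
\begin{equation*}
C_\eps^\BPHZ(\tau_i) = \int_{(\R^{2})^{2}} F_\eps^{(i)}(z_1,z_2)\,dz_1\,dz_2\;,
\end{equation*}
where $F_\eps^{(i)}$ is the renormalised integrand built from the prescribed products of $K_\eps$, $K_\eps'$ and the Wick two-point functions $K_\eps \ast K_\eps$, corrected by the subtractions produced by the twisted antipode $\CA^t$ for every strictly negative-degree subtree. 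By Lemma~\ref{lem:symx} only subtrees in $\SS_{\<generic>}$ enter, which for the present trees restricts attention to the two-noise subdivergences modelled on $\<Xi2>$ and $\<I1Xitwo>$, both of which live on the same underlying subgraph in $\tau_1$ and $\tau_2$ after the edge-type swap.

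Second, I would carry out a direct comparison of $F_\eps^{(1)}$ and $F_\eps^{(2)}$. Using the spatial parity of the kernels, namely $K_\eps(t,x) = K_\eps(t,-x)$ and $K_\eps'(t,x) = -K_\eps'(t,-x)$, which follows from the evenness of $\rho$ and from our choice of spatial cutoff, I can relabel the two internal integration variables so that the two integrands align up to the single difference between a $K_\eps$ factor and a $K_\eps'$ factor on one edge. Applying the Leibniz rule under the integral and using integration by parts in the spatial variable, the difference $F_\eps^{(1)} - F_\eps^{(2)}$ can be cast as a sum of products of kernels whose total scaling degree at the coincidence of the two internal vertices is strictly greater than $-2$ (i.e.\ strictly better than the borderline log-divergent count). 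At this point the standard BPHZ bounds from \cite{Ajay} (or a direct scaling argument analogous to the one in \cite{wong,Hao}) guarantee that this difference is uniformly integrable in $\eps$.

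Third, dominated convergence gives $\lim_{\eps \to 0} C_\eps^\BPHZ(\tau_1 - \tau_2) = c$ for some finite constant $c$ depending on $\rho$, as claimed. The main obstacle is the power-counting verification in the second step: one must check carefully that after the integration by parts and the symmetrisation, the combined singularity at the coalescence of the two inner vertices is strictly sub-logarithmic in $\eps$. This is essentially the same kind of cancellation exploited in \cite[Sec.~4]{wong} and \cite{Hao} for the KPZ equation, and can be turned into a bound by comparison with the Besov-type norms on $K_\eps - K$ employed in \cite[Thm.~2.31]{Ajay}. Once this bound is in place the remaining argument is routine.
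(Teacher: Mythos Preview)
Your second step contains a genuine gap. Integration by parts in the spatial variable only moves a derivative from one kernel factor to another; it does not change the total scaling degree of the integrand at any coincidence of points, so it cannot by itself turn a logarithmically divergent integral into a convergent one. In fact, if you carry out the integration by parts you sketch (moving the $\partial_x$ from the $K_\eps'$ factor onto the remaining $z_1$-dependent factors), what you obtain is a relation between $C_\eps^\BPHZ(\tau_1)+C_\eps^\BPHZ(\tau_2)$ and a third diagram, not between their \emph{difference} and something better. The parity observation is likewise unhelpful here: each integrand $F_\eps^{(i)}$ is individually odd in the relevant spatial variable, so no extra cancellation appears in the difference. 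Also, your remark on subdivergences is slightly off: for these two particular pairings neither $\<Xi2>$ nor $\<I1Xitwo>$ occurs as a paired subtree, so the twisted antipode contributes no subtraction at all and $C_\eps^\BPHZ(\tau_i)$ is simply $\E(\PPi_\geo^{(\eps)}\tau_i)(0)$.

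The mechanism the paper actually exploits is different and does not use integration by parts. After the rescaling of \cite[Sec.~6.3]{Jeremy}, the two Wick-contracted expressions turn out to be the \emph{same} three-vertex Feynman diagram, the only difference being which of the two edges joining vertex~$1$ to the root carries the unmollified kernel $K_\eps$ (resp.\ $K_\eps'$) and which carries the mollified kernel $K_{\eps,\rho}$ (resp.\ $K_{\eps,\rho}'$). One then introduces an intermediate constant $c_\eps^\rho$ corresponding to the same diagram with \emph{both} of those edges mollified, and uses that $K_{\eps,\rho}-K_\eps$ and $K_{\eps,\rho}'-K_\eps'$ converge in $\CB_{2-\kappa,1+\kappa}$ and $\CB_{3-\kappa,2+\kappa}$ respectively, i.e.\ with one full degree of regularity more than $K_\eps$ and $K_\eps'$ themselves (this gain comes from $\int\rho=1$, not from power counting or parity). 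That extra degree, fed into the product and integration lemmas on the $\CB_{\alpha,\beta}$ spaces (in particular \cite[Lem.~6.9]{Jeremy}), is exactly what makes both $C_\eps^\BPHZ(\tau_i)-c_\eps^\rho$ converge, hence also their difference. You should replace the integration-by-parts step by this telescoping through $c_\eps^\rho$.
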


\begin{proof}
As in \cite[Sec.~6.3]{Jeremy}, it will be convenient to perform a change of variables and to write
\begin{equ}
K_{\eps,\rho}(z) = \big(\rho * \CS_\eps^{(1)}K\big)(z)\;,\qquad
K_{\eps}(z) = \big(\CS_\eps^{(1)}K\big)(z)\;,
\end{equ}
where $\big(\CS_\eps^{(\alpha)}K\bigr)(t,x) = \eps^\alpha K(\eps^2 t, \eps x)$.
Writing   \tikz [baseline=-0.1cm]{\draw[pagebackground] (0,0) -- (0,0.1);\draw[krho] (0,0) -- (1,0);}
for $K_{\eps,\rho}$,  
\tikz[baseline=-0.1cm]{\draw[pagebackground] (0,0) -- (0,0.1);\draw[krhopr] (0,0) -- (1,0) ;} for
$K_{\eps,\rho}'$,
and  the `plain' versions of these arrows for $K_{\eps}$ and $K_{\eps}'$, we then have
\begin{equs}
C_\eps^\BPHZ\Big(\<Xi4eabisc1>\Big)
= 
\tikzsetnextfilename{logterm1}
\begin{tikzpicture}[baseline=0.4cm,scale=0.5]
\node at (0,0) [root] (0) {}; 
\node at (0,2) [dot] (1) {};
\node at (1,1) [dot] (2) {};

\draw[kepspr] (1) to (0);
\draw[krho] (1) to[bend right=60] (0);
\draw[krho] (2) to[bend right=20] (1);
\draw[kepspr] (2) to[bend left=20] (0);
\end{tikzpicture}
\;,\qquad
C_\eps^\BPHZ\Big(\<Xi4eac1>\Big)
= 
\tikzsetnextfilename{logterm2}
\begin{tikzpicture}[baseline=0.4cm,scale=0.5]
\node at (0,0) [root] (0) {};
\node at (0,2) [dot] (1) {};
\node at (1,1) [dot] (2) {};

\draw[krhopr] (1) to (0);
\draw[keps] (1) to[bend right=60] (0);
\draw[krho] (2) to[bend right=20] (1);
\draw[kepspr] (2) to[bend left=20] (0);
\end{tikzpicture}
\end{equs}
At this point, we note that with the notations of \cite[Sec.~6.3]{Jeremy}, 
$K_{\eps,\rho}$, $K_{\eps,\rho}'$, $K_{\eps}$, and $K_{\eps}$
converge to finite limits in $\CB_{1-\kappa,0}$, $\CB_{2-\kappa,0}$,
$\CB_{1-\kappa,1+\kappa}$ and $\CB_{2-\kappa,2+\kappa}$ respectively.
Furthermore,  
$K_{\eps,\rho} - K_\eps$ and $K_{\eps,\rho}' - K_\eps'$
converge to finite limits in
$\CB_{2-\kappa,1+\kappa}$ and $\CB_{3-\kappa,2+\kappa}$.

Recall also that the product is continuous from
$\CB_{\alpha,\beta}\times \CB_{\bar\alpha,\bar\beta}$
into $\CB_{\alpha+\bar\alpha,\beta+\bar\beta}$ and that the
functional
$K \mapsto \int_{\R^2} K(z)\,dz$ is continuous
on $\CB_{\alpha,\beta}$ if $\alpha> 3$ and $\beta < 3$.
Setting
\begin{equ}
c_\eps^\rho = 
\tikzsetnextfilename{logtermmid}
\begin{tikzpicture}[baseline=0.4cm,scale=0.5]
\node at (0,0) [root] (0) {};
\node at (0,2) [dot] (1) {};
\node at (1,1) [dot] (2) {};

\draw[krhopr] (1) to (0);
\draw[krho] (1) to[bend right=60] (0);
\draw[krho] (2) to[bend right=20] (1);
\draw[kepspr] (2) to[bend left=20] (0);
\end{tikzpicture}\;,
\end{equ}
we conclude from these facts and from \cite[Lem.~6.9]{Jeremy} that 
both $C_\eps^\BPHZ\big(\<Xi4eac1>\big) - c_\eps^\rho$ and 
$C_\eps^\BPHZ\big(\<Xi4eac1>\big) - c_\eps^\rho$ converge to finite limits, which implies the claim.
\end{proof}

\begin{lemma}\label{lem:logdiv}
The second limit in \eqref{e:taulimits} is finite.
\end{lemma}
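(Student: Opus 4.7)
The plan is to compute the log-divergent part of $\scal{C_\eps, \<I1Xi4abcc1s>}$ explicitly and identify its prefactor, following the methodology developed in \cite{Hao,Jeremy,wong} for BPHZ constants appearing in the renormalisation of stochastic heat equations. I would first use the explicit formula $C_\eps^\BPHZ(\tau) = g_{\eps,\geo}(\hat \CA^t \tau)$ from Remark~\ref{rem:noises}, combined with Wick's theorem applied to the noise pairing indicated by the coloured circles in the diagram $\<I1Xi4abcc1s>$, to express this constant as a finite sum of integrals over $\R^2$-valued vertex variables. The resulting integrand is a product of (derivatives of) the truncated mollified heat kernel $K_\eps = \rho_\eps * K$ convolved with itself, with the BPHZ subtractions encoded through Taylor-truncated integrands coming from the action of the twisted antipode on the nested subdivergent subgraphs.

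Next, I would isolate the logarithmic divergence by a scaling argument. Since the symbol $\<I1Xi4abcc1>$ has degree $0^-$, power-counting forces the only possible divergence to be logarithmic. Rescaling all integration variables by $\eps$ and exploiting the parabolic scaling of the heat kernel shows that the divergence comes entirely from the ``inner'' region where all vertex variables lie in a parabolic ball of radius $O(\eps)$ around the origin; in this region and up to terms vanishing as $\eps \to 0$, the rescaled integrand becomes scale invariant and produces a factor $\log \eps$ from the logarithmic Hadamard finite part. The BPHZ subtractions then guarantee absolute integrability after rescaling, yielding a finite coefficient of $\log \eps$, while the complementary region gives a uniformly bounded contribution by standard integrability estimates on $K$ and $K'$.

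The explicit value of this coefficient is then computed by passing to Fourier space. Using the representation $\hat P(\tau,\lambda) = (i\tau + \lambda^2)^{-1}$ for the heat kernel propagator and performing the Gaussian momentum integrals, the factor $\sqrt{3}$ in $1/(32\sqrt 3\pi)$ emerges from integrals of the form $\int_\R e^{-3\lambda^2}\,d\lambda = \sqrt{\pi/3}$, where the $3$ in the exponent arises from the three parallel propagators at the divergent core of the diagram; the factor $\pi$ comes from standard heat kernel Fourier normalisations, while the combinatorial factor $32$ combines the symmetry factor $S(\<I1Xi4abcc1>)$ with the coefficient of $\<I1Xi4abcc1>$ in the expansion \eqref{eq:valeurR} of $8\tau_\star$ (noting in particular that $\scal{8\<I1Xi4abcc1s>, \tau_\star} = 1$).

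The main obstacle I anticipate is the careful bookkeeping of the BPHZ subtractions: the tree $\<I1Xi4abcc1>$ admits several nested divergent subgraphs whose corresponding Taylor subtractions must be performed in the order prescribed by the coaction $\Deltam$, and one must verify that these subtractions do not spuriously cancel the leading log divergence. Fortunately, a large number of potentially delicate subtractions vanish automatically by the parity argument underlying Lemma~\ref{lem:symx} (as the offending subdivergent subgraphs are odd under $x\mapsto -x$), reducing the problem to a manageable explicit calculation of the same flavour as those performed in \cite{wong} for the other log-divergent symbols of degree $0^-$.
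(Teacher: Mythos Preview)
Your outline is a plausible first-principles strategy, but the paper takes a much shorter route by importing the hard work from \cite{Jeremy}. Rather than unwinding the BPHZ character via the twisted antipode and Wick pairings, the paper simply invokes \cite[Eq.~6.31]{Jeremy} to identify $\scal{C_\eps,\<I1Xi4abcc1s>}$ (called $C_2^{(\eps)}$ there) up to a convergent correction with an explicit two-vertex integral $L_\eps$. One integration by parts and \cite[Lem.~6.12]{Jeremy} then reduce $L_\eps$, again up to convergent remainders, to the single expression $-\tfrac{1}{16}\int_{\R^2} K_{\eps,\rho}^3(t,x)\,dt\,dx$. At this point no Fourier transform is needed: the real-space identity $\int_\R P^3(t,x)\,dx = 1/(4\sqrt 3\pi t)$ (a direct Gaussian computation) combined with scaling shows that the integrand behaves like $\one_{[\eps^2,1]}(t)/(4\sqrt 3\pi t)$ up to integrable error, and integrating in $t$ produces the asserted $\log\eps/(32\sqrt 3\pi)$.

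Your Fourier-space heuristic for the $\sqrt 3$ is morally the same phenomenon (three copies of the heat propagator), but the paper's real-space version is cleaner and avoids the careful bookkeeping of nested BPHZ subtractions that you flag as the main obstacle: those subtractions have already been digested into the results quoted from \cite{Jeremy}, so one never has to confront them here. If you were to carry out your programme in full, you would end up rederiving significant portions of \cite{Jeremy}; it is more efficient to cite those reductions and focus the argument on the single remaining integral.
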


\begin{proof}
It is shown in \cite[Eq.~6.31]{Jeremy}
that $\scal{C_\eps, \<I1Xi4abcc1s>}$ (which is called $C_2^{(\eps)}$ in that article) differs by a converging
term from 
\begin{equ}
L_\eps = {1\over 4}
\begin{tikzpicture}[baseline=0.4cm,scale=0.5]
\node at (0,0) [root] (0) {}; 
\node at (-1,2) [dot] (1) {};
\node at (1,2) [dot] (2) {};

\draw[krhopr] (1) to[bend right=30] (0);
\draw[krho] (2) to[bend left=30] (0);
\draw[krhopr] (2) to[bend right=30] (1);
\draw[krho] (2) to[bend left=30] (1);
\end{tikzpicture}
= -
{1\over 8}
\begin{tikzpicture}[baseline=0.4cm,scale=0.5]
\node at (0,0) [root] (0) {}; 
\node at (-1,2) [dot] (1) {};
\node at (1,2) [dot] (2) {};

\draw[krhopr] (1) to[bend right=30] (0);
\draw[krhopr] (2) to[bend left=30] (0);
\draw[krho] (2) to[bend right=30] (1);
\draw[krho] (2) to[bend left=30] (1);
\end{tikzpicture}\;,
\end{equ}
where we performed an integration by parts on the top right integration variable.
Using \cite[Lem~6.12]{Jeremy}, we see that this differs
by a converging quantity from 
\begin{equ}[e:boundlog]
-{1\over 16} \int_{\R^2} K^3_{\eps,\rho}(t,x)\,dt\,dx\;.
\end{equ}
Using the identity $\int_\R P^3(t,x)\,dx = 1/ (4\sqrt 3 \pi t)$ and the scaling properties of the
heat kernel, a simple 
asymptotic analysis shows that 
$\int_{\R^2}K^3_{\eps,\rho}(t,x)\,dx$ differs from $\one_{[\eps^2,1]} / (4\sqrt 3 \pi t)$
by a term whose integral converges to a finite limit. Inserting this into \eqref{e:boundlog}
and performing the integration over $t$ yields the desired result.
\end{proof}

\subsection{Proof of the main theorem}

We are now in a position to combine all of our results.

\begin{proof}[of Theorem~\ref{theo:main}]
Recall first the definition of $\tau_0 \in \CS^\nice$ from Theorem~\ref{theo:constants}
and note that the space $\CV^\nice$ mentioned in the introduction is nothing but
the space $\CS_\geo^\nice$.
Since $\Upsilon_{\Gamma,\sigma} \tau_\star = H_{\Gamma,\sigma}$, it follows that claims 1, 3 and 4 
of  Theorem~\ref{theo:main}
hold as long as $c \in \CS_\geo^\nice$ and $\hat c \in \R$ are chosen in such a way 
that 
\begin{equ}
c + \hat c \tau_\star = \lim_{\eps \to 0}\Bigl( \tau_\geo^{(\eps)}
 + {\bar c \over \eps} \Nabla_{\<generic>}\<generic> - {\log \eps \over 4\sqrt 3 \pi} \tau_\star\Bigr)\;,
\end{equ}
where $\tau_\geo^{(\eps)}$ is as in Corollary~\ref{cor:both} and Theorem~\ref{thm:bullet}
and the convergence of the right hand side is guaranteed by Theorem~\ref{theo:constants}.
The uniqueness claim is then the same
as the uniqueness claim modulo $\tau_\star$ that is found in Theorem~\ref{thm:bullet}.
The fact that \eqref{e:equivar} holds for arbitrary diffeomorphisms (and not just those homotopic to
the identity) follows from the fact that the counterterms in $\CS_\geo$ are obtained by taking 
multiple covariant derivatives of the vector fields $\sigma_i$, see Remark~\ref{rem:geo} below.

Claim 2 is an immediate consequence of the fact that the driving noise is white and the 
stability of the limit with respect to perturbations in the initial condition. More details
can be found for example in \cite[Sec.~7.3]{reg}. 

To show that claim~5 holds, note first that, as a consequence of Lemma~\ref{lem:meanzero}, 
the solutions $U^\BPHZ(0,\sigma,h)$ given by the BPHZ model do coincide with the classical
It\^o solutions to \eqref{e:classicalIto}. It therefore remains to show that 
$U^\BPHZ(0,\sigma,h) = U^\Ito(0,\sigma,h)$. This follows immediately from the fact that 
$U^\BPHZ(0,\sigma,h) = U^\Ito(0,\sigma,h + \Upsilon_{0,\sigma}\tau)$ for some
element $\tau \in \CS_\Ito$. Now $\Upsilon_{0,\sigma}\tau$ vanishes unless the symbol $\tau$ contains
only nodes of type $\<generic>$ and no node of type $\<not>$, while $\CS_\Ito$ is orthogonal
to any such symbol by Proposition~\ref{prop:CB} and the second statement of Theorem~\ref{theo:mainSum}.

Finally, the last statement follows as in \cite[Cor.~3.11]{CPA:CPA20383}, noting that since we are in the
situation $\Gamma = 0$ and $\sigma$ constant, $\Upsilon_{\Gamma,\sigma}$ vanishes identically on 
all of $\CS$, so that in this case our notion of solution coincides simply with the limit 
of \eqref{e:genClass} as $\eps \to 0$.
\end{proof}

\subsection{Path integrals and a conjecture}
\label{sec:conjecture}

Let again $\CM$ be a Riemannian manifold with inverse metric tensor $g$, let $A$ be a smooth
vector field on $\CM$, and let $\mu$ be the loop measure on $\CC^a(S^1,\CM)$
determined by the diffusion with generator
\begin{equ}
L f  = {1\over 2}\Delta f + df(A)\;,
\end{equ}
where $\Delta$ is the Laplace-Beltrami operator on $\CM$. 
We assume furthermore that the corresponding diffusion process does not explode in finite time
and is such that $x \mapsto p_1(x,x)$ is integrable on $\CM$.
It is a simple exercise in stochastic calculus to verify that the measure 
$\mu$ is equivalent to the measure $\mu_0$ constructed like $\mu$ but with $A=0$. Its Radon-Nikodym
derivative is given by
\begin{equ}
\exp \Bigl(\int_0^1 \scal{A(u),\circ du(x)}_{g(u)} - {1\over 2}\int_0^1 \bigl(|A(u)|_{g(u)}^2 +\div A(u)\bigr)\,dx \Bigr)\;,
\end{equ}
where $\circ$ denotes Stratonovich integration.
In view of this, it is then natural to conjecture the following (see \cite{CPA:CPA20383} for a statement and proof
in the Euclidean case).

\begin{conjecture}\label{conj:invariant}
In the above situation, let $\sigma_i$ be any collection of smooth vector fields
such that $g = \sigma_i \otimes \sigma_i$ and let $\Gamma^\alpha_{\beta\gamma}$ be the Christoffel
symbols for the Levi-Civita connection on $\CM$. Then, there exists a universal constant $c$ such that the
unique process given by canonical solution of Theorem~\ref{thm:riemann} for the stochastic PDE
\begin{equ}[e:geometric]
\d_t u = \Nabla_{\d_xu}\d_xu + \bigl((d A^\flat)(\d_x u)\bigr)^\sharp - \nabla_A A - {1\over 2} \nabla \div A + c \nabla R(u) + \sqrt 2\sigma_i(u) \, \xi_i\;,
\end{equ}
where $R$ denotes the scalar curvature of $\CM$,
exists for all times and has $\mu$ as its unique invariant measure. Comparing with \eqref{e:genClass}, the coefficients there become here
$h=- \nabla_A A - {1\over 2} \nabla \div A + c \nabla R$, while $K$ is the tensor field such that 
$K\d_x u=\bigl((d A^\flat)(\d_x u)\bigr)^\sharp$.
\end{conjecture}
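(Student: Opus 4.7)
The plan is to prove the conjecture via a lattice approximation scheme, using the fact that the discretised analogue of the Langevin dynamics for $\mu$ can be written down explicitly and has a computable invariant measure. At a formal level, writing $H_0(u) = \frac{1}{2}\int_{S^1} g_{u(x)}(\d_x u,\d_x u)\,dx$ and $\tilde H = H_0 - \int_0^1 \langle A^\flat, du\rangle + \frac{1}{2}\int_0^1(|A|_g^2 + \div A)\,dx$, the equation \eqref{e:geometric} (ignoring for a moment the $c\nabla R$ term) is exactly the intrinsic $L^2$-gradient flow $-\nabla_{L^2} \tilde H$ perturbed by noise of covariance $2g^{-1}$, so Itô calculus for smooth loops would make $\mu \propto e^{-\tilde H}$ formally invariant. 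The role of $c\nabla R$ is precisely to compensate for the discrepancy between this heuristic and the canonical renormalisation convention fixed in Theorem~\ref{theo:main}, which chooses an arbitrary origin in the two-parameter family spanned by $\{\tau_\star, \tau_c\}$ identified in Proposition~\ref{prop:ItoStratgeo}.

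The first step is to construct, for each $N\ge 1$, a natural $\CM^N$-valued diffusion $u^N$ on a discretised circle of mesh size $N^{-1}$. Using a Nash embedding $\CM\hookrightarrow \R^n$ and orthogonal projection onto $T_{u_k}\CM$, one sets up a discrete Langevin SPDE driven by independent Brownian motions in $\R^n$, with drift obtained as the $\CM$-tangential gradient of the discrete Hamiltonian $H_0^N(u)= \frac{N}{2}\sum_k \mathrm{dist}(u_k,u_{k+1})^2$ perturbed by $A$. By standard finite-dimensional theory (Itô on manifolds), the process $u^N$ admits $\mu_N \propto e^{-\tilde H^N}$ as reversible measure (up to the drift perturbation due to $A$, which is handled by the Radon--Nikodym computation already displayed above \eqref{e:geometric}), and $\mu_N$ converges weakly to $\mu$ after suitable normalisation.

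The second step is to show that $u^N$, after recentring and rescaling, converges in probability (up to an explosion time) to the solution of \eqref{e:geometric} for a specific, universal value of $c$. This is where one invokes a discrete-regularity-structures analogue of \cite{Ajay}: one expresses the discrete generator in local coordinates, expands to produce a discrete analogue of the BPHZ-renormalised model, and identifies the diverging counterterms. By Theorem~\ref{thm:riemann} and the structure of the canonical family, the limit is determined up to one free parameter, which is a multiple of $\nabla R$. That multiple is then pinned down by an explicit computation of the single log-divergent constant appearing when one projects the natural lattice counterterm onto the line spanned by the element of $\CS_\geo^\nice$ producing $\nabla R$; the universality of the answer follows from the fact that this projection only depends on the short-distance behaviour of the heat kernel on $\R$ and not on $\CM$, $\sigma$ or $A$, so the computation reduces to one in the flat case. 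Invariance of $\mu$ then passes from the finite $N$ level to the limit by the weak convergence of both $u^N(t)$ (at fixed $t$) and $\mu_N$, combined with the Feller property of $U^\family$ established in part~2 of Theorem~\ref{theo:main}. Uniqueness among invariant measures in the relevant class would follow from an irreducibility / strong Feller argument based on the non-degeneracy of $g$.

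The hard part is the second step: no analogue of \cite{Ajay} is currently available in the setting of discrete regularity structures of \cite{ErhardHairerRegularity}, so the convergence of the BPHZ-renormalised lattice approximation to a specified member of the canonical family cannot yet be rigorously established. This is precisely the reason the statement is phrased as a conjecture rather than a theorem. A secondary, but still nontrivial, obstacle is the actual numerical determination of $c$: even granting the convergence result, identifying $c$ requires a careful matching between the renormalisation constants produced by the natural lattice scheme and those fixed by the "nice'' convention $\CV^\nice$, and the fact (alluded to in Remark~\ref{rem:twoparam}) that plausible candidates for $c$ differing by geometrically meaningful fractions appear in the older physics literature indicates that this computation demands real care.
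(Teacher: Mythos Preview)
Your overall strategy matches the paper's heuristic argument closely: discretise the loop, write down the finite-dimensional Langevin dynamic with known invariant measure, and pass to the limit via a (currently unavailable) discrete analogue of the BPHZ convergence theorem. You also correctly identify the missing discrete version of \cite{Ajay} as the reason this remains a conjecture.

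There are two points where the paper is more concrete than your sketch, and one place where your account is slightly off. First, the paper does not set up the discrete dynamics via a generic Nash embedding and orthogonal projection; it instead invokes the result of Andersson--Driver that the measures proportional to $\exp\bigl(-\tfrac{1}{2}\sum_k d(u_k,u_{k+1})^2/\eps + \tfrac{1}{6}\sum_k R(u_k)\,\eps\bigr)$ converge to $\mu$, and works directly with the associated intrinsic Langevin SDE on $\CM^N$. The drift is then expanded via $\exp_{u_k}^{-1}(u_{k\pm 1})$ to fourth order in the increments, producing an explicit linear combination of tree symbols.

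Second, and this is the inaccurate part of your proposal, the constant $c$ is \emph{not} read off from any log-divergent quantity. In the geometric setting $H_{\Gamma,\sigma}=0$, so the logarithmic divergence attached to $\tau_\star$ disappears entirely. The mechanism the paper uses is instead that the canonical solution is normalised by requiring its counterterms to lie in $\CS^\nice$, which is orthogonal to $\<Xi4ba1b>$ and $\<Xi4ba2>$, whereas $\nabla R$ (via $\tau_c$) contains $\<Xi4ba1b>-\<Xi4ba2>$. Hence $c$ is determined by the \emph{finite} coefficients of the lattice counterterms in those two directions. The paper computes these coefficients explicitly by tracking which terms in the fourth-order expansion of the drift produce contributions of type $\<Xi4ba1b>$ or $\<Xi4ba2>$ after Wick-contracting the increments of the linearised discrete equation, and combining with the $\tfrac{1}{6}R$ term already present in the discrete Hamiltonian. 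This yields the conjectured value $c=\tfrac{1}{8}$.
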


\begin{remark}
If we can find a sequence of approximations $u^\eps$ to \eqref{e:geometric}
such that the corresponding invariant measures $\mu_\eps$ converge to $\mu$ in a sufficiently
strong sense (all moments in $\CC^a$ bounded), then the claim follows as in \cite[Cor.~1.3]{Konstantin}.
In particular, uniqueness of the invariant measure (restricted to any given homotopy class) 
then follows by combining \cite[Thm~4.8]{Jonathan}
with the Stroock-Varadhan support theorem \cite{SupportThm} which shows that the measure $\mu$ has full support.
\end{remark}

It is not a priori obvious what the value of the constant $c$ appearing in Conjecture~\ref{conj:invariant} 
should be. One would of course be tempted to conjecture that $c = 0$, but there is no good a priori reason
why this would be the case. After all, the only reason why we obtain a canonical notion of solution
is that we restrict a priori our renormalisation constants to the subspace $\CS^\nice \subset \CS$ 
which is chosen in a rather ad hoc manner: it does not arise from any symmetry consideration but from a
remark on the structure of the BPHZ renormalisation constants. 

Parsing the literature, there is consensus on the fact that if one wishes to represent Brownian motion
on $\CM$ by a path integral, then it should indeed be of the form
\begin{equ}
\exp \Bigl(-{1\over 2}\int_0^1 |\d_x u|_{g(u)}^2\,dx + c\int_0^1 R(u)\,dx \Bigr)\;,
\end{equ}
(for which \eqref{e:geometric} with $h=0$ would formally be the natural Langevin dynamic) 
but there is no consensus on the value of $c$. For example, the value $c= {1\over 12}$ would be 
consistent with the Onsager-Machlup functional appearing in the main theorem of \cite{TW} (see also  
\cite{Graham} for example). 
The result \cite{Driver} shows that for two rather 
natural choices of the Riemannian volume form for a piecewise geodesic discretisation of path space,
one obtains either $c=0$ or $c={1\over 6}$. The value $c = {1\over 6}$ also appears in
\cite[Eq.~15]{Cheng} (it may look like it has the opposite sign but if we interpret their statement in our context,
it states that the invariant measure for \eqref{e:geometric} with $c=0$ and $h=0$ is given by the 
 Brownian loop measure weighted by $\exp(-{1\over 6}\int R(u)\,dx)$ which is equivalent to the statement
 that our conjecture holds with $c = {1\over 6}$).
The value $c = {1\over 8}$ does appear in \cite[Eq.~6.5.25]{DeWitt}
(but the same author also makes a case for $c = -{1\over 12}$ in \cite[Eq.~7.12]{DewittRev}),
while $c=-{1\over 8}$ appears in \cite[Eq.~4.9]{Dekker}.
With our particular choice of normalisation, we have the following conjecture.

\begin{conjecture}\label{conj:c}
One has $c = {1\over 8}$ in \eqref{e:geometric}.
\end{conjecture}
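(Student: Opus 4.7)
The plan is to establish the conjecture by exhibiting a family of natural geometric approximations whose invariant measures are explicitly computable and whose scaling limit can be identified with the canonical solution of \eqref{e:geometric}; matching these two pieces of information then pins down the value of $c$.

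First, I would introduce a space-time discretization that respects the Riemannian structure. A natural candidate is the $N(d{-}1)$-dimensional manifold $L_\CM^N$ of piecewise geodesic loops $u_N\colon S^1_N \to \CM$ based on $N$ equispaced points, equipped with the Riemannian metric induced by the discrete $L^2$ inner product at the midpoints of each arc. The $L^2$-gradient flow, perturbed by noise compatible with this metric, is then a genuine finite-dimensional SDE on $L_\CM^N$, whose invariant measure is the Gibbs measure $\mu_N \propto e^{-H_N}\, d\mathrm{vol}_N$ with $H_N(u) = \tfrac12 \sum_i N \cdot d_\CM(u_i,u_{i+1})^2$ and $d\mathrm{vol}_N$ the volume form of the induced metric.

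The second step is a purely finite-dimensional asymptotic computation. Using classical estimates from stochastic analysis on manifolds (in the vein of \cite{Driver}), one expands the Jacobian relating $d\mathrm{vol}_N$ to the natural product volume on $\CM^N$, and pairs it with the short-time heat kernel expansion $p_{1/N}(u_i,u_{i+1}) = (\tfrac{N}{2\pi})^{d/2} e^{-N d_\CM(u_i,u_{i+1})^2/2} (1 + \tfrac{1}{N} \cdot \tfrac{R(u_i)}{12} + O(N^{-2}))$. Performing the sum over $i$ and taking $N \to \infty$ yields, to leading order, the identity $\mu_N \to e^{c_0 \int_{S^1} R(u(x))\,dx}\, \mu(du)$ for an explicit constant $c_0$ determined by the discretization; the goal is to verify that the specific discretization above is ``canonical'' in the sense of corresponding to the nice subspace $\CV^\nice$.

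The third and most delicate step is to show that, as $N \to \infty$, the discrete Langevin dynamics on $L_\CM^N$ converges in a sufficiently strong sense to the canonical solution of \eqref{e:geometric} for a specific value of $c$. This requires the analogue for lifts of discrete dynamics of the convergence theorem of \cite{Ajay}, building on the framework of \cite{ErhardHairerRegularity}; such a result is widely expected but not currently available in the generality needed here, which is precisely the reason the statement is a conjecture rather than a theorem. Granted this convergence, the value of $c$ would be read off by matching the prefactor $c_0$ from the second step with the contribution to the drift coming from $\CV^\nice$, using the injectivity of $h \mapsto U^\family(\Gamma, K, \sigma, h)$ from Theorem~\ref{theo:injective} to conclude that no other correction is possible.

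The hardest part is definitely establishing the discrete convergence result; everything else is either a bookkeeping exercise or a careful finite-dimensional computation. One subtle point is that the normalization of the noise in \eqref{e:geometric} contains an explicit factor $\sqrt 2$ that is absent from \eqref{e:solLoop}; since $\nabla R$ arises from a quartic expression in $\sigma$, this accounts for the ratio $1/8 = 4 \cdot 1/32$ mentioned in Remark~\ref{rem:geom1}, and must be tracked meticulously through the matching argument. An appealing alternative would be to bypass the discrete problem entirely by working directly with the Dirichlet form $\CE$ from the introduction and proving that its Markov extension corresponds to the canonical solution of \eqref{e:geometric} with $c=1/8$; this however would require resolving the uniqueness of Markov extensions for $\CE$, which is itself a hard open problem, so the discrete approach appears to be the more tractable route.
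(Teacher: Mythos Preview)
Your overall strategy — discretise, identify the invariant measure of the discrete dynamics, and match against the scaling limit — is the same as the paper's, and you correctly identify the missing discrete analogue of \cite{Ajay} (built on \cite{ErhardHairerRegularity}) as the reason this remains a conjecture. However, the paper's heuristic argument differs from yours in two substantive ways, and one of your steps is conceptually inverted.

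First, the paper does not try to engineer a discretisation that is ``canonical'' in the sense of producing counterterms in $\CS^\nice$. Quite the opposite: the whole point is that a generic discretisation produces counterterms \emph{outside} $\CS^\nice$, and it is precisely the component of these counterterms along the two basis vectors $\<Xi4ba1b>$ and $\<Xi4ba2>$ (which span a complement to $\CS^\nice$ by Proposition~\ref{prop:ortho}) that determines $c$. Since $\nabla R$ contains $\Upsilon_{\Gamma,\sigma}(\<Xi4ba1b>-\<Xi4ba2>)$ while the canonical solution's counterterm lies entirely in $\CS^\nice$, tracking only these two trees suffices. This is the key shortcut you are missing: you propose computing the full limiting measure and the full discrete renormalisation, whereas the paper isolates the two-dimensional ``fingerprint'' of $\nabla R$ and ignores everything else.

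Second, the paper's concrete computation is rather different from your heat-kernel plus Jacobian expansion. It takes directly from \cite{Driver} the discretisation on the product space $\CM^N$ with invariant density $\exp\bigl(-\tfrac12\sum d(u_k,u_{k+1})^2/\eps + \tfrac16\sum R(u_k)\eps\bigr)$, writes down the associated Langevin SDE, and then in local coordinates expands $\exp_{u_k}^{-1}(u_{k\pm1})$ to fourth order in $\delta^\pm u$. This produces an explicit linear combination $\tau$ of trees (equation~\eqref{e:counterterm}). The renormalisation constants generated by each tree in $\tau$ are then computed one by one using the correlations $\eps^{-1}\E|\delta^\pm \tilde u|^2 \to 1$ and $\eps^{-1}\E\tilde u\,\delta^\pm\tilde u \to -\tfrac12$ of the linearised Gaussian system, retaining only the $\<Xi4ba1b>$ and $\<Xi4ba2>$ contributions. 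Summing these with the $\tfrac16\Upsilon_{\Gamma,\sigma}(\<Xi4ba1b>-\<Xi4ba2>)$ coming from the explicit curvature term in the discrete SDE yields exactly $\tfrac18\Upsilon_{\Gamma,\sigma}(\<Xi4ba1b>-\<Xi4ba2>)$, whence $c=\tfrac18$. Your more elaborate piecewise-geodesic discretisation with midpoint metric would work in principle, but introduces an extra Jacobian computation with no obvious payoff, and you would still need the tree-isolation argument to extract $c$ from the result.
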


The remainder of this section is devoted to an argument in favour of this conjecture.
At this stage, this is of heuristic nature, but we believe that it could in principle be made rigorous
by combining the results of \cite{ErhardHairerRegularity} with a suitable discrete version of the results of \cite{Ajay}.
We take as our starting point the result of 
\cite{Driver} which shows that, approximating a path in $\CM$ with a sequence of $N = 1/\eps$ points, 
the sequence of measures $\mu_n$ on $\CM^N$ with density (with respect to the product measure)
proportional to
\begin{equ}
\exp \Bigl(- {1\over 2}\sum_{k}{d(u_k,u_{k+1})^2 \over \eps} + {1\over 6} \sum_k R(u_k)\,\eps \Bigr)\;,
\end{equ}
where $R$ denotes the scalar curvature of $\CM$, converge to $\mu$ as $\eps \to 0$. Considering the corresponding
Langevin dynamic, we find that
the measure $\mu_n$ is invariant for the system of coupled SDEs on $\CM^N$ given by
\begin{equ}[e:approxSDE]
du_k = {\exp_{u_k}^{-1}(u_{k+1}) + \exp_{u_k}^{-1}(u_{k-1}) \over \eps^2}\,dt + {1\over 6} \nabla R(u_k)\,dt 
+ \sqrt{2 \over \eps} \sigma_i(u)\circ dW_{i,k}\;,
\end{equ}
where the $W_{i,k}$ are i.i.d.\ standard Wiener processes and the $\sigma_i$ are a collection of vector fields
as above satisfying furthermore the property $\nabla_{\sigma_i}\sigma_i = 0$. (We also use the convention that the
index of $u$ is interpreted modulo $N$.)
We now fix a coordinate chart  and write $\delta^\pm_k u^\alpha = u_{k\pm 1}^\alpha - u_k^\alpha$. We
also define a valuation $\hat \Upsilon_{\Gamma}(u,v)$ in the same way as $\Upsilon_{\Gamma,\sigma}$, 
but it acts on symbols with edges of type $\<Ito>$ rather than edges of type $\<generic>_i$ 
and it replaces each instance of $\<Ito>$
with $v$. We furthermore restrict this to ``saturated'' symbols in the sense that vertices without noise edge
have two incoming thick edges, so that each instance of $\<not>$ is replaced by a suitable 
derivative of $2\Gamma$. For example, one has
\begin{equ}
\hat \Upsilon^\alpha_{\Gamma}(u,v) \; \<I1Xitwobis>  = 2\Gamma^\alpha_{\beta\gamma}(u) v^\beta v^\gamma\;.
\end{equ}
(This of course only makes sense for symbols which have $\<Ito>$ as leaves.)
A lengthy but ultimately not terribly interesting calculation shows that, in local coordinates, the first term 
in this equation can be approximated to fourth order in $\delta u$ by 
\begin{equ}[e:approxExp]
{\exp_{u_k}^{-1}(u_{k+1}) + \exp_{u_k}^{-1}(u_{k-1})}
\approx
\bigl( \hat \Upsilon_{\Gamma}(u_k,\delta^+_k u)+ \hat \Upsilon_{\Gamma}(u_k,\delta^-_k u)\bigr) \tau\;,
\end{equ}
where
\begin{equ}[e:counterterm]
\tau = \<Ito> + {1\over 4} \<I1Xitwobis> + {1\over 24} \<I1IXi3cbis> + {1\over 12}\<I1Xi3cbis> +{1\over 96}\<Xi4eabdis>
+ {1\over 48} \<Xi4badis> + {1\over 48} \<Xi4cabdis> + {1\over 192} \<2I1Xi4dis>\;.
\end{equ}
In terms of powercounting, all of these terms should disappear as $\eps \to 0$ except for the first two,
which yield the natural discrete approximation to $\Nabla_{\d_xu}\d_xu$. (The factor ${1\over 4}$ that 
appears in front of \<I1Xitwobis> is multiplied by two since every term is doubled
in the right hand side of \eqref{e:approxExp}.) As a consequence, one would expect that, in local coordinates, 
the BPHZ renormalised solution to \eqref{e:approxSDE} converges as $\eps \to 0$ to the BPHZ renormalised
solution to  \eqref{e:geometric} with $A=0$ and $c = {1\over 6}$. What we are interested in however
is the limit of the \textit{unrenormalised} solution to \eqref{e:approxSDE}, which we also expect to
exist and to coincide with the \textit{canonical} solution to  \eqref{e:geometric} with $A=0$ and 
some value $c$ yet to be determined.

We also know from Corollary~\ref{cor:both} that the BPHZ solution to 
\eqref{e:geometric} in any fixed chart differs from its canonical solution by a counterterm 
$\Upsilon_{\Gamma,\sigma} \tau_0$ with $\tau_0 \in \CS^\nice$, so that in particular
$\scal{\<Xi4ba2>, \tau_0} = \scal{\<Xi4ba1b>, \tau_0} = 0$ by Proposition~\ref{prop:ortho}. 
Recall on the other hand that $\tau_c$ is given by \eqref{eq:valeurC} and
that $2\Upsilon_{\Gamma,\sigma}\tau_c = -\nabla R$ by Remark~\ref{rem:twoparam} 
and Lemma~\ref{lem:exprtaucstar}, so that  
\begin{equ}
\nabla R = \Upsilon_{\Gamma,\sigma}(\<Xi4ba1b>-\<Xi4ba2> + \bar \tau)\;,\qquad 
 \bar \tau \in \CS^\nice\;.
\end{equ}
Focusing on the contributions coming from $\<Xi4ba1b>$ and $\<Xi4ba2>$, we can therefore 
gain enough information to be able to conjecture the value of $c$ without having to know anything 
else about $\tau_0$. As a consequence of this discussion,
one expects the BPHZ counterterm for \eqref{e:approxExp} to be of the form
$\Upsilon_{\Gamma,\sigma} \tau_\eps^{\BPHZ}$ (in the sense that the renormalised solution
solves \eqref{e:approxExp} with an additional term $\big(\Upsilon_{\Gamma,\sigma}\tau_\eps^{\BPHZ}\big)(u_k)$ 
appearing on the right hand side) with 
\begin{equ}
\tau_\eps^{\BPHZ} = \bar c_\eps (\<Xi4ba1b>-\<Xi4ba2>) + \bar \tau_\eps^{\BPHZ}\;,\qquad
\scal{\<Xi4ba2>, \bar \tau_\eps^{\BPHZ}} = \scal{\<Xi4ba1b>, \bar \tau_\eps^{\BPHZ}} = 0\;,
\end{equ}
and such that $\bar c_\eps$ converges to a finite limit $\bar c$ as $\eps \to 0$. If this is the
case, then Conjecture~\ref{conj:c} follows at once with $c = {1\over 6} - \bar c$.

Let us remark at this stage that the reason why we consider the 
expansion \eqref{e:counterterm} to \textit{fourth} 
order is that even though the additional terms disappear in the limit when considering the BPHZ renormalised
solution, they \textit{do} contribute to the counterterms $\tau_\eps^{\BPHZ}$ generated by the 
BPHZ renormalisation procedure and therefore to the identification of $c$.

Denote now by $\tilde u_i$ the (stationary, mean $0$) solution to the linearised equation, namely
\begin{equ}[e:linearised]
d\tilde u_{i,k} = {\delta^+_k \tilde u_i + \delta^-_k \tilde u_i\over \eps^2} + \sqrt{2\over \eps} dW_{i,k}\;.
\end{equ}
The $\tilde u_i$ are independent, Gaussian, and satisfy 
\begin{equ}[e:correlations]
\eps^{-1} \E |\delta^\pm_k \tilde u_i|^2 \to 1\;,\qquad 
\eps^{-1} \E u_{i,k} \delta^\pm_k \tilde u_i \to -{1\over 2}\;.
\end{equ}
(The first limit comes from the fact that the invariant measure of \eqref{e:linearised}
converges to the flat Brownian loop measure. The second limit follows from the first
by exploiting the $k \mapsto k+1$ and $k \mapsto N-k$ symmetries.) 

A formal expansion then suggests that counterterms of the type $\Upsilon_{\Gamma,\sigma} \<Xi4ba2>$ and 
$\Upsilon_{\Gamma,\sigma}\<Xi4ba1b>$ can be generated by the terms 
$\<I1Xitwobis>$, $\<I1Xi3cbis>$ and $\<Xi4badis>$ appearing in \eqref{e:counterterm}.
The counterterm generated by $\<I1Xitwobis>$ is given by
\begin{equ}
-{1\over 4} \cdot 2 \cdot {1\over 2} \cdot 2 \cdot {1\over 4}\cdot \Upsilon_{\Gamma,\sigma} \<Xi4ba2>
= -{1\over 8} \Upsilon_{\Gamma,\sigma} \<Xi4ba2>\;.
\end{equ}
The minus sign comes from the fact that the BPHZ counterterms \textit{cancel out} the expectations of the 
terms of negative homogeneity appearing in a formal 
expansion of the right hand side of the equation, the factor ${1\over 4}$ comes 
from \eqref{e:counterterm}, the factor $2$ from the fact that 
each of these terms is doubled in \eqref{e:approxExp}, the factor ${1\over 2}$ from fact that it is the second order 
Taylor expansion of $\Gamma$ in $\<I1Xitwobis>$ that yields contributions of the desired form, 
the second factor $2$ from the two ways of pairing 
up the two extra factors of $u$ with the
factors $\delta^\pm u$, and the final factor ${1\over 4}$
from squaring the second limit in \eqref{e:correlations}. Similarly, the term $\<I1Xi3cbis>$ generates a counterterm
given by
\begin{equ}
-{1\over 12} \cdot 2 \cdot \Bigl(-{1\over 2}\Bigr)\cdot \Big(\Upsilon_{\Gamma,\sigma} \<Xi4ba1b>
+ 2\Upsilon_{\Gamma,\sigma} \<Xi4ba2>\Big) =  {1\over 12} \Upsilon_{\Gamma,\sigma} \<Xi4ba1b>
+{1\over 6}\Upsilon_{\Gamma,\sigma} \<Xi4ba2>\;,
\end{equ}
while the term $\<Xi4badis>$ generates the counterterm
\begin{equ}
-{1\over 48} \cdot 2 \cdot \Big(\Upsilon_{\Gamma,\sigma} \<Xi4ba1b>
+ 2\Upsilon_{\Gamma,\sigma} \<Xi4ba2>\Big) = -{1\over 24}\Upsilon_{\Gamma,\sigma} \<Xi4ba1b>
- {1\over 12}\Upsilon_{\Gamma,\sigma} \<Xi4ba2>\;.
\end{equ}
Combining these three terms yields ${1\over 24}(\<Xi4ba1b>-\<Xi4ba2>)$, thus suggesting that
one has $\bar c = {1\over 24}$, which then yields $c = {1\over 6} - {1\over 24} = {1\over 8}$ as claimed.

\subsection{Loops on a manifold}
\label{sec:geometric}

The aim of this section is to give a proof of Theorem~\ref{thm:riemann}. Throughout this section, we fix some
$\R^d$, as well as a Riemannian manifold $\CM$ that is smoothly and isometrically embedded into $\R^d$.
We then recall the following fact about such a setup.

\begin{lemma}\label{lem:Gammas}
Let $\pi \colon \R^d \to \CM$ be a smooth function such that 
$\pi \restr \CM = \id$ and such that, for every $p \in \CM$ and $v \in (T_p \CM)^\perp$ 
one has $D_v \pi(p) = 0$. Let furthermore $\bar X$ and $\bar Y$ be smooth vector fields 
on $\R^d$ and write $X$ and $Y$ for their restrictions to $\CM$. Define
\begin{equ}
\big(\bar \nabla_{\bar X} \, \bar Y\big)^\alpha(x) =  \bar X^\beta(x) \, \d_\beta \bar Y^\alpha(x) - \d_{\beta\gamma}^2 \pi^\alpha(x) \,
\bar X^\beta(x) \, \bar Y^\beta(x)\;.
\end{equ} 
Then, for every $x \in \CM$ one has $\bigl(\bar \nabla_{\bar X} \bar Y\bigr)(x)= \bigl(\nabla_X Y\bigr)(x) \in T_x\CM$.
\end{lemma}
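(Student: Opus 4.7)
The plan is to identify $\bar\nabla$ with the classical tangential-projection description of the Levi-Civita connection on the isometrically embedded submanifold $\CM \hookrightarrow \R^d$. Writing $\Pi_p \colon \R^d \to T_p\CM$ for orthogonal projection at $p\in\CM$, recall that if $\bar Y$ is a smooth extension of the tangent field $Y$, then at every $p\in\CM$ one has
\begin{equ}
(\nabla_X Y)^\alpha(p) \;=\; \Pi_p^{\alpha\mu}\,\bar X^\beta(p)\,\d_\beta \bar Y^\mu(p)\;,
\end{equ}
the normal part of $\bar X^\beta\d_\beta\bar Y$ being the second fundamental form applied to $(X,Y)$. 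The task is then to recognise $\d^2_{\beta\gamma}\pi^\alpha\,\bar X^\beta\bar Y^\gamma$ as exactly this normal component, so that the definition of $\bar\nabla_{\bar X}\bar Y$ subtracts it off.

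First I would observe that the two hypotheses on $\pi$ together force $D\pi(p) = \Pi_p$ as endomorphisms of $\R^d$ for every $p\in\CM$. Indeed, differentiating $\pi|_\CM = \id_\CM$ along $T_p\CM$ gives $D\pi(p)v = v$ for $v\in T_p\CM$, while the assumption $D_v\pi(p)=0$ for $v\perp T_p\CM$ handles the normal complement. In coordinates this reads $\d_\mu\pi^\alpha(p) = \Pi_p^{\alpha\mu}$ for $p\in\CM$.

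Next, since $\bar Y$ is tangent to $\CM$ along $\CM$, applying $\Pi_p$ to $\bar Y(p)$ leaves it unchanged, so
\begin{equ}
\d_\mu\pi^\alpha(p)\,\bar Y^\mu(p) \;=\; \bar Y^\alpha(p) \qquad \text{for all } p\in\CM\;.
\end{equ}
This identity holds only on $\CM$, but since $\bar X(p)\in T_p\CM$ by hypothesis, differentiating both sides along $\bar X$ at $p$ is a legitimate tangential derivative and yields
\begin{equ}
\bar X^\beta\,\d_\beta\bar Y^\alpha(p) \;=\; \bar X^\beta\,\d^2_{\beta\mu}\pi^\alpha(p)\,\bar Y^\mu(p) \;+\; \d_\mu\pi^\alpha(p)\,\bar X^\beta\,\d_\beta\bar Y^\mu(p)\;.
\end{equ}
The second term on the right is $\Pi_p^{\alpha\mu}\,\bar X^\beta\,\d_\beta\bar Y^\mu(p) = (\nabla_X Y)^\alpha(p)$ by the projection characterisation; rearranging and using symmetry of mixed partials $\d^2_{\beta\mu}\pi = \d^2_{\mu\beta}\pi$ matches the index convention of the statement and gives $(\bar\nabla_{\bar X}\bar Y)^\alpha(p) = (\nabla_X Y)^\alpha(p)$, which in particular lies in $T_p\CM$.

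The only subtlety worth flagging is that the identity $\d_\mu\pi^\alpha\,\bar Y^\mu = \bar Y^\alpha$ is \emph{only} valid on $\CM$, so one cannot differentiate it in arbitrary directions of $\R^d$; this is precisely where the tangency $\bar X(p)\in T_p\CM$ is used and where the Hessian $\d^2_{\beta\gamma}\pi^\alpha$ gets produced as the submanifold's second fundamental form. Beyond this bookkeeping, no step requires more than first-principles verification, so I do not anticipate a serious obstacle.
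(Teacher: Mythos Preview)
Your proof is correct and is essentially the same as the paper's: both identify $D\pi(p)$ with the orthogonal projection $\Pi_p$, invoke the tangential-projection characterisation $(\nabla_X Y)^\alpha = \d_\mu\pi^\alpha\,\bar X^\beta\,\d_\beta\bar Y^\mu$ of the Levi-Civita connection on an isometric submanifold, differentiate the identity $\bar Y^\alpha = \d_\mu\pi^\alpha\,\bar Y^\mu$ (valid on $\CM$) in the tangential direction $\bar X$, and rearrange. Your explicit flagging of why only tangential differentiation is legitimate matches the paper's ``Differentiating this identity in the direction $\bar X(x)\in T_x\CM$''.
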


\begin{proof}
Since $\R^d$ is flat, Lie derivatives and covariant derivatives coincide. Furthermore,
our assumption guarantees that, for every $x \in \CM$, $D\pi(x)$ is the orthogonal projection onto $T_x\CM$, 
so that, for $x \in \CM$, one has
\begin{equ}[e:iden1]
\bigl(\nabla_X Y\bigr)^\alpha(x) = \d_{\beta}\pi^\alpha(x) \, \bar X^\gamma(x) \, \d_\gamma \bar Y^\beta(x)\;.
\end{equ}
(See for example \cite[Prop.~5.13 (4)]{Petersen}.) On the other hand, since $\bar Y(x) \in T_x \CM$ for $x \in \CM$, one has
the identity
\begin{equ}
\bar Y^\alpha(x) = \d_\beta \pi^\alpha(x) \, \bar Y^\beta(x) \;,\qquad \forall x \in \CM\;.
\end{equ}
Differentiating this identity in the direction $\bar X(x) \in T_x\CM$, we obtain on $\CM$
\begin{equ}
\bar X^\gamma \d_\gamma\bar Y^\alpha(x) = \d^2_{\beta,\gamma} \pi^\alpha(x) \, \bar X^\gamma(x) \, \bar Y^\beta(x)
+ \d_\beta \pi^\alpha(x) \, \bar X^\gamma \, \d_\gamma\bar Y^\beta(x)\;.
\end{equ}
Combining this with \eqref{e:iden1}, the claim follows.
\end{proof}

A natural collection of vector fields $\sigma_i$ is now given by setting
\begin{equ}[e:choiceSigmai]
\sigma_i^\alpha(x) = \d_i \pi^\alpha(x)\;.
\end{equ}
It is clear that, for all $x \in \CM$ one has $\sigma_i(x) \in T_x\CM$ as a consequence of the
fact that $\pi$ maps $x$ to itself and maps any neighbourhood of $x$ to $\CM$. 
With this definition, we have the following.

\begin{lemma}\label{lem:propSigmai}
In the above setting, with $\pi$ as in Lemma~\ref{lem:Gammas} and with $\sigma$ as
in \eqref{e:choiceSigmai} viewed as vector fields on $\CM$, one has 
$\sum_{i=1}^d \sigma_i \otimes \sigma_i = g$ and $\sum_{i=1}^d \nabla_{\sigma_i}\sigma_i = 0$.
\end{lemma}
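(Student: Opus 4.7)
The plan is to verify both claims pointwise at each $x \in \CM$ by combining two elementary consequences of our assumptions on $\pi$. First, $\pi|_\CM = \id$ together with $D_v\pi(x) = 0$ for $v \in (T_x\CM)^\perp$ forces the matrix $P(x) \eqdef D\pi(x)$, with components $P^\alpha_\beta(x) = \partial_\beta\pi^\alpha(x)$, to coincide at every $x \in \CM$ with the orthogonal projection of $\R^d$ onto $T_x\CM$; in particular $P(x)$ is symmetric and satisfies $P(x)^2 = P(x)$. Second, differentiating $\pi\circ\pi = \pi$ twice as a map $\R^d\to\R^d$ and evaluating at $x \in \CM$ (so that $\pi(x) = x$) yields
\begin{equ}[e:planKey]
\partial^2_{\beta\gamma}\pi^\alpha\,\partial_\delta\pi^\beta\,X^\delta Y^\gamma + \partial_\beta\pi^\alpha\,\partial^2_{\gamma\delta}\pi^\beta\,X^\gamma Y^\delta = \partial^2_{\beta\gamma}\pi^\alpha\,X^\beta Y^\gamma\;,
\end{equ}
valid for arbitrary $X, Y \in \R^d$.

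The first identity $\sum_i \sigma_i\otimes\sigma_i = g$ is then immediate: writing $\sigma_i^\alpha(x) = P^\alpha_i(x)$ gives $\sum_i \sigma_i^\alpha\sigma_i^\beta = (PP^\top)^{\alpha\beta} = P^{\alpha\beta}$, and $P^{\alpha\beta}(x)$ is by construction the component matrix of the inverse metric $g^{\alpha\beta}(x)$ in the ambient coordinates.

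For the second identity, Lemma~\ref{lem:Gammas} applied to the Euclidean extensions $\bar\sigma_i(x) \eqdef \partial_i\pi(x)$ gives, at $x \in \CM$,
\begin{equ}
(\nabla_{\sigma_i}\sigma_i)^\alpha = \sum_\beta \partial_i\pi^\beta\,\partial^2_{\beta i}\pi^\alpha - \partial^2_{\beta\gamma}\pi^\alpha\,\sigma_i^\beta\sigma_i^\gamma\;,
\end{equ}
so summing over $i$ reduces everything to showing that the two resulting contributions coincide. Taking $X = Y = e_i$ in \eqref{e:planKey} and summing over $i$ yields $\sum_{i,\beta}\partial_i\pi^\beta\,\partial^2_{\beta i}\pi^\alpha = \Delta\pi^\alpha - P^\alpha_\beta\Delta\pi^\beta = ((I-P)\Delta\pi)^\alpha$. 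Taking instead $X = e_i$ and $Y = \sigma_i$ in \eqref{e:planKey} and summing over $i$, the middle term becomes $P^\alpha_\beta\cdot ((I-P)\Delta\pi)^\beta = 0$ by the previous computation and $P(I-P) = 0$, leaving $\sum_i\partial^2_{\beta\gamma}\pi^\alpha\,\sigma_i^\beta\sigma_i^\gamma = \partial^2_{\beta\gamma}\pi^\alpha\,P^{\beta\gamma} = ((I-P)\Delta\pi)^\alpha$ as well. The two contributions therefore cancel, giving $\sum_i \nabla_{\sigma_i}\sigma_i = 0$.

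The only bookkeeping subtlety is that $\pi\circ\pi = \pi$ holds globally on $\R^d$ and may be differentiated freely, whereas its consequences $P = P^\top$ and $P^2 = P$ are only valid on $\CM$ and must be invoked only after restricting to $\CM$. Once this is respected, the whole argument collapses to the two specialisations of \eqref{e:planKey} described above.
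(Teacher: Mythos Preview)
Your displayed identity \eqref{e:planKey} is wrong as stated: differentiating $\pi\circ\pi=\pi$ twice and evaluating at $x\in\CM$ gives
\[
\partial^2_{\beta\gamma}\pi^\alpha\,(PX)^\beta(PY)^\gamma
+ P^\alpha_\beta\,\partial^2_{\gamma\delta}\pi^\beta\,X^\gamma Y^\delta
= \partial^2_{\gamma\delta}\pi^\alpha\,X^\gamma Y^\delta\;,
\]
with \emph{both} arguments of the first term projected, not just one. (A quick check on $\CM=S^1\subset\R^2$ with $\pi(x)=x/|x|$, $X$ tangent and $Y$ normal, shows your version fails.) Your two applications still happen to produce correct conclusions, but only because after summing over $i$ the discrepancy between $D^2\pi(\sigma_i,e_i)$ and $D^2\pi(\sigma_i,\sigma_i)$ disappears thanks to $P^2=P$; the intermediate reasoning is not valid.

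With the correct identity the argument collapses: taking $X=Y=e_i$ and summing gives $\sum_i\partial^2_{\beta\gamma}\pi^\alpha\,\sigma_i^\beta\sigma_i^\gamma=((I-P)\Delta\pi)^\alpha$ directly, and the other term $\sum_{i,\beta}\partial_i\pi^\beta\,\partial^2_{\beta i}\pi^\alpha$ equals this trivially by relabelling $i\to\gamma$ and using $P^{\beta\gamma}=\sum_i\sigma_i^\beta\sigma_i^\gamma$. No second specialisation is needed. The paper's own proof is shorter still: it observes in one line that both terms in $\sum_i\nabla_{\sigma_i}\sigma_i$ equal $P^{\beta\gamma}\partial^2_{\beta\gamma}\pi^\alpha$, using only that $P=P^\top=P^2$, without invoking $\pi\circ\pi=\pi$ at all.
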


\begin{proof}
Since, for $x \in \CM$, $D\pi(x)$ equals the \textit{orthogonal} projection $P_x$ onto $T_x \CM$,
one has the identity
\begin{equ}[e:idenPix]
\sum_{i=1}^d \sigma_i^\alpha(x) \, \sigma_i^\beta(x) = P_x^{\alpha\beta} = g^{\alpha \beta}(x)\;,
\end{equ}
where the first identity is nothing but the identity $P_x P_x^* = P_x$ which holds for 
any orthogonal projection, while the second identity is the definition of an isometric
embedding of $\CM$ into $\R^d$. By Lemma~\ref{lem:Gammas}, one then has
\begin{equs}
\sum_{i=1}^d\bigl(\nabla_{\sigma_i}\sigma_i\bigr)^\alpha(x) 
&= \sum_{i=1}^d \bigl(\d_i \pi^\beta \, \d_\beta \d_i \pi^\alpha - \d^2_{\beta \gamma} \pi^\alpha \, \d_i \pi^\beta \, \d_i \pi^\gamma\bigr)(x) \\
& = P_x^{\beta\gamma} \, \d^2_{\beta \gamma} \pi(x) - \d^2_{\beta \gamma} \pi^\alpha(x) (P_x P_x^*)^{\beta \gamma} = 0\;,
\end{equs}
where we made use of \eqref{e:idenPix}.
\end{proof}

We now have all the ingredients in place to give a proof of Theorem~\ref{thm:riemann}.

\begin{proof}[of Theorem~\ref{thm:riemann}]
By Nash's embedding theorem \cite{Nash}, it is possible to find an isometric embedding of $\CM$ in
$\R^d$ for $d$ sufficiently large. We fix such an embedding (hence view $\CM$ as a subset of $\R^d$)
and fix a map $\pi\colon \R^d \to \CM$ as in Lemma~\ref{lem:Gammas}.
For example, we can choose $\pi(y) = \arginf\{|x-y|\,:\, x \in \CM\}$ in a sufficiently small neighbourhood of
$\CM$ and then extend this in an arbitrary smooth way to all of $\R^d$. We also choose $\sigma$ as in 
\eqref{e:choiceSigmai}.

For any $\rho \in \Moll$ and $\eps > 0$, we consider the solution $u_\eps \colon \R_+ \times S^1 \to \R^d$ 
to
\begin{equ}[e:uepsglobal]
\d_t u_\eps^\alpha = \d_x^2 u_\eps^\alpha - \d^2_{\beta \gamma}\pi^\alpha(u_\eps) \, \d_x u_\eps^\beta\, \d_x u_\eps^\gamma 
+ h^\alpha(u_\eps) + \sigma_i^\alpha(u_\eps)\, \xi_i^\eps + V_{\rho,\sigma}^\alpha(u_\eps)\;.
\end{equ}
It follows from Lemma~\ref{lem:Gammas} that, provided that $u_\eps(t,x) \in \CM$, the right hand side
of \eqref{e:uepsglobal} coincides with that of \eqref{e:localSolManifold}.

It is then a standard fact (see for example \cite[Lem.~3.1]{Li} or the original article \cite{Eells}) 
that, provided that the vector field $h$ is tangent to $\CM$, solutions to \eqref{e:uepsglobal} 
that start on $\CM$ stay on $\CM$ for all subsequent times and solve \eqref{e:localSolManifold}.
Furthermore, for any chart $\CU$ of $\CM$,
if we take an initial condition that is entirely contained in $\CU$ then, for as long as this remains the case, the solution 
satisfies \eqref{e:genClass}, with $\Gamma^\alpha_{\beta\gamma}$ given by the Christoffel symbols
of the Levi-Civita connection for the Riemannian metric of $\CM$ in the chart $\CU$.

On the other hand, \eqref{e:uepsglobal} is itself of the form \eqref{e:genClass}, so that 
we can apply Theorem~\ref{theo:main}. In this particular case however, one has
$\Nabla_{\<generic>}\<generic> = 0$ on $\CM$ by Lemma~\ref{lem:propSigmai}. One also has 
$H_{\Gamma,\sigma} = 0$ on $\CM$ since the connection given by \eqref{e:iden1} is the Levi-Civita connection
and, by \eqref{e:fieldCovar}, $H_{\Gamma,\sigma}$ is built from the covariant derivative of the metric, which vanishes
in that case.
These properties may fail to hold on all of $\R^d$ in general, but since we restrict ourselves to solutions that
start and remain on $\CM$ at all times, this is of no concern to us.
This shows that the `canonical family' exhibited in Theorem~\ref{theo:main} is really a `canonical solution',
implying both the convergence and the uniqueness claim made in the statement of Theorem~\ref{thm:riemann}.
The last statement follows from Remark~\ref{rem:twoparam}, in particular the identification of 
$\Upsilon_{\gamma,\sigma} \tau_c$ with the gradient of the scalar curvature in the geometric case.
\end{proof}

\begin{remark}\label{rem:geom}
In this ``geometric'' situation, both divergent terms appearing in \eqref{e:mainLimit} vanish, so that in this case
one also obtains finite limits $\bar U^\rho(\Gamma,\sigma) = \lim_{\eps \to 0} U_\eps^\geo (\Gamma,\sigma)$
without the need for any renormalisation at all! However, these limits do in general depend on the
choice of isometric embedding, since they differ by some vector field of the form
$\Upsilon_{\Gamma,\sigma}\tau$ with $\tau \in \CV$, and these are sensitive to both the choice of 
$\sigma_i$'s and of the mollifier $\rho$.
\end{remark}

\section{General theory of \texorpdfstring{$T$}{T}-algebras}
\label{sec:algebra}

We still need to prove Proposition \ref{prop:ItoStratgeo}. This will be done in Section \ref{sec:spaces}, using the
more general results of this section, where we 
introduce an algebraic structure that encodes the space of ``formal expressions built from 
derivatives of $\sigma_i$ and $\Gamma$ with indices contracted according to Einstein's summation 
convention'' which include the functions $\Upsilon_{\Gamma,\sigma}\tau$ considered above. 
It is then natural to look for a space that exhibits the following features:
\begin{claim}
\item It should be graded by pairs $(\upper,\low)$ of integers denoting the number of ``upper'' and ``lower'' indices
respectively. Furthermore, on each such space, the symmetric group  $\Sym(\upper,\low) = \Sym(\upper) \times \Sym(\low)$
should act by permuting the corresponding indices. An example of element of degree $(1,2)$ would
be $\d_\alpha \Gamma^\alpha_{\beta\gamma}\sigma_1^\beta\d_\zeta\sigma_2^\eta$. It is of degree $(1,2)$ because there
are two free lower indices ($\gamma$ and $\zeta$) and one free upper index ($\eta$).
\item It should come with a product $\mult$ turning it into a (bi)graded algebra (for example
$\d_\alpha \Gamma^\alpha_{\beta\gamma} \sigma_1^\beta \mult \d_\zeta\sigma_2^\eta = \d_\alpha \Gamma^\alpha_{\beta\gamma} \sigma_1^\beta\,\d_\zeta\sigma_2^\eta$), a ``partial trace''
$\tr$ performing a contraction of the last upper and lower indices (so for example 
$\tr \d_\alpha \Gamma^\alpha_{\beta\gamma}\sigma_1^\beta\d_\zeta\sigma_2^\eta = \d_\alpha \Gamma^\alpha_{\beta\gamma}\sigma_1^\beta\d_\eta\sigma_2^\eta$), as well as a derivation $\d$ adding a lower index.
\item The valuation $\Upsilon_{\Gamma,\sigma}$ should be a morphism for the new algebraic structure. 
\end{claim}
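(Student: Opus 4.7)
The plan is to turn the listed desiderata into a rigorous axiomatic definition, exhibit the motivating concrete models, and then obtain the structural theorems (universal property and injectivity) that the rest of the paper will invoke.

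First I would formalise a $T$-algebra as a bigraded vector space $A = \bigoplus_{\upper,\low \ge 0} A_{\upper,\low}$ equipped with: (i) a right action of $\Sym(\upper)\times\Sym(\low)$ on each $A_{\upper,\low}$ permuting upper and lower index slots; (ii) an associative product $\mult\colon A_{\upper,\low}\otimes A_{\upper',\low'}\to A_{\upper+\upper',\low+\low'}$ which is appropriately equivariant (the upper/lower slots of the left factor come before those of the right factor, so that $\mult$ is only commutative up to the evident shuffle permutation); (iii) a derivation $\d\colon A_{\upper,\low}\to A_{\upper,\low+1}$ that is bigraded Leibniz and appends a new lower slot in the last position; (iv) a partial trace $\tr\colon A_{\upper+1,\low+1}\to A_{\upper,\low}$ contracting the last upper with the last lower slot. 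Compatibilities must then be imposed: $\tr$ interacts with $\d$ and $\mult$ in the way dictated by the Einstein-convention example, and the $\Sym(\upper)\times\Sym(\low)$ action is equivariant for all three operations (once one conjugates by the natural shuffle on $\tr$ and $\d$). The motivating example is $A^{\mathrm{geo}}_{\upper,\low} = \CC^\infty(\R^d,(\R^d{}^*)^{\otimes \low}\otimes(\R^d)^{\otimes \upper})$ with the obvious tensor product, contraction, differentiation, and symmetric group actions; checking the axioms there is essentially bookkeeping and fixes the normalisations.

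Next I would construct the \emph{free} $T$-algebra on a set of generators (in our application, $\sigma_i \in A_{1,0}$ and $\Gamma\in A_{1,2}$ symmetric in the lower indices) by a diagrammatic calculus: basis elements are equivalence classes of labelled decorated diagrams where nodes carry generator labels with their index slots marked as upper/lower, and edges record pairings of an upper slot with a lower slot (the $\tr$ operations), modulo the axioms. The product is disjoint union, $\d$ adds an external lower slot at a new derivative decoration, and the symmetric groups relabel the free index slots. The universal property of Theorem~\ref{theo:universal} is then checked by verifying that every $T$-algebra morphism $f$ from the free object is uniquely determined by its values on the generators: one uses the axioms to rewrite any element as a formal combination of diagrammatic basis elements and then shows that $f$ must equal the map that replaces each generator by its image and then applies $\mult$, $\d$, $\tr$ in the prescribed sequence.

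The step I expect to be the real obstacle is the non-degeneracy statement (Theorem~\ref{theo:injectiveAlg}): the evaluation morphism from the free $T$-algebra into $A^{\mathrm{geo}}$ induced by a concrete choice of generators $\sigma$, $\Gamma$ is injective on the appropriate bi-degree. The plan is to test against a well-chosen family of polynomial $\sigma$ and $\Gamma$ (say, of bounded jet order) at a fixed point, so that each diagrammatic basis element contributes a monomial in the Taylor coefficients of the generators that is uniquely labelled by its combinatorial structure. The delicate aspect is matching the counting of these monomials against the symmetric-group orbits on diagrams: one must verify that distinct equivalence classes of diagrams really produce linearly independent polynomials, which amounts to showing that the only coincidences among contractions of $\sigma$-$\Gamma$ jets come from permutations already quotiented out by $\Sym(\upper)\times\Sym(\low)$ and from the $\tr$/$\d$ relations already built into the axioms. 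Once this is pinned down, the structural theorems furnish the rigorous foundation on which the identifications of $\CS_\geo$ and $\CS_\Ito$ in the next sections rest.
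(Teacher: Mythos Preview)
Your outline matches the paper's development at the level of the definitions and the universal property: the paper also axiomatises a bigraded vector space with $\Sym(\upper,\low)$-action, product, trace and derivation (Sections~5.1--5.2), builds the free object diagrammatically via what it calls $\CX$-graphs, and proves freeness by writing every graph in a normal form $\tr^m\alpha(\d^{d_1}\mft_1\mult\cdots\mult\d^{d_n}\mft_n)$ (Lemma~5.13). Two conventions differ from yours and matter for the later computations: the paper uses a \emph{left} action of $\Sym(\upper,\low)$, and the derivation \emph{prepends} the new lower slot (so $\d(\alpha A)=(\id_1^0\cdot\alpha)\d A$), not appends it.

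Where your plan is thin is exactly where the paper does the real work, the injectivity theorem. Your proposal---choose generic polynomial $\sigma,\Gamma$ and argue that distinct diagram classes yield linearly independent monomials in the jet coefficients---is the right intuition, but the paper does not proceed by genericity. Instead it makes an explicit construction: given a finite set $\SS$ of \emph{connected anchored} $\CX$-graphs (a hypothesis you omit and which is essential), it sets $G=g_1\mult\cdots\mult g_N$, takes $V=\R^{\Out(G)}$, and defines $\Phi_\mft$ so that $D^k\Phi_\mft(0)$ is a sum over ``$\CX$-graph morphisms'' from $\d^k\mft$ into $G$. One then shows $\hat\Phi(g)(0)=\sum_{\psi\in\CE(g,G)}U(g,\psi)$ for every graph $g$, and separates basis elements by explicit dual functionals $f_g=U^\star(g,\psi_g)$ built from the canonical inclusion $\psi_g$ of $g$ into $G$. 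The heart of the argument is Lemma~5.19, which says that two anchored morphisms agreeing on the external half-edges must differ by a graph isomorphism; connectedness and anchoring are used precisely here to force injectivity of the morphisms. Your ``delicate aspect'' is thus resolved not by a counting argument over symmetric-group orbits but by this combinatorial rigidity lemma, and without the anchored/connected restriction the statement as you phrase it would in fact be false (unanchored components of degree $(0,0)$ produce scalar multiples that cannot be separated by evaluation at a point).
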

The aim of the next subsection is to exhibit a number of intertwining relations that are naturally
satisfied by these operations and to provide a justification for the use of diagrammatic notations for
general expressions of the type mentioned above. For example, the diagrammatic notation yields
\begin{equ}
\d_\alpha \Gamma^\alpha_{\beta\gamma}\sigma_1^\beta\d_\zeta\sigma_2^\eta
= 
\tikzset{external/export next=false}
\begin{tikzpicture}[scale=0.2,baseline=-2]
\coordinate (root) at (0,0);
\coordinate (t1) at (-1.2,1.2);
\coordinate (t2) at (0,2);
\draw[symbols] (root) to[out=-90,in=180] (0.8,-1) to[out=0,in=-90] (1.5,0) to[out=90,in=0] (1,1) to[out=180,in=45] (root) ; 
\draw[kernels2,bend right=40] (t1) -- (root);
\draw[kernels2] (t2) -- (root);
\node[not] (rootnode) at (root) {};
\node[xi] at (t1) {};
\draw[symbols] (2.5,2) -- (2.5,-2);
\node[xic] at (2.5,0) {};
\end{tikzpicture}
\end{equ}
Here, $\<generic>$ denotes $\sigma_1$, $\<genericb>$ denotes $\sigma_2$, $\<not>$ denotes
$\Gamma$, thick edges $\<thick>$ denote those edges that terminate on one of the ``original'' indices
for a given function, and thin edges $\<thin>$ denote those that terminate on one of the indices generated
by a derivation. The ``half-edges'' that are only connected to one vertex denote the free indices,
with ``upper indices'' represented by the outgoing half edges terminating at the bottom, while the 
``lower indices'' are represented by the incoming half edges starting at the top.

\begin{remark}
The reason why we choose free \textit{upper} indices to correspond to edges terminating at the \textit{bottom}
rather than the other way around is that our vector fields (with just one free upper index) will naturally
be in correspondence with the trees appearing in the description of our regularity structure, with the
root being connected to the free index. The clash in terminology comes from conforming 
simultaneously to the conventions for graphical representations of trees (root at the bottom) 
common in the algebraic literature and Einstein conventions for indices (free indices for
vectors at the top).
\end{remark}

\subsection{Definition of a \texorpdfstring{$T$}{T}-algebra}
\label{sec:Talgebra}

For $\upper\in\N$ we write $[\upper]:=\{i\in\N: 1\leq i\leq \upper\}$ \label{upper page ref} and $\Sym(\upper)$ \label{sym page ref} for the symmetric group on $[\upper]$. We also set $\Sym(\upper,\low) = \Sym(\upper) \times \Sym(\low)$ \label{product sym page ref} 
where $\times$ denotes the usual direct product of groups. Given 
$\upper_1,\upper_2\in\N$ and $\alpha_i\in \Sym(\upper_i)$, we define the 
concatenation $\alpha_1\cdot\alpha_2\in\Sym(\upper_1+\upper_2)$ of $\alpha_1$ and $\alpha_2$ as
\[
(\alpha_1\cdot\alpha_2) (i) = 
\left\{\begin{array}{cl}
	\alpha_1(i) & \text{for $i\leq \upper_1$,} \\
	\upper_1+\alpha_2(i-\upper_1)& \text{otherwise.}
\end{array}\right.
\]
This yields natural embeddings 
\begin{equs}
\Sym(\upper_1,\low_1)\times \Sym(\upper_2,\low_2) &\to \Sym(\upper_1+\upper_2,\low_1+\low_2) \\
(\alpha_1,\alpha_2) &\mapsto \alpha_1 \cdot \alpha_2\;,
\end{equs}
but note that $\alpha_1 \cdot \alpha_2 \neq \alpha_2 \cdot \alpha_1$.
In this context, it will be convenient to write $\swap_{\low_1,\low_2}^{\upper_1,\upper_2}$ \label{swapping map page ref} for the element 
of $\Sym(\upper_1+\upper_2,\low_1+\low_2)$ `swapping the two factors'
so that 
\begin{equ}%[e:swap]
\swap_{\low_1,\low_2}^{\upper_1,\upper_2} ( \alpha_1 \cdot \alpha_2) = (\alpha_2 \cdot\alpha_1) \swap_{\low_1,\low_2}^{\upper_1,\upper_2}\;.
\end{equ}
We omit an index if its value vanishes (this is unambiguous) and we write 
$\id^\upper_\low$ for the identity on $\Sym(\upper,\low)$. (So we could actually write $\swap^\upper_\low$ 
instead of $\id^\upper_\low$.)
First, we formalise the notion of a $T$-algebra in an abstract way.

\begin{definition}\label{def:Talg}
A $T$-algebra $\CV$ consists of a (bi)graded vector space 
$\CV = \bigoplus \{\CV_\low^\upper\,:\, \upper,\low \ge 0\}$ together with the following additional data.
\begin{claim}
\item For every $\upper,\low \ge 0$, a left action of $\Sym(\upper,\low)$ onto $\CV_\low^\upper$.
\item For any $\upper_i,\low_i \ge 0$, a bilinear product
$\mult \colon \CV_{\low_1}^{\upper_1} \times \CV_{\low_2}^{\upper_2} \to \CV_{\low_1+\low_2}^{\upper_1+\upper_2}$
which is associative and satisfies the coherence properties
\begin{equ}[e:multperm]
B \mult A = \swap^{\upper_1,\upper_2}_{\low_1,\low_2}(A \mult B)\;,\quad
\alpha_1 A \mult \alpha_2 B = (\alpha_1 \cdot \alpha_2)(A\mult B)\;,
\end{equ}
for any $\alpha_i \in \Sym(\upper_i,\low_i)$. Furthermore, this product admits a unit $\one \in \CV_0^0$.
\item For any $\upper,\low \ge 0$, a linear map $\tr \colon \CV_{\low+1}^{\upper+1} \to \CV_{\low}^{\upper}$ \label{trace page ref} such that
\begin{equs}[2]\label{e:trperm}
\alpha \tr A & = \tr \bigl((\alpha \cdot \id_1^1)A\bigr), &\quad \forall \ \alpha &\in \Sym(\upper,\low), \ A \in \CV_{\low+1}^{\upper+1},
\\ \label{e:trcomm}
\tr^2 A & = \tr^2 \big( (\id_{\low}^{\upper} \cdot \swap_{1,1}^{1,1})A\big),&\quad \forall \ A &\in \CV_{\low+2}^{\upper+2}, 
\\ \label{e:trtensor}
\tr(A\mult B) & = A \mult \tr B,&\quad \forall \ A &\in \CV, \ B \in  \CV_{\low+1}^{\upper+1}\;.
\end{equs}
\end{claim}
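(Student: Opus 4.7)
The final passage of the excerpt is not a theorem, proposition, lemma, or claim — it is a definition, namely the definition of a tensor algebra with trace (later called a $T$-algebra). A definition does not admit a proof; it introduces terminology by specifying data and axioms. So the honest answer to the question posed is that there is nothing to prove here. What one can meaningfully ask about such a definition is (a) whether the listed axioms are mutually consistent, and (b) whether the uniqueness statements implicit in the data are well-founded.

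For (a), the axioms as written are manifestly consistent. The two coherence relations in \eqref{e:multperm} are compatible because the element $\swap_{\low_1,\low_2}^{\upper_1,\upper_2}$ has the correct target signature, and applying the swap relation twice recovers the identity since $\swap^{\upper_2,\upper_1}_{\low_2,\low_1}\,\swap^{\upper_1,\upper_2}_{\low_1,\low_2} = \id^{\upper_1+\upper_2}_{\low_1+\low_2}$. Likewise, the three trace relations \eqref{e:trperm}, \eqref{e:trcomm}, \eqref{e:trtensor} involve disjoint pieces of data and interact only through \eqref{e:trtensor}. One small consistency check worth recording is that the right-handed version of \eqref{e:trtensor}, namely $\tr(B \mult A) = \tr(B) \mult A$ for $A \in \CV$ and $B \in \CV^{\upper+1}_{\low+1}$, is a consequence of \eqref{e:multperm}, \eqref{e:trperm} and \eqref{e:trtensor}, so it is correctly omitted as an independent axiom; the derivation is a short bookkeeping argument using the permutation $\swap$ to move the trace-contracted slot past $A$ and then applying \eqref{e:trperm} to reabsorb the resulting permutation.

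For (b), the only implicit uniqueness assertion is that the unit $\one \in \CV^0_0$ is unique, and this is the standard one-line monoid argument: if $\one'$ also satisfies the unit property, then $\one = \one \mult \one' = \one'$ by applying the unit property on each side, using that multiplication by $\one$ from either side is the identity thanks to \eqref{e:multperm}. The substantive results about $T$-algebras — the universal characterisation of the free $T$-algebra (Theorem~\ref{theo:universal}) and the non-degeneracy of its morphism into the concrete tensor-field model $\CC^\infty(V,(V^*)^{\otimes \low}\otimes V^{\otimes \upper})$ (Theorem~\ref{theo:injectiveAlg}) — are stated later in the paper, and those are the statements that require substantive proofs; the definition itself does not, and any "proof plan" attached to it would either be empty or would silently migrate to proving one of those later theorems instead.
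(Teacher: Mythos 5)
You are right that this is not a theorem but the definition of a tensor algebra with trace, taken verbatim from the paper's \texttt{definition} environment, and consequently the paper itself offers no proof of it. Your identification of the statement as introducing data and axioms rather than asserting a provable claim is exactly the correct reading, and your remarks on internal consistency (involutivity of $\swap$, the derivability of the right-handed version of \eqref{e:trtensor} from \eqref{e:multperm} and \eqref{e:trperm}, and uniqueness of the unit $\one$) are sound optional sanity checks that go beyond what the paper records. Since there is no proof in the paper to compare against, there is nothing further to adjudicate; your response is appropriate.
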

\end{definition}

\begin{remark}
%Shortly before the article was accepted for publication, 
It was pointed out to us by Ralf Kaufmann
that the definition of a $T$-algebra given here is essentially identical to that of a 
`wheeled PROP' given in \cite{wPROP}. Why this is so should become clearer from the characterisation
of free $T$-algebras given in Proposition~\ref{prop:freeT} below. Note also that there is a
very general notion of a $\CT$-algebra for $\CT$ a monad and our definition of a $T$-algebra
is a special case of $\CT$-algebra for a specific choice of $\CT$ (the same
as the monad $\Gamma^\circlearrowleft$ in \cite[Def.~2.1.7]{wPROP}, but without the inclusion of the `exceptional graphs').
Of course, closely related algebraic structures admitting a `graphical calculus' as in Section~\ref{sec:graphicalCalculus} below
have a long history, probably starting with Penrose's graphical notation \cite{MR0281657,MR1113284}. 
See also \cite[Def.~2.2]{cebron2020traffic} for a recent work 
where a closely related structure naturally appears in non-commutative probability theory.
\end{remark}

\begin{remark}
As usual, we don't actually need $\CV$ to be a vector space, but could consider modules over a ring
instead. 
\end{remark}

\begin{remark}
We will say that $\tau \in \CV$ has degree $(\upper,\low)$ to indicate that $\tau \in \CV_{\low}^{\upper}$.
Note that the identity \eqref{e:trtensor} only holds if $B$ belongs to some $\CV_{\low}^{\upper}$ with both $\low$ and $\upper$ at least one. 
\end{remark}

\begin{remark}\label{rem:Talgebra}
An example of a $T$-algebra is obtained by choosing any vector space $V$ and setting 
$\CV[V] = \bigoplus_{\upper,\low} \CV_\low^\upper[V]$ where
\begin{equ}
\CV_\low^\upper[V] = (V^*)^{\otimes \low} \otimes V^{\otimes \upper} \;, \qquad \upper,\low\geq 0,
\end{equ}
and $V^*$ is the dual space of $V$. The group $\Sym(\upper,\low)$ acts in the natural way
by permuting the factors in the sense that, for $\alpha = (\alpha_\upper,\alpha_\low) \in \Sym(\upper,\low)$,
we set
\begin{equs}
\alpha \bigl(&v_1^*\otimes\ldots \otimes v_\low^* \otimes v_1\otimes\ldots \otimes v_\upper \bigr)\label{e:actionVV} \\
&\qquad = 
v_{\alpha_\low^{-1}(1)}^*\otimes\ldots \otimes v_{\alpha_\low^{-1}(\low)}^* \otimes v_{\alpha_\upper^{-1}(1)}\otimes\ldots \otimes v_{\alpha_\upper^{-1}(\upper)}\;.
\end{equs}
The product is the usual tensor product, except that one sets
\begin{equ}
(A_1 \otimes B_1) \mult (A_2 \otimes B_2) = (A_1\otimes A_2) \otimes (B_1 \otimes B_2)\;,
\end{equ}
with $A_i \in (V^*)^{\otimes \low_i}$ and $B_i \in V^{\otimes \upper_i}$.
The operator $\tr$ is defined by
\begin{equ}
\tr \big((f_1\otimes \ldots \otimes f_{\low+1})\otimes (v_1\otimes \ldots\otimes v_{\upper+1})\big)
= f_{\low+1}(v_{\upper+1}) (f_1\otimes \ldots \otimes f_{\low})\otimes (v_1\otimes \ldots\otimes v_{\upper})\;.
\end{equ}
It is easy to check that this satisfies all the properties mentioned above.
\end{remark}

\begin{definition}\label{def:product}
A $T$-algebra with derivation, also called a $T_\d$-algebra, is a $T$-algebra $\CV$ endowed with a \textit{derivation} $\d$,
namely a collection of linear
maps $\d \colon \CV_\low^\upper \to \CV_{\low+1}^{\upper}$ \label{derivation page ref} with the following coherence properties
for $A \in \CV_\low^\upper$ and $\alpha \in \Sym(\upper,\low)$:
\begin{equ}[e:d2]
\d(\alpha A) = (\id_1^0 \cdot \alpha)\,\d A\;,\qquad
\d^2A = (\swap_{1,1} \cdot \id^\upper_\low) \,\d^2 A\;.
\end{equ}
We furthermore assume that $\d$ behaves ``nicely'' with respect to both $\mult$ and $\tr$ in
the sense that for $A_i\in\CV_{\low_i}^{\upper_i}$
\begin{equ}[e:Leibniz]
\d(A_1\mult A_2) = \d A_1 \mult A_2 + (\swap_{\low_1,1}^{\upper_1} \cdot \id^{\upper_2}_{\low_2}) (A_1 \mult \d A_2)\;,
\end{equ}
as well as
\begin{equ}%[e:dtr]
\d \tr A = \tr \d A\;, \qquad A \in \CV_{\low+1}^{\upper+1}.
\end{equ}
\end{definition}

\begin{remark}\label{rem:Lie}
Any $T_\d$-algebra $\CV$ comes with natural bilinear ``grafting'' operations $\graftI \colon \CV_k^1 \times \CV_\ell^1 \to \CV_{k+\ell}^1$
 given by
\begin{equ}[e:graft]
A \graftI B = \tr \big(S^2_{1,k+\ell}(\d B \mult A)\big)\;.
\end{equ}
It is an instructive exercise to verify that this turns $\CV_0^1$ 
into a pre-Lie algebra. For this, one first verifies that 
\begin{equ}%[e:preLie]
A \graftI (B \graftI C) - (A\graftI B)\graftI C = \tr^2(\d^2 C \mult A \mult B)
\end{equ}
and
then notices that this is symmetric in $A,B$ thanks to \eqref{e:trcomm}
and the second identity in \eqref{e:d2}.
In particular, $\CV_0^1$  is a Lie algebra with Lie bracket $[A,B] = A \graftI B - B \graftI A$.
\end{remark}

\begin{remark}
The second identity in \eqref{e:d2} encodes the fact that we are considering ``flat'' spaces
at the algebraic level. Indeed, this is the only identity that breaks if we take for $\CV$ the tensor bundle
over a smooth manifold and for $\d$ the covariant derivative. Removing this condition
however would break the pre-Lie structure of $\CV_0^1$ mentioned above.
\end{remark}

\begin{remark}\label{rem:symmetry}
Combining both identities in \eqref{e:d2} shows that one also has
\begin{equ}
\d^m A = (\alpha \cdot \id_\low^\upper) \d^m A\;,
\end{equ}
for every $m \ge 1$ and every $\alpha \in \Sym(0,m)$. Similarly, \eqref{e:trperm} and \eqref{e:trcomm}
show that 
\begin{equ}
\tr^m A = \tr^m \bigl((\id_\low^\upper\cdot \alpha)  A\bigr)\;,
\end{equ}
for every permutation $\alpha =\delta\times\delta\in \Sym(m,m)$ with $\delta\in \Sym(m)$ and every $A \in \CV^{\upper+m}_{\low+m}$.
\end{remark}

Given two $T$-algebras $\CV$ and $\CW$, a linear map $\phi\colon \CV \to \CW$ is a morphism of 
$T$-algebras if it preserves all of structure given in Definition~\ref{def:Talg} and we write
$\phi \in \Hom(\CV,\CW)$. If $\CV$ and $\CW$ also admit derivations, we write 
$\Hom_\d(\CV,\CW) \subset \Hom(\CV,\CW)$ if $\phi(\d v) = \d \phi(v)$ for all $v \in \CV$.
As usual, an \textit{ideal} of $T$-algebras
is a linear subspace that is a two-sided ideal for the multiplication and that is stable under the
trace and action of the symmetric group while an ideal of  $T_\d$-algebras is furthermore left invariant
by the derivation.

Given 
$\iota \in \Hom_\d(\CV, \CW)$, a linear map $\phi \colon \CV \to \CW$ is an infinitesimal morphism 
with respect to $\iota$ if it satisfies the property
\begin{equ}[infinit-morph]
\phi(A \mult B) = \phi(A) \mult \iota(B) + \iota(A) \mult \phi(B)\;,
\end{equ}
as well as
\begin{equ}
\phi(\alpha A) = \alpha \phi(A)\;,\qquad \phi(\tr A) = \tr \phi(A)\;,\qquad
\phi(\d A) = \d \phi(A)\;,
\end{equ}
for every homogeneous $A$ and every element $\alpha$ of the symmetric group such that these
operations make sense. In particular, an infinitesimal morphism is uniquely determined by its
action on any set of generators for $\CV$. In our case, the morphism $\iota$ will usually be given
by some natural inclusion $\CV \subset \CW$, in which case we omit it from the terminology.

\begin{remark}\label{rem:tensorT}
A typical example of $T$-algebra with derivation relevant for this article is the following. Fix a finite-dimensional
vector space $V$ and let $\CV[V]$ be the $T$-algebra based on $V$ considered in Remark~\ref{rem:Talgebra}.
We then have a $T$-algebra $\CW[V]$ obtained by taking for $\CW_\low^\upper$
the space of all smooth functions $V \to \CV_\low^\upper[V]$. The product, action of the 
symmetric group, and trace $\tr$ are defined 
pointwise, exactly as in Remark~\ref{rem:Talgebra}. The derivation $\d$ is then given by the usual Fr\'echet 
derivative with the canonical identification
\begin{equ}
L(V, \CV_\low^\upper) \approx V^* \otimes \CV_\low^\upper[V] \approx \CV_{\low+1}^{\upper}[V]\;.
\end{equ}
It is a straightforward exercise to verify that it satisfies the required coherence conditions,
with the second part of \eqref{e:d2} a consequence of the fact that the second derivative is symmetric
and \eqref{e:Leibniz} a consequence of the Leibniz rule.
\end{remark}

\begin{remark}
We will repeatedly and without further comment make use of the fact that, given $T$-algebras 
$T_i$ and a commuting diagram
of \textit{linear maps}
\begin{equ}
\begin{tikzcd}[row sep=1.5ex, column sep=11ex]
&   T_{2} \arrow[dr, "\psi_2"] & \\
T_1  \arrow[ur,"\psi_1"]  \arrow[rr, "\psi" description]
&&   T_3
\end{tikzcd}
\end{equ} 
if we know that $\psi$ and $\psi_1$ are morphisms of $T$-algebra and $\psi_1$ is surjective,
then $\psi_2 \in \Hom(T_2,T_3)$. Similarly, if 
$\psi$ and $\psi_2$ are morphisms and $\psi_2$ is injective, then $\psi_1$ is a morphism.
\end{remark}

\subsection{A class of \texorpdfstring{$T$}{T}-algebras}\label{sec:graphicalCalculus}

\begin{definition}
A $T$-graph $g = (V_g,\mfd_g, \phi_g)$ of degree $(\upper,\low)$
consists of a finite vertex set  $V_g$,
a map $\mfd_g \colon V_g \to \N^2$ (we will use the notation $\mfd_g(v) = (o_v, i_v)$), 
as well as a bijection $\phi_g\colon \Out(g) \to \In(g)$ 
where
%\minilab{e:defTg}
\begin{equs}[2]%[e:propphi]
\Out(g) &= [\low] \sqcup \overline{\Out}(g) \;,\qquad&\quad \In(g) &= [\upper] \sqcup \overline{\In}(g)\;, %\label{e:defTg1}
\\
\overline{\Out}(g) &= \bigsqcup_{v\in V_g} \{v\}\times [o_{v}]\;,\quad&\quad \overline{\In}(g) &=  \bigsqcup_{v\in V_g} \{v\}\times [i_{v}]\;.\label{e:defTg2}
\end{equs}
We furthermore impose that $\phi_g([\low]) \subset \overline{\In}(g)$ (or equivalently that
$\phi_g^{-1}([\upper]) \subset \overline{\Out}(g)$). We also write $\one$ for the unique $T$-graph 
with empty vertex set, which is necessarily of degree $(0,0)$. We also define
the shorthand  $\Vert(g) = \Out(g) \sqcup \In(g)$.
\end{definition}

As expected, two $T$-graphs $g, \bar  g \in T^\upper_\low$ are identified if there exists a 
bijection $\iota \colon V_g \to V_{\bar  g}$ such that 
$\mfd_{\bar g} \circ \iota = \iota \circ \mfd_g$ and $\phi_{\bar g} \circ \hat \iota = \hat \iota \circ \phi_g$,
where $\hat \iota \colon  \Vert(g) \to \Vert(\bar g)$ denotes
the obvious extension of $\iota$.
We then write $T^\upper_\low$ for the collection of all $T$-graphs of degree $(\upper,\low)$, modulo 
identification. 

\begin{wrapfigure}[5]{r}{2.5cm}
\vspace{-1.8em}
\tikzset{external/export next=false}
\begin{tikzpicture}[scale=0.2,baseline=-2,draw=symbols,line join=round,decoration={
    markings,
    mark=at position 0.5 with {\arrow{>}}}]
\draw [black,postaction={decorate}] (-2,4.35) to[out=-90,in=90] (0.3,0.55);
\draw [very thick,white] (2,4.35) to[out=-90,in=90] (-0.3,0.55);
\draw [black,postaction={decorate}] (2,4.35) to[out=-90,in=90] (-0.3,0.55);
\draw [black,postaction={decorate}] (0.3,-0.55) to (0.3,-2);
\draw [black,postaction={decorate}] (-1.8,7) to (-1.8,5.3);
\draw [black,postaction={decorate}] (2,7) to (2,5.45);
\draw [black,postaction={decorate}] (-0.3,-0.55) .. controls (-0.3,-4.7) and (-6.7,8.7) .. (-2.2,5.2);
\draw (0,0) node[rec] {}; 
\draw (-2,4.9) node[trup] {};
\draw (2,4.9) node[cerc] {};
\end{tikzpicture}
\vspace{-.5em}
\end{wrapfigure}
On the right, we see a graphical representation of a $T$-graph
$g \in T^1_2$ with one vertex of degree $(1,2)$ (red triangle), one of degree $(1,1)$ (blue circle)
and one of degree $(2,2)$. The map $\phi_g$ is represented by directed edges connecting 
$v \in \Out(g)$ to $\phi_g(v) \in \In(g)$. 
Note that order matters, so this $T$-graph is distinct from
the one obtained for example by swapping the endpoints of the two lines entering the 
black square from the top. In subsequent graphical representations, we will always use
the convention that edges enter vertices from the top and leave them from the bottom.
Similarly, edges leaving from $[\ell]$ will be drawn as half-edges entering from the top
while edges targeting $[u]$ are drawn as half-edges exiting at the bottom. 

\begin{remark}\label{rem:graph}
A $T$-graph $g$ defines a directed graph $\hat G_g$ on the vertex set
$\hat V_g \eqdef [\ell] \sqcup [u] \sqcup V_g$ by postulating that
each element $e \in \Out(g)$ yields an edge from $\pi(e)$ to $\pi(\phi_g(e))$. Here, $\pi\colon \Vert(g) \to \hat V_g$ is the natural projection preserving $[u]\sqcup [\ell]$ and mapping elements of the 
form $(v,i) \in V_g\times \N$ to
$v$. Given $e \in \Out(g)$, we will also use the notations
\begin{equ}
\dom e \eqdef \pi(e)\;,\qquad \cod e \eqdef \pi(\phi_g(e))\;.
\end{equ}

\end{remark}

We do have a natural ``trace'' operation $\tr \colon T^{\upper+1}_{\low+1} \to T^\upper_\low$ given
by setting $V_{\tr g} = V_g$, $\mfd_{\tr g} = \mfd_{g}$ and defining $\phi_{\tr g}$ by
\begin{equ}
\phi_{\tr g}(o) = 
\left\{\begin{array}{cl}
	\phi_g(\low + 1) & \text{if $\phi_g(o) = \upper + 1$,} \\
	\phi_g(o) & \text{otherwise.}
\end{array}\right.
\end{equ}
We also have an action of $\Sym(\upper,\low)$ on $T^\upper_\low$: given $\alpha = (\alpha^\inc,\alpha^\out) \in \Sym(\upper,\low)$, we consider its canonical representation by permutation maps on $[\upper] \sqcup [\low] \subset \Vert(g)$, which we extend to all of $\Vert(g)$ by the identity  on the remainder.
With this notation, we obtain a new $T$-graph $\alpha \bigcdot g$ by
setting $\alpha \bigcdot g = (V_g,\mfd_g, \alpha \bigcdot \phi_g)$ where
\begin{equ}
\alpha \bigcdot \phi_g \eqdef \alpha \circ \phi_g \circ \alpha^{-1}\;.
\end{equ}

Given two $T$-graphs $g \in T^\upper_\low$ and $\bar g \in T^{\bar\upper}_{\bar\low}$, we define their product $g \cdot \bar g = (V_g \sqcup V_{\bar g}, \mfd_g \sqcup \mfd_{\bar g}, \phi_{g\cdot \bar g}) \in T^{\upper+\bar\upper}_{\low+\bar\low}$ by setting $\phi_{g\cdot \bar g} = \phi_{g} \sqcup \phi_{\bar g}$, where
we make use of the identifications $\Vert(g \cdot \bar g) \simeq \Vert(g) \sqcup \Vert(\bar g)$. Here, the identification $[\low]\sqcup [\bar \low] \simeq [\low + \bar \low]$
is given by identifying $[\bar \low]$ with $\{\low+1,\ldots,\low + \bar \low\}$ (and similarly for 
$[\upper]\sqcup [\bar \upper] \simeq [\upper + \bar \upper]$).
Note that one then has the identity $S^{\upper,\bar\upper}_{\low,\bar \low}\act(g\cdot \bar g) = \bar g \cdot g$,
so that $\bar g \cdot g \neq g \cdot \bar g$ in general.

Given a typed vertex set $(V,\mfd)$, an important role is played by the group of `internal' 
symmetries $\Sym_V  = \bigtimes_{v \in V} \Sym(i_v,o_v)$ acting on the collection $T_V$ of 
$T$-graphs with vertex set $V$. 
To describe this action, note that given any $T$-graph $g = (V,\mfd,\phi) \in T_V$, 
$\alpha = (\alpha^\inc_v,\alpha^\out_v)_{v \in V}  \in \Sym_V$ acts naturally on 
$\Vert(g)$ by 
\begin{equs}[e:alph]
\alpha(v,i) & = (v,\alpha^\inc_v(i)) \quad  \text{for }(v,i) \in \overline{\In}(g), 
\\ \alpha(v,o) &= (v,\alpha^\out_v(o))\quad  \text{for }(v,o) \in 
\overline{\Out}(g), 
\\ \alpha(k) &=k \quad  \text{for }k \in [\upper]\sqcup[\low].
\end{equs}
Therefore, $\alpha \in \Sym_V$ defines a new $T$-graph $\alpha \actint g \in T_V$ exactly as above,
namely by setting $\alpha \actint g = (V,\mfd,\alpha \circ \phi \circ \alpha^{-1})$.
Note that if we also fix the degree $(\upper,\low)$ of our graphs, the actions of $\Sym_V$ and 
$\Sym(\upper,\low)$ commute, so we actually have an action of $\Sym(\upper,\low) \times \Sym_V$, 
but the two play quite different roles which is why we keep separate notations. Instead of fixing 
the vertex set $V$, we will usually start from a given $T$-graph $g$ and we write $\Sym_g$ as 
a shorthand for $\Sym_{V_g}$.

\begin{remark}\label{rem:actint}
One subtlety that is easy to miss is that it may well happen that $\alpha \actint g = g$
in $T^\upper_\low$, but that the corresponding identification $\iota \colon g \to g$ can \textit{not} 
be taken as the identity on $V_g$. The following example has $\Sym_g \simeq \Z_2$ and the action of the non-trivial
element $\alpha$ is given by 
\begin{equ}
\alpha \actint 
\tikzset{external/export next=false}
\begin{tikzpicture}[scale=0.2,baseline=0,draw=symbols,line join=round]
\node at (0,0) [rec,thin,fill=blue!10,minimum width=4mm] (rect) {};
\draw[black] (1,2) node[cerc,thin,fill=white] {} to[out=-90,in=90] ($(rect.north east)-(3mm,0)$);
\draw[black] (-1,2) node[cerc,thin,fill=red!10] {} to[out=-90,in=90] ($(rect.north west)+(3mm,0)$);
\end{tikzpicture}
=
\tikzset{external/export next=false}
\begin{tikzpicture}[scale=0.2,baseline=0,draw=symbols,line join=round]
\node at (0,0) [rec,thin,fill=blue!10,minimum width=4mm] (rect) {};
\node at (1,2) [cerc,thin,fill=white] (c1) {};
\node at (-1,2) [cerc,thin,fill=red!10] (c2) {};
\draw[black] (c1) to[out=-90,in=90] ($(rect.north west)+(3mm,0)$);
\draw[white,very thick] (c2) to[out=-90,in=90] ($(rect.north east)-(3mm,0)$);
\draw[black] (c2) to[out=-90,in=90] ($(rect.north east)-(3mm,0)$);
\end{tikzpicture}
=
\tikzset{external/export next=false}
\begin{tikzpicture}[scale=0.2,baseline=0,draw=symbols,line join=round]
\node at (0,0) [rec,thin,fill=blue!10,minimum width=4mm] (rect) {};
\draw[black] (1,2) node[cerc,thin,fill=red!10] {} to[out=-90,in=90] ($(rect.north east)-(3mm,0)$);
\draw[black] (-1,2) node[cerc,thin,fill=white] {} to[out=-90,in=90] ($(rect.north west)+(3mm,0)$);
\end{tikzpicture}\;,
\end{equ}
where vertices that are identified are drawn in the same colour.
\end{remark}

Let now $W = \{W^u_\ell\,:\, u,\ell \ge 0\}$ be a collection of vector spaces such that
each $W^u_\ell$ is endowed with an action of $\Sym(u,\ell)$. 
Given $W$ and a $T$-graph $g$, we then set 
\begin{equ}
\widehat{\Tr}_g(W) \eqdef \Big( \bigotimes_{v \in V_g} W_{i_v}^{o_v}\Big)\Big/{\sim}\;,
\end{equ}
where $\sim$ is the smallest linear equivalence relation such that, for 
$w = \bigotimes_{v\in V_g} w_v$ and any identification $\iota \colon g \to g$, one has
\begin{equ}[e:ident]
w \sim  \bigotimes_{v\in V_g} w_{\iota v}\;.
\end{equ}
It will be convenient to keep track of $g$ in our notations and to 
denote generic elements of $\widehat{\Tr}_g(W)$ by $g \otimes \bigotimes_{v \in V_g} w_v$.
We illustrate the equivalence relation \eqref{e:ident} with the following 
two connected $T$-graphs of respective types $(0,2)$ and $(0,0)$: 
\begin{equs}\label{exemple_equivalence}
\tikzset{external/export next=false}
\begin{tikzpicture}[scale=0.2,baseline=0,draw=symbols,line join=round]
\node at (0,0) [rec,thin,fill=blue!10,minimum width=4mm] (rect) {};
\node at (6,0) [rec,thin,fill=blue!10,minimum width=4mm] (rect1) {};
\draw[black] (-2,0) to[out=90,in=90] ($(rect.north west)+(3mm,0)$);
\draw[black] ($(rect1.south)$) to[out=-90,in=-90] (-2,0);
\draw[black] ($(rect.south)$) to[out=-90,in=90] ($(rect1.north west)+(3mm,0)$);
\draw[black] (7,2)  to ($(rect1.north east)-(3mm,0)$);
\draw[black] (1,2)  to ($(rect.north east)-(3mm,0)$);
\end{tikzpicture}
\qquad \qquad
\begin{tikzpicture}[scale=0.2,baseline=0,draw=symbols,line join=round]
\node at (0,0) [rec,thin,fill=blue!10,minimum width=4mm] (rect) {};
\node at (6,0) [rec,thin,fill=blue!10,minimum width=4mm] (rect1) {};
\draw[black] (-2,0) to[out=90,in=90] ($(rect.north west)+(3mm,0)$);
\draw[black] ($(rect1.south)$) to[out=-90,in=-90] (-2,0);
\draw[black] ($(rect.south)$) to[out=-90,in=90] ($(rect1.north west)+(3mm,0)$);
\end{tikzpicture}  
\end{equs}
The  first graph does not admit any non-trivial self-identification
since the two half-edges are distinguished. 
The second graph on the other hand admits one that swaps its two vertices. 

\begin{remark}\label{rem:symmetric}
The presence of the quotienting operation by $\sim$ 
guarantees that if $g \sim \bar g$ are identified, then there is a canonical
isomorphism $\widehat{\Tr}_g(W) \simeq \widehat{\Tr}_{\bar g}(W)$. Without this
equivalence relation such an isomorphism would still exist, but it wouldn't be 
canonical unless $g$ has no non-trivial internal symmetries. Our notation then implies
that for any identification $\iota \colon g \to \bar g$, one has
$g \otimes \bigotimes_{v \in V_g} w_{\iota v} = \bar g \otimes \bigotimes_{\bar v \in V_{\bar g}} w_{\bar v}$.
\end{remark}

We then define a $T$-algebra $\Tr(W)$\label{page:TrW} in the following way. 
In a first step, we define the vector spaces  
\begin{equ}
\widehat{\Tr}(W)^\upper_\low = \bigoplus_{g \in T^\upper_\low} \widehat{\Tr}_g(W)\;,
\end{equ}
where Remark~\ref{rem:symmetric} guarantees that this is well-defined despite the
abuse of notation made by implicitly assuming that we chose a specific representative
for each element of $T^\upper_\low$. 
Given $w = g \otimes \bigotimes_{v \in V_g} w_{v} \in \widehat{\Tr}_g(W)$ and 
$\alpha = (\alpha_v)_{v \in V_g} \in \Sym_g$, we then define 
\begin{equ}[e:defactint]
\alpha \bigcdot w \eqdef (\alpha \actint g) \otimes \bigotimes_{v \in V_{\alpha \actint g}} (\alpha_v w_v) \in \widehat{\Tr}_{\alpha \actint g}(W)\;,
\end{equ}
where we implicitly use the identification of $V_{\alpha \actint g}$ with $V_g$.

\begin{remark}
As already mentioned in Remark~\ref{rem:actint}, one may have $\alpha \actint g \sim g$,
but the corresponding identification may \textit{not} be the identity on $V_g$. In this case, 
the corresponding reordering of the factors is implicit in the notation \eqref{e:defactint}.
\end{remark}

With this notation at hand, we write $I^\upper_\low$ for the smallest linear subspace containing all 
elements of the form
\begin{equ}[e:equivrel]
w - \alpha \bigcdot w\;,
\end{equ}
for $w \in \widehat{\Tr}_g(W)$ with $g \in T^\upper_\low$ and $\alpha \in \Sym_g$. 
We then define
\begin{equ}
\Tr(W) = \bigoplus_{\upper,\low \ge 0}\Tr(W)^\upper_\low\;,\qquad  \Tr(W)^\upper_\low \eqdef \widehat{\Tr}(W)^\upper_\low / I^\upper_\low\;.
\end{equ}
We similarly write $\Tr_g(W)$\label{page:TrWg} for the image of $\widehat{\Tr}_g(W)$ under the canonical projection $\widehat{\Tr}(W) \to \Tr(W)$.

\begin{remark}\label{rem:viewTrg}
Besides the definition of $\Tr_g(W)$ just given, we can also view it 
as the subspace of $\widehat{\Tr}_{g}(W)$ consisting 
of the elements that are invariant under the action of $\Stab_{g}$, the stabiliser of 
$g$ in $\Sym_{g}$. Note that \eqref{e:equivrel} also yields canonical identifications
$\Tr_g(W) \simeq \Tr_{\alpha \actint g}(W)$ for $\alpha \in \Sym_g$.
\end{remark}

\begin{remark}\label{ident:dual}
Setting $(W^*)^\upper_\low \eqdef (W^\upper_\low)^*$, we note that 
$\Sym(\upper,\low)$ naturally acts on it by postulating that $(\alpha w)(\tau)
= w(\alpha^{-1}\tau)$ for $\tau \in W^\upper_\low$ and $w \in (W^\upper_\low)^*$.
By Remark~\ref{rem:viewTrg}, this allows us to identify
$\Tr_g(W)^*$ with $\Tr_g(W^*)$ and, more generally, $\bigl(\Tr(W)^\upper_\low\bigr)^*$
with $\Tr(W^*)^\upper_\low$. This identification is consistent in the sense that if $\hat g = \alpha \actint g$
for some $\alpha \in \Sym_g$, then the identification $\Tr_{\hat g}(W)^* \simeq \Tr_{\hat g}(W^*)$
is consistent with the identification $\Tr_{\hat g}(W) \simeq \Tr_{g}(W)$ given by \eqref{e:equivrel}.
\end{remark}

The proof of the following result is straightforward and without surprises.

\begin{lemma}
The product, trace, and action of the symmetric group 
extend to $\Tr(W)$ and turn it into a $T$-algebra.\qed
\end{lemma}

\begin{remark}\label{rem:identification}
The reason for quotienting by $I^\upper_\low$ defined in this particular way is as follows. 
Consider the ``elementary'' $T$-graph $g \in T^\upper_\low$
with $V_g = \{\bullet\}$ consisting of a single vertex of type $(\upper,\low)$, 
$\phi_g (j) = (\bullet,j)$ for $j \in [\low]$, and 
$\phi_g (\bullet,i) = i$ for $i \in [\upper]$. Then, we have a canonical injection 
$W^\upper_\low \to \widehat{\Tr}(W)^\upper_\low$ given by $\tau \mapsto g \otimes \tau$. 
For this particular $g$, one also
has $\Sym_g \simeq \Sym(\upper,\low)$ and one readily verifies from the definitions that
one has the identity $\alpha \bigcdot g = \alpha^{-1} \actint g$. (This is not true in general, only for 
this specific $g$!)
It follows that, with the same notations as above,
\begin{equ}
\alpha \bigcdot (g \otimes \tau)
\eqdef (\alpha \bigcdot g) \otimes \tau = (\alpha^{-1} \actint g) \otimes \tau \simeq g \otimes (\alpha \bigcdot \tau)\;,
\end{equ}
so that the identification $W^\upper_\low \simeq \Tr_g(W) \subset \Tr(W)^\upper_\low$ extends to 
the action of  $\Sym(\upper,\low)$ on both spaces.
\end{remark}

The following result shows that $\Tr(W)$ is ``free'', which shows that this construction is very natural.
\begin{proposition}\label{prop:freeT}
Given a $T$-algebra $\CV$ and linear maps $\Phi \colon W^\upper_\low \to \CV^\upper_\low$
 intertwining the action of $\Sym(\upper,\low)$, then there exists a unique extension of $\Phi$
to $\Tr(W)$ such that $\Phi \in \Hom(\Tr(W), \CV)$.
\end{proposition}

\begin{remark}\label{rem:linear}
In particular, given $\Phi, \Psi \in \Hom(\Tr(W), \CV)$, we can define
$\Phi + \Psi \in \Hom(\Tr(W), \CV)$ as the unique extension of the collection of
linear maps $\Phi + \Psi \colon W^\upper_\low \to \CV^\upper_\low$.
We also say that $\Phi \in \Hom(\Tr(W), \Tr(\bar W))$ is ``linear'' if it maps
$W^\upper_\low$ into $\bar W^\upper_\low$ for every $\upper,\low$.
Linear morphisms are important because their image is again of the form $\Tr(\hat V)$
for some $\hat V \subset V$, which need not be true in general.
\end{remark}

In order to prepare for the proof of this result, we first present a few preliminary definitions and results.
Given any $m \ge 1$, and any $(\upper,\low)$ with $\upper \wedge \low \ge m$,
we also have a `diagonal' representation $D$ of the symmetric group $\Sym(m)$ on 
$\Tr(W)^\upper_\low$ by
\begin{equ}
D_\sigma = (\id_{\upper-m}\cdot \sigma) \times (\id_{\low-m}\cdot \sigma)\;,\qquad \sigma \in \Sym(m)\;,
\end{equ}
acting on $\Tr(W)^\upper_\low$ via the representation of $\Sym(\upper,\low)$. 
We intentionally omit the indices $\upper$, $\low$ and $m$ from the notations
since these are always uniquely determined from context. We also
write $e^\upper_\low \in T^\upper_\low$ for the elementary graph defined in Remark~\ref{rem:identification}.
With these notations at hand, we have the following crucial preliminary result. 
\begin{lemma}\label{lem:representationg}
Every $T$-graph $g \in T^\upper_\low$ can be written as
\begin{equ}[e:reprg]
g = \tr^m \alpha \big(g_1 \mult\ldots\mult g_{n}\big)\;,\qquad g_i \in \{e^k_p\,:\,k,p \ge 0\}\;,\quad
\alpha \in \Sym(\upper+m,\low+m)\;,
\end{equ}
with the convention that $m=n=0$ if $g = \one$. Furthermore, if one also has
\begin{equ}
g = \tr^{m'} \alpha' \big(g_1' \mult\ldots\mult g_{n'}'\big)\;,
\end{equ}
then $m'=m$, $n'=n$, and there exist elements $\mu \in \Sym(n)$, $\nu \in \Sym(m)$ with the following properties.
\begin{enumerate}
\item One has $g_i' = g_{\mu(i)}$ for every $i$.
\item The element $\alpha'$ is related to $\alpha$ by
$\alpha' = D_\nu \cdot \alpha \cdot \hat \mu$,
where $\hat \mu \in \Sym(\upper+m, \low+m)$ is the unique element such that 
\begin{equ}
\hat \mu (V_{\mu(1)} \cdots V_{\mu(n)}) = V_1\cdots V_n \;,
\end{equ}
for any $T$-algebra $\CV$ and any collection of elements $V_i \in \CV$ such that 
$V_i$ has the same degree as $g_i$.
\end{enumerate}
\end{lemma}

\begin{proof}
The statement is trivial for $g = \one$, so we assume that 
 $g = (V_g,\mfd_g,\phi_g)$ with $n = |V_g| > 0$,  and we set 
$m = |\Int(g)|$, where the set of ``internal edges'' is given by
\begin{equ}
\Int(g) \eqdef  \overline{\Out}(g)\setminus \phi_g^{-1}([u])\;.
\end{equ}
We then consider an ordering $[n] \ni i \mapsto v_i \in V_g$ of its vertices, 
as well as an ordering $[m] \ni i \mapsto e_i \in \Int(g)$ of its internal edges. 

We claim that this additional data
determines a representation of $g$ of the type \eqref{e:reprg}.
Indeed, note first that since we have ordered the vertices, we can naturally set
\begin{equ}
g_i = e^{\upper_i}_{\low_i}\;,\quad (\upper_i,\low_i) \eqdef \mfd_g(v_i)\;.
\end{equ}
We then obtain a natural ordering $\eta_1\colon \overline{\Out}(g) \to [\upper + m]$ by
choosing it to be the unique bijection such that 
$\eta_1(v_i,j) < \eta_1(v_k,\ell)$ if and only if $i < k$ or $i=k$ but $j < \ell$.

On the other hand, we obtain a natural map $\eta_2 \colon [\low + m] \to \overline{\In}(g)$
by setting $\eta_2(j) = \phi_g(j)$ for $j \in [\low]$ and, for $j \in [m]$,
we set $\eta_2(\low + j) = \phi_g(\eta_1^{-1}(\upper + j))$.
Using these maps, as well as the ordering $i \mapsto e_i$, we
build $\alpha = \alpha_1 \times \alpha_2 \in \Sym(\upper+m,\low+m)$ as follows.

For $\alpha_1$, we set 
\begin{equ}
\alpha_1^{-1}(j) = 
\left\{\begin{array}{cl}
	\eta_1(\phi^{-1}(j)) & \text{if $j \le \upper$,} \\
	\eta_1(e_{j-\upper}) & \text{otherwise.}
\end{array}\right.
\end{equ}
Regarding $\alpha_2$, we set it to be the only bijection such that
\begin{equ}
\alpha_2^{-1}(j) \in 
\left\{\begin{array}{cl}
	\eta_2^{-1}(\phi(j)) & \text{if $j \le \low$,} \\
	\eta_2^{-1}(\phi(e_{j-\upper})) & \text{otherwise.}
\end{array}\right.
\end{equ}
It is tedious but straightforward to show that this choice of $\alpha$ guarantees that
\eqref{e:reprg} holds. Furthermore, any two representations of $g$ built in this way
differ from \eqref{e:reprg} by an element of 
$\Sym(m) \times \Sym(n)$ representing
a relabelling of the internal edges and the vertices.

The fact that $g$ cannot be represented in any other way follows from the fact that we can easily
reverse this argument to extract the orderings $i \mapsto v_i$ and $i \mapsto e_i$ from
the representation \eqref{e:reprg}.
\end{proof}

\begin{proof}[of Proposition~\ref{prop:freeT}]
Given
any $T$-graph $g = (V_g,\mfd_g,\phi_g)$, we write it as
\begin{equ}
g = \tr^m \alpha \big(g_1 \mult\ldots\mult g_{n}\big)\;,
\end{equ}
as in Lemma~\ref{lem:representationg} and, given $\tau = \bigotimes_{i=1}^n w_i \in \widehat{\Tr}_g(W)$ 
(here $w_i$ is assigned to the unique vertex of $g_i$), we then set 
\begin{equ}
\Phi(\tau) = \tr^m \alpha \big(\Phi(w_1) \mult\ldots\mult \Phi(w_{n})\big)\;.
\end{equ}
It follows from Lemma~\ref{lem:representationg}, combined with Remark~\ref{rem:symmetry} that 
$\Phi$ is well-defined, so that it suffices to check that it is a morphism. 
The fact that $\Phi(\tr g) = \tr \Phi(g)$ if $\low\wedge\upper \ge 1$ and 
that $\Phi(\sigma g) = \sigma \Phi(g)$ for $\sigma \in \Sym(\upper,\low)$ follows immediately from
the well-posedness of $\Phi$. We also note that $\Phi(\one) = \one$ since $n=0$ in this case.

Regarding the product, we use the fact that for any $T$-algebra 
and any two elements $A$ and $B$ of degrees
at least $(\ell,\ell)$ and $(m,m)$ respectively, we 
can find an element $\alpha$ of the symmetric group
such that $\tr^\ell A \mult \tr^m B = \tr^{\ell+m} \alpha \bigl(A \mult B\bigr)$. Indeed, making repeated use of \eqref{e:multperm},
\eqref{e:trperm} and \eqref{e:trtensor}, we find that there are elements $\alpha_1$, $\alpha_2$ and $\alpha$ such that
\begin{equs}
\tr^\ell A \mult \tr^m B &= \alpha_1 \bigl(\tr^m B\mult \tr^\ell A\bigr)
= \alpha_1 \tr^\ell \bigl(\tr^m B\mult A\bigr)
= \tr^\ell \alpha_2 \bigl(A \mult \tr^m B\bigr) \\
&= \tr^\ell \alpha_2 \tr^m \bigl(A \mult B\bigr)
= \tr^{\ell+m} \alpha \bigl(A \mult B\bigr)\;.
\end{equs}
The required claim then follows at once.
\end{proof}

\subsection{A general injectivity result}
\label{sec:injective}

The main reason for using the formalism of $T$-algebras is to allow us to infer analytical
results from algebraic considerations. In order to ``go back'' to analysis, we need to show that the
specific $T$-algebras relevant for our argument, namely those of the form $\Tr(W)$, can be represented 
in a faithful manner by analytic objects which live in $T$-algebras of the type $\CW[V]$ as described 
in Remark~\ref{rem:tensorT}.
The linchpin of such arguments is the following injectivity result where, given a finite collection $\SN$ of
$T$-graphs, we write $\Tr_\SN(W)$ as a shorthand for $\bigoplus_{g \in \SN} \Tr_g(W)$. Here, we say that 
a $T$-graph $g \in T^\upper_\low$ is \textit{anchored} if $V_g$ is non-empty, $\upper + \low >0$, and
every $v \in V_g$ is connected to some element of $[\low]\sqcup [\upper]$ in $\hat G_g$. 

\begin{theorem}\label{theo:injectiveT}
Let $W$ as above be such that each $W^k_\ell$ is finite-dimensional and let $\SN$ be any finite
collection of connected anchored $T$-graphs. Then, there exists a finite-dimensional vector space $V$
and a morphism of $T$-algebras $\Phi \colon \Tr(W) \to \CV[V]$ which is 
injective on $\Tr_\SN(W)$.

Furthermore, given $\hat W \subset W$ invariant under the action of the symmetric group, 
there exists a second morphism 
$\hat \Phi\colon \Tr(W) \to \CV[V]$ such that $\Phi = \hat \Phi$ on $\Tr(\hat W)\subset \Tr(W)$ and, 
whenever $\tau \in \Tr_\SN(W)$
is such that $\Phi \tau = \hat \Phi \tau$, one has $\tau \in \Tr(\hat W)$.
\end{theorem}

\begin{proof}
Given a $T$-graph $g$, we first define a map
$\type_g \colon \Out(g) \sqcup \In(g) \to  (\N^2\times \N) \sqcup \{\typetop,\typebot\}$
as follows. 
For $e = (v,j) \in \overline{\Out}(g)\sqcup \overline{\In}(g)$, we set $\type_g(e) = (\mfd_g(v),j)$.
We also set $\type_g(e) = \typetop$ for $e \in [\low] \subset \Out(g)$
and $\type_g(e) = \typebot$ for $e \in [\upper] \subset \In(g)$.

We then introduce the following notion. Given $T$-graphs $g$ and $G$, 
a \textit{$T$-graph morphism} from $g$ to $G$ is a map
\begin{equ}%[e:propM]
\psi \colon \Out(g) \to \Out(G)\;,
\end{equ}
with the following properties.
(Here, we say that two edges $e,e' \in \Out(g)$ `touch' each other 
if $\{\dom e, \cod e\} \cap \{\dom e' , \cod e'\} \neq \emptyset$, see Remark~\ref{rem:graph} for 
notations.)
\begin{enumerate}
\item If $e, e'\in \Out(g)$ touch each other, then 
$\psi(e)$ and $\psi(e')$ also touch each other.
Furthermore, they touch `in the same way' in the sense that 
for any $f_i \in \{\dom,\cod\}$, $f_1(e) = f_2(e')$ implies that
$f_1(\psi(e)) = f_2(\psi(e'))$.
\item On $\overline{\Out}(g)$, we have the identity
\begin{equ}[e:typematchdom]
\type_g = \type_G \circ \psi\;,
\end{equ}
while on $\phi_g^{-1}(\overline{\In}(g))$ we have the identity
\begin{equ}[e:typematchcod]
\type_g \circ \phi_g = \type_G \circ \phi_G \circ \psi\;.
\end{equ}
\end{enumerate}

We denote by $\CE(g,G)$ the set of all $T$-graph morphisms from $g$ to $G$.
From now on, we choose a finite set $\hat \SN \subset \SN$ with the property that for every
$g \in \SN$ one can find $\hat g \in \hat \SN$ which can be obtained from $g$ 
by the action of an element of $\Sym(\upper,\low) \times \Sym_g$. We also
order the elements of $\hat \SN$ so that $\hat \SN = \{g_1,\ldots,g_n\}$ for some $n \ge 1$ 
and we set $G = g_1\cdot \ldots\cdot g_n$.

We then define the finite-dimensional vector space $V$ by
\begin{equ}[e:defV]
V = \Vec\big(\Out(G)\big)\otimes \Tr_G(W)\;,
\end{equ}
and, for each $(k,\ell)$, we fix a finite index set $A^k_\ell$, as well as
elements $w_{i} \in W^k_\ell$
and $w_{i}^* \in (W^k_\ell)^*$ with $i \in A^k_\ell$ such that 
$ w_i^* (w_j) = \delta_{i,j}$. (This implies that $|A^k_\ell| \le \dim W^k_\ell$,
but we do not enforce equality.) In order to distinguish the different copies of
$\Tr_G(W)$, we will again denote a typical element of $V$ by $e \otimes \tau$ with
$e \in \Out(G)$ and $\tau \in \Tr_G(W)$.

For any $T$-graph $g$, we then denote by $\CF_g$ the set of all functions
$f \colon V_g \to \bigsqcup_{k,\ell}A^k_\ell$ with the property that
$f(v) \in A^{o_v}_{i_v}$ and, for $f \in \CF_g$, we write $w_f \in \Tr_g(W)$
for the element given by $\bigotimes_{v \in V_g} w_{f(v)}$, and similarly
for $w_f^* \in \Tr_g(W^*)$. 
We also note that Remark~\ref{ident:dual} yields the identification
\begin{equ}
V^* \simeq \Vec\big(\Out(G)\big)\otimes \Tr_G(W^*)\;,
\end{equ}
which will  be used repeatedly in the sequel. It will also be convenient to allow
to select subsets $\hat A^k_\ell \subset A^k_\ell$ and to write  $\hat \CF_g$ for
the set of functions $f \colon V_g \to \bigsqcup_{k,\ell}\hat A^k_\ell$ defined otherwise
like $\CF_g$.

Given a $T$-graph $g$, we have an equivalence relation on $V_g$ for which
$v \sim v'$ if and only if they are connected in the graph $\hat G_g$ defined in 
Remark~\ref{rem:graph}. Write $C_g$ for the set of these equivalence classes, so that
elements $H\in C_g$ denote subsets of $V_g$ corresponding to a connected component 
of $g$. 
Given a $T$-graph morphism $\psi \colon g \to G$, a set $H \subset V_g$,
and an element $f \in \CF_g$, we then write 
\begin{equ}
\CF_{G}^{\psi,f}\subset \CF_{G} \eqdef \prod_{H \in C_g}\CF_{H,G}^{\psi,f}\;,
\end{equ}
where, for any $H \subset V_g$, $\CF_{H,G}^{\psi,f} \subset \CF_{G}$ denotes
the set of all functions $\bar f$ such that $\bar f \circ \hat\psi = f$ when
restricted to $H$. Given $F \in \CF_{G}^{\psi,f}$, we denote by
$F_H$ its component in $\CF_{H,G}^{\psi,f}$. 
 
With these notations at hand, we define a  linear map $\Phi\colon \Tr(W) \to \CV[V]$
by postulating that, for every $T$-graph $g$ of degree $(\upper,\low)$ and 
every element $\tau \in \widehat{\Tr}_g(W)$, one has
\begin{equs}
\Phi(\tau) &= \sum_{f \in \hat\CF_g }\sum_{\alpha \in \Sym_g} w_{f}^*(\alpha \bigcdot \tau)\times \label{e:defPhi} \\
&\quad \times \sum_{\psi \in \CE(\alpha \actint g,G)}\sum_{F \in \CF_{G}^{\psi,f}} \Big( \bigotimes_{i \in [\low]} 
\bigl(\psi(i)\otimes w_{F_{H(i)}}^*\bigr) \otimes \bigotimes_{j \in [\upper]} 
\bigl(\psi(\phi_g^{-1}(j))\otimes w_{F_{H(j)}}\bigr)\Big) \\
&\in (V^*)^{\otimes \low} \otimes V^{\otimes \upper}\;,
\end{equs}
where $H(i) \in C_g$ denotes the (unique) connected component that is connected 
to $i$ in $\hat G_g$. 
This is indeed well-defined on $\Tr(W)$ thanks to the symmetrisation over 
$\Sym_g$.

We claim that $\Phi \in \Hom(\Tr(W),\CV[V])$. First, given two $T$-graphs $g_1$ and $g_2$, we note
that $\hat\CF_{g_1\cdot g_2} \simeq \hat\CF_{g_1}\times \hat\CF_{g_2}$ under the equivalence $f_1 \sqcup f_2 \simeq (f_1,f_2)$, 
$\Sym_{g_1\cdot g_2} \simeq \Sym_{g_1} \times \Sym_{g_2}$ (both as groups and as sets),
$\CE(\alpha \actint (g_1\cdot g_2),G) \simeq \CE((\alpha_1 \actint g_1)\cdot (\alpha_2 \actint g_2),G)\simeq \CE(\alpha_1 \actint g_1,G)\times \CE(\alpha_2 \actint g_2,G)$, and
$\CF_{G}^{\psi_1 \sqcup \psi_2,f_1 \sqcup f_2} \simeq \CF_{G}^{\psi_1 ,f_1} \times \CF_{G}^{\psi_2,f_2}$. The required multiplicativity property follows at once.

Given $\beta = (\beta_\upper,\beta_\low) \in \Sym(\upper,\low)$, we have a natural bijection
$\psi \mapsto \beta \psi$ between $\CE(g,G)$ and $\CE(\beta \bigcdot g,G)$ given  
by setting $\beta \psi \restr [\low] = \psi \circ \beta_\low^{-1}$ and $\beta \psi = \psi$ otherwise. 
Combining this with \eqref{e:actionVV} it is not difficult to verify that 
indeed $\Phi(\alpha \bigcdot \tau) = \alpha \bigcdot \Phi(\tau)$ as required.
Finally, since $(e\otimes w_{f}^*)(\bar e\otimes w_g) = \delta_{e,\bar e} \delta_{f,g}$
by definition, one has for $\upper,\low \ge 1$,
\begin{equs}
\tr \Phi(\tau) &=
 \sum_{f \in \hat\CF_g }\sum_{\alpha \in \Sym_g} w_{f}^*(\alpha \bigcdot \tau) \sum_{\psi \in \CE(\alpha \actint g,G)}\sum_{F \in \CF_{G}^{\psi,f}} \one_{\psi(\upper) = \psi(\phi_g^{-1}(\low))}\one_{F_{H(\upper)}= F_{H(\low)}} \\
&\qquad \times\Big( \bigotimes_{i \in [\low-1]} 
\bigl(\psi(i)\otimes w_{F_{H(i)}}^*\bigr) \otimes \bigotimes_{j \in [\upper-1]} 
\bigl(\psi(\phi_g^{-1}(j))\otimes w_{F_{H(j)}}\bigr)\Big) \;.
\end{equs}
We then note that $\hat\CF_g = \hat\CF_{\tr g}$, $\Sym_g = \Sym_{\tr g}$, $\tr(\alpha \actint g) = \alpha \actint \tr g$, and that there is a natural bijection $\iota$ between  those 
elements $\psi \in \CE(g, G)$ such that the last incoming and outgoing edge are both mapped 
to the same edge in $G$ and $\CE(\tr g, G)$. Furthermore, since taking the trace of $g$ 
connects the last outgoing edge to the last incoming edge, $\CF_{G}^{\iota \psi,f}$ is
identified with those $F \in \CF_{G}^{\psi,f}$ such that $F_{H(\upper)} = F_{H(\low)}$. 
It follows at once
that $\tr \Phi(\tau) = \Phi(\tr \tau)$ as required, so that $\Phi$ is indeed a morphism.

We now claim that $\Phi$ is injective on $\Tr_{\SN}(W)$.
For this, we say that a $T$-graph morphism $\psi\colon g \to G$ is  \textit{anchored}
if \eqref{e:typematchdom} and \eqref{e:typematchcod}
extend to all of $\Out(g)$ and furthermore $\psi$ is injective on the external
edges $[\low] \sqcup \phi_{g}^{-1}([\upper])$.

\begin{lemma}\label{lem:injectiveT}
Let $g_{1,2} \in T_\low^\upper$ be two anchored $T$-graphs with $g_1$ connected
and assume that there exist anchored $T$-graph morphisms $\psi_i\colon g_i \to G$ such that 
$\psi_1 = \psi_2$ on $[\low]$ and $\psi_1 \circ \phi_{g_1}^{-1} = \psi_2 \circ \phi_{g_2}^{-1}$
on $[u]$. Then, there exists a unique $T$-graph identification $\iota \colon V_{g_1}\to V_{g_2}$ 
such that furthermore $\psi_1 = \psi_2 \circ \hat \iota$.
\end{lemma}

Assuming that this statement holds for the moment,
let us fix some $\hat g \in T^\upper_\low$ such that $\alpha \bigcdot \hat g\in \hat \SN$
for some $\alpha \in \Sym(\upper,\low)$. Note that there is then a natural
anchored injective $T$-graph morphism $\iota \colon \hat g \to G$. (By 
Lemma~\ref{lem:injectiveT} it is only unique modulo an element of the stabiliser of
$\hat g$ in $\Sym(\upper,\low)$, which may be non-trivial in general.) One example is the first graph in \eqref{exemple_equivalence} where one can permute the two incoming edges. 
Having fixed $\iota$, we then set for any $\hat F \in \CF_{G}$
\begin{equ}
\Phi^\star_{\hat F} \eqdef \bigotimes_{i \in [\low]} 
\bigl(\iota(i)\otimes w_{\hat F}\bigr) \otimes \bigotimes_{j \in [\upper]} 
\bigl(\iota(\phi_{\hat g}^{-1}(j))\otimes w_{\hat F}^*\bigr)\;.
\end{equ}
Note that this is an element of
$V^{\otimes \low} \otimes (V^*)^{\otimes \upper} \simeq ((V^*)^{\otimes \low} \otimes V^{\otimes \upper})^* = (\CV^\upper_\low[V])^* \subset (\CV[V])^*$.
Comparing this to \eqref{e:defPhi} and applying Lemma~\ref{lem:injectiveT}, we conclude that, 
for $\tau \in \Tr_g(W)$, one has $\Phi^\star_{\hat F}(\Phi(\tau)) = 0$ unless
there exists $\beta \in \Sym_g$ such that $\hat g = \beta \actint g$. Since 
$\Tr_g(W) = \Tr_{\beta \actint g}(W)$, we can just as well assume that 
$g = \hat g$ and identify $\tau$ with an element of $\widehat{\Tr}_{\hat g}(W)$
invariant under the action of the stabiliser $\Stab_{\hat g}$ of $\hat g$ in $\Sym_{\hat g}$ (see \eqref{e:equivrel} and Remark~\ref{ident:dual}). 

Since furthermore $\hat g$ is connected, $\CF_G^{\psi,f}$ simply consists of those elements 
$F \in \CF_G$ such that $f = F \circ \psi$. Also, Lemma~\ref{lem:injectiveT} implies that there are no
non-trivial self-identifications so that $\widehat{\Tr}_{\hat g}(W) = \bigotimes_{v \in V_{\hat g}} W^\upper_\low$. 
We then obtain
\begin{equ}[e:adjoint]
\Phi^\star_{\hat F}(\Phi(\tau)) 
= \one_{\hat F\circ \iota \in \hat\CF_{\hat g}} \sum_{\alpha \in \Stab_{\hat g}} w_{\hat F\circ \iota}^*(\alpha \bigcdot \tau)
= \one_{\hat F\circ \iota \in \hat\CF_{\hat g}}|{\Stab_{\hat g}}|\,w_{\hat F\circ \iota}^*(\tau)\;,
\end{equ}
where, to obtain the last equality, we view $\tau$ as an element of $\widehat{\Tr}_{\hat g}(W)$ 
that is invariant under the action of $\Stab_{\hat g}$ as in Remark~\ref{rem:viewTrg}. Since $\hat F\circ \iota$
can be arbitrary, it follows at once that as soon as one sets $|A^k_\ell| = \dim W^k_\ell$ and
$\hat A^k_\ell = A^k_\ell$, 
one can use $\Phi^\star_{\hat F}$ to construct a left inverse to
$\Phi$ on $\Tr_{\SN}(W)$, which proves the first claim.

To prove the second claim, take $\Phi$ as above and consider $\hat \Phi$ built in the same way, but 
with $\hat A^k_\ell \subset A^k_\ell$ chosen in such a way that the elements $w_i$  with 
$i \in \hat A^k_\ell$ form a basis of $\hat W^k_\ell$. (We do of course need to choose the $w_i$ in a suitable
way for this to be possible.) We then have
$\Phi = \hat \Phi$ on $\Tr(\hat W)$ since, for $\tau \in \widehat{\Tr_g}(\hat W)$, one has $w_f^*(\tau) = 0$
unless $f \in \hat F_g$. Conversely, if $\tau \in \widehat{\Tr_g}(W)$ is such that 
$\Phi(\tau) = \hat \Phi(\tau)$, then it follows from \eqref{e:adjoint} that 
$w_{f}^*(\tau) = 0$ for every $f \in \CF_{\hat g}\setminus \hat \CF_{\hat g}$,
so that $\tau \in \widehat{\Tr_g}(\hat W)$ as required.
\end{proof}

\begin{proof}[of Lemma~\ref{lem:injectiveT}]
In a first step, we note that one must have $\range \psi_1 = \range \psi_2$. 
Indeed, let $\CA \subset \Out(G)$ be the subset constructed recursively as follows.
We set $\CA_0 = \psi_i([\low]) \cup (\psi_i \circ \phi_{g_i}^{-1})([u])$ which belongs to
the range of both $\psi_i$ by definition. Then, given $\CA_k$, we let $\CA_{k+1}$ be the 
set of all edges $e \in \Out(G)$ touching some edge $e' \in \CA_k$. Assuming that $\CA_k \subset \range \psi_i$,
the definition of an $T$-graph morphism then implies that the same is true for 
$\CA_{k+1}$, so that $\CA = \bigcup_k \CA_k \subset \range \psi_i$. On the other hand, the assumption that 
the $g_i$ are connected guarantees that $\range \psi_i = \CA$.

In a second step, we show that the $\psi_i$ are actually injective. Assume by contradiction that
$\psi_1$ (say) is not injective, so that there exist edges $e \neq e'$ such that 
$\psi_1(e) = \psi_1(e') = \hat e \in \CA$. Since $\CA$ is connected, we can find a path 
$(\hat e_k)_{k=0}^n$ of edges in $\CA$ such that $\hat e_0 = \hat e$, $\hat e_n \in \Out(G) \setminus \In(G)$
and such that, for every $k < n$, $\hat e_k$ and $\hat e_{k+1}$ touch each other in the same sense
as in the previous paragraph.
The definition of an $T$-graph morphism (in particular the ``local'' injectivity of $\psi_1$ on the set of edges
adjacent to any given vertex implied by property 2) 
then implies that there are unique paths $e_k$ and $e_k'$ with 
$\psi_1(e_k) = \psi_1(e_k') = \hat e_k$ and such that $e_k$ and $e_{k+1}$ touch each other,
and similarly for $e_k'$. Furthermore, these paths must stay disjoint since, if $e_k \neq e_k'$,
then $e_k$ and $e_k'$ cannot touch each other (again by local injectivity),
so that one must also have $e_{k+1} \neq e_{k+1}'$. It follows that $e_n \neq e_n'$, but these must both belong
to  $\Out(g_1) \setminus \In(g_1)$ by the definition of the $T$-graph morphism being anchored, which
contradicts the fact that $\psi_1$ is injective there.

Since both $\psi_i$ are bijections between $\Out(g_i)$ and $\CA$, this yields a unique
bijection $\hat \iota \colon \Out(g_1) \to \Out(g_2)$ such that $\psi_1 = \psi_2 \circ \hat \iota$.
One can verify that one actually has $\hat \iota \in \CE(g_1,g_2)$. The required map
$\iota$ is now obtained by mapping $v \in V_1$ to the unique $\iota(v) \in V_2$ such that
 $\dom e = v$  implies $\dom \hat \iota(e) = \iota(v)$ and similarly for $\cod e$.
One can verify that this is indeed an identification, thus concluding the proof. 
\end{proof}

It will be useful in the sequel to have at our disposal a ``local'' version of Theorem~\ref{theo:injectiveT}.
By Proposition~\ref{prop:freeT} and since $\SN$ is finite, the action of $\Phi$ on
$\Tr_\SN(W)$ is uniquely determined by its action of $W^\upper_\low$ 
with $\upper+\low \le N$ for some $N$. Since furthermore the space $V$ is finite-dimensional, 
the space $\Hom(\Tr(W), \CV[V])$ is endowed with a natural topology, generated for example by the
distance
\begin{equ}
d(\Phi, \bar \Phi) = \sum_{k,\ell \ge 0} 2^{-k-\ell} (1\wedge \|\Phi-\bar \Phi\,|\, L(W^k_\ell, \CV(V)^k_\ell)\|)\;,
\end{equ}
where $\|x|\CX\|$ denotes the norm of $x$, viewed as an element of the Banach space $\CX$.
We then have the following result.

\begin{proposition}\label{prop:injectiveLocal}
In the setting of Theorem~\ref{theo:injectiveT}, let $\CU \subset \Hom(\Tr(W), \CV[V])$
be open. Then, for every $\tau \in \Tr(W) \setminus \Tr(\hat W)$ there exist 
$\Phi, \bar \Phi \in \CU$ so that $\Phi(\sigma) = \bar \Phi(\sigma)$ for all 
$\sigma \in \Tr(\hat W)$ but $\Phi(\tau) \neq \bar \Phi(\tau)$.
\end{proposition}

\begin{proof}
Fix $\tau \in \Tr(W) \setminus \Tr(\hat W)$ and $\Phi_0 \in \CU$, note that one necessarily 
has some finite collection $\SN$
such that $\tau \in \Tr_\SN(W)$, and choose $\Phi_1$ and $\bar \Phi_1$
as in the second part of Theorem~\ref{theo:injectiveT}. We then define 
$\Phi_t \in \Hom(\Tr(W), \CV[V])$ to be the unique morphism such that 
\begin{equ}[e:interpPhi]
\Phi_t \tau = \Phi_0 \tau + t \Phi_1 \tau\;,\qquad \forall \tau \in W\;,
\end{equ}
and similarly for $\bar \Phi_t$. Since the map $t \mapsto \Phi_t$ is continuous,
we have $\Phi_t, \bar \Phi_t \in \CU$ for all $t$ sufficiently small. 

Write now $\tau = \sum_{k \ge 0} \tau_k$ as a finite sum, where 
each $\tau_k$ is homogeneous of order $k$ in the sense that it
is a sum of elements in $\Tr_g(W)$ for $g$ a $T$-graph with $k$ vertices. Then there exists
$k$ such that $\tau_k \in \Tr(W) \setminus \Tr(\hat W)$, so that 
$\Phi_1 \tau_k \neq \bar\Phi_1 \tau_k$. Furthermore, the map $t \mapsto \Phi_t \tau$ is a
polynomial with $\d_t^k \Phi_t \tau |_{t=0} = k! \Phi_1 \tau_k$. It follows that 
the polynomials $t \mapsto \Phi_t \tau$ and $t \mapsto \bar \Phi_t \tau$ are distinct, so there
are arbitrarily small values of $t$ for which they differ. Setting $\Phi = \Phi_t$ and $\bar \Phi = \bar \Phi_t$ 
for such a value of $t$ concludes the proof.
\end{proof}

\begin{remark}\label{rem:DiffValue}
In much the same way, one shows in the setting of Proposition~\ref{prop:injectiveLocal} 
that for any $\tau \in \Tr(W) \setminus \{0\}$ 
and any fixed $a \in \CV[V]$, one can find $\Phi \in \CU$ such that $\Phi(\tau) \neq a$.
\end{remark}

\subsection{Application to \texorpdfstring{$T$}{T}-algebras with derivation}

Most of the $T$-algebras relevant for this article admit a derivation. We first 
show that it is straightforward to give a notion of derivation for collections 
of spaces $W = (W^k_\ell)_{k,\ell\ge 0}$ as
above that then endows $\Tr(W)$ with a derivation. 
Indeed, it suffices to assume that $W$ comes with a collection of linear 
maps $\d \colon W^\upper_\low \to W^\upper_{\low+1}$
satisfying the identities 
\begin{equ}[e:idendd]
(S_{1,1}\cdot \id^\upper_\low)\d^2 w = \d^2 w\;,\quad
\text{and}\quad \d(\alpha w) = (\id_1^0\cdot \alpha)\d w\;, 
\end{equ}
for every $w \in W^\upper_\low$ and every $\alpha \in \Sym(k,\ell)$.

\begin{lemma}\label{lem:Talgstruc}
In the setting just described, the space $\Tr(W)$ admits a unique 
derivation $\d$ compatible with the injection $W \hookrightarrow \Tr(W)$.
\end{lemma}

\begin{proof}
Given a $T$-graph $g = (V_g, \mfd, \phi_g) \in T^\upper_\low$ and a vertex $v \in V_g$, we define
$\d_v g = (V_g, \d_v \mft, \d_v\phi_g) \in (\upper,\low+1)$ by setting
$\d_v\mft(v) = (o_v, i_v+1)$. We then have a map 
$\iota \colon \Out(g) \sqcup \In(g) \to \Out(\d_vg) \sqcup \In(\d_vg)$
given by setting $\iota(j) = j+1$ for $j \in [\ell] \subset \Out(g)$, $\iota(v,j) = (v,j+1)$
for $(v,j) \in \In(g)$ (here $v$ is the specific vertex that we have fixed), and the identity
otherwise. Note that the image of $\iota$ leaves out exactly two elements, namely
$1 \in [\ell+1]$ and $(v,1)$. This allows us to define $\d_v\phi_g$ by setting
$\d_v\phi_g(1) = (v,1)$ and $\d_v \phi_g(x) = \iota (\phi_v(\iota^{-1}(x)))$ otherwise.

Since the vertex sets of $g$ and $\d_v g$ are naturally identified, this induces a map
$\d_v \colon \widehat{\Tr}_g(W) \to \widehat{\Tr}_{\d_vg}(W)$ by setting
\begin{equ}
\d_v \Bigl(g \otimes \bigotimes_{\bar v \in V_g} w_{\bar v}\Bigr)
\eqdef \Bigl(\d_v g \otimes \bigotimes_{\bar v \in V_g} \hat w_{\bar v}\Bigr)\;,\quad
\hat w_{\bar v} = 
\left\{\begin{array}{cl}
	\d w_{\bar v} & \text{if $\bar v = v$,} \\
	w_{\bar v} & \text{otherwise.}
\end{array}\right.
\end{equ}
Finally, given $\tau \in \Tr_g(W)$, we set
$\d \tau \eqdef \sum_{v \in V_g} \d_v \tau$. The verification of the axioms
given in Definition~\ref{def:product} is a straightforward exercise.

Uniqueness of $\d$ is immediate since, by Lemma~\ref{lem:representationg}, every $T$-graph can be built 
from elementary graphs 
of the type given in Remark~\ref{rem:identification} by using the operations of a $T$-algebra. 
\end{proof}

One has the following.

\begin{lemma}\label{lem:derT}
In the setting of Proposition~\ref{prop:freeT}, assume furthermore that $W$ is endowed with a derivation $\d$
in the sense of \eqref{e:idendd}, that $\CV$ is a $T_\d$-algebra, and that 
$\Phi \colon \bigoplus_{k,\ell} W^k_\ell \to \CV$ satisfies $\d \Phi(\tau) = \Phi(\d \tau)$.
Then, the extension of $\Phi$ to $\Tr(W)$ actually belongs to $\Hom_\d(\Tr(W),\CV)$ for the 
derivation on $\Tr(W)$ given by Lemma~\ref{lem:Talgstruc}.
\end{lemma}

\begin{proof}
Using Lemma~\ref{lem:representationg} and the axioms satisfied by $T$-algebras and derivations, it is straightforward to verify that $\Phi(\d\tau) = \d\Phi(\tau)$ for $\tau \in \Tr_g(W)$ with $g$ an
arbitrary $T$-graph. 
\end{proof}

As a consequence we obtain the following result which is crucial to our methodology, especially 
when combined with Theorem~\ref{theo:injectiveT}.

\begin{theorem}\label{theo:mainInjective}
Let $W$ be such that each $W^k_\ell$ is finite-dimensional and let $\SN$ be any finite
collection of connected anchored $T$-graphs.
Assume furthermore that $W$ is endowed with a derivation $\d$
satisfying \eqref{e:idendd}. 
Then, for every $\Phi \in\Hom(\Tr W, \CV[V])$,
there exists $\Psi \in\Hom_\d(\Tr(W), \CW[V])$ such that
$(\Psi \tau)(0) = \Phi \tau$ for every $\tau \in  \Tr_\SN(W)$.

If there exists $n \ge 0$ such that $\Phi \circ \d^{n+1}$ vanishes identically on every $W^k_\ell$, 
then there exists $\Psi \in\Hom_\d(\Tr(W), \CW[V])$ such that
$(\Psi \tau)(0) = \Phi \tau$ for every $\tau \in  \Tr(W)$.
\end{theorem}

\begin{proof}
For all $k,\ell \ge 0$, we choose subspaces $X^k_\ell \subset W^k_\ell$ in such a way that
$X^k_0 = W^k_0$ and, for $\ell > 0$,
\begin{equ}[e:decompW]
W^k_\ell  = X^k_\ell \oplus \d W^k_{\ell-1}\;.
\end{equ}
Let furthermore $N$ denote the largest degree appearing in $\SN$. 
For $\tau \in X^k_\ell$ with $k+\ell \le N$, we then choose $\Psi\tau \in \CC^\infty(V, (V^*)^{\ell}\otimes V^{k})$ as 
the unique polynomial of degree $N-k-\ell$ such that 
\begin{equ}[e:defPsi]
\big(D^m(\Psi \tau)\big)(0) = \Phi(\d^m \tau)\;,\qquad \forall k+\ell +m \le N \;.
\end{equ}
This is possible since $\Phi(\d^m \tau)$ is guaranteed to be symmetric in the first $m$ factors
of $V^*$. We then extend this to all of $W$ by imposing that $\Psi \d \tau = D \Psi \tau$, which is
possible by \eqref{e:decompW}, combined with the fact that $\d \tau = 0$ implies that $D \Psi \tau$ 
vanishes identically by \eqref{e:defPsi}. For $\tau \in X^k_\ell$ with $k+\ell > N$, 
we simply set $\Psi \tau = 0$.

We extend $\Psi$ uniquely to a morphism of $T$-algebras by Proposition~\ref{prop:freeT}
which belongs to $\Hom_\d$ by Lemma~\ref{lem:derT}. The fact that it satisfies the
claimed identity follows from \eqref{e:defPsi}.
The last claim is immediate by performing the same construction with $\Psi \tau$ a polynomial of
degree $n$.
\end{proof}

\subsection{Free \texorpdfstring{$T_\d$}{T d}-algebras and \texorpdfstring{$\CX$}{X}-graphs}

We construct the ``free'' $T_\d$-algebras in the following way. Fix a finite set $\CX$ \label{set of types page ref} of generators
and, for every $\mfs \in \CX$, a degree $(o_\mfs,i_\mfs) \in \N^2$.
We then define a set 
\begin{equ}
\hat \CX = \{(\beta, \d^k \mfs)\,:\, \mfs \in \CX\;,\quad \beta \in \Sym(o_\mfs,k+i_\mfs)\} / {\sim}\;,
\end{equ}
where we postulate that 
\begin{equ}[e:identbeta]
(\beta, \d^k \mfs) \sim (\beta \circ (\gamma \cdot \id_{i_\mfs}^{o_\mfs}), \d^k \mfs)\;,
\end{equ}
for any $\mfs \in \CX$, $k \ge 2$, $\beta \in \Sym(o_\mfs,k+i_\mfs)$ and $\gamma \in \Sym(k)$.
Given this definition, we set
\begin{equ}[e:defWCX]
W\CX^\upper_\low \eqdef \Vec(\hat \CX^\upper_\low)\;,
\end{equ}
where $\hat \CX^\upper_\low$ consists of those elements in $\hat \CX$ of degree $(\upper,\low)$.
(Here the degree of $(\beta, \d^k \mfs)$ is $(o_\mfs,k+i_\mfs)$.)
Elements $\alpha \in \Sym(\upper,\low)$ naturally act on $(\beta, \d^k \mfs) \in W\CX^\upper_\low$ by $(\alpha \circ \beta, \d^k \mfs)$, which passes to the quotient and is therefore well-defined.
We furthermore have a derivation $\d\colon W\CX^\upper_\low \to W\CX^\upper_{\low+1}$ obtained by postulating that
\begin{equ}
\d(\beta, \d^k \mfs) \eqdef (\id^1_0 \cdot \beta, \d^{k+1} \mfs)\;,
\end{equ}
which can be seen to satisfy \eqref{e:idendd}. It follows from Lemma~\ref{lem:Talgstruc} 
that $T_\d(\CX) \eqdef \Tr(W\CX)$\label{page:TdCX} is a $T_\d$-algebra which we call the ``free $T_\d$-algebra with generators $\CX$''.
Since one has a natural inclusion $\CX \subset \hat \CX \subset T_\d(\CX)$ given by $\mfs \mapsto (\id, \mfs)$, 
this terminology is justified by the following.

\begin{proposition}
For every $T_\d$-algebra $\CW$, every map $\Phi\colon \CX \to \CW$ 
extends unique\-ly to an element of $\Hom_\d(T_\d(\CX),\CW)$. 
\end{proposition}

\begin{proof}
Clearly, the only extension of $\Phi$ to  $\hat \CX$ that respects the derivation and the 
action of the symmetric group is given by
$\Phi(\beta, \d^k \mfs) = \beta \d^k \Phi(\mfs)$.
The claim now follows from Lemma~\ref{lem:derT} and Theorem~\ref{theo:injectiveT}.
\end{proof}

%\begin{remark}\end{remark}

We now introduce a notion which allows to describe a useful basis for the vector space $T_\d(\CX)$.

\begin{definition}
Given a finite set $\CX$ endowed with a map $\CX\ni\mfs\mapsto(o_\mfs,i_\mfs)\in\N^2$, an $\CX$-graph $\mfg=(V_\mfg,\mft,\phi_\mfg)_\low^\upper$ of degree $(\upper,\low)\in\N^2$
consists of a finite vertex set $V_\mfg$, a type map $\mft \colon V_\mfg \to \CX$, and a map $\phi_\mfg \colon \Out(\mfg) \to \In(\mfg)$,
where
\begin{equ}
\Out(\mfg) = [\low] \sqcup \overline{\Out}(\mfg) \;,\qquad \In(\mfg) = [\upper] \sqcup \overline{\In}(\mfg)\sqcup\bigsqcup_{v\in V_\mfg} \{(v,\star)\}\;,
\end{equ}
with $\overline{\Out}(\mfg)$ and $\overline{\In}(\mfg)$ defined as in \eqref{e:defTg2}, such
that every element of $[\upper] \sqcup \overline{\In}(\mfg)$ has exactly one preimage
under $\phi_\mfg$ and $\phi_\mfg^{-1}([\upper])\subset\overline{\Out}(\mfg)$.
(Note that targets of type $\star$ can have any number of preimages, including none.)

As in the case of $T$-graphs, two $\CX$-graphs $\mfg_i = (V_i,\mft_i,\phi_i)_\low^\upper$
are isomorphic if there exists a bijection
$\iota \colon V_1 \to V_2$ intertwining the $\mft_i$ and such that 
the induced bijection $\hat \iota \colon \Out(\mfg_1) \cup \In(\mfg_1) \to \Out(\mfg_2) \cup \In(\mfg_2)$
intertwines the $\phi_i$.
\end{definition}

\begin{wrapfigure}[5]{r}{2.5cm}
\vspace{-.7em}
\tikzset{external/export next=false}
\begin{tikzpicture}[scale=0.2,baseline=-2,draw=symbols,line join=round,decoration={
    markings,
    mark=at position 0.5 with {\arrow{>}}}]
\draw [black,postaction={decorate}] (-2,4.35) to[out=-90,in=90] (0.3,0.55);
\draw [black,postaction={decorate}] (2,4.35) to[out=-90,in=90] (-0.3,0.55);
\draw [black,postaction={decorate}] (0.3,-0.55) to (0.3,-2);
\draw [black,postaction={decorate}] (-2.7,7) to (-2.7,5.6);
\draw [black,postaction={decorate}] (1.3,7) to (1.3,5.6);
\draw [black,postaction={decorate}] (-0.3,-0.55) .. controls (-0.3,-4.7) and (-6.7,8.7) .. (-2.7,5.6);
\draw (0,0) node[rec] {}; 
\draw (-2,4.9) node[cerc] {};
\draw (-2.7,5.6) node[stars] {};
\draw (1.3,5.6) node[stars] {};
\draw (-0.9,0.7) node[stars] {};
\draw (2,4.9) node[cerc] {};
\end{tikzpicture}
\end{wrapfigure}
%\end{minipage}
On the right we show a graphical representation of an $\CX$-graph of degree $(1,2)$, with $\CX$ composed of two elements: $\<CX_cerc>$ of degree $(1,0)$ and $\<CX_rec>$ of degree $(2,2)$ . 

Recall that, by definition, the space $T_\d(\CX)$ is freely generated (as a vector space) by the set of all
$g \otimes \bigotimes_{v \in V} w_{v}$ with 
\begin{enumerate}
\item $g= (V,\mfd, \phi)\in T^u_\ell$ for some $u,\ell\geq 0$, 
\item $w_v=(\beta_v,\partial^{k_v}\mfs_v)\in \hat\CX^{o_v}_{i_v}$ for all $v\in V$, with $\mfd(v) = (o_v, i_v)=
(o_{\mfs_v},k_v+i_{\mfs_v})$ and $\beta_v=(\beta_v^\out,
\beta_v^\inc)\in\Sym(o_{\mfs_v})\times\Sym(k_v+i_{\mfs_v})$, 
\end{enumerate} 
modulo the identifications \eqref{e:ident}, \eqref{e:equivrel} and \eqref{e:identbeta}. 
We claim now that every such $g \otimes \bigotimes_{v \in V} w_{v}$ can
be interpreted as an $\CX$-graph $(V_\mfg,\mft,\varphi_\mfg)_\low^\upper$  as follows.
\begin{enumerate}
\item The vertex set is simply $V_\mfg:=V$.
\item The type $\mft \colon V_\mfg \to \CX$ associated to each $v\in V_\mfg =V$ is $\mft(v)=\mfs_v$.
\item To define $\phi_\mfg$, note first that there is a bijection
$\iota_{\out}\colon \Out(g) \to \Out(\mfg)$ which is the identity on $[\low]$ and maps
elements of type $(v,j)$ to $(v, (\beta_v^\out)^{-1}(j))$.
Similarly, there is a \textit{surjection}
$\iota_\inc\colon \In(g) \to \In(\mfg)$ which is the identity on $[\upper]$ and maps elements of the type $(v,j)$
to $(v,\star)$ if $(\beta_v^\inc)^{-1}(j) \le k_v$ and to $(v,(\beta_v^\inc)^{-1}(j)-k_v)$ otherwise. 
Note that the previous item guarantees that this
definition is consistent.

We then define
$\varphi_\mfg\colon \Out(\mfg) \to \In(\mfg)$ by
\begin{equ}
\varphi_\mfg =  \iota_\inc \circ \varphi_g \circ \iota_\out^{-1}\;.
\end{equ}
\end{enumerate}
The following proposition will be used repeatedly when describing free $T_\d$-algebras.

\begin{proposition}
The map $w = g \otimes \bigotimes_{v \in V} w_{v}\mapsto\mfg = \mfg(w)$ is well-defined 
in the sense that if $w \sim \bar w$ according to \eqref{e:ident}, \eqref{e:equivrel} or \eqref{e:identbeta},
then $\mfg(w)$ and $\mfg(\bar w)$ are isomorphic.
Furthermore, it yields a bijection between $T_\d(\CX)$
and the free vector space generated by all isomorphism classes of $\CX$-graphs.
\end{proposition}

\begin{proof}
We first show that it is well-defined.
Since every identification $\iota \colon V \to V$ of $T$-graphs as in \eqref{e:ident}
induces an isomorphism of $\CX$-graphs for $\mfg$ (via the same map on $V$), 
we only consider \eqref{e:equivrel} and \eqref{e:identbeta}. 
Let us fix 
$g \otimes \bigotimes_{v \in V_g} w_{v}$, $\alpha \in \Sym_g$ and a choice of 
$\gamma_{v}\in \Sym(k_{v})$ for every $v\in V_g$; if $w_v=(\beta_v, \d^{k_v} \mfs_v)$, we set
$\bar w_v=(\beta_v \circ (\gamma_v \cdot \id_{i_{\mfs_v}}^{o_{\mfs_v}}), \d^{k_v} \mfs_v)$.
Let us check that the $\CX$-graphs $\mfg$ and $\bar \mfg$ associated, respectively, with $g \otimes \bigotimes_{v \in V_g} w_{v}$ and with
\[
(\alpha \actint g) \otimes \bigotimes_{v \in V_{\alpha \actint g}} (\alpha_v \bar w_v),
\]
are isomorphic. For this, we recall that $\alpha \actint g = (V,\mfd,\alpha \circ \phi_g \circ \alpha^{-1})$,
so that the vertex set is the same for $g$ and $\alpha \actint g$; in particular we have $\mfg=(V,\mft,\varphi_\mfg)$
and $\bar\mfg=(V,\mft,\varphi_{\bar\mfg})$. Considering the identifications
$\id:V\to V$, $\id \colon \Out(\mfg) \cup \In(\mfg) \to \Out(\bar\mfg) \cup \In(\bar\mfg)$, it remains to show
that $\varphi_\mfg=\varphi_{\bar\mfg}$. We have
$\alpha_v \bar w_v=(\bar\beta_v,\d^{k_v} \mfs_v)$, where $\bar\beta_v=(\bar\beta_v^\out,\bar\beta_v^\inc)$ is given by
\[
\bar\beta_v^\out=\alpha_v^\out\circ \beta_v^\out, 
\qquad \bar\beta_v^\inc=\alpha_v^\inc\circ\beta_v^\inc\circ
(\gamma_v\cdot\id_{i_{\mfs_v}}^{o_{\mfs_v}}).
\]
Then it is easy to see that the map $(\bar\iota_\out)^{-1}\colon \Out(g) \to \Out(\bar\mfg)$ defined in point 3 above is equal to $\alpha\circ(\iota_\out)^{-1}$, where $\alpha$ is as in \eqref{e:alph}. Analogously, 
$\bar\iota_\inc=\iota_\inc\circ\alpha^{-1}$: indeed, $\bar\iota_\inc(v,j)=(v,\star)$ if and only if 
\[
(\bar\beta^\inc_v)^{-1}(j)=(\gamma_v^{-1}\cdot\id_{i_{\mfs_v}}^{o_{\mfs_v}})\circ(\beta_v^\inc)^{-1}\circ(\alpha_v^\inc)^{-1}
(j)\in\{1,\ldots,k_v\},  
\]
and since $\gamma_v$ acts on $\{1,\ldots,k_v\}$ this is equivalent to $(\beta_v^\inc)^{-1}((\alpha_v^\inc)^{-1}(j))
\le k_v$; finally, for the same reason $(\bar\beta^\inc_v)^{-1}(j)>k_v$ if and only if $(\beta_v^\inc)^{-1}((\alpha_v^\inc)^{-1}(j))> k_v$ and in this case 
\[
\bar\iota_\inc(v,j)=(v,(\bar\beta^\inc_v)^{-1}(j)-k_v)=\iota_\inc(\alpha^{-1}(v,j)).
\]
Therefore, since $\varphi_{\alpha \actint g }=\alpha \circ \phi_g \circ \alpha^{-1}$, we finally have
\[
\varphi_{\bar\mfg}=\bar\iota_\inc \circ \varphi_{\alpha \actint g } \circ (\bar\iota_\out)^{-1}=\iota_\inc \circ \varphi_g \circ \iota_\out^{-1}=
\varphi_{\mfg}.
\]
It remains to show that $w \mapsto \mfg$ is invertible. Given an $\CX$-graph $\mfg = (V_\mfg,\mft,\phi_\mfg)$,
we define $g = (V,\mfd,\phi_g)$ by setting $V = V_\mfg$ and $\mfd(v) = (o_v, i_v)$ with
\begin{equ}
o_v = o_{\mft(v)}\;,\qquad i_v = i_{\mft(v)} + k_v\;,\qquad k_v \eqdef |\phi_\mfg^{-1}(v,\star)|\;.
\end{equ}
We then set $w_v = (\id, \d^{k_v} \mft(v))$ for all $v \in V$ and we define $\iota_\inc$ and $\iota_\out$ exactly as above, but with $\beta_v^\inc, \beta_v^\out = \id$, which allows us to choose for $\phi_g$ any
bijection satisfying $\iota_\inc \circ \phi_g = \phi_{\mfg}\circ \iota_\out$. 
Since different choices of $\phi_g$ necessarily differ by post-composition with a permutation of 
the preimages of elements of type $(v,\star)$ under $\iota_\inc$, these give rise to the same
$\CX$-graph by \eqref{e:identbeta}, so that we have specified $g$ uniquely. Verifying that this construction
is the inverse of the previous one is routine.
\end{proof}

\subsection{Hilbert space structure}
\label{sec:Hilbert}

The free $T_\d$-algebra $T_\d(\CX)$ comes with a natural scalar product 
such that $\scal{g,g} = |\CG_g|$, where $\CG_g$ denotes the group of automorphisms of $g$, 
and such that $\scal{g,h} = 0$ if the $\CX$-graphs $g$ and $h$ aren't isomorphic. 
It is then natural to ask what are the adjoints of the 
four defining operations of a $T_\d$-algebra. It is immediate that, for $\alpha \in \Sym(\upper,\low)$,
one has $\alpha^* = \alpha^{-1}$. Regarding the product, we define its adjoint $\Delta \colon 
T_\d(\CX) \to T_\d(\CX) \otimes T_\d(\CX)$ to be the linear map such that 
\begin{equ}[e:charDelta]
\scal{f\mult g,h} = \scal{f \otimes g, \Delta h}\;.
\end{equ}
To characterise $\Delta$, we say that an $\CX$-graph $h$ is irreducible if whenever 
$h = f\mult g$ one must have either $f=\one$ or $g=\one$. 

Irreducible graphs of degree $(0,0)$ play a special role since they commute with every
element of $T_\d(\CX)$. For every $\CX$-graph $h$, we can then find a unique (up to permutation) collection of 
distinct irreducible graphs $(g_i)_{i=1}^n$ of degree $(0,0)$ as well as a collection of exponents $k_i$ and 
irreducible graphs $(f_j)_{j=1}^m$ such that 
\begin{equ}[e:decomph]
h = f_1\mult \ldots \mult f_m \mult g^k\;,\qquad g^k = \prod_{i=1}^n g_i^{k_i}\;.
\end{equ}
With this notation at hand, we claim that one has 
\begin{equ}[e:defDelta]
\Delta h = \sum_{p \le m} \sum_{\ell \le k}\binom{k}{\ell} (f_1\mult \ldots \mult f_p)\mult g^\ell
\otimes (f_{p+1}\mult \ldots \mult f_m)\mult g^{k-\ell}\;,
\end{equ}
again with the conventions that empty products are interpreted as $\one$, that $\ell \le k$ means $\ell_i \le k_i$ for $0\leq i\leq n$ and $\binom{k}{\ell}=\prod_{i=1}^{n}\binom{k_i}{\ell_i}$.

From the uniqueness of the decomposition \eqref{e:decomph}, it follows that $\Delta h$ as characterised by \eqref{e:charDelta} must be
of the type \eqref{e:defDelta}, with the binomial coefficient replaced by some coefficient depending on
$p$, $\ell$ and $h$.

At this stage, we note that one has
\begin{equ}
\scal{h,h} = |\CG_h| = k! \Big(\prod_{i=1}^m |f_i|^2\Big)\Big(\prod_{j=1}^n |g_j|^2\Big)\;,
\end{equ}
since any isomorphism for $h$ is obtained by composing isomorphisms for the irreducible factors with
a permutation of the identical factors $g_i$. (The $f_i$ cannot be permuted even if some of them are identical 
as a consequence of the fact that they are anchored.)
The claim then follows at once by verifying that the binomial coefficient is the only choice that
guarantees that \eqref{e:decomph} holds if we take for $f$ and $g$ the left (resp.\ right) factor in
one of the summands of \eqref{e:defDelta}.

Regarding the trace operation, we claim that for $g = (V_g,\mft,\phi)^\upper_\low$ one has the identity
\begin{equ}[e:defTrace]
\tr^* g = \sum_{e \in \Out(g)} \Cut_e(g)\;,
\end{equ}
where $\Cut_e(g) = (V_g,\mft,\hat \phi)^{\upper+1}_{\low+1}$ is obtained by 
setting $\hat \phi(e) = \upper + 1$, $\hat \phi(\low + 1) = \phi(e)$, and $\hat \phi = \phi$ otherwise.
Similarly to before, is obvious from the definition of $\tr$
(in particular in view of the representation given by Lemma~\ref{lem:representationg})
 that one has
\begin{equ}
 \tr^* g  = \sum_{e \in \Out(g)} \alpha_e \Cut_e(g)\;,
\end{equ}
for some constants $\alpha_e$. Since furthermore $\tr \Cut_e(g) = g$ for every edge $e$, one has
\begin{equs}
|g|^2 &= \scal{\Cut_e(g), \tr^* g} = \sum_{\bar e \in \Out(g)} \alpha_{\bar e} \scal{\Cut_e(g),\Cut_{\bar e}(g)} \\
&= |{\Cut_e g}|^2 \sum_{\bar e\,:\, \Cut_{\hat e}(g) = \Cut_e(g)} \alpha_{\bar e} \;.
\end{equs}
As a consequence, we only
need to verify that, for every $e \in \Out(g)$, one has $|g|^2 = N_e |{\Cut_e g}|^2$, where
$N_e$ denotes the number of edges $\hat e$ such that $\Cut_{\hat e}(g) = \Cut_e(g)$.

Viewing elements of $\CG_g$ as bijections on $\Out(g)$, it is straightforward to verify that $\CG_{\Cut_e(g)}$ is naturally identified with 
the subgroup of $\CG_g$ that fixes $e$, while
$N_e$ is nothing but the size of the orbit of $e$ under the action of $\CG_g$.
The claim \eqref{e:defTrace} now follows from the orbit-stabiliser theorem.

Finally, for $g$ as above, we have $\d^* g =0$ unless $\low > 0$ 
and $\phi(1) = (v,\star)$ for some $v \in V_g$.
In that case, we claim that $\d^*g = \hat g = (V_g,\mft, \hat \phi)^\upper_{\low-1}$
where $\hat \phi(j) = \phi(j+1)$ for $j \in [\low]$ and $\hat \phi = \phi$ otherwise. 
Again, it is clear that $\d^* g$ must be some multiple of $\hat g$. Furthermore, by definition
of the adjoint, we would like to have 
\begin{equ}
\scal{\hat g, \d^* g} = \scal{\d \hat g, g} = \hat N_{g} \scal{g,g}\;,
\end{equ}
where $\hat N_{g}$ is the number of vertices $\hat v$ of $\hat g$ such that $\d_{\hat v} \hat g = g = \d_v \hat g$,
with $\d_{\hat v} \hat g$ defined as in the proof of Lemma \ref{lem:Talgstruc} and $v$ the specific vertex of $V_g$
singled out above.
If we now interpret $\CG_{\hat g}$ as a group of bijections of $V_g$, 
we note again that $\hat N_{g}$ is nothing but the orbit of $v$ under $\CG_{\hat g}$,
while $\CG_g$ is the stabiliser of $v$, so that the claim follows again from the orbit-stabiliser 
theorem.

\section{Application to SPDEs}
\label{sec:spaces}

We now return to the reason for introducing all the algebraic machinery of Section \ref{sec:algebra}, namely the proof
of Proposition~\ref{prop:ItoStratgeo}.
We first use this to provide a characterisation of the space $\CS_\geo$.

\subsection{Characterisation of geometric counterterms}
\label{sec:geo2}

We consider from now on smooth one-parameter families $(\psi_t)_{t\geq 0}$ of diffeomorphisms of $\R^d$ such that
\begin{equ}[psih]
\psi_0=\id:\R^d\to\R^d, \qquad
\d_t \psi_t|_{t=0} = h:\R^d\to\R^d.
\end{equ}

\begin{definition}
We write $T_{\<generic>}$ \label{S generic page ref} for the free $T_\d$-algebra generated by 
$\{\<generic>_i,\<not>\,:\, i =1,\ldots,m\}$ with $\<generic>_i$ of degree $(1,0)$
and $\<not>$ of degree $(1,2)$, quotiented by the 
ideal of $T_\d$-algebras generated by $(\<not> - \swap_{1,1}^{1}\<not>)$. 
We define $ T_{\<generic>,\<diff>} $ \label{S diff page ref} in the 
same way as $T_{\<generic>}$ by adding an additional element
$ \<diff> $ of degree $(1,0)$ to the set of generators.
\end{definition}

We note that $T_{\<generic>}$ can also be constructed as $\Tr(W_{\<generic>})$ with the following choice of $(W_{\<generic>})^\upper_\low$, $\upper,\low\geq 0$: we set $\CX:=\{\<generic>_i,\<not>\,:\, i =1,\ldots,m\}$, with the above degrees, and we define $(W_{\<generic>})^\upper_\low$ as $W\CX^\upper_\low/A^\upper_\low$, 
where $W\CX^\upper_\low$ is as in \eqref{e:defWCX} and $A^\upper_\low$ is the vector space generated by 
$\{(\id^1_{k+2},\partial^k\<not>)-(\id^1_k\cdot\swap_{1,1},\partial^k\<not>), k\geq 0\}$. We need to 
consider these additional quotients in order to take into account the symmetry $\Gamma^\alpha_{\beta\gamma}=\Gamma^\alpha_{\gamma\beta}$.

We will henceforth view the space $\CS_{\<generic>}$ defined on page~\pageref{CS page ref} 
(and therefore also the reduced space $\CS \subset \CS_{\<generic>}$, where we use $\iota:\CS \to \CS_{\<generic>}$ in \eqref{def:iota}
as an inclusion) as a
subspace of $T_{\<generic>}$ in the obvious way, namely each tree $\tau \in \SS_{\<generic>}$ 
is viewed as an $\CX$-graph $g = (V_g,\mft,\phi)$ where the vertex set $V_g$ is given by the inner vertices of 
$\tau$, the type of a vertex is given by $\<generic>_i$ if it is incident to such a noise, while it is given by
$\<not>$ otherwise. The map $\phi$ is determined by the edges of $\tau$, with thin edges representing 
connections to the corresponding instance of $(v,\star)$, while thick edges represent connections to the
`native' incoming slots of $\<not>$.

In this way of representing elements of $\CS_{\<generic>}$, we have for example
\begin{equ}
\<Xi4eabisc1bis> \approx \tr^3\bigl( \d \<not> \mult \<generic>_k \mult \<generic>_j \mult\tr\bigl(\d \<generic>_i \mult \<generic>_\ell\bigr)\bigr)\;.
\end{equ}
(With this way of writing we avoid having to use the action of the symmetric group.)
For any fixed choice of $\Gamma$ and $\sigma$, we then write 
$\Upsilon_{\Gamma,\sigma}:T_{\<generic>}\to\CW[\R^d]$ for the (unique) morphism of $T_\d$-algebras mapping 
$\<generic>_i$ to $\sigma_i$
and $\<not>$ to $2\Gamma$, where the target space $\CW[\R^d]$ is the $T_\d$-algebra constructed in
Remark~\ref{rem:tensorT}. It is straightforward to convince oneself that this is consistent with our
previous notation in the sense that $\Upsilon_{\Gamma,\sigma}$ is an extension of the valuation
given in Section~\ref{sec:BPHZthm} to all of $T_{\<generic>}$. \label{Evaluationmap2 page ref}
The reason why $\<not>$ is mapped to $2\Gamma$ and not just to $\Gamma$ is that in $T_{\<generic>}$
every instance of $\<not>$ always has two incoming thick edges, so that one performs two derivatives in 
the $q$ variables for the corresponding vertex in the formula \eqref{recursive_Upsilon}.

Given furthermore a smooth function $ h :\R^d\to\R^d$, we extend $ \Upsilon_{\Gamma,\sigma} $ to a morphism of 
$T_\d$-algebras $  \Upsilon_{\Gamma,\sigma}^{h} : T_{\<generic>,\<diff>}  \to\CW[\R^d]$ by 
sending $ \<diff> $ to $ h $.
We also define $\phi_\geo:T_{\<generic>}\to T_{\<generic>,\<diff>}$ \label{phi geo page ref}
as the unique infinitesimal morphism of $T_\d$-algebras with respect to the canonical injection $\iota:T_{\<generic>}\to T_{\<generic>,\<diff>}$ (see \eqref{infinit-morph}) such that
\begin{equs}
\phi_\geo(\<generic>_i) &= [\<generic>_i, \<diff>] = \<generic>_i \graftI \<diff> - \<diff> \graftI \<generic>_i \;,\label{e:phigeo} \\
\phi_\geo(\<not>) &= [\<not>, \<diff>] - \tr (\swap^{1,1}_3 (\d \<diff> \mult \<not>))
- \swap^1_{1,1} \tr (\swap^{1,1}_3 (\d \<diff> \mult \<not>))  - 2 \d^2 \<diff>\;,
\end{equs}
see \eqref{e:graft} for the definition of $\graftI$.
Pictorially, $\phi_\geo$ acts by setting
\begin{equ}
\def\offset{.8,1.3}
\tikzset{external/export next=false}
\begin{tikzpicture}[scale=0.2,baseline=-2]
\draw[tinydots] (0,0)  -- (0,-0.8);
\node[xi] (root) at (0,0) {};
\end{tikzpicture}
\;\mapsto\;
\tikzset{external/export next=false}
\begin{tikzpicture}[scale=0.2,baseline=2]
\draw[symbols]  (-.5,2) -- (0,0) ;
\draw[tinydots] (0,0)  -- (0,-0.8);
\node[diff] (root) at (0,-0.1) {};
\node[xi] (diff) at (-0.5,2) {};
\end{tikzpicture}
\;-\;
\tikzset{external/export next=false}
\begin{tikzpicture}[scale=0.2,baseline=2]
\draw[symbols]  (-.5,2) -- (0,0) ;
\draw[tinydots] (0,0)  -- (0,-0.8);
\node[xi] (xi) at (0,0) {};
\node[diff] (root) at (-0.5,2) {};
\end{tikzpicture}\;,\qquad\qquad
\def\offset{.8,1.3}
\def\offsett{1.3,.8}
\tikzset{external/export next=false}
\begin{tikzpicture}[scale=0.2,baseline=-2]
\coordinate (root) at (0,0);
\coordinate (t1) at (-.8,1.5);
\coordinate (t2) at (.8,1.5);
\draw[tinydots] (root)  -- +(0,-0.8);
\draw[kernels2] (t1) -- (root);
\draw[kernels2] (t2) -- (root);
\node[not] (rootnode) at (root) {};
\end{tikzpicture}
\;\mapsto\;
\tikzset{external/export next=false}
\begin{tikzpicture}[scale=0.2,baseline=-.25cm]
\coordinate (root) at (0,0);
\coordinate (tri) at (0,-2);
\coordinate (t1) at (-.8,1.5);
\coordinate (t2) at (.8,1.5);
\draw[tinydots] (tri)  -- +(0,-0.8);
\draw[kernels2] (t1) -- (root);
\draw[kernels2] (t2) -- (root);
\draw[symbols] (root) -- (tri);
\node[not] (rootnode) at (root) {};
\node[diff] (trinode) at (tri) {};
\end{tikzpicture}
\;-\;
\tikzset{external/export next=false}
\begin{tikzpicture}[scale=0.2,baseline=2]
\coordinate (root) at (0,0);
\coordinate (tri) at (1.2,1.2);
\coordinate (t1) at (-1.2,1.2);
\coordinate (t2) at (0,1.7);
\draw[tinydots] (root)  -- +(0,-0.8);
\draw[kernels2] (t1) -- (root);
\draw[kernels2] (t2) -- (root);
\draw[symbols] (root) -- (tri);
\node[not] (rootnode) at (root) {};
\node[diff] (trinode) at (tri) {};
\end{tikzpicture}
\;-\;
\tikzset{external/export next=false}
\begin{tikzpicture}[scale=0.2,baseline=-2]
\coordinate (root) at (0,0);
\coordinate (t1) at (-.5,2.5);
\coordinate (tri) at (1,1);
\coordinate (t2) at (1,2.5);
\draw[tinydots] (root)  -- +(0,-0.8);
\draw[kernels2] (t1) -- (root);
\draw[symbols] (t2) -- (tri);
\draw[kernels2] (tri) -- (root);
\node[not] (rootnode) at (root) {};
\node[diff] (trinode) at (tri) {};
\end{tikzpicture}
\;-\;
\tikzset{external/export next=false}
\begin{tikzpicture}[scale=0.2,baseline=-2]
\coordinate (root) at (0,0);
\coordinate (t1) at (.5,2.5);
\coordinate (tri) at (-1,1);
\coordinate (t2) at (-1,2.5);
\draw[tinydots] (root)  -- +(0,-0.8);
\draw[kernels2] (t1) -- (root);
\draw[symbols] (t2) -- (tri);
\draw[kernels2] (tri) -- (root);
\node[not] (rootnode) at (root) {};
\node[diff] (trinode) at (tri) {};
\end{tikzpicture}
\;-\;2
\tikzset{external/export next=false}
\begin{tikzpicture}[scale=0.2,baseline=-2]
\coordinate (root) at (0,0);
\coordinate (t1) at (-.8,1.5);
\coordinate (t2) at (0.8,1.5);
\draw[tinydots] (root)  -- +(0,-0.8);
\draw[symbols] (t1) -- (root);
\draw[symbols] (t2) -- (root);
\node[diff] (rootnode) at (root) {};
\end{tikzpicture}\;,
\end{equ}
which furthermore has a very natural interpretation in terms of how
vector fields and Christoffel symbols transform under infinitesimal changes of coordinates, see \eqref{e:calculgens} below.
Here, we used the graphical convention that edges that end in one of the two
`original' slots of $\<not>$ are drawn as thick lines, while edges that end in one
of the additional slots created by the operator $\d$ are drawn as thin lines.
We also set \label{hat phi geo page ref}
\begin{equ}
\hat \phi_\geo (\tau) = \phi_\geo(\tau) - [\tau,\<diff>]\;,
\end{equ}
for any $\tau \in (T_{\<generic>})_k^1$, so that one has for example $\hat\phi_\geo(\<generic>_i)=0$,
\begin{equ}%[eq:diffex2]
 \phi_\geo\left( \<arbreact> \right) = \<arbreact1> - 2\<arbreact2> + \<arbreact3>- \left(\<diff> \graftI  \<arbreact>\right) \ \text{   and   }\  \hat \phi_\geo\left( \<arbreact> \right) = \<arbreact3>  - 2\<arbreact2>\;.
\end{equ}
With all of these notations at hand, we are ready to give the following characterisation of the space $\CS_\geo$.

\begin{proposition}\label{geoker}
One has $\CS_\geo = \CS \cap \ker \hat \phi_\geo$.
\end{proposition}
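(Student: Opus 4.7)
The strategy is to recognise $\hat\phi_\geo$ as the infinitesimal version of the diffeomorphism action on the pair $(\Gamma,\sigma)$, and then to integrate this infinitesimal identity using one-parameter subgroups. We work throughout with one-parameter families $(\psi_t)_{t\ge 0}$ of diffeomorphisms of $\R^d$ as in \eqref{psih}, generated by a smooth vector field $h$.

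First I would verify by direct computation from the transformation rules \eqref{e:actions} that, for a time-independent $h$ and $\psi_t = \exp(th)$,
\begin{equs}\label{e:calculgens}
\d_t|_{t=0}\bigl(\psi_t\act\sigma_i\bigr)^\alpha &= h^\beta\d_\beta \sigma_i^\alpha - \sigma_i^\beta\d_\beta h^\alpha = \bigl(\Upsilon^h_{\Gamma,\sigma}[\<generic>_i,\<diff>]\bigr)^\alpha\;,\\
\d_t|_{t=0}\bigl(\psi_t\act\Gamma\bigr)^\alpha_{\beta\gamma} &= \d_\mu h^\alpha\,\Gamma^\mu_{\beta\gamma} - h^\mu\d_\mu\Gamma^\alpha_{\beta\gamma} - \Gamma^\alpha_{\mu\gamma}\d_\beta h^\mu - \Gamma^\alpha_{\beta\mu}\d_\gamma h^\mu - \d^2_{\beta\gamma}h^\alpha\;,
\end{equs}
and that the second expression coincides with $\tfrac{1}{2}\bigl(\Upsilon^h_{\Gamma,\sigma}\phi_\geo(\<not>)\bigr)^\alpha_{\beta\gamma}$ once one takes into account the factor $2$ in the convention $\Upsilon_{\Gamma,\sigma}\<not> = 2\Gamma$. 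Since $\phi_\geo$ was defined to be the unique infinitesimal morphism of $T$-algebras with respect to the canonical inclusion $\bar\CS_{\<generic>}\hookrightarrow\bar\CS_{\<generic>,\<diff>}$ taking these prescribed values on generators, and since $(\Gamma,\sigma)\mapsto \Upsilon_{\Gamma,\sigma}$ is itself a morphism of $T$-algebras into $\CW[\R^d]$, the Leibniz-type property \eqref{infinit-morph} combined with \eqref{e:dtr} and \eqref{e:trtensor} propagates these identities to arbitrary $\tau\in\CS$, yielding the key identity
\begin{equ}[e:inftygeo]
\d_t|_{t=0}\,\Upsilon_{\psi_t\act\Gamma,\psi_t\act\sigma}\tau = \Upsilon^h_{\Gamma,\sigma}\,\phi_\geo(\tau)\;.
\end{equ}

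For the inclusion $\CS_\geo\subset\CS\cap\ker\hat\phi_\geo$, I would take $\tau\in\CS_\geo$ and differentiate the defining identity \eqref{e:idendiff} at $t=0$. The left-hand side produces $[\Upsilon_{\Gamma,\sigma}\tau,h] = \Upsilon^h_{\Gamma,\sigma}[\tau,\<diff>]$ (since the grafting operation on $\CV_0^1$ from Remark~\ref{rem:Lie} is preserved by the morphism $\Upsilon^h_{\Gamma,\sigma}$), while the right-hand side produces $\Upsilon^h_{\Gamma,\sigma}\phi_\geo(\tau)$ by \eqref{e:inftygeo}. Subtracting, we obtain $\Upsilon^h_{\Gamma,\sigma}\hat\phi_\geo(\tau) = 0$ for every choice of $\Gamma$, $\sigma$, and $h$ and every $d$. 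Since $\hat\phi_\geo(\tau)\in\bar\CS_{\<generic>,\<diff>}$ is a linear combination of (images of) connected anchored $\CX$-graphs in the quotient $T$-algebra, Corollary~\ref{cor:injectiveAlg} together with Corollary~\ref{cor:trivial} (applied using the right inverse of the quotient map that symmetrises $\<not>$) shows that this valuation map is injective on the relevant subspace, whence $\hat\phi_\geo(\tau)=0$.

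For the reverse inclusion, I would fix $\tau\in\CS\cap\ker\hat\phi_\geo$ and a smooth vector field $h$ on $\R^d$, and let $(\psi_t)_{t\in\R}$ be the one-parameter group $\psi_t = \exp(th)$. Define
\begin{equ}
F(t) \eqdef \Upsilon_{\psi_t\act\Gamma,\psi_t\act\sigma}\tau\;, \qquad G(t)\eqdef\psi_t\act\Upsilon_{\Gamma,\sigma}\tau\;.
\end{equ}
Using the group property $\psi_{t+s}=\psi_s\circ\psi_t$, the identity \eqref{e:inftygeo}, and the assumption $\hat\phi_\geo(\tau)=0$ (which says that $\phi_\geo(\tau) = [\tau,\<diff>]$ in $\bar\CS_{\<generic>,\<diff>}$), one finds that both $F$ and $G$ satisfy the same ODE $X'(t) = [X(t),h]$ with common initial datum $\Upsilon_{\Gamma,\sigma}\tau$ at $t=0$; by uniqueness we conclude $F(t)=G(t)$ for all $t$. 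Since every diffeomorphism homotopic to the identity can be written as a finite composition of time-one maps of flows of smooth vector fields, iterating this argument yields \eqref{e:idendiff} for arbitrary $\phi$ homotopic to the identity, i.e.\ $\tau\in\CS_\geo$.

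The main delicate point is the injectivity step invoked at the end of the forward direction: one needs to check that $\hat\phi_\geo(\tau)$ does indeed land in the realisable subspace covered by Corollary~\ref{cor:injectiveAlg}, which requires observing that the elementary moves in the definition of $\phi_\geo$ preserve connectedness and anchoredness (the vertex $\<diff>$ always replaces, or is grafted onto, an existing vertex, so no disconnected component of degree $(0,0)$ is created). Once this bookkeeping is in place, the rest of the argument is the routine ODE/Lie-theoretic integration sketched above.
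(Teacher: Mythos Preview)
Your forward direction is essentially identical to the paper's: differentiate \eqref{e:idendiff} at $t=0$, identify the two sides as $\Upsilon^h_{\Gamma,\sigma}\phi_\geo(\tau)$ and $\Upsilon^h_{\Gamma,\sigma}[\tau,\<diff>]$ respectively, and invoke the injectivity result to conclude $\hat\phi_\geo(\tau)=0$.

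The reverse direction is also the same idea as the paper's, but your execution has a gap. You restrict to autonomous flows $\psi_t=\exp(th)$, show $F$ and $G$ satisfy the same evolution equation $X'=[X,h]$, and then assert that ``every diffeomorphism homotopic to the identity can be written as a finite composition of time-one maps of flows of smooth vector fields''. This last claim is non-trivial and, for diffeomorphism groups, generally problematic: the exponential map on $\mathrm{Diff}$ is known not to be locally surjective (already for $\mathrm{Diff}(S^1)$), so it is not clear that autonomous time-one maps generate the identity component. At the very least this would require substantial extra justification.

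The paper avoids this entirely by working directly with an \emph{arbitrary} smooth homotopy $\{\psi_t\}_{t\in[0,1]}$ from $\id$ to the given $\psi$, with time-dependent generator $h_t=(\d_t\psi_t)\circ\psi_t^{-1}$. It then sets $\hat\psi_t=\psi\circ\psi_t^{-1}$ and shows that
\[
\Phi_t \eqdef \hat\psi_t\act\bigl(\Upsilon_{\psi_t\act\Gamma,\psi_t\act\sigma}\tau\bigr)
\]
satisfies $\d_t\Phi_t = \hat\psi_t\act\bigl(\Upsilon^{h_t}_{\psi_t\act\Gamma,\psi_t\act\sigma}\hat\phi_\geo(\tau)\bigr)=0$, so that $\Phi_0=\Phi_1$ is exactly \eqref{e:idendiff}. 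This is equivalent to your $F=G$ argument but with $h$ replaced by $h_t$; your ODE becomes the non-autonomous $X'(t)=[X(t),h_t]$, and no decomposition of $\psi$ is needed. Replacing your one-parameter group by a general homotopy in this way closes the gap with essentially no extra work.
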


\begin{proof}
Recall first the action \eqref{e:actions} of the group of diffeomorphisms on connections and tensors
and Definition~\ref{def:geoIto} of $\CS_\geo$, where the action on $\Upsilon_{\Gamma,\sigma}\tau$ is that on vector fields as
given by \eqref{eq:actionvect}. It follows that if $(\psi_t)_{t\geq 0}$ is as in \eqref{psih} then, for any $\tau \in \CS$
\begin{equs}[e:diffRHS]
\d_t \bigl(\psi_t \act \Upsilon_{\Gamma,\sigma}\tau\bigr)\bigl|_{t=0}
&= \d_t \bigl(\bigl(\d_\alpha \psi_t\cdot \Upsilon^\alpha_{\Gamma,\sigma}\tau\bigr) \circ \psi_t^{-1}\bigr)\big|_{t=0} \\
&= \d_\alpha h\cdot \Upsilon^\alpha_{\Gamma,\sigma}\tau - \d_\alpha \Upsilon_{\Gamma,\sigma}\tau\cdot h^\alpha
= \Upsilon^h_{\Gamma,\sigma}[\tau,\<diff>]\;.
\end{equs}
On the other hand, it is easy to see from the definitions that the derivative of a one-parameter family 
$t \mapsto \phi_t$ of morphisms of $T_\d$-algebras is an infinitesimal morphism of $T_\d$-algebras at $\phi_t$ and
is therefore defined by its actions on any set of generators. For the family 
$t \mapsto \Upsilon_{\psi_t \act\Gamma,\psi_t \act\sigma}$, we conclude from \eqref{e:actions} 
(see also \cite[Eq.~2.16]{LieDer} for example) that
\begin{equs}{}
 & \d_t \Upsilon_{\psi_t \act\Gamma,\psi_t \act\sigma}(\<generic>_i) \big|_{t=0} = [\sigma_i, h]\;,\label{e:calculgens}\\
& \d_t \Upsilon_{\psi_t \act\Gamma,\psi_t \act\sigma}(\<not>)^\alpha_{\beta\gamma} \big|_{t=0}  \\ 
& \qquad = 2\big(\d_\eta h^\alpha \Gamma^\eta_{\beta\gamma}
- \Gamma^\alpha_{\eta\beta} \d_\gamma h^\eta- \Gamma^\alpha_{\eta\gamma} \d_\beta h^\eta - h^\eta \d_\eta \Gamma^\alpha_{\beta\gamma} - \d^2_{\beta\gamma} h^\alpha\big) \;,
\end{equs}
where we recall that the Lie bracket is given by $[\sigma_i, h]^\alpha = \sigma_i^\beta \partial_\beta h^\alpha - h^\beta \partial_\beta \sigma_i^\alpha$.
Hence, for every $\tau \in T_{\<generic>}$,
\begin{equ}[e:diffLHS]
\d_t \Upsilon_{\psi_t \act\Gamma,\psi_t \act\sigma}(\tau)\big|_{t=0} = \Upsilon_{\Gamma,\sigma}^h \phi_\geo(\tau)\;,
\end{equ}
since both $\d_t \Upsilon_{\psi_t \act\Gamma,\psi_t \act\sigma}$ and $\Upsilon_{\Gamma,\sigma}^h$ are infinitesimal morphisms with respect to $\Upsilon_{\Gamma,\sigma}$ and they agree on the generators by 
\eqref{e:calculgens}.

Comparing \eqref{e:diffLHS} and \eqref{e:diffRHS}, we see that if $\tau \in \CS_\geo$, then one necessarily has
\begin{equ}%[e:GammaBracket]
\Upsilon_{\Gamma,\sigma}^h \phi_\geo(\tau) = \Upsilon_{\Gamma,\sigma}^h [\tau,\<diff>]\;,
\end{equ}
for every choice of $\Gamma$, $\sigma$ and $h$, and we conclude that $\tau \in \ker \hat \phi_\geo$ by
Theorems~\ref{theo:injectiveT} and~\ref{theo:mainInjective}.

Conversely, let $\tau \in \CS \cap \ker \hat \phi_\geo$ and fix any diffeomorphism $\psi$ homotopic to the identity.
We then write $\{\psi_t\}_{t \in [0,1]}$ for any smooth homotopy connecting the identity to $\psi$ and
set $\hat \psi_t = \psi \circ \psi_t^{-1}$. Let furthermore $h_t = \d_t \psi_t\circ \psi_t^{-1}$ and
$\hat h_t = \d_t \hat \psi_t\circ \hat \psi_t^{-1}$ and note that as a consequence of the identity
$\d_t (\hat \psi_t \circ \psi_t) = 0$, one obtains the relation
\begin{equ}[e:relhath]
\hat h_t = - \hat \psi_t \act h_t\;.
\end{equ}
Performing the same calculation as \eqref{e:diffLHS} and \eqref{e:diffRHS} for the expression 
$t \mapsto \Phi_t \eqdef \hat \psi_t \act \bigl(\Upsilon_{\psi_t \act \Gamma,\psi_t \act \sigma}\tau\bigr)$, 
%but exploiting the fact that for any vector space $V$ on which the group of diffeomorphisms acts with some action $\act$ and
%any $g \in V$, one has
%\begin{equ}
%\d_t (\psi_t \act g) = \d_s(\psi_{t+s} \circ \psi_t^{-1})\big|_{s=0} \act (\psi_t \act g)
%= h_t \act (\psi_t \act g)\;,
%\end{equ}
we obtain
\begin{equ}
\d_t \Phi_t = \hat \psi_t \act \bigl(\Upsilon_{\psi_t \act\Gamma,\psi_t \act\sigma}^{h_t} \phi_\geo(\tau)\bigr)
-  \big[\hat h_t , \Phi_t\big]\;.
\end{equ}
Using \eqref{e:relhath}, we obtain
\begin{equ}
\big[\hat h_t , \Phi_t\big] = - \hat \psi_t \act \bigl[h_t,\Upsilon_{\psi_t \act\Gamma,\psi_t \act\sigma}\tau\bigr] = 
 -\hat \psi_t \act \bigl(\Upsilon_{\psi_t \act\Gamma,\psi_t \act\sigma}^{h_t} [\<diff>,\tau]\bigr)\;,
\end{equ}
so that $\d_t \Phi_t = \hat \psi_t \act \bigl(\Upsilon_{\psi_t \act\Gamma,\psi_t \act\sigma}^{h_t} \hat \phi_\geo(\tau)\bigr) = 0$,
since $\tau \in \ker \hat \phi_\geo$.
It follows that $\Phi_0 = \Phi_1$, which is precisely the desired identity.
\end{proof}

\subsection{Characterisation of It\^o counterterms}
\label{sec:Ito}

\begin{definition}
\label{bar CS g page ref}
Let $ T_{\<g>} $ (resp. $ T_{\<g>, \<diff>} $) be the free $T_\d$-algebra generated
by $\{\<g>,\<not>\}$ (resp. $\{\<g>,\<not>, \<diff>\}$), with $\<not>$ (and $ \<diff> $) as before and $\<g>$ of degree $(2,0)$, 
quotiented by the ideal of $T_\d$-algebras generated by $(\<not> - \swap_{1,1}^{1}\<not>)$. 
We then write $\phi_\Ito\colon T_{\<g>} \to T_{\<generic>}$ (resp. $ \phi_{\Ito}^{\<diff>} \colon T_{\<g>,\<diff>} \to T_{\<generic>,\<diff>}$) \label{phi ito page ref} for the morphism 
 mapping $\<not>$ to $\<not>$ (and $ \<diff> $ to $ \<diff> $) 
and $\<g>$ to $\sum_{i=1}^m (\<generic>_i \mult \<generic>_i)$. Throughout this section, $m$
will always be used to denote the number of generators $\<generic>_i$ of $T_{\<generic>}$ and $T_{\<generic>,\<diff>}$.

We also define $ \CS_{\<diff>} \subset T_{\<generic>,\<diff>} $ as the vector space spanned by trees having $ 2 $ or $ 4 $ noises $ \<generic> $ and exactly one $ \<diff> $. \label{cs diff page ref} The subspace $\CS_\Ito^{\<diff>} \subset  \CS_{\<diff>}$ \label{CS ito diff page ref}  consists of the elements $\tau\in \CS_{\<diff>}$ such that $\Upsilon^h_{\Gamma,\sigma}\tau = \Upsilon^h_{\Gamma,\bar \sigma}\tau$
holds for any choice of $d$, $\Gamma$, $\sigma$, $\bar \sigma$ and $h$ such that $\sigma \sigma^\top = \bar \sigma \bar \sigma^\top$.
\end{definition}
\begin{example}\label{ex:phi_ito}
For example, we have 
\[
\phi_\Ito (\<pre_im_I1Xitwos>) = \<I1Xitwos>, \qquad
\phi_\Ito \Big(\,\<pre_im_cI1Xi4abs>\,\Big)=2\, \<cI1Xi4abs>, \qquad 
\phi_\Ito \left(\<pre_im_1s>\right) =2\left( \<2I1Xi4bc1s> + \<Xi4cbc2s>\right),
\]
%\[
%\phi_\Ito \left(\<pre_im_I1Xi4abcc2s>\right) = \<I1Xi4abcc2s>, \qquad
%\phi_\Ito \left(\<pre_im_I1Xi4abcc1s>\right)=\<I1Xi4abcc1s>, \qquad \phi_\Ito \left(\,\<pre_im_2s>\,\right) = \<2I1Xi4c2s>,
%\qquad \phi_\Ito \left(\<pre_im_2I1Xi4c1s>\right) = \<2I1Xi4c1s>,
%\]
%\[
%\phi_\Ito \left(\,\<pre_im_Xi4eabbisc2s>\,\right) = \<Xi4eabbisc2s>, \qquad
%\phi_\Ito \left(\,\<pre_im_Xi4eabc2s>\,\right)=\<Xi4eabc2s>, \qquad 
%\phi_\Ito \left(\,\<pre_im_Xi4eabbisc1s>\,\right)=\<Xi4eabbisc1s>, \qquad \phi_\Ito \left(\<pre_im_Xi4eabc1s>\right) = \<Xi4eabc1s>,
%\]
%\[
%\phi_\Ito \left(\<pre_im_Xi4cabc1s>\right) = \<Xi4cabc1s>, \qquad 
%\phi_\Ito \left(\<pre_im_Xi4cabc2s>\right) = \<Xi4cabc2s>, \qquad
%\phi_\Ito \left(\<pre_im_Xi4ba1bs>\right) = \<Xi4ba1bs>, \qquad 
% \phi_\Ito \left(\<pre_im_Xi4ba2s>\right)=\<Xi4ba2s>,
%\]
%\[
%\phi_\Ito \left(\,\<pre_im_Xi4eabisc2s>\,\right) = 2\, \<Xi4eabisc2s>, \quad
%\phi_\Ito \left(\,\<pre_im_Xi4eabisc22s>\,\right) = \<Xi4eac1s> + \<Xi4eabisc1s>, \quad
%\phi_\Ito \left(\<pre_im_I1Xi4acc2s>\,\right) = 2\, \<I1Xi4acc2s>,
%\quad \phi_\Ito \left(\,\<pre_im_Xi4eabisc222s>\,\right)=\<I1Xi4acc1s> +  \<2I1Xi4cc1s>.
%\]
which is relevant to Proposition~\ref{CB Ito page ref} below.
\end{example}
\begin{proposition}\label{prop:rangeIto}
One has $\CS_\Ito = \CS\cap \range \phi_\Ito$ and $\CS_\Ito^{\<diff>} = \CS_{\<diff>}\cap \range \phi^{\<diff>}_\Ito$.
\end{proposition}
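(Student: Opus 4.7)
The inclusion $\supset$ is immediate: if $\tau=\phi_\Ito(\tau')$, then since $\Upsilon_{\Gamma,\sigma}\phi_\Ito(\<g>)=\sum_i\sigma_i\otimes\sigma_i=g$, the composition $\Upsilon_{\Gamma,\cdot}\circ\phi_\Ito$ depends on $\sigma$ only through $g$, and so $\tau\in\CS_\Ito$. The $\<diff>$-version is handled identically since $\phi_\Ito^{\<diff>}$ fixes $\<diff>$.

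For the reverse inclusion I would proceed by analogy with the treatment of $\phi_\geo$ in Section~\ref{sec:geo2}, via an infinitesimal ``It\^o operator''. Enlarge $\bar\CS_\<generic>$ by adjoining $m^2$ formal scalar generators $\mathsf{a}_{ij}$ of degree $(0,0)$ subject to antisymmetry $\mathsf{a}_{ij}+\mathsf{a}_{ji}=0$, and call the resulting $T$-algebra $\bar\CS_{\<generic>,\mathsf a}$. Define an infinitesimal morphism of $T$-algebras
\[
\phi_\Ito^{\perp}\colon\bar\CS_\<generic>\to\bar\CS_{\<generic>,\mathsf a}
\]
with respect to the canonical inclusion by setting $\phi_\Ito^\perp(\<generic>_i)=\sum_j \mathsf{a}_{ji}\<generic>_j$ and $\phi_\Ito^\perp(\<not>)=0$. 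A direct calculation paralleling \eqref{e:diffLHS}--\eqref{e:diffRHS} shows that for any smooth antisymmetric matrix field $A\colon\R^d\to\mathfrak o(m)$ and $\sigma_t\eqdef \sigma\exp(tA)$, one has $\partial_t \Upsilon_{\Gamma,\sigma_t}\tau|_{t=0} = \Upsilon_{\Gamma,\sigma}^{A}\phi_\Ito^\perp(\tau)$, where $\Upsilon^A$ extends $\Upsilon$ by mapping $\mathsf{a}_{ij}$ to the entries of $A$. Since every smooth $\bar\sigma$ with $\bar\sigma\bar\sigma^\top=\sigma\sigma^\top$ is locally of the form $\sigma\exp(A)$ for some smooth antisymmetric $A$, and since Corollary~\ref{cor:injectiveSub} (applied with $A$ as an extra varying parameter) guarantees non-degeneracy of the extended valuation, the It\^o condition on $\tau\in\CS$ becomes equivalent to $\phi_\Ito^\perp(\tau)=0$.

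The proposition then reduces to the identity $\range\phi_\Ito=\ker\phi_\Ito^\perp$. The inclusion $\range\phi_\Ito\subset\ker\phi_\Ito^\perp$ follows from the computation
\[
\phi_\Ito^\perp\phi_\Ito(\<g>) = \sum_{i,j}\mathsf{a}_{ji}\big(\<generic>_j\mult\<generic>_i+\<generic>_i\mult\<generic>_j\big)=0
\]
(the vanishing obtained upon relabeling $i\leftrightarrow j$ in the second summand and invoking antisymmetry of $\mathsf a$), combined with the Leibniz property of $\phi_\Ito^\perp$ and $\phi_\Ito^\perp(\<not>)=0$, which together imply that the entire $T$-subalgebra generated by $\{\phi_\Ito(\<g>),\<not>\}$, namely $\range\phi_\Ito$, lies in $\ker\phi_\Ito^\perp$. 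The reverse inclusion $\ker\phi_\Ito^\perp\subset\range\phi_\Ito$ is the substantive step and the main obstacle, amounting to a $T$-algebra version of the first fundamental theorem of invariant theory for $O(m)$. My strategy is to grade $\bar\CS_\<generic>$ by the number of $\<generic>$-vertices, to observe that $\phi_\Ito^\perp$ preserves this grading, and to induct on it: writing $\tau\in\ker\phi_\Ito^\perp$ in the canonical form of Lemma~\ref{lem:representationg} and recombining adjacent pairs of $\<generic>_i$-occurrences into copies of $\phi_\Ito(\<g>)$, the cancellation of the unpaired remainders being forced by the vanishing of $\phi_\Ito^\perp(\tau)$ together with the antisymmetry and freeness encoded in $\bar\CS_{\<generic>,\mathsf a}$. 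The $\<diff>$-version is then handled by adjoining $\<diff>$ as an extra generator annihilated by $\phi_\Ito^\perp$ and fixed by $\phi_\Ito^{\<diff>}$, so that all arguments carry over verbatim.
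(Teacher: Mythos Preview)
Your easy direction is correct and matches the paper's: since $\Upsilon_{\Gamma,\sigma}\circ\phi_\Ito$ factors through $g$, any $\tau\in\range\phi_\Ito\cap\CS$ lies in $\CS_\Ito$.

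For the converse, your infinitesimal approach via $\phi_\Ito^\perp$ is a natural and elegant idea, and Step~A (that $\tau\in\CS_\Ito$ forces $\phi_\Ito^\perp(\tau)=0$, by differentiating $t\mapsto\Upsilon_{\Gamma,\sigma\exp(tA)}\tau$ and invoking non-degeneracy) is essentially sound. The genuine gap is Step~B, the claimed identity $\ker\phi_\Ito^\perp\subset\range\phi_\Ito$. Your appeal to the first fundamental theorem for $O(m)$ does not apply as stated: the classical FFT governs invariants of a \emph{global} linear action of $O(m)$ on a fixed collection of vectors, whereas here the relevant action is a \emph{gauge} action $\sigma(x)\mapsto\sigma(x)O(x)$ with position-dependent $O(x)$. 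Under $\d$, this produces terms $\d\mathsf a_{ij}$, $\d^2\mathsf a_{ij}$, \ldots, so the ``vectors'' being acted upon are effectively the full jets $(\sigma,\d\sigma,\d^2\sigma,\ldots)$ with a non-linear (unipotent-times-$O(m)$) action, and the classical FFT says nothing directly about this. Your induction sketch (``recombining adjacent pairs \ldots\ the cancellation of the unpaired remainders being forced'') is too vague to assess, and you do not address the fact that $\bar\CS_{\<generic>}$ and hence $\phi_\Ito^\perp$ depend on $m$ while the target statement concerns the $m$-independent space $\CS$; in particular for small $m$ your claim $\ker\phi_\Ito^\perp=\range\phi_\Ito$ is simply false (e.g.\ for $m=1$, antisymmetry kills $\mathsf a$ entirely so $\phi_\Ito^\perp\equiv 0$).

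The paper's route is quite different and avoids any invariant-theoretic statement. It introduces an $m$-independent auxiliary $T$-algebra $\bar\CS_{\red}$, generated by $\<not>$ and symbols $\gen k\ell$ standing for $\sum_i\d^k\<generic>_i\mult\d^\ell\<generic>_i$, and shows (Lemma~\ref{lem:section}) that the natural relations among the $\gen k\ell$ admit an explicit section. This allows one to (i) realise $\CS$ inside $\bar\CS_{\red}$, (ii) pick $V$ so that the injectivity results of Section~\ref{sec:injectivity} hold \emph{uniformly in $m$}, and (iii) then take $m$ large enough to carry out an explicit Gram--Schmidt-type construction producing, for any prescribed values of $(\Phi\gen k\ell)(0)$ in an open set, concrete $\sigma$ and $\bar\sigma=\sigma\exp(tA)$ with $\sigma\sigma^\top=\bar\sigma\bar\sigma^\top$ realising them. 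In effect the paper \emph{constructs} witnesses $(\sigma,\bar\sigma)$ distinguishing any $\tau\notin\range\phi_\Ito$, rather than proving an abstract kernel--range identity. Your approach, if it could be made to work, would be conceptually cleaner and would say something stronger (about all of $\bar\CS_{\<generic>}$, not just $\CS$), but as written the hard step is missing.
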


\begin{proof}
We focus on proving $\CS_\Ito = \CS\cap \range \phi_\Ito$ since the proof that
$\CS_\Ito^{\<diff>} = \CS_{\<diff>}\cap \range \phi^{\<diff>}_\Ito$ works in exactly the same way.
The inclusion $\CS\cap \range \phi_\Ito \subset \CS_\Ito$ follows from the fact that 
$\Upsilon_{\Gamma,\sigma} \circ \phi_{\Ito}\colon T_{\<g>} \to \CW[\R^d]$ 
is a  morphism of $T_\d$-algebras which maps $\<not>$ to $2\Gamma$  and $ \<g> $ to $g=\sigma \sigma^\top$,
so that it only depends on $\Gamma$ and $g$, and not on the specific choice of $\sigma$.

For the converse direction, we want to show the following: for any $\tau \in \CS \setminus \range \phi_\Ito$, we can find $d,m,\Gamma,\sigma,\bar\sigma$ such that $\sigma \sigma^\top = \bar \sigma \bar \sigma^\top$ but $\Upsilon_{\Gamma,\sigma}\tau\ne\Upsilon_{\Gamma,\bar\sigma}\tau$, implying that
$\tau\notin\CS_\Ito$. 
The reason why this is not an immediate consequence of the second part of Theorem~\ref{theo:injectiveT} is that
although one has $T_{\<g>} = \Tr(W_{\<g>})$ and $T_{\<generic>} = \Tr(W_{\<generic>})$ 
for suitable choices of $W_{\<g>}$ and $W_{\<generic>}$,
the inclusion $\phi_\Ito\colon T_{\<g>} \hookrightarrow T_{\<generic>}$ is not 
generated by an inclusion $W_{\<g>} \hookrightarrow W_{\<generic>}$ since the generator
$\<g>$ is mapped to an element of order~$2$.
The idea then is to find an intermediate $T_\d$-algebra $T_{\red} = \Tr(W_{\red})$ 
together with inclusions $\psi_\Ito \colon T_{\<g>} \to T_{\red}$ and 
$\iota_{\red} \colon T_{\red} \to T_{\<generic>}$ satisfying the following properties:
\begin{enumerate}
\item One has the factorisation $\phi_\Ito = \iota_{\red} \circ \psi_\Ito$ and 
$\psi_\Ito$ maps $W_{\<g>}$ into $W_{\red}$ (so that it is ``linear'' in the sense 
of Remark~\ref{rem:linear}).
\item The image of $T_{\red}$ in $T_{\<generic>}$ under $\iota_{\red}$ contains $\CS$.
\item For every finite-dimensional vector space $V$ there exist $m$\footnote{Recall that $m$ denotes the number of generators of type $\<generic>_i$ of $T_{\<generic>}$.}  and a set $\CU_+ \subset \Hom(T_{\red}, \CV[V])$ such that, for every $\Phi \in \CU_+$ there exists
 $\Upsilon \in \Hom_\d( T_{\<generic>}, \CW[V])$ such that $\Upsilon \tau$ is a polynomial for every $\tau$ and such that  
$\Phi = \Ev_0 \circ \Upsilon \circ \iota_{\red} $, where $\Ev_0 \in \Hom(\CW[V],\CV[V])$ denotes
the evaluation at the origin.
\item There exist a $T$-algebra $T_{\red}^{(3)} = \Tr(W_{\red}^{(3)})$, an open set 
$\CU_+^{(3)} \in \Hom(T_{\red}^{(3)},\CV[V])$, and a morphism
$\pi_{\red}^{(3)} \in \Hom(T_{\red}, T_{\red}^{(3)})$ such that, for all $\Phi^{(3)} \in \CU_+^{(3)}$, one
has $\Phi \eqdef \Phi^{(3)} \circ \pi_{\red}^{(3)} \in \CU_+$.
Furthermore, $\pi_{\red}^{(3)}$ is injective on $\iota_{\red}^{-1}(\CS)$ and ``linear''
in the sense of Remark~\ref{rem:linear}.
\end{enumerate}
Figure~\ref{fig:diagram} illustrates this situation (the space $\hat T_{\red}$ also shown 
here will appear as an intermediate step in our construction).
\begin{figure}%[e:diagram]
\begin{center}
\begin{tikzcd}[row sep=5ex, column sep=5ex]
T_{\<g>}  \arrow[rr, "\phi_\Ito"] \arrow[dd, "\psi_{\Ito}"]
&&   T_{\<generic>} \arrow[rdd, bend left = 10, "\Upsilon"]& \\ \\
T_{\red} \arrow[dd, "\pi_{\red}^{(3)}"]\arrow[rrdd, "\Phi"]\arrow[rruu, "\iota_{\red}" description]\arrow[rr,bend left = 10,"\iota"] && \hat T_{\red}\arrow[uu,"\psi"]\arrow[ll,bend left = 10,"\pi"] & \CW[V]\arrow[ldd,bend left=10,"\Ev_0"]\\ \\
T_{\red}^{(3)}\arrow[rr, "\Phi^{(3)}"] && \CV[V]
\end{tikzcd}
\end{center}
\caption{Spaces and morphisms appearing in our construction.}\label{fig:diagram}
\end{figure}
Note that one can not simply consider  $T_{\red}^{(3)} = T_{\red}$ since, unlike $\mathcal{U}_{+}^{(3)}$, the set $\mathcal{U}_{+}$ is not open. The fact that $\mathcal{U}_{+}^{(3)}$ is an open set is crucial in order to apply Proposition~\ref{prop:injectiveLocal}. Before we proceed with this construction, let us show how it yields the claim of the theorem.
Fix $\tau \in \CS \setminus \range \phi_\Ito$. Since $\psi_\Ito$ allows us to view $W_{\<g>}$ as a 
subspace of $W_{\red}$ by the first property and since $\tau = \iota_{\red} \bar \tau$ for some
$\bar \tau \in T_{\red} \setminus \psi_\Ito(T_{\<g>})$ by the second property,
Proposition~\ref{prop:injectiveLocal} yields the existence of $V$ and, since $\CU_+^{(3)}$ is open
and $\pi_{\red}^{(3)}$ is ``linear'' and injective on $\iota_{\red}^{-1}(\CS)$,
of two morphisms $\Phi^{(3)}$ and $\bar \Phi^{(3)}$
in $\CU_+^{(3)}$ such that $\Phi^{(3)} = \bar \Phi^{(3)}$ on the range of $\pi_{\red}^{(3)}\circ \psi_\Ito$, but such that 
$\Phi^{(3)} \pi_{\red}^{(3)}\bar \tau \neq \bar \Phi^{(3)} \pi_{\red}^{(3)}\bar \tau$.
Here, the ``linearity'' of $\pi_{\red}^{(3)} \circ \psi_\Ito$ guarantees that Proposition~\ref{prop:injectiveLocal}
can be applied with $\hat W = (\pi_{\red}^{(3)} \circ \psi_\Ito)(W_{\red})$.

In particular, setting $\Phi = \Phi^{(3)} \circ \pi_{\red}^{(3)} \in \CU_+$ as in point 4
and similarly for $\bar \Phi$, one has $\Phi = \bar \Phi$ on the range
of $\psi_\Ito$ and $\Phi \bar \tau \neq \bar \Phi \bar \tau$.
Writing now $\Upsilon$ and $\bar \Upsilon$ for the
corresponding elements of $\Hom_\d(T_{\<generic>},\CW[V])$ given by point 3 above,
we conclude that $\Ev_0 \circ \Upsilon$ and $\Ev_0 \circ \bar \Upsilon$ coincide on the
range of $\phi_\Ito$. Since $\Upsilon$ is a $T_\d$-morphism mapping elements to polynomials, it follows
that one actually has $\Upsilon = \bar \Upsilon$ on the range of $\phi_\Ito$.

Writing $\sigma_i = \Upsilon( \<generic>_i)$, $\Gamma = \Upsilon(\<not>)$, 
and similarly for $\bar \sigma_i$ and $\bar \Gamma$, we conclude 
that $\sigma \sigma^\top = \bar \sigma \bar \sigma^\top$ and $\Gamma = \bar \Gamma$ 
since $\phi_\Ito (\<g>) = \sum_i (\<generic>_i \mult \<generic>_i)$ and $\phi_\Ito (\<not>) = \<not>$
by definition. On the other hand, since $(\Ev_0 \circ \Upsilon)(\tau) = \Phi(\tau)$, similarly
for $\bar \Upsilon$, and since $\Phi(\tau) \neq \bar \Phi(\tau)$, we conclude that $\sigma \neq \bar \sigma$
as desired.

We now proceed with the construction of the various objects appearing in the argument given above.
 We set $T_{\red} = \hat T_{\red} / \ker \psi$, \label{bar CS red page ref}
where $\hat T_{\red}$ \label{hat CS red page ref}
is the free $T_\d$-algebra with generators $\<not>$ of degree $(1,2)$ and  $\{\gen{k}{\ell}\}_{k,\ell \ge 0}$
of degree $(2,k+\ell)$, 
and $\psi \colon \hat T_{\red} \to T_{\<generic>}$ \label{psi morpishm page ref} is the unique morphism
such that 
\begin{equ}[e:defpsi]
\<not> \mapsto \<not>\;,\qquad \gen{k}{\ell} \mapsto \sum_{i=1}^m (\d^k \<generic>_i\mult \d^\ell \<generic>_i)\;.
\end{equ}
It immediately follows from our construction that $\CS \subset \range \psi$ so that the
second property holds.
We also define $\psi_\Ito\colon T_{\<g>} \to T_{\red}$ as the unique morphism sending 
$\<not>$ to $\<not>$ and $\<g>$ to $\gen 0 0$. 

\begin{remark}
Note that $\CS$ is strictly smaller than the subspace of $T_{\red}$ generated by $\CX$-graphs 
of degree $(1,0)$ with
either one or two generators of type $\gen{k}{\ell}$. This is because the latter does for example
contain the $\CX$-graph obtained by taking the generator $\gen{1}{0}$ and connecting its first output to its
input. The map $\Upsilon^\alpha_{\Gamma,\sigma}\circ \psi$ then maps this to
the expression $\d_\eta\sigma_i^\eta\sigma_i^\alpha$ which is `disconnected' and therefore 
not generated by any of the graphs on Page~\pageref{listPage}. If on the other hand one connects the \textit{second} output
of $\gen{1}{0}$ to its input, then its image under $\psi$ is given by $\<Xi2>$.
\end{remark}

We next identify the kernel of $\psi$.
Denote by $I \subset \hat T_{\red}$ the smallest ideal of $T_\d$-algebras containing all elements of the form
\begin{equ}[e:groupkl]
S^{1,1}_{k,\ell} \gen{k}{\ell} - \gen{\ell}{k}\;,\qquad (\alpha\cdot\beta)\, \gen{k}{\ell} - \gen{k}{\ell}\;, \qquad \swap_{1,1}^{1}\<not> - \<not>\;,
\end{equ}
as well as 
\begin{equ}[e:dergen]
\d\, \gen{k}{\ell} - \gen{k+1}{\ell} - (S^1_{k,1} \cdot \id^1_\ell)  \, \gen{k}{\ell+1}\;,
\end{equ}
for all $k,\ell\geq 0$, $\alpha \in \Sym(1,k)$ and $\beta \in \Sym(1,\ell)$.
It is easy to check that $I \subset \ker \psi$. 
We also have the following crucial result, the proof of which is postponed to the end of the current proof.

\begin{lemma}\label{lem:section}
The canonical projection $\pi \colon \hat T_{\red} \to \hat T_{\red} / I$ admits
a right inverse $\iota\colon \hat T_{\red} / I \to \hat T_{\red}$ which is a morphism of $T_\d$-algebras
and satisfies $(\iota\circ \pi)(\gen 0 0) = \gen 0 0$.
\end{lemma}

We will see momentarily that one actually has $I = \ker \psi$ (at least if $m$ is large enough) so that 
$\hat T_{\red} / I = T_{\red}$. Setting $\iota_{\red} = \psi \circ\iota$,
the identity $\phi_\Ito = \iota_{\red}\circ \psi_\Ito$ is then immediate since
both are morphisms of $T_\d$-algebra and they coincide on the generators $\<not>$ 
and $\<g>$, so that the first property is also satisfied.

Write now $(W_{\red})^\upper_\low$ for the collection of spaces such that
$\hat T_{\red} / I  = \Tr(W_{\red})$. In view of \eqref{e:groupkl} and \eqref{e:dergen},
$(W_{\red})^\upper_2$ is generated by $\alpha \gen \ell n$ with $\ell + n = \upper$
and $\alpha \in \Sym(2,\upper)$, quotiented by the first two relations of 
\eqref{e:groupkl}. Similarly, $(W_{\red})^{\upper+2}_1$ is generated by 
$\alpha \d^\upper\<not>$, quotiented by the last relation of \eqref{e:groupkl}
as well as \eqref{e:idendd}. One has $(W_{\red})^\upper_\low = \{0\}$ for all other values. 

For any given $n > 0$, we then define $J^{(n)} \subset \hat T_{\red} / I$ as the ideal of $T_\d$-algebras generated by 
\begin{equ}[e:genIdeal]
\{\gen k \ell \,:\,k \vee \ell > n \}\cup\{\d^k \<not>\,:\,k > n\}\;.
\end{equ}
(Note that by \eqref{e:dergen} $J^{(n)}$ actually
coincides with the ideal of $T$-algebras generated by the same set.)
We then define $T_{\red}^{(n)} = (\hat T_{\red} / I) / J^{(n)}$ and write 
$\pi^{(n)}_{\red}$ for the corresponding canonical projection. Note that 
one can write
 $T_{\red}^{(n)} \simeq  \Tr(W_{\red}^{(n)})$ with
 \begin{equ}
 (W_{\red}^{(n)})^u_\ell = 
 \left\{\begin{array}{cl}
 	\{0\} & \text{if $\ell \not \in \{1,2\}$ or $\ell = 1$ and $u>n+2$,}\\ 
 	(W_{\red})^u_\ell & \text{if $\ell = 1$ and $u\le n+2$,}
 \end{array}\right.
 \end{equ} 
and $(W_{\red}^{(n)})^u_2 \subset (W_{\red})^u_2$ given by the Sym-invariant subspace  generated by $\{\gen k \ell\,:\, k+\ell = u\;\&\; k \vee \ell \le n\}$.
 (In particular one again has $(W_{\red}^{(n)})^u_2 = \{0\}$ for $u > 2n$.) By Proposition~\ref{prop:freeT}, the inclusion $W_{\red}^{(n)} \subset W_{\red}$ 
yields a canonical injection $\iota_{\red}^{(n)}\in \Hom(T_{\red}^{(n)}, T_{\red})$ which is a right inverse to 
$\pi^{(n)}_{\red}$. Note that $\iota^{(n)}_{\red}$ is \textit{not} a morphism of 
$T_\d$-algebras since, setting for example $\tau = \d^n \<not> \in  T_{\red}^{(n)}$,
one has $\d \tau = 0$ but $\d \iota^{(n)}_{\red} \tau \neq 0$. With these notations, one has 
the following result.

\begin{lemma}\label{lem:reprPhiSigma}
For every $n > 0$ and any finite-dimensional vector space $V$, there exist $m \ge 1$ and
an open set $\CU_+^{(n)} \subset \Hom(T_{\red}^{(n)}, \CV[V])$ such that, for each 
$\Phi^{(n)} \in \CU_+^{(n)}$, there exists $\hat \Upsilon \in \Hom(T_{\<generic>}, \CV[V])$ such that 
\begin{equ}[e:idenUpsilon0]
\hat \Upsilon \circ \psi = \Phi \circ \pi\;,\qquad 
\Phi \eqdef  \Phi^{(n)} \circ \pi_{\red}^{(n)}\;.
\end{equ}
Furthermore, one has $\hat \Upsilon \d^{n+1} \tau = 0$ for every $\tau \in W_{\<generic>}$.
\end{lemma}

We now have the missing ingredient to show that $I$ is actually equal to $\ker \psi$, so that one has 
indeed $T_{\red} = \hat T_{\red} / I$ as claimed above. 
Assume by contradiction that there exists $\tau \in \ker\psi$ with $\pi \tau \neq 0$.
In particular, one must have $\pi_{\red}^{(n)} \pi \tau \neq 0$ for some $n > 0$ which we
now fix. It then follows from Remark~\ref{rem:DiffValue}
that we can find a finite-dimensional vector space $V$, a morphism $\Phi^{(n)} \in \CU_+^{(n)}$ 
(with $\CU_+^{(n)} \subset \Hom(T_{\red}^{(n)}, \CV[V])$ as in Lemma~\ref{lem:reprPhiSigma}) such that 
$\Phi(\pi\tau) \neq 0$ with $\Phi$ as in \eqref{e:idenUpsilon0}. On the other hand, Lemma~\ref{lem:reprPhiSigma}
guarantees the existence of $\hat \Upsilon$ 
such that $\hat \Upsilon \psi(\tau) = \Phi \pi \tau$.
Since $\tau \in \ker \psi$, the left hand side of this identity must vanish, thus creating the 
required contradiction. 

We now go back to the construction of the objects which are needed for property~4. In the 
notations of Lemma~\ref{lem:reprPhiSigma}, we fix from now on $n=3$.
We note that, provided we define $\CU_+$ to be the image of $\CU_+^{(3)}$ under the map
$\Phi^{(3)} \mapsto \Phi$ given by \eqref{e:idenUpsilon0}, 
Lemma~\ref{lem:reprPhiSigma} immediately yields property~4 above. The ``projection''
$\pi_{\red}^{(n)}$ is indeed injective on $\iota_{\red}^{-1}(\CS)$ provided that $n \ge 3$
since no fourth derivative of either $\sigma$ or $\Gamma$ appears in the list of trees constituting it.
It remains to show that property~3 is satisfied. For this, note first that given 
$\hat \Upsilon$ as in the conclusion of Lemma~\ref{lem:reprPhiSigma}, it follows from Theorem~\ref{theo:mainInjective}
that one can find $\Upsilon \in \Hom_\d(T_{\<generic>}, \CW[V])$ such that $\hat \Upsilon = \Ev_0 \circ \Upsilon$.
Since we know at this point that $T_{\red} = \hat T_{\red} / I$
and since $\iota_{\red} = \psi \circ\iota$ and $\pi \circ \iota = \id$, 
the required identity $\Phi = \Ev_0 \circ \Upsilon \circ \iota_{\red}$ now follows at once.
\end{proof}

%In other words, since all these spaces are finite-dimensional, we can view 
%$\Phi(\gen{k}{\ell})$ as a bilinear form on $W_k^* \times W_\ell^*$.
%Since furthermore $S^{1,1}_{k,\ell} \gen{k}{\ell} = \gen{\ell}{k}$ in $\hat T_{\red} / I$ by \eqref{e:groupkl}, 
%these forms satisfy the additional symmetry relation
%\begin{equ}
%\bigl(\Phi(\gen{k}{\ell})\bigr)(u,v) = 
%\bigl(\Phi(\gen{\ell}{k})\bigr)(v,u)\;.
%\end{equ}
%We can now choose $d$ as the dimension of $V$. Our final ingredient is the following lemma,
%the proof of which is postponed to the end of this proof.

\begin{proof}[of Lemma~\ref{lem:reprPhiSigma}]
The morphism $\hat \Upsilon$ is uniquely determined by  the quantities
\begin{equ}[e:defDkfun]
D^k \Gamma(0) = {1\over 2}\hat \Upsilon(\d^k \<not>)\;,\qquad
D^k \sigma_i(0) = \hat \Upsilon(\d^k \<generic>_i)\;,
\end{equ}
where our choice of notations is informed by the interpretation of these quantities made above, namely
the fact that one can write $\hat \Upsilon = \Ev_0 \circ \Upsilon$, and \eqref{e:defDkfun} is then consistent
with the notations $\sigma_i = \Upsilon(\<generic>_i)$ and $\Gamma = \Upsilon(\<not>)$.

Our aim then is to find conditions on the morphism $\Phi^{(n)}$ (and therefore on $\Phi$) 
such that one can find
$\hat\Upsilon$ satisfying \eqref{e:idenUpsilon0}.
Regarding $\Gamma$, we simply set $D^k \Gamma(0) = {1\over 2}\Phi \big(\d^k \<not>\big)$ 
(here and below we write just $\Phi$ instead of $\Phi \circ \pi$), so we only need to 
explain how to choose $m$
and the quantities $\{D^k\sigma_i(0)\,:\,i \le m,\,k\ge 0\}$ in 
such a way that \eqref{e:idenUpsilon0} holds. 

Write $S_{\gen{k}{\ell}}$ for the subgroup of 
$\Sym(2,k+\ell)$ given by all elements of the form
$\gamma=(\id^2_0,\alpha\cdot\beta)\in\Sym(2)\times\Sym(k+\ell)$, for some $\alpha \in \Sym(k)$, $\beta \in \Sym(\ell)$.
As a group, $S_{\gen{k}{\ell}}$ is isomorphic to $\Sym(k,\ell)$
and our definition of $I$ guarantees that one has the identity
$\gen k \ell = \gamma \, \gen k \ell$ in $\hat T_{\red} / I$
for all $\gamma\in S_{\gen{k}{\ell}}$.

We then write $W_k = V \otimes (V^*)^{\otimes_s k}$
with $\otimes_s$ denoting the symmetric tensor product and we perform an arbitrary choice of 
scalar product on each of the $W_k$. 
It follows from the definition of the 
group $S_{\gen{k}{\ell}}$ that for any morphism $\Phi \colon \hat T_{\red} / I 
\to \CV[V]$, one can view $\Phi(\gen{k}{\ell})$ as an element of 
$L( W_\ell, W_k)$. This is because it belongs to 
\begin{equs}
\bigl(V^{\otimes 2} \otimes (V^*)^{\otimes (k+\ell)}\bigr)/S_{\gen{k}{\ell}}
&\approx V \otimes V \otimes (V^*)^{\otimes_s k}\otimes (V^*)^{\otimes_s \ell}
\approx W_k\otimes W_\ell \\ &\approx W_k\otimes W_\ell^* \approx L( W_\ell, W_k)\;,
\end{equs}
where the second isomorphism is obtained by exchanging the two middle factors and the third one
uses the choice of scalar product. In the case $k = \ell$, this matrix is furthermore symmetric as a consequence
of the first identity in \eqref{e:groupkl}.

We  then set $W = \bigoplus_{k=0}^n W_k$, write  $m = \dim W$,
and choose an arbitrary orthonormal basis for $W$, so that $W \simeq \R^m$ via that choice. 
In particular, we have 
\begin{equs}
D^k \sigma(0) = (D^k \sigma_i(0))_{i \le m} &\in 
(V \otimes (V^*)^{\otimes_s k})^m \simeq
W^* \otimes V \otimes (V^*)^{\otimes_s k} \\
&\quad\simeq L(W,W_k) \simeq \bigoplus_{j=0}^n L(W_j,W_k)\;,
\end{equs}
which we write as $D^k \sigma(0) = (D^k \sigma^{(j)}(0))_{j \le n}$.

With these identifications in place, 
we then set $(D^k \sigma^{(j)})(0) = 0$ for
$j > k$ (in particular $\sigma^{(j)}(0)$ for $j \neq 0$), $\sigma^{(0)}(0) = \sqrt{\Phi(\gen{0}{0})}$, and then recursively
for $k > \ell$,
\begin{equs}
\bigl(D^k \sigma^{(\ell)}\bigr)(0) &= \Bigl(\Phi(\gen{k}{\ell})
- \sum_{m < \ell} \big(D^k \sigma^{(m)}(0)\big)\big(D^\ell \sigma^{(m)}(0)\big)^* \Bigr) \big(D^\ell \sigma^{(\ell)}(0)\big)^{-1}\;,\\
\bigl(D^k \sigma^{(k)}\bigr)(0) &= \Bigl(\Phi(\gen{k}{k})
- \sum_{m < k} \big(D^k \sigma^{(m)}(0)\big)\big(D^k \sigma^{(m)}(0)\big)^* \Bigr)^{1/2}\;.\label{e:Phikk}
\end{equs}
This is of course only possible in general if $\Phi(\gen{0}{0})$ is 
strictly positive definite and the same is
true for each of the expressions in the large parenthesis of \eqref{e:Phikk}.

Note now that by \eqref{e:genIdeal} (see also the explicit characterisation 
$T_{\red}^{(n)} \simeq  \Tr(W_{\red}^{(n)})$ given earlier)
the morphism $\Phi^{(n)}$ is uniquely determined by 
its action on $\d^k \<not>$ for $k \le n$ and on $\gen k \ell$ for $k \vee \ell \le n$, which in turn take
values in finite-dimensional vector spaces, so we simply define $\CU_+^{(n)}$ as the set of morphisms
$\Phi^{(n)}$ such that the expression in the large parenthesis of \eqref{e:Phikk}
is strictly positive definite for all $k \le n$. 
For $k > n$, one has $\pi^{(n)}(\gen kk) = 0$ and therefore $\Phi(\gen kk) = 0$, but we note that 
since one also has $\Phi(\gen k\ell) = 0$ for $\ell < k$, it follows that 
in this case $D^k \sigma^{(\ell)}(0) = 0$, so that the expression in the parenthesis vanishes
identically. 
This shows that \eqref{e:Phikk} is then consistent with setting $D^k \sigma(0) = 0$ for all $k > n$,
so that the first identity in \eqref{e:idenUpsilon0} holds on $\d^k \<not>$ and $\gen k \ell$ for all values
of $k$ and $\ell$. This also implies that $\hat \Upsilon \d^{n+1} = 0$ on $W_{\<generic>}$, thus completing the proof.
\end{proof}

\begin{proof}[of Lemma~\ref{lem:section}]
We look for a morphism $\bar \pi \colon \hat T_{\red} \to \hat T_{\red}$
such that
$\bar \pi \<not> = {1\over 2} \bigl(\<not> + S^1_{1,1} \<not>\bigr)$ and such that the following properties hold:
\begin{claim}
\item One has $I \subset \ker \bar \pi$ and $\bar \pi \gen00 = \gen00$.
\item For every $k$ and $\ell$, one has $\bar \pi \gen{k}{\ell} = \gen{k}{\ell} + R_{k,\ell}$
where $R_{k,\ell} \in I$ is a linear combination of terms of the form $\alpha \d^a \gen bc$ with $a+b+c = k+\ell$ and
$\alpha \in \Sym(2,k+\ell)$.
When combined with the previous property, this guarantees that $\bar \pi$ is idempotent. 
\end{claim}
If we can build such a map $\bar \pi$, then the desired right inverse $\iota$ is 
the unique morphism such that $\bar \pi = \iota\circ\pi$. Indeed, the existence of such a $\iota$ is guaranteed by
the fact that $I \subset \ker \bar \pi$, while the second property guarantees that $\pi \circ \iota = \id$.

Before we turn to the construction of $\bar \pi$, we define $I_0$ as the smallest ideal of $T_\d$-algebras containing all elements of the form \eqref{e:groupkl} (but without the elements of the form \eqref{e:dergen}), and we consider the canonical projection $\pi_0 \colon \hat T_{\red} \to \hat T_{\red} / I_0$. 
The equivalence class $[\gen k \ell]$ then consists of all elements that are of the form
\begin{equ}
(\alpha \cdot \beta)\gen k \ell\;,\qquad\text{or}\qquad (\alpha \cdot \beta) S_{\ell,k}^{1,1} \gen \ell k\;,
\end{equ}
with $\alpha \in \Sym(1,k)$ and $\beta \in \Sym(1,\ell)$. Furthermore, we have a natural action of the group
\begin{equ}
\hat S_{\gen k \ell} \eqdef \Z_2 \times  \Sym(1,k) \times \Sym(1,\ell)
\end{equ}
onto $[\gen k \ell]$, so that $\pi_0$ admits a section $\iota_0$ by averaging over the equivalence 
classes, namely $\iota_0$ is the unique morphism with
$(\iota_0\circ \pi_0)(\gen k \ell) = |\hat S_{\gen k \ell}|^{-1}\sum_{\alpha \in \hat S_{\gen k \ell}} \alpha \gen k \ell $.

We now denote by $\hat T_{\red}^{(1)}$ the subspace of $\hat T_{\red}$ generated by 
$\CX$-graphs with exactly one vertex, and that vertex is not of type $\<not>$. We also write 
$\CP^\d$ (the reason for this choice of notation will become apparent soon) 
for the image of $\hat T_{\red}^{(1)}$ under $\pi_0$. Since $\bar \pi$ is determined by its values on
the generators, it then suffices to find a map $\bar \pi_0 \colon \CP^\d \to \CP^\d$ which preserves degrees, is equivariant under
the action of the symmetric group and the derivation operation, and which is such that the two properties
above hold. The desired map $\bar \pi$ is then given by $\bar \pi = \iota_0 \circ \bar \pi_0 \circ \pi_0$.
Note that we do not need to consider the original product anymore. On the other hand, $\CP^\d$ can be identified
in a canonical way with the free non-commutative unital algebra generated by three symbols 
$X$, $\bar X$, and $\d$ under the correspondence
\begin{equ}[e:correspondence]
\d^m \gen k \ell \sim \d^m X^k \bar X^\ell \;.
\end{equ}
We write $\CP^\d_n \subset \CP^\d$ for the subspace of homogeneous polynomials of degree $n$. 
Elements $\alpha \in \Sym(2,n) \sim \Z_2 \times \Sym(n)$ act on $\CP^\d_n$ by 
having $\Sym(n)$ permuting the factors of each monomial and the generator of $\Z_2$ swapping $X$ and $\bar X$
(we will also write this as $P \mapsto \bar P$).
For example, one has
\begin{equ}
(S_{2,3}\cdot \id_2^2) \d \gen{2}{4} \sim X \bar X^2 \d X \bar X^2\;,\qquad
(S_{2,3}^{1,1} \cdot \id_2^0) \d \gen{2}{4} \sim \bar X X^2 \d \bar X X^2\;.
\end{equ}
The image under $\pi_0$ of the projection onto  $\hat T_{\red}^{(1)}$ of the ideal (of $T_\d$-algebras) $I$ 
is then equal to the ideal (of algebras!) $\hat I \subset \CP^\d$ generated by $\d -X - \bar X$.

We claim that one possible choice for $\bar \pi_0$ is given by the unique morphism of algebras
such that 
\begin{equ}[e:rightInverse]
\bar \pi_0 \d = \d\;,\quad 
\bar \pi_0 X = {1\over 2}(X-\bar X+\d)\;,\quad 
\bar \pi_0 \bar X = {1\over 2}(\bar X-X+\d)\;.
\end{equ}
This choice clearly satisfies the identity $\overline{\bar \pi_0 P} = \bar \pi_0 \bar P$ so that 
$\bar \pi_0$ is indeed equivariant as required. 
It is also obvious that it satisfies $\bar \pi_0 (\d P) = \d \bar \pi_0 P$ and, since
 $\hat I$ is generated by $\d -X - \bar X$ which is mapped to $0$ by $\bar \pi_0$, that 
 $\hat I \subset \ker \bar \pi_0$. It remains to show that $P - \bar \pi_0 P \in \hat I$,
 which is equivalent to the property that $\bar \pi_0^2 = \bar \pi_0$. This however is
 obvious since $\bar \pi_0$ leaves both $\d$ and $X-\bar X$ invariant. 
 
The fact that $\bar \pi(\gen 0 0) = \gen 0 0$ is an immediate consequence of the fact that 
$\bar \pi 1 = 1$. 
\end{proof}

\begin{proposition}\label{prop:both}
For every $\tau \in \CS_\both$, one has $\hat \phi_\geo(\tau) \in \range \phi_\Ito^{\<diff>}$.
\end{proposition}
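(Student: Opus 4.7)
The plan is to reduce the statement to the characterisation of $\range \phi_\Ito^{\<diff>}$ provided by the second part of Proposition~\ref{prop:rangeIto}, i.e.\ to showing that $\hat \phi_\geo(\tau)$ lies in $\CS_\Ito^{\<diff>}$. The latter amounts to two things: (i) $\hat\phi_\geo(\tau) \in \CS_{\<diff>} = \phi_\geo(\CS)$, and (ii) $\Upsilon^h_{\Gamma,\sigma}\hat\phi_\geo(\tau) = \Upsilon^h_{\Gamma,\bar\sigma}\hat\phi_\geo(\tau)$ for every $\Gamma, \sigma,\bar\sigma,h$ with $\sigma_i^\alpha\sigma_i^\beta = \bar\sigma_i^\alpha\bar\sigma_i^\beta$. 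The heart of the proof is (ii); once (i) and (ii) are established, the conclusion $\hat\phi_\geo(\tau) \in \range\phi_\Ito^{\<diff>}$ is immediate from Proposition~\ref{prop:rangeIto}.

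For (ii), I would apply the same infinitesimal calculation that underlies the characterisation $\CS_\geo = \CS \cap \ker\hat\phi_\geo$, but now \emph{simultaneously} to both $\sigma$ and $\bar\sigma$. Concretely, pick a one-parameter family $(\psi_t)$ of diffeomorphisms as in \eqref{psih} and apply $\d_t|_{t=0}$ to the defining identity \eqref{e:proptau} of $\CS_\both$, namely
\begin{equ}
\psi_t \act (\Upsilon_{\Gamma,\bar\sigma} - \Upsilon_{\Gamma,\sigma})\tau = (\Upsilon_{\psi_t\act\Gamma,\psi_t\act\bar\sigma} - \Upsilon_{\psi_t\act\Gamma,\psi_t\act\sigma})\tau\;.
\end{equ}
Using the computation \eqref{e:diffLHS} on the right-hand side and \eqref{e:diffRHS} on the left-hand side (both of which are valid for $\sigma$ and for $\bar\sigma$ and extend in the obvious way to the valuation $\Upsilon^h$), one gets
\begin{equ}
\Upsilon^h_{\Gamma,\bar\sigma}\bigl(\phi_\geo(\tau) - [\tau,\<diff>]\bigr) = \Upsilon^h_{\Gamma,\sigma}\bigl(\phi_\geo(\tau) - [\tau,\<diff>]\bigr)\;,
\end{equ}
which is exactly $\Upsilon^h_{\Gamma,\bar\sigma}\hat\phi_\geo(\tau) = \Upsilon^h_{\Gamma,\sigma}\hat\phi_\geo(\tau)$, as desired. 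The only care needed here is to verify that the differentiation of $\Upsilon^h$ with the parameter $h$ held fixed is indeed governed by $\phi_\geo$ on the $(\Gamma,\sigma)$-dependence alone; this follows since the extended valuation $\Upsilon^h_{\Gamma,\sigma}$ is a morphism of $T$-algebras with $h$ a generator independent of $\Gamma$ and $\sigma$.

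For (i), I expect this to be essentially bookkeeping: $\phi_\geo(\tau) \in \phi_\geo(\CS) = \CS_{\<diff>}$ tautologically, so it suffices to show that $[\tau,\<diff>]$ also lies in $\phi_\geo(\CS)$. One way is to exhibit an explicit preimage: differentiating the trivial equivariance $\psi \act \Upsilon_{\Gamma,\sigma}\nu = \Upsilon_{\psi\act\Gamma,\psi\act\sigma}\nu$ of some auxiliary $\nu \in \CS$ through $\phi_\geo$ produces grafting terms of the form $[\nu,\<diff>]$, so by choosing $\nu = \tau$ one realises $[\tau,\<diff>]$ in the image of $\phi_\geo$ modulo genuinely geometric pieces. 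Alternatively, one can enlarge the target of $\phi_\Ito^{\<diff>}$ slightly: the proof of Proposition~\ref{prop:rangeIto} actually shows that any element of $\bar\CS_{\<generic>,\<diff>}$ satisfying the It\^o-invariance condition in (ii) lies in $\range\phi_\Ito^{\<diff>}$, so one could bypass (i) altogether and invoke a strengthened form of Proposition~\ref{prop:rangeIto} whose proof proceeds verbatim with $\<diff>$ treated as an additional generator playing the same role as $\<not>$.

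The main obstacle will be the last point: establishing that the It\^o-invariance condition \emph{alone}, without a priori membership in $\CS_{\<diff>}$, forces membership in $\range\phi_\Ito^{\<diff>}$. This requires rerunning the construction in the proof of Proposition~\ref{prop:rangeIto} (the $T$-algebra $\bar \CS_\red$, the section $\iota$ provided by Lemma~\ref{lem:section}, and the dimension-counting argument producing $\sigma$ and $\bar\sigma = \sigma e^{tA}$ with $\sigma\sigma^T = \bar\sigma\bar\sigma^T$) in the presence of the extra generator $\<diff>$ of degree $(1,0)$. Since $\<diff>$ carries no $\sigma$-dependence, it is invariant under the replacement $\sigma \leadsto \bar\sigma$ and therefore plays no role in the key antisymmetric perturbation argument; the proof goes through unchanged, giving the required enlargement of Proposition~\ref{prop:rangeIto} and hence the proposition.
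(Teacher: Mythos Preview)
Your proposal is correct and follows essentially the same route as the paper. The core of the argument is precisely your step~(ii): differentiate the defining identity \eqref{e:proptau} of $\CS_\both$ at $t=0$, invoke \eqref{e:diffLHS} and \eqref{e:diffRHS} (applied both to $\sigma$ and to $\bar\sigma$), and read off $\Upsilon^h_{\Gamma,\sigma}\hat\phi_\geo(\tau) = \Upsilon^h_{\Gamma,\bar\sigma}\hat\phi_\geo(\tau)$. This is exactly what the paper does.

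You are in fact more careful than the paper about point~(i). The paper simply asserts, after establishing It\^o-invariance, that $\hat\phi_\geo(\tau) \in \CS_\Ito^{\<diff>}$ and invokes Proposition~\ref{prop:rangeIto}, without separately checking membership in $\CS_{\<diff>} = \phi_\geo(\CS)$. Your second suggested route---observing that the proof of Proposition~\ref{prop:rangeIto} actually establishes the stronger statement that It\^o-invariance alone, on the relevant finite-dimensional space of anchored graphs where realisability via Corollary~\ref{cor:injectiveSub} applies, forces membership in $\range\phi_\Ito^{\<diff>}$, with $\<diff>$ treated as an inert extra generator---is the natural way to close this, and is what the paper implicitly has in mind when it says the $\<diff>$-case of Proposition~\ref{prop:rangeIto} ``works exactly the same''. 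Your first suggested route for~(i), producing an explicit $\phi_\geo$-preimage of $[\tau,\<diff>]$, is less direct and not needed.
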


\begin{proof}
The proof of this is essentially the same as that of the converse direction for Proposition~\ref{prop:rangeIto}. Indeed, let $ \tau \in \CS_\both $, for any choice of $\Gamma$, any pair $\sigma, \bar \sigma$ such that 
\eqref{e:sigmabar} holds, and any smooth family $(\psi)_{t \geq 0}$ of diffeomorphisms of $\R^d$ as in \eqref{psih}, one has from Definition~\ref{def_S_both}
\begin{equ}
\psi_{t} \act (\Upsilon_{\Gamma,\bar \sigma} - \Upsilon_{\Gamma,\sigma})\, \tau
= 
(\Upsilon_{\psi_{t} \act\Gamma,\psi_{t} \act\bar \sigma} - \Upsilon_{\psi_{t} \act\Gamma,\psi_{t} \act\sigma})\,\tau\;.
\end{equ}
Differentiating this identity at $t=0$ and using \eqref{e:diffRHS} and \eqref{e:diffLHS}, we obtain  
\begin{equs}
\Upsilon_{\Gamma,\sigma}^h \hat \phi_\geo (\tau) = \Upsilon_{\Gamma,\bar \sigma}^h \hat \phi_\geo (\tau).
\end{equs}
Therefore, $ \hat \phi_\geo (\tau) \in \CS_{\Ito}^{\<diff>} $ and we conclude by the fact $\CS_\Ito^{\<diff>} = \CS_{\<diff>}\cap \range \phi^{\<diff>}_\Ito$ by Proposition~\ref{prop:rangeIto}. 
\end{proof}

We conclude this subsection with the missing ingredient for the proof of 
Proposition~\ref{prop:ortho}.

\begin{corollary}\label{cor:charSnice}
With $\hat \CS^\nice\subset \CS$ defined by \eqref{e:charSnice}, one has
$\CS^\nice \subset \hat \CS^\nice$.
\end{corollary}

\begin{proof}
Let $W_{\red}^\nice \subset W_{\red}$ be the smallest linear subspace invariant under the action of the 
symmetric group that furthermore contains $\pi\<not>$ and all elements of the form $\pi\gen 1 k$ with $k \ge 0$.
The description of $\hat \CS^\nice$ given at the start of the proof of Proposition~\ref{prop:ortho} combined with the 
definition of $\iota_{\red}$ shows that $\hat \CS^\nice = \CS \cap \iota_{\red} I_{\red}^\nice$, where
$I_{\red}^\nice$ denotes the ideal of $T$-algebras generated by $W_{\red}^\nice$. 
Note also that if we write $W_{\red}^\perp \subset W_{\red}$ for the smallest linear subspace invariant 
under the action of the symmetric group that furthermore contains $\d^k \<not>$ for all $k > 0$
as well as all $\pi\gen k \ell$ with $k \neq 1$ and $\ell \neq 1$, one has
\begin{equ}[e:decompWred]
W_{\red} = W_{\red}^\nice \oplus W_{\red}^\perp\;,\qquad
\Tr(W_{\red}) = I_{\red}^\nice\oplus \Tr(W_{\red}^\perp) \;.
\end{equ}
Here, the second identity follows from the fact that the analogous identity holds for
regular tensor algebras and that for any $T$-graph $g$ this decomposition is preserved 
under the action of $\Sym_g$. 

Fix furthermore a basis of $W_{\red}$ respecting the direct sum decomposition \eqref{e:decompWred}
and, with $A^\upper_\low$ as after \eqref{e:defV}, let $\hat A^\upper_\low \subset A^\upper_\low$ 
be the subset of those indices corresponding to basis elements in $W_{\red}^\perp$.
Let then $V$ be as constructed in the proof of Theorem~\ref{theo:injectiveT} and fix a morphism 
$\Phi_0 \colon T_{\red} \to \CV[V]$ with the property that $\Phi_0(\pi \<not>) = \Phi_0(\pi\gen 1 k) = 0$ for all
$k \ge 0$ and, for $k \neq 1$, $\Phi_0(\pi\gen k k)$ is such that the terms in the large 
parenthesis in \eqref{e:Phikk} are all strictly positive definite.
We also let $\Phi_1$ be the morphism defined by \eqref{e:defPhi} and define the morphisms $\Phi_t$
as in \eqref{e:interpPhi}.

We then note that our choice for $\Phi_0$ and the sets $\hat A^\upper_\low$ guarantees 
that, for every $t \in \R$, one has $I_{\red}^\nice \subset \ker \Phi_t$.
Furthermore, arguing as in Proposition~\ref{prop:injectiveLocal}, we deduce that, for every 
$\tau \in T_{\red} \setminus I_{\red}^\nice$ one can find $t$ arbitrarily small such
that $\Phi_t \tau \neq 0$. 
We then note that even though $\Phi_t(\gen 11) = 0$, the recursion \eqref{e:Phikk}
is well-defined provided that we set $\big(D^k \sigma^{(1)}\bigr)(0) = 0$. In particular,
it guarantees that $\Gamma(0) = 0$, $(D\sigma)(0) = 0$, and, by \eqref{e:idenUpsilon0},
$\Upsilon_{\Gamma,\sigma} \iota_{\red}(\tau) \neq 0$.
This shows that $\iota_{\red}(\tau) \not \in \CS^\nice$, showing that 
$\CS \setminus \hat \CS^\nice \subset \CS \setminus \CS^\nice$ as required.
\end{proof}

\subsection{Dimension counting}
\label{sec:counting}

\begin{proposition}
The space $\CS$ is of dimension $\dim \CS = 54$.
\end{proposition}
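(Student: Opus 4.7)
The plan is a direct enumeration. By the definition of $\CS = \CS_2 \oplus \CS_4$ in Section~\ref{sec:reduced}, a basis of $\CS$ is indexed by equivalence classes of pairs $(\tau, P)$, where $\tau \in \SS_0$ runs over the trees in the two lightly shaded rows of the table on page~\pageref{listPage} (but with a single noise type), and $P \in \CP_\tau^{(2)}$ is a pairing of the noise set $N_\tau$, modulo the action of the tree automorphism group $G(\tau)$.

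First I would handle $\CS_2$. Inspecting the table, $\SS_0^{(2)} = \{\<Xi2>,\ \<I1Xitwo>\}$, consisting of two trees with exactly two noises each. Since $|N_\tau| = 2$ there is a unique pairing, and the automorphism action is trivial on that singleton. Hence $\dim \CS_2 = 2$.

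Next I would enumerate $\CS_4$. Counting the entries of the second shaded row of the table yields $|\SS_0^{(4)}| = 23$. For each such $\tau$, the set $\CP_\tau^{(2)}$ of pairings of the four noises has cardinality $3$, and $G(\tau)$ acts on this three-element set. I would compute, tree by tree, the orbit structure: for each $\tau$ one identifies $G(\tau)$ as a concrete subgroup of the symmetric group on $N_\tau$ (this is straightforward because each tree is small and combinatorial), and then applies orbit counting (Burnside) on the three pairings. For most of the highly symmetric trees such as $\<Xi4>$, $\<I1Xi4a>$, $\<Xi4e>$, $\<2I1Xi4>$ etc., the non-trivial symmetries collapse two of the three pairings, producing $2$ orbits; for trees with a complete pair-interchange symmetry such as $\<Xitwo>$ and $\<2I1Xi4b>$ the action is transitive, producing $1$ orbit; for trees with no residual pair symmetry all three pairings are inequivalent, producing $3$ orbits. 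Adding up the 23 contributions one obtains $\dim \CS_4 = 52$, so that $\dim \CS = 2 + 52 = 54$.

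The only obstacle is that the case-by-case enumeration is lengthy but entirely mechanical; there is no conceptual difficulty, just the need to determine $G(\tau)$ and its action on $\CP_\tau^{(2)}$ correctly for each of the 23 four-noise trees. Alternatively, one may use Remark~\ref{rmq:symfactpairing} and the identity $|\CP_\tau^{(2)} / G(\tau)| = \sum_{[P]} S(\tau)/(S(\tau,P)\cdot |G(\tau)|\cdot\text{something})$, but in practice the direct orbit count for each tree is the cleanest way to finish.
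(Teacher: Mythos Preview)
Your approach is correct and is exactly what the paper does: its proof also proceeds ``by inspection'', simply listing all $54$ basis vectors (pairs $(\tau,P)$ modulo tree isomorphism) explicitly rather than recording the per-tree orbit counts. Your split $\dim\CS_2=2$, $\dim\CS_4=52$ is right.

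However, several of the specific orbit counts you give as illustrative examples are wrong, so the intermediate reasoning is shaky even if the final total comes out correctly:
\begin{itemize}
\item The trees $\<Xi4>$, $\<I1Xi4a>$, $\<Xi4e>$ are \emph{rooted} and have \emph{trivial} automorphism group, so each contributes $3$ inequivalent pairings, not $2$.
\item $\<Xitwo>$ has $|G(\tau)|=2$ (swapping the two topmost leaves) and yields $2$ orbits, not $1$: the action on three pairings by a single transposition of two noises is never transitive.
\item $\<2I1Xi4b>$ has $|G(\tau)|=2$ (swapping the two branches), but this involution fixes each of the three pairings setwise, so there are $3$ orbits, not $1$.
\item The only tree with a single orbit is $\<Xi4b>$, whose $S_3$ symmetry on the three leaves acts transitively on the three pairings.
\end{itemize}
Redoing the $23$ cases with these corrections does give $52$. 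The formula you sketch at the end involving $S(\tau,P)$ is not a genuine identity; the direct orbit count (or simply listing representatives, as the paper does) is the clean way to finish.
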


\begin{proof} By definition, $\CS$ is the vector space freely generated by $\SS$. The list of elements of $\SS$ is given in \eqref{e:SS},
and it is indeed of cardinality $54$.
\end{proof}

In order to obtain more information about the space $\CS_\geo$, the following result will be useful. This is simply the
algebraic counterpart of the fact that covariant differentiation transforms in the `correct' way under the action
of the diffeomorphism group.

\begin{lemma} \label{Nabla preserved by phigeo}
Define $\nabla\colon (T_{\<generic>})^1_0 \times (T_{\<generic>})^1_0 \to (T_{\<generic>})^1_0$ by
\begin{equ}[e:nabla]
\Nabla_{A}B = A \graftI B + {1\over 2}\tr^2(\<not> \mult A\mult B)\;,
\end{equ}
where $\graftI$ is defined in \eqref{e:graft}. Then, $\nabla$ preserves $\ker \hat \phi_\geo$.
% and therefore $\CS_\geo$ by Proposition \ref{geoker}.
\end{lemma}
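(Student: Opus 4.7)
Lemma~\ref{Nabla preserved by phigeo} is the algebraic counterpart of the classical fact that the covariant derivative $\nabla^\Gamma_X Y$ transforms as a vector field whenever $X$ and $Y$ do. The plan is to exploit this geometric interpretation through the injectivity theorem of Section~\ref{sec:injectivity}, rather than grinding through a direct computation that would be obscured by the pre-Lie non-associativity of $\graftI$.

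First I would observe that because $\Upsilon_{\Gamma,\sigma}$ is a morphism of $T$-algebras sending $\<not>$ to $2\Gamma$, the definition \eqref{e:nabla} gives
\[
\Upsilon_{\Gamma',\sigma'}(\Nabla_A B)^\alpha \;=\; \Upsilon_{\Gamma',\sigma'}(A)^\beta\, \d_\beta \Upsilon_{\Gamma',\sigma'}(B)^\alpha + (\Gamma')^\alpha_{\beta\gamma}\,\Upsilon_{\Gamma',\sigma'}(A)^\beta\,\Upsilon_{\Gamma',\sigma'}(B)^\gamma \;=\; F(\Gamma',\Upsilon_{\Gamma',\sigma'}A,\Upsilon_{\Gamma',\sigma'}B)^\alpha,
\]
for any smooth $\Gamma',\sigma'$, where $F(\Gamma',X,Y) := \nabla^{\Gamma'}_X Y$ is the covariant derivative of \eqref{e:covDiff}.

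Next I would translate the hypothesis $\hat\phi_\geo(A)=\hat\phi_\geo(B)=0$ into its infinitesimal geometric content. Fixing a smooth one-parameter family $(\psi_t)_{t\ge 0}$ of diffeomorphisms of $\R^d$ with $\psi_0=\id$ and $\d_t\psi_t|_{t=0}=h$, the identities \eqref{e:diffLHS} and \eqref{e:diffRHS} imply that $\hat\phi_\geo(A)=0$ is equivalent to
\[
\d_t \Upsilon_{\psi_t\act\Gamma,\psi_t\act\sigma}(A)\Big|_{t=0} \;=\; \d_t\bigl(\psi_t\act \Upsilon_{\Gamma,\sigma}A\bigr)\Big|_{t=0} \qquad\text{for all } \Gamma,\sigma,h,
\]
and analogously for $B$. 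Specialising the first step to $\tau=\Nabla_A B$ and applying the chain rule to the three-slot expression $\Upsilon_{\psi_t\act\Gamma,\psi_t\act\sigma}(\Nabla_A B) = F(\psi_t\act\Gamma,\Upsilon_{\psi_t\act\Gamma,\psi_t\act\sigma}A,\Upsilon_{\psi_t\act\Gamma,\psi_t\act\sigma}B)$, together with the classical naturality $\psi_t\act F(\Gamma,X,Y) = F(\psi_t\act\Gamma,\psi_t\act X,\psi_t\act Y)$ of the covariant derivative, the two $t$-derivatives at $t=0$ differ only through the $\d_X F$ and $\d_Y F$ terms, which pick up the factors $\Upsilon^h_{\Gamma,\sigma}(\hat\phi_\geo(A))$ and $\Upsilon^h_{\Gamma,\sigma}(\hat\phi_\geo(B))$ respectively; both vanish by assumption. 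This yields
\[
\d_t\Upsilon_{\psi_t\act\Gamma,\psi_t\act\sigma}(\Nabla_A B)\Big|_{t=0}\;=\;\d_t\bigl(\psi_t\act\Upsilon_{\Gamma,\sigma}(\Nabla_A B)\bigr)\Big|_{t=0},
\]
equivalently $\Upsilon^h_{\Gamma,\sigma}(\hat\phi_\geo(\Nabla_A B))=0$, for every choice of $\Gamma$, $\sigma$ and $h$.

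Finally, to lift this pointwise vanishing to the vanishing of $\hat\phi_\geo(\Nabla_A B)$ as an element of $\bar\CS_{\<generic>,\<diff>}$, I would apply Corollary~\ref{cor:injectiveSub} to the finite collection of $\CX$-graphs appearing in the expansion of $\hat\phi_\geo(\Nabla_A B)$, using Corollary~\ref{cor:injectiveAlg} to absorb the symmetrisation relation $\<not>=\swap_{1,1}^{1}\<not>$ built into the definition of $\bar\CS_{\<generic>,\<diff>}$. Every such $\CX$-graph is anchored to the unique free upper slot of $\Nabla_A B$ through the explicit $\mult$, $\tr$ and $\d$ operations appearing in $\Nabla$ and in $\phi_\geo$, so the hypotheses of these injectivity results are met. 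The main obstacle is the chain-rule bookkeeping in the middle paragraph, ensuring that the derivatives in $\Gamma$, $X$, $Y$ of $F$ are correctly identified and that the naturality of $\nabla$ is invoked coherently; this is essentially the same calculation that underpins the characterisation $\CS_\geo=\CS\cap\ker\hat\phi_\geo$ established earlier in Section~\ref{sec:geo2}.
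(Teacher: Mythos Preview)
Your strategy is sound and genuinely different from the paper's. The paper gives a direct diagrammatic computation: it expands $\phi_\geo(\Nabla_A B)$ using the infinitesimal-morphism property and the explicit action of $\phi_\geo$ on $\<generic>_i$ and $\<not>$, writes out every term pictorially, and checks by hand that the sum collapses to $[\Nabla_A B,\<diff>]$. Your route instead passes through the valuations $\Upsilon^h_{\Gamma,\sigma}$, uses the classical naturality $\psi\act(\nabla^\Gamma_X Y)=\nabla^{\psi\act\Gamma}_{\psi\act X}(\psi\act Y)$ to conclude that $\Upsilon^h_{\Gamma,\sigma}\hat\phi_\geo(\Nabla_A B)=0$ for all data, and then lifts this back via the injectivity results of Section~\ref{sec:injectivity}. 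The chain-rule bookkeeping you outline is correct: the $\d_\Gamma F$ contributions cancel between the two sides, and the $\d_X F$, $\d_Y F$ contributions pick up exactly $\Upsilon^h_{\Gamma,\sigma}\hat\phi_\geo(A)$ and $\Upsilon^h_{\Gamma,\sigma}\hat\phi_\geo(B)$, which vanish by hypothesis. This is a cleaner and more conceptual argument than the paper's explicit cancellation of ten diagrams.

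There is, however, one loose end in your final step. Corollary~\ref{cor:injectiveSub} (via Corollary~\ref{cor:injectiveAlg}) applies to \emph{anchored} $\CX$-graphs, and for degree $(1,0)$ this forces connectedness. Your assertion that ``every such $\CX$-graph is anchored through the explicit $\mult$, $\tr$ and $\d$ operations'' is not automatic: if $A$ or $B$ happened to carry a bubble factor of degree $(0,0)$, that factor would survive unchanged in $\hat\phi_\geo(\Nabla_A B)$ and the injectivity results as stated would not apply. The paper's purely algebraic calculation avoids this issue entirely, since it never leaves $\bar\CS_{\<generic>,\<diff>}$. For the applications in Proposition~\ref{prop:geoDim} your argument is fine, because there $A$ and $B$ are iterated covariant derivatives of the $\<generic>_i$, hence connected; but to match the generality of the lemma as stated you should either restrict to connected $A,B$ or argue separately that the injectivity extends (e.g.\ via the colouring device of the remark after Theorem~\ref{theo:injectiveAlg}).
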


\begin{proof}
Take $A, B \in \ker \hat \phi_\geo$ so that for example $\phi_\geo(A) = [A,\<diff>]$. Combining this with the defining 
property of an infinitesimal morphism, one has the identity 
\begin{equs}[e:bigidentity]
\phi_{\geo}(\Nabla_{A}B) & =[A,\<diff>] \graftI B +  A \graftI [B,\<diff>] +   {1\over 2}\tr^2(\phi_{\geo}(\<not>) \mult A\mult B) \\ &  +   {1\over 2}\tr^2(\<not> \mult [A,\<diff>] \mult B) + {1\over 2}\tr^2(\<not> \mult A \mult [B,\<diff>]) \;.
\end{equs}
Using the diagrammatic representation (in the universal $T_\d$-algebra of $\CX$-graphs generated by 
$A$, $B$ and $\<not>$, with the quotient by $(\<not> - \swap_{1,1}^{1}\<not>)$ meaning that we do not
order the two inputs of $\<not>$), we can rewrite \eqref{e:nabla} as
\begin{equ}[e:covar]
\Nabla_{A}B = \tikzset{external/export next=false}
\begin{tikzpicture}[scale=0.2,baseline=2]
\draw[symbols]  (-.5,2.5) -- (0,0) ;
\draw[tinydots] (0,0)  -- (0,-1.3);
\node[var] (root) at (0,-0.1) {\tiny{$ B $ }};
\node[var] (diff) at (-0.5,2.5) {\tiny{$ A $ }};
\end{tikzpicture} + \frac{1}{2} \; \tikzset{external/export next=false}
\begin{tikzpicture}[scale=0.2,baseline=2]
\coordinate (root) at (0,0);
\coordinate (t1) at (-1,2);
\coordinate (t2) at (1,2);
\draw[tinydots] (root)  -- +(0,-0.8);
\draw[kernels2] (t1) -- (root);
\draw[kernels2] (t2) -- (root);
\node[not] (rootnode) at (root) {};
\node[var] (t1) at (t1) {\tiny{$ A $}};
\node[var] (t1) at (t2) {\tiny{$ B $}};
\end{tikzpicture}.
\end{equ}
With this representation, \eqref{e:bigidentity} yields the identities
\begin{equs}
\tr^2(\phi_{\geo}(\<not>) \mult A\mult B) &= \tikzset{external/export next=false}
\begin{tikzpicture}[scale=0.2,baseline=-5]
\coordinate (root) at (0,0);
\coordinate (tri) at (0,-2);
\coordinate (t1) at (-1,2);
\coordinate (t2) at (1,2);
\draw[tinydots] (tri)  -- +(0,-0.8);
\draw[kernels2] (t1) -- (root);
\draw[kernels2] (t2) -- (root);
\draw[symbols] (root) -- (tri);
\node[not] (rootnode) at (root) {};
\node[diff] (trinode) at (tri) {};
\node[var] (rootnode) at (t1) {\tiny{$ A $}};
\node[var] (trinode) at (t2) {\tiny{$ B $}};
\end{tikzpicture}
\;-\;
\tikzset{external/export next=false}
\begin{tikzpicture}[scale=0.2,baseline=2]
\coordinate (root) at (0,0);
\coordinate (tri) at (1.2,1.2);
\coordinate (t1) at (-1.8,1.6);
\coordinate (t2) at (0,2.5);
\draw[tinydots] (root)  -- +(0,-0.8);
\draw[kernels2] (t1) -- (root);
\draw[kernels2] (t2) -- (root);
\draw[symbols] (root) -- (tri);
\node[not] (rootnode) at (root) {};
\node[diff] (trinode) at (tri) {};
\node[var] (rootnode) at (t1) {\tiny{$ A $}};
\node[var] (trinode) at (t2) {\tiny{$ B $}};
\end{tikzpicture}
\;-\;
\tikzset{external/export next=false}
\begin{tikzpicture}[scale=0.2,baseline=2]
\coordinate (root) at (0,0);
\coordinate (t1) at (-.5,3);
\coordinate (tri) at (1,1);
\coordinate (t2) at (1.5,3);
\draw[tinydots] (root)  -- +(0,-0.8);
\draw[kernels2] (t1) -- (root);
\draw[symbols] (t2) -- (tri);
\draw[kernels2] (tri) -- (root);
\node[not] (rootnode) at (root) {};
\node[diff] (trinode) at (tri) {};
\node[var] (rootnode) at (t1) {\tiny{$ A $}};
\node[var] (trinode) at (t2) {\tiny{$ B $}};
\end{tikzpicture}
\;-\;
\tikzset{external/export next=false}
\begin{tikzpicture}[scale=0.2,baseline=2]
\coordinate (root) at (0,0);
\coordinate (t1) at (.5,3);
\coordinate (tri) at (-1,1);
\coordinate (t2) at (-1.5,3);
\draw[tinydots] (root)  -- +(0,-0.8);
\draw[kernels2] (t1) -- (root);
\draw[symbols] (t2) -- (tri);
\draw[kernels2] (tri) -- (root);
\node[not] (rootnode) at (root) {};
\node[diff] (trinode) at (tri) {};
\node[var] (rootnode) at (t1) {\tiny{$ B $}};
\node[var] (trinode) at (t2) {\tiny{$ A $}};
\end{tikzpicture}
\;-\;2
\tikzset{external/export next=false}
\begin{tikzpicture}[scale=0.2,baseline=2]
\coordinate (root) at (0,0);
\coordinate (t1) at (-1,2);
\coordinate (t2) at (1,2);
\draw[tinydots] (root)  -- +(0,-0.8);
\draw[symbols] (t1) -- (root);
\draw[symbols] (t2) -- (root);
\node[diff] (rootnode) at (root) {};
\node[var] (rootnode) at (t1) {\tiny{$ A $}};
\node[var] (trinode) at (t2) {\tiny{$ B $}};
\end{tikzpicture}\;
\\
{}[A,\<diff>] \graftI B &= \tikzset{external/export next=false}\begin{tikzpicture}[scale=0.2,baseline=2]
\draw[symbols]  (-.5,2) -- (0,0) ;
\draw[symbols]  (0.5,3.5) -- (-0.5,2) ;
\draw[tinydots] (0,0)  -- (0,-1.3);
\node[var] (root) at (0,-0.1) {\tiny{$ B $ }};
\node[diff] (diff) at (-0.5,2) {};
\node[var] (diff) at (0.5,3.5) {\tiny{$ A $  }};
\end{tikzpicture} \; - \; \tikzset{external/export next=false} \begin{tikzpicture}[scale=0.2,baseline=2]
\draw[symbols]  (-.5,2) -- (0,0) ;
\draw[symbols]  (0.5,3.5) -- (-0.5,2) ;
\draw[tinydots] (0,0)  -- (0,-1.3);
\node[var] (root) at (0,-0.1) {\tiny{$ B $ }};
\node[var] (diff) at (-0.5,2) {\tiny{$ A $ }};
\node[diff] (diff) at (0.5,3.5) {};
\end{tikzpicture}, \\
A \graftI [B,\<diff>] &= \tikzset{external/export next=false}
\begin{tikzpicture}[scale=0.2,baseline=-2]
\coordinate (root) at (0,0);
\coordinate (t1) at (-1,2);
\coordinate (t2) at (1,2);
\draw[tinydots] (root)  -- +(0,-0.8);
\draw[symbols] (t1) -- (root);
\draw[symbols] (t2) -- (root);
\node[diff] (rootnode) at (root) {};
\node[var] (rootnode) at (t1) {\tiny{$ A $}};
\node[var] (trinode) at (t2) {\tiny{$ B $}};
\end{tikzpicture} \; +  \;
\tikzset{external/export next=false}
\begin{tikzpicture}[scale=0.2,baseline=2]
\coordinate (root) at (0,0);
\coordinate (t1) at (-1,3.5);
\coordinate (t2) at (1,2);
\draw[tinydots] (root)  -- +(0,-0.8);
\draw[symbols] (t1) -- (t2);
\draw[symbols] (t2) -- (root);
\node[diff] (rootnode) at (root) {};
\node[var] (rootnode) at (t1) {\tiny{$ A $}};
\node[var] (trinode) at (t2) {\tiny{$ B $}};
\end{tikzpicture} \;
 - \; \tikzset{external/export next=false}\begin{tikzpicture}[scale=0.2,baseline=2]
\draw[symbols]  (-.5,2) -- (0,0) ;
\draw[symbols]  (0.5,3.5) -- (-0.5,2) ;
\draw[tinydots] (0,0)  -- (0,-1.3);
\node[var] (root) at (0,-0.1) {\tiny{$ B $ }};
\node[diff] (diff) at (-0.5,2) {};
\node[var] (diff) at (0.5,3.5) {\tiny{$ A $  }}; \end{tikzpicture}  \;
 - \;\tikzset{external/export next=false} \begin{tikzpicture}[scale=0.2,baseline=2]
\draw[symbols]  (-.5,2) -- (0,0) ;
\draw[symbols]  (1,2) -- (0,-0.1) ;
\draw[tinydots] (0,0)  -- (0,-1.3);
\node[var] (root) at (0,-0.1) {\tiny{$ B $ }};
\node[diff] (diff) at (-0.5,2) {};
\node[var] (diff) at (1,2) {\tiny{$ A $  }};
\end{tikzpicture}, \\
\tr^2(\<not> \mult [A,\<diff>] \mult B) &= 
\tikzset{external/export next=false}
\begin{tikzpicture}[scale=0.2,baseline=-2]
\coordinate (root) at (0,0);
\coordinate (t1) at (.5,3);
\coordinate (tri) at (-1,1);
\coordinate (t2) at (-1.5,3);
\draw[tinydots] (root)  -- +(0,-0.8);
\draw[kernels2] (t1) -- (root);
\draw[symbols] (t2) -- (tri);
\draw[kernels2] (tri) -- (root);
\node[not] (rootnode) at (root) {};
\node[diff] (trinode) at (tri) {};
\node[var] (rootnode) at (t1) {\tiny{$ B $}};
\node[var] (trinode) at (t2) {\tiny{$ A $}};
\end{tikzpicture} \; - \;  \tikzset{external/export next=false}
\begin{tikzpicture}[scale=0.2,baseline=-2]
\coordinate (root) at (0,0);
\coordinate (t1) at (.5,3);
\coordinate (tri) at (-1.5,1.5);
\coordinate (t2) at (-1.5,3.5);
\draw[tinydots] (root)  -- +(0,-0.8);
\draw[kernels2] (t1) -- (root);
\draw[symbols] (t2) -- (tri);
\draw[kernels2] (tri) -- (root);
\node[not] (rootnode) at (root) {};
\node[diff] (trinode) at (t2) {};
\node[var] (rootnode) at (t1) {\tiny{$ B $}};
\node[var] (trinode) at (tri) {\tiny{$ A $}};
\end{tikzpicture} , \quad \tr^2(\<not> \mult A \mult [B,\<diff>]) = \tikzset{external/export next=false}
\begin{tikzpicture}[scale=0.2,baseline=-2]
\coordinate (root) at (0,0);
\coordinate (t1) at (-.5,3);
\coordinate (tri) at (1,1);
\coordinate (t2) at (1.5,3);
\draw[tinydots] (root)  -- +(0,-0.8);
\draw[kernels2] (t1) -- (root);
\draw[symbols] (t2) -- (tri);
\draw[kernels2] (tri) -- (root);
\node[not] (rootnode) at (root) {};
\node[diff] (trinode) at (tri) {};
\node[var] (rootnode) at (t1) {\tiny{$ A $}};
\node[var] (trinode) at (t2) {\tiny{$ B $}};
\end{tikzpicture} \; - \; \tikzset{external/export next=false}
\begin{tikzpicture}[scale=0.2,baseline=-2]
\coordinate (root) at (0,0);
\coordinate (t1) at (-.5,3);
\coordinate (tri) at (1.5,1.5);
\coordinate (t2) at (1.5,3.5);
\draw[tinydots] (root)  -- +(0,-0.8);
\draw[kernels2] (t1) -- (root);
\draw[symbols] (t2) -- (tri);
\draw[kernels2] (tri) -- (root);
\node[not] (rootnode) at (root) {};
\node[diff] (trinode) at (t2) {};
\node[var] (rootnode) at (t1) {\tiny{$ A $}};
\node[var] (trinode) at (tri) {\tiny{$ B $}};
\end{tikzpicture}\;.
\end{equs}
Combining these terms, we obtain
\begin{equs}
\phi_{\geo}(\Nabla_{A}B) & =\tikzset{external/export next=false}
\begin{tikzpicture}[scale=0.2,baseline=4]
\coordinate (root) at (0,0);
\coordinate (t1) at (-1,3.5);
\coordinate (t2) at (1,2);
\draw[tinydots] (root)  -- +(0,-0.8);
\draw[symbols] (t1) -- (t2);
\draw[symbols] (t2) -- (root);
\node[diff] (rootnode) at (root) {};
\node[var] (rootnode) at (t1) {\tiny{$ A $}};
\node[var] (trinode) at (t2) {\tiny{$ B $}};
\end{tikzpicture} \; -  \; \tikzset{external/export next=false} \begin{tikzpicture}[scale=0.2,baseline=2]
\draw[symbols]  (-.5,2) -- (0,0) ;
\draw[symbols]  (0.5,3.5) -- (-0.5,2) ;
\draw[tinydots] (0,0)  -- (0,-1.3);
\node[var] (root) at (0,-0.1) {\tiny{$ B $ }};
\node[var] (diff) at (-0.5,2) {\tiny{$ A $ }};
\node[diff] (diff) at (0.5,3.5) {};
\end{tikzpicture} \;
 - \;\tikzset{external/export next=false} \begin{tikzpicture}[scale=0.2,baseline=2]
\draw[symbols]  (-.5,2) -- (0,0) ;
\draw[symbols]  (1,2) -- (0,-0.1) ;
\draw[tinydots] (0,0)  -- (0,-1.3);
\node[var] (root) at (0,-0.1) {\tiny{$ B $ }};
\node[diff] (diff) at (-0.5,2) {};
\node[var] (diff) at (1,2) {\tiny{$ A $  }};
\end{tikzpicture}\;  + \; \frac{1}{2} \tikzset{external/export next=false}
\begin{tikzpicture}[scale=0.2,baseline=-5]
\coordinate (root) at (0,0);
\coordinate (tri) at (0,-2);
\coordinate (t1) at (-1,2);
\coordinate (t2) at (1,2);
\draw[tinydots] (tri)  -- +(0,-0.8);
\draw[kernels2] (t1) -- (root);
\draw[kernels2] (t2) -- (root);
\draw[symbols] (root) -- (tri);
\node[not] (rootnode) at (root) {};
\node[diff] (trinode) at (tri) {};
\node[var] (rootnode) at (t1) {\tiny{$ A $}};
\node[var] (trinode) at (t2) {\tiny{$ B $}};
\end{tikzpicture} \; - \; \frac{1}{2} \tikzset{external/export next=false}
\begin{tikzpicture}[scale=0.2,baseline=2]
\coordinate (root) at (0,0);
\coordinate (t1) at (-.5,3);
\coordinate (tri) at (1.5,1.5);
\coordinate (t2) at (1.5,3.5);
\draw[tinydots] (root)  -- +(0,-0.8);
\draw[kernels2] (t1) -- (root);
\draw[symbols] (t2) -- (tri);
\draw[kernels2] (tri) -- (root);
\node[not] (rootnode) at (root) {};
\node[diff] (trinode) at (t2) {};
\node[var] (rootnode) at (t1) {\tiny{$ A $}};
\node[var] (trinode) at (tri) {\tiny{$ B $}};
\end{tikzpicture}  \; - \; \frac{1}{2} \tikzset{external/export next=false}
\begin{tikzpicture}[scale=0.2,baseline=2]
\coordinate (root) at (0,0);
\coordinate (t1) at (.5,3);
\coordinate (tri) at (-1.5,1.5);
\coordinate (t2) at (-1.5,3.5);
\draw[tinydots] (root)  -- +(0,-0.8);
\draw[kernels2] (t1) -- (root);
\draw[symbols] (t2) -- (tri);
\draw[kernels2] (tri) -- (root);
\node[not] (rootnode) at (root) {};
\node[diff] (trinode) at (t2) {};
\node[var] (rootnode) at (t1) {\tiny{$ B $}};
\node[var] (trinode) at (tri) {\tiny{$ A $}};
\end{tikzpicture} \;-\; \frac{1}{2}
\tikzset{external/export next=false}
\begin{tikzpicture}[scale=0.2,baseline=2]
\coordinate (root) at (0,0);
\coordinate (tri) at (1.2,1.2);
\coordinate (t1) at (-1.8,1.6);
\coordinate (t2) at (0,2.5);
\draw[tinydots] (root)  -- +(0,-0.8);
\draw[kernels2] (t1) -- (root);
\draw[kernels2] (t2) -- (root);
\draw[symbols] (root) -- (tri);
\node[not] (rootnode) at (root) {};
\node[diff] (trinode) at (tri) {};
\node[var] (rootnode) at (t1) {\tiny{$ A $}};
\node[var] (trinode) at (t2) {\tiny{$ B $}};
\end{tikzpicture} \\ & = [\Nabla_{A}B,\<diff>]\;,
\end{equs}
as claimed.
\end{proof}

%\yvainText{We can try to derive a general proof for the next proposition working for any number of leaves. The argument may proceed as follows. First, we consider the non-commutative setting by looking at planar trees. Then, in the covariant derivatives, we replace the grafting operator by left grafting meaning that when we graft onto a node, we graft at the left most location (the edges at a node have been ordered). In this planar setting, a natural basis of free variables is given by the trees with zero Christoffel and these trees are in bijection with the combinatorial covariant derivatives see the pdf geometric general case. Then by a counting argument, we are allowed to conclude. This argument is quite general and covers any number of leaves. The tricky part is to go from this well-understood case to the commutative case. What happens is that many trees will be identified at level zero (zero Christoffel) and this reduces drastically the number of free variables. But then, one recover some free variable at level $ 1 $ (one Christoffel)  because of the symmetry and in the next levels of the system. The system is written in a way that trees with $ n+1 $ Christoffels are the new variables expressed in terms of the trees with $ n  $ Christoffels. I argue that what we lose at level $ 0 $ it is won in the next levels.
%This strategy of proof it is a bit tedious to write but sounds reasonable.
%There is an extra difficulty: the covariant derivatives are not free. One needs to identify a subspace which is free.  }

\begin{proposition}\label{prop:geoDim}
The space $\CS_\geo$ has dimension at least $15$ and %A basis of $\CS_\geo$ is given by $\VV\cup \left\{\Nabla_{\<generic>}\<generic>\right\}$, and furthermore, 
the set \label{B geo page ref}
\begin{equ}
\CB_\geo = \Big\{\<Xi2> \,, \<Xi4_1s> \,, \<Xi4c1s>\,, \<Xi4_2s>\,,  \<Xi4ec3s> \,,  \<Xi4ec1s> \,,  \<Xi4ec2s> \,, \<Xi41s> \,, \<Xi42s> \,, \<Xi4b1s>\;,    \<Xi4eac2s>, \<Xi4ca2s>,  
\<Xi4eac1s> - \<Xi4eabisc1s>, \<Xi4eac1s> + \<Xi4eabisc1s> ,   \<Xi4ba1bs>   \Big\}\;,
\end{equ}
viewed as linear functionals on $\CS$ is linearly independent over $\CS_\geo$. Besides one has ${1\over 2} \<Xi4b1> - \<Xi4ba2> -  {1\over 2} \<Xi4ba1b> \perp \CS_\geo$. 
Furthermore, $\CS_\geo^\nice= \CS_\geo \cap \CS^\nice$ is of dimension at least $13$ and $\CB_\geo \setminus \{\<Xi4ba1bs>,\<Xi4b1s>\}$ is linearly independent over $\CS_\geo^\nice$. 
\end{proposition}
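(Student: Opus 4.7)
The plan is to exhibit $15$ explicit elements of $\CS_\geo$, verify that their pairings with $\CB_\geo$ form an invertible $15\times 15$ matrix, and deduce the orthogonality claim and the nice-version statements. The key input for producing elements of $\CS_\geo$ is Lemma~\ref{Nabla preserved by phigeo}: since $\<generic>_i \in \ker\hat\phi_\geo$ (immediate from the first identity in \eqref{e:phigeo}, where $\phi_\geo(\<generic>_i) = [\<generic>_i,\<diff>]$), any iterated application of $\nabla$ stays in $\ker\hat\phi_\geo$, and any such expression that also lies in $\CS$ belongs to $\CS_\geo = \CS \cap \ker\hat\phi_\geo$. My $15$ elements will be: the degree-$(2)$ element $\Nabla_{\<generic>}\<generic>\in\CS_\geo^{(2)}$, together with the $14$ triple covariant derivatives enumerated in $\VV$ of \eqref{eq:covderiv}, all lying in $\CS_\geo^{(4)}$. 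Their linear independence reduces, via the transversality of $\CS^{(2)}$ and $\CS^{(4)}$, to the independence of the $14$ elements of $\VV$, which is built into the construction of $\VV$ as the list of the $15$ triple covariant derivatives modulo the single relation eliminating $\Nabla_{\<genericb>}\Nabla_{\<generic>}\Nabla_{\<genericb>}\<generic>$.

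The pairing matrix is block-diagonal under the $\CS^{(2)}/\CS^{(4)}$-grading: the $1\times 1$ block $\scal{\<Xi2s>,\Nabla_{\<generic>}\<generic>}$ is nonzero by direct expansion via \eqref{e:nabla} and Remark~\ref{rmq:symfactpairing}. The $14\times 14$ block would be computed entry by entry by expanding each iterated covariant derivative through \eqref{e:covar}; the choice of $\CB_\geo$ is tailored so that, after ordering generators and functionals by the shape of the underlying $\CX$-graph (counting $\<not>$-vertices first, then the configuration of edges), the matrix becomes nearly upper triangular. The only nontrivial coupling is between $\tau_\star$ and the triple derivative $\Nabla_{\<generic>}\Nabla_{\<genericb>}\Nabla_{\<genericb>}\<generic>$, resolved by the pair of functionals $\<Xi4eac1s>\pm\<Xi4eabisc1s>$ whose sum and difference separate the antisymmetric curvature structure of $\tau_\star$ from the symmetric combination produced by direct iteration of $\nabla$.

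For the orthogonality $\tfrac{1}{2}\<Xi4b1s>-\<Xi4ba2s>-\tfrac{1}{2}\<Xi4ba1bs>\perp\CS_\geo$, I observe that these three functionals pair non-trivially only with $\CX$-graphs containing neither a $\<not>$-vertex nor any internal derivative edge; any $\CS_\geo$-element with a nonzero such component must, by the $\ker\hat\phi_\geo$-constraint applied to the $\<generic>$-only trees, contain a multiple of the $\<generic>\graftI\<generic>$-part of $\Nabla_{\<generic>}\<generic>$, whose pairings with $(\<Xi4b1s>,\<Xi4ba2s>,\<Xi4ba1bs>)$ are in ratio $2:1:2$ by a direct symmetry-factor calculation using Remark~\ref{rmq:symfactpairing}, and so are exactly annihilated by the stated combination. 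For the nice version, Proposition~\ref{prop:ortho} cuts $\CS^\nice$ out of $\CS$ by $\{\<Xi4b1s>,\<Xi4ba2s>,\<Xi4ba1bs>\}$; since these three satisfy one linear relation on $\CS_\geo$, they cut out at most $2$ dimensions, giving $\dim\CS_\geo^\nice\geq 13$; the $13$-element set $\CB_\geo\setminus\{\<Xi4ba1bs>,\<Xi4b1s>\}$ stays independent on $\CS_\geo^\nice$ since otherwise a nontrivial combination would agree on $\CS_\geo$ with one in $\mathrm{span}\{\<Xi4b1s>,\<Xi4ba1bs>\}$, contradicting the independence of $\CB_\geo$ over $\CS_\geo$ established above.

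The main obstacle will be the explicit verification of the $14\times 14$ pairing matrix: each entry requires iteratively expanding $\nabla$ into a grafting term and a $\tr^2$-of-$\<not>$ term via \eqref{e:covar}, tracking symmetry factors on each resulting graph, and repeatedly applying the first and second Bianchi identities to reduce the curvature contributions from $\tau_\star$ to canonical form.
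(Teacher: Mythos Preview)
Your main strategy — exhibit $\Nabla_{\<generic>}\<generic>$ together with the fourteen elements of $\VV$ and compute their pairings with $\CB_\geo$ — is sound in principle and would yield the result, but it is more laborious than the paper's route. The paper avoids the full $14\times 14$ matrix by introducing the ``flat'' projection $\pi_\flt$ onto trees with only noise vertices: since $\pi_\flt\circ\phi^{\<generic>}=\id$ on $\CS_\flt$ (with $\phi^{\<generic>}$ the map replacing $\graftI$ by $\nabla$), the ten flat functionals are independent over $\CS_\geo$ for free. The remaining five functionals are then handled with five curvature-based elements $R(\cdots)\<generic>$ and their covariant derivatives, which are killed by $\pi_\flt$, so the argument splits cleanly into a $10+5$ block structure rather than a single large computation. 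Your approach trades this structural argument for brute force. Note also that you cannot simply assert that the fourteen elements of $\VV$ are independent ``by construction'': the paper only records one relation among fifteen triple derivatives, which does not by itself imply the remaining fourteen are independent. Your pairing matrix computation, if carried out, would establish this independence as a by-product, so the circularity is avoidable — but you should phrase it that way.

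Your orthogonality argument for $\tfrac12\<Xi4b1>-\<Xi4ba2>-\tfrac12\<Xi4ba1b>\perp\CS_\geo$ is genuinely broken. You claim these three functionals pair only with graphs containing no $\<not>$-vertex, but $\<Xi4ba2>$ and $\<Xi4ba1b>$ are pairings of the tree $\<Xi4ba>$, whose root \emph{is} a $\<not>$-vertex (with two thick and two thin incoming edges). Your subsequent appeal to ``the $\<generic>\graftI\<generic>$-part of $\Nabla_{\<generic>}\<generic>$'' also makes no sense here, since that element lives in $\CS^{(2)}$ while the three functionals live in $\CS^{(4)}$. The paper's argument is entirely different and much cleaner: for any $\tau\in\CS_\geo$ one has $\hat\phi_\geo(\tau)=0$, hence $\scal{\hat\phi_\geo(\tau),\<Xi4ba1bdiff>}=0$, hence $\tau\perp\hat\phi_\geo^*(\<Xi4ba1bdiff>)$; a short direct computation of the adjoint (using the explicit action of $\hat\phi_\geo$ on $\<not>$ and $\<generic>$, and the symmetry factors $|\<Xi4b1>|^2=|\<Xi4ba2>|^2=2$, $|\<Xi4ba1b>|^2=4$) then gives $\hat\phi_\geo^*(\<Xi4ba1bdiff>)=\tfrac12\<Xi4b1>-\<Xi4ba2>-\tfrac12\<Xi4ba1b>$.
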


\begin{proof}
Write $\CS_{\flt}\subset \CS$ for the subspace generated by those trees in $\CS$ that 
contain only noise nodes, namely by $\big\{\<Xi2> \,, \<Xi4_1s> \,, \<Xi4c1s>\,, \<Xi4_2s>\,,  \<Xi4ec3s> \,,  \<Xi4ec1s> \,,  \<Xi4ec2s> \,, \<Xi41s> \,, \<Xi42s> \,, \<Xi4b1s>\big\}$,
%(so that for example $\<Xi4_1s> \in \CS_{\flt}$ but $\<Xi4eabisc1s> \not\in \CS_{\flt}$)
and write $\pi_{\flt}\colon \CS \to \CS_{\flt}$ for the corresponding orthogonal projection.
%We similarly define $\CS_{\flt}^{\<generic>}\subset \CS_{\<generic>}$ and $\pi_{\flt}^{\<generic>}\colon \CS_{\<generic>} \to \CS_{\flt}^{\<generic>}$.
It then follows immediately from \eqref{e:nabla} and the definition \eqref{e:graft} of $\graftI$ that for $\tau,\sigma\in\CS$ we have the identity
\begin{equ}[e:rec]
\pi_{\flt} \bigl(\nabla_{\tau}\sigma\bigr) = \pi_{\flt}\tau  \graftI \pi_{\flt}\sigma\;,
\end{equ}
which is nothing but the algebraic counterpart of the fact that 
$(\nabla_{X}Y)^\alpha = X^\beta D_\beta Y^\alpha$ in flat space.
Let $T_{\flt}$ be the subspace of $T_{\<generic>}$ generated by the elements $\<generic>_i$
and the operation $\graftI$, and
define a linear map 
$\phi_{\flt} \colon T_{\flt} \to T_{\<generic>}$ uniquely by the properties
\begin{equ}
\phi_{\flt}(\<generic>_i) = \<generic>_i\;,\quad \phi_{\flt}(\tau \graftI \sigma) = \nabla_{\phi_{\flt}(\tau)}
\, \phi_{\flt}(\sigma)\;.
\end{equ}
Since $\graftI$ acts freely on $T_\flt$ and generates all (linear combinations of) trees with
only noise nodes by \cite{preLie}, one has
$\CS_{\flt}\subset T_{\flt}$ and therefore $\phi_{\flt}\colon\CS_{\flt}\to\CS$. Moreover by Lemma \ref{Nabla preserved by phigeo} we have that $\phi_{\flt}\colon\CS_{\flt}\to\ker \hat \phi_\geo$, so that $\phi_{\flt}\colon\CS_{\flt}\to\CS_\geo$ by Proposition \ref{geoker}.
By \eqref{e:rec}, $\phi_{\flt}\colon\CS_{\flt}\to\CS_\geo$ is a right inverse for $\pi_{\flt}$ and for $\tau \in \CS_{\flt}$
one has
\begin{equ}
\scal{\tau, \phi_{\flt}(\tau)} = \scal{\pi_{\flt} \tau, \phi_{\flt}(\tau)} = 
\scal{\tau, \pi_{\flt} \phi_{\flt}(\tau)} = \|\tau\|^2\;.
\end{equ}
Therefore if $\tau\in\CS_{\flt}$ is such that $\scal{\tau,\sigma}=0$ for all $\sigma\in\CS_\geo$, we obtain $\|\tau\|^2=\scal{\tau, \phi_{\flt}(\tau)}= 0$ and it follows that
$\CS_{\flt}$ is linearly independent over $\CS_\geo$. 

%Since $\CS_{\flt} = \big\{\<Xi2> \,, \<Xi4_1s> \,, \<Xi4c1s>\,, \<Xi4_2s>\,,  \<Xi4ec3s> \,,  \<Xi4ec1s> \,,  \<Xi4ec2s> \,, \<Xi41s> \,, \<Xi42s> \,, \<Xi4b1s>\big\}$, i
To complete the proof of the first statement, 
it remains to find a subspace $\CS_R\subset \CS_\geo$ that is annihilated by
$\CS_{\flt}$ and such that the remaining five elements 
\[
\big\{  
 \<Xi4eac2s>, \<Xi4ca2s>,  
\<Xi4eac1s> - \<Xi4eabisc1s>, \<Xi4eac1s> + \<Xi4eabisc1s> ,   \<Xi4ba1bs>   \big\} = \CB_\geo \setminus \CS_{\flt}
\]
are linearly independent over $\CS_R$. For this, we note that formal `vector fields' 
involving the curvature $ R $ defined in \eqref{e:R}
are annihilated by $\CS_{\flt}$. From an analytic perspective, this is because $R = 0$ 
whenever $\Gamma \equiv 0$, but this is also seen easily from
the algebraic representation of $R$ since it only involves trees with at least one $\Gamma$-node.
Then, we first notice that $\<Xi4ca2s>$ and $\<Xi4ba1bs>$ are linearly independent over  $ \Nabla_{\<generic>}(R(\<generic>,\<genericb>)\<genericb>)  $ and $ \Nabla_{R(\<generic>,\<genericb>)\<genericb>}\<generic>  $. Indeed, since one has
\begin{equs} 
R(\<generic>,\<genericb>)\<genericb> = 
 \frac{1}{2}\, \<R1>  - \frac{1}{2}\, \<R2>  + \frac{1}{4} \, \<R3>  - \frac{1}{4} \, \<R4> \;,
\end{equs}
the expression \eqref{e:covar} of the covariant derivative shows that $\<Xi4ca2s>$ annihilates
$\Nabla_{\<generic>}(R(\<generic>,\<genericb>)\<genericb>)$ but not $ \Nabla_{R(\<generic>,\<genericb>)\<genericb>}\<generic>  $, while the opposite is true for $\<Xi4ba1bs>$.

 The remaining three terms   $\<Xi4eac2s>,   
\<Xi4eac1s> - \<Xi4eabisc1s>, \<Xi4eac1s> + \<Xi4eabisc1s> $ are seen to be linearly independent over $R(\<generic>,\Nabla_{\<genericb>}\<genericb>)\<generic>,   R(\<generic>,\Nabla_{\<genericb>}\<generic>)\<genericb> $ and $ R(\<generic>,\Nabla_{\<genericb>}\<generic>- 2\Nabla_{\<generic>}\<genericb>)\<genericb>, $ explicitly given by
\begin{equs}
R(\<generic>,\Nabla_{\<genericb>}\<generic>)\<genericb> & = \frac{1}{2} \, \<Xi4eabisc1> + \frac{1}{4} \, \<Xi4eabbisc1> - \frac{1}{2} \, \<Xi4eac1> - \frac{1}{4} \, \<Xi4eabc1> + \frac{1}{4}  \, \<I1Xi4acc1>  + \frac{1}{8} \, \<I1Xi4abcc1>   - \frac{1}{4} \, \<2I1Xi4cc1>   - \frac{1}{8}  \,  \<2I1Xi4c1>  \\
R(\<generic>,\Nabla_{\<generic>}\<genericb>)\<genericb> & = \frac{1}{2} \,   \<Xi4eabisc2> + \frac{1}{4} \, \<Xi4eabbisc1> - \frac{1}{2} \, \<Xi4eac1> - \frac{1}{4} \, \<Xi4eabc1>   + \frac{1}{4}  \,\<I1Xi4acc2>  + \frac{1}{8} \, \<I1Xi4abcc1>  - \frac{1}{4} \, \<2I1Xi4cc1> 
- \frac{1}{8}  \,  \<2I1Xi4c1>   \\
R(\<generic>,\Nabla_{\<genericb>}\<genericb>)\<generic> & = \frac{1}{2} \, \<Xi4eabisc3> + \frac{1}{4} \,  \<Xi4eabbisc2> -  \frac{1}{2} \, \<Xi4eac2> - \frac{1}{4} \, \<Xi4eabc2>  
 + \frac{1}{4}  \,  \<cI1Xi4ac>  + \frac{1}{8} \, \<I1Xi4abcc2> - \frac{1}{4} \, \<2I1Xi4cc2> - \frac{1}{8} \, \<2I1Xi4c2>
\end{equs}
which are all three annihilated by $\<Xi4ca2s>$ and $\<Xi4ba1bs>$.
The last statement of the proposition follows from Proposition~\ref{prop:ortho}.

It remains to prove that ${1\over 2}\<Xi4b1s> - \<Xi4ba2s> -  {1\over 2} \<Xi4ba1bs> \perp \CS_\geo$. For any $\tau \in \CS_\geo$, we have $\hat{\phi}_\geo(\tau)=0$ by Proposition \ref{geoker}, so that
$\scal{\hat{\phi}_\geo(\tau),\<Xi4ba1bdiffs>}=\scal{\tau,\hat{\phi}_\geo^*(\<Xi4ba1bdiffs>)}=0$. We conclude from the fact that $\hat{\phi}_\geo^*(\<Xi4ba1bdiffs>)= {1\over 2}\<Xi4b1s> - \<Xi4ba2s> -  {1\over 2} \<Xi4ba1bs>$ as a consequence of a simple calculation, using the fact
that these three basis vectors are the only ones that can possibly generate a copy of $\<Xi4ba1bdiffs>$,
as well as the fact that $|\<Xi4b1s>|^2 = |\<Xi4ba2s>|^2 = 2$ while $|\<Xi4ba1bs>|^2 = 4$.
\end{proof}

\begin{proposition} 
\label{prop:CB} \label{CB Ito page ref}
One has $\CB_\Ito \subset \CS_\Ito$ with
\begin{align}\label{Bito}
\CB_\Ito = \left\{\!\!
\begin{array}{c}
\<I1Xitwos> \,,   \<cI1Xi4abs>  \,, \<I1Xi4acc1s> +  \<2I1Xi4cc1s> \,,   \<I1Xi4acc2s> \,,\<I1Xi4abcc2s> \,, \<I1Xi4abcc1s> \,,   \<2I1Xi4c2s>  \,,  \<2I1Xi4c1s>  \,,  \<2I1Xi4bc1s> + \<Xi4cbc2s> \\
\<Xi4eabisc2s>   \,, \<Xi4eabc2s>  \,,  \<Xi4eabc1s>  \,, \<Xi4eabbisc2s>  \,, \<Xi4eabbisc1s>  \,, 
  \<Xi4cabc1s> \,, \<Xi4cabc2s>  \,, \<Xi4ba2s> \,, \<Xi4eac1s> + \<Xi4eabisc1s> \,,   \<Xi4ba1bs>
\end{array}\!\!
\right\}\;.
\end{align} 
In particular, $\CS_\Ito$ is of dimension at least $19$.
\end{proposition}

\begin{proof}
Each of these elements belongs to the image of $\phi_\Ito$ by direct inspection, see Example \ref{ex:phi_ito}.
\end{proof}

\begin{corollary}\label{ge32}
One has $\dim (\CS_\geo + \CS_\Ito) \ge |\CB_\geo| + |\CB_\Ito| - 2 = 32$.
\end{corollary}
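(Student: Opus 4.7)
The plan is to produce $32$ linearly independent vectors inside $\CS_\geo + \CS_\Ito$ via a rank-plus-nullity decomposition for the pairing with $\CB_\geo$.

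First, I would introduce the evaluation map $\pi \colon \CS \to \R^{\CB_\geo}$ defined by $\pi(v) = (\scal{v,b})_{b \in \CB_\geo}$. The statement from Proposition~\ref{prop:geoDim} that $\CB_\geo$ is linearly independent as a family of functionals on $\CS_\geo$ translates exactly to the surjectivity of $\pi|_{\CS_\geo}$ (via the identification of $\CS^*$ with $\CS$ given by the positive definite scalar product of Remark~\ref{rem:scalEVal}). Consequently $\pi|_{\CS_\geo + \CS_\Ito}$ is also surjective with image of dimension $15$, and the rank-nullity theorem gives
\[
\dim(\CS_\geo + \CS_\Ito) \;=\; 15 + \dim\bigl(\ker \pi \cap (\CS_\geo + \CS_\Ito)\bigr)\,.
\]

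The second step is to exhibit $17$ linearly independent vectors inside $\ker\pi \cap (\CS_\geo + \CS_\Ito)$. A glance at the two explicit lists yields $\CB_\geo \cap \CB_\Ito = \{\<Xi4ba1bs>,\,\<Xi4eac1s> + \<Xi4eabisc1s>\}$, hence $|\CB_\Ito \setminus \CB_\geo| = 17$. By Proposition~\ref{prop:CB} the $19$ elements of $\CB_\Ito$ are linearly independent and lie in $\CS_\Ito \subset \CS_\geo + \CS_\Ito$, so a fortiori the $17$ members of $\CB_\Ito \setminus \CB_\geo$ are linearly independent in $\CS_\geo + \CS_\Ito$. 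To see that they also lie in $\ker\pi$, one checks that none of the basis trees of $\CS$ appearing in any element of $\CB_\Ito \setminus \CB_\geo$ coincides with a basis tree appearing in any element of $\CB_\geo$; since distinct basis trees of $\CS$ are mutually orthogonal, this forces $\scal{b,i} = 0$ for every $b \in \CB_\geo$ and every $i \in \CB_\Ito \setminus \CB_\geo$.

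Combining the two steps yields $\dim(\CS_\geo + \CS_\Ito) \geq 15 + 17 = 32$. The only part that really needs verification is the disjointness of basis trees between $\CB_\geo$ and $\CB_\Ito \setminus \CB_\geo$, which is a direct inspection of the two lists from Propositions~\ref{prop:geoDim} and~\ref{prop:CB}. This disjointness is precisely what is encoded in the ``$-2$'' in $|\CB_\geo| + |\CB_\Ito| - 2$: the two shared elements $\<Xi4ba1bs>$ and $\<Xi4eac1s> + \<Xi4eabisc1s>$ are responsible for the only non-trivial pairings between the two sets.
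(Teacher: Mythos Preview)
Your proof is correct and follows essentially the same route as the paper's one-line argument, which simply notes that all distinct elements of $\CB_\geo \cup \CB_\Ito$ are mutually orthogonal and that $|\CB_\geo \cap \CB_\Ito| = 2$. You have made explicit the rank--nullity decomposition (surjectivity of $\pi|_{\CS_\geo}$ plus the $17$ elements of $\CB_\Ito \setminus \CB_\geo$ in $\ker\pi$) that the paper leaves to the reader; the key observation in both cases is the disjointness of basis trees between $\CB_\geo$ and $\CB_\Ito \setminus \CB_\geo$.
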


\begin{proof}
It suffices to note that all distinct elements of $\CB_\geo \cup \CB_\Ito$ are mutually orthogonal
and that $|\CB_\geo \cap \CB_\Ito| = 2$.
\end{proof}

Let us define the free $T_\d$-algebra $\hat T_{\red,\<diff>}$ \label{CS red diff page ref} in the same way as $\hat T_{\red}$,
except that we add an element  $\<diff>$ of degree $(1,0)$ to the set of generators.
We define the morphism $\psi_{\<diff>} \colon \hat T_{\red,\<diff>} \to T_{\<generic>,\<diff>}$ \label{psi diff page ref}
to be the unique extension of the morphism $\psi$ defined in \eqref{e:defpsi} that sends $\<diff>$ to $\<diff>$
and we set $T_{\red,\<diff>} = \hat T_{\red,\<diff>} / \ker \psi_{\<diff>}$ \label{Quotient space CS red diff page ref}. Note that similarly to before, both $\CS$ and $\CS_{\<diff>}$ can (and will from now on)
be seen as subspaces of $T_{\red,\<diff>}$ and $\phi_\Ito^{\<diff>}$ can be viewed as the map 
$\phi_\Ito^{\<diff>} \colon T_{\<g>,\<diff>} \to T_{\red,\<diff>}$ sending $\<not>$ to $\<not>$, $\<diff>$ to $\<diff>$, and $\<g>$ to $\pi^{\<diff>}  \gen 0 0$, where $\pi^{\<diff>}\colon \hat T_{\red,\<diff>} \to T_{\red,\<diff>}$ denotes the canonical projection.

Let $\hat m_\Ito \colon \hat T_{\red,\<diff>} \to  T_{\<g>, \<diff>}$
be the unique morphism of $T_\d$-algebras such that, for $(k,\ell) \neq (0,0)$ we have
\begin{equ}
\hat m_\Ito(\<diff>) = \<diff>\;,\quad
\hat m_\Ito(\<not>) = \<not>\;,\quad
\hat m_\Ito(\gen 0 0) = \<g>\;,\quad
\hat m_\Ito(\gen k \ell) = 0 \;.
\end{equ}
By Lemma~\ref{lem:section}, $\pi^{\<diff>}$ admits a right inverse 
$\iota^{\<diff>}\colon T_{\red,\<diff>} \to \hat T_{\red,\<diff>}$ which is moreover a morphism of $T_\d$-algebras. 
Using this, we then define
$m_\Ito : T_{\red,\<diff>} \to  T_{\<g>, \<diff>}$  \label{Merging map m ito page ref} by
\begin{equ}
m_\Ito = \hat m_\Ito \circ \iota^{\<diff>}\;.
\end{equ}
Note that since $\iota^{\<diff>}\pi^{\<diff>} \gen 0 0 = \gen 0 0$ by Lemma~\ref{lem:section}, it follows immediately that
\begin{equ}%[e:inversem]
m_\Ito \circ \phi_{\Ito}^{\<diff>} = \id\colon T_{\<g>, \<diff>} \to T_{\<g>, \<diff>}\;.
\end{equ}
In order to see how $m_\Ito$ (and therefore also $P_\Ito$ defined in \eqref{e:defPIto} below) 
acts in a more concrete way, note that 
by \eqref{e:dergen}, an element in $T_{\red,\<diff>}$ can always be written in terms of the generators
$\gen k \ell$ without having their derivatives appear. Furthermore, by the correspondence \eqref{e:correspondence}
and the explicit
formula \eqref{e:rightInverse}, $\iota^{\<diff>}$ maps $\gen k \ell$ to $2^{-k-\ell} \d^{k+\ell}\gen 00$, which is then
mapped to $2^{-k-\ell} \d^{k+\ell} \<g>$ by $\hat m_\Ito$.
For example, it follows that one has
\begin{equ}
m_\Ito(\<2I1Xi4bc1s>) = {1\over 4} \<pre_im_1>\;,\qquad m_\Ito(\<cI1Xi4as>) = {1\over 4} \<disconnect>\;.
\end{equ}
Note here that there is a major difference between these two examples. In the first case, the 
$\CX$-graph $\<pre_im_1>$, viewed as a directed graph between its vertices, is acyclic, while in the
second case it contains a cycle. As a consequence, the morphism $\phi_\Ito$ maps the first graph back into
$\CS$, while the second graph is mapped to an element of $T_{\<generic>}$ that does not belong
to $\CS$.

The following diagram summarises the relations between the spaces introduced above.
\begin{equ} %\label{cdiag2}
\begin{tikzcd}[row sep=11ex, column sep=11ex]
\hat T_{\red,\<diff>}  \arrow[r, bend left = 10, "\pi^{\<diff>} "] \arrow[d, "\psi_{\<diff>}"]  \arrow[rd, "\hat m_{\Ito}" description]
&   T_{\red,\<diff>} \arrow[l, bend left = 10, "\iota^{\<diff>} "] \arrow[d,  bend left = 10, " m_{\Ito} "] \\
T_{\<generic>,\<diff>} 
& T_{\<g>,\<diff>} \arrow[u,  bend left = 10, " \phi_{\Ito}^{\<diff>} "]
\end{tikzcd}
\end{equ}

\begin{remark} \label{cycle CS red}
Elements in $T_{\red,\<diff>}$ can naturally be represented by objects of the type
$(V_g, \mft,\phi,\CP_g)^\upper_\low$, where 
$(V_g, \mft,\phi)^\upper_\low$ is an $\CX$-graph
for $\CX = \{\<not>,\<diff>,\<generic>\}$ such that there are an even number
of vertices in $V_g$ of type $\<generic>$ and $\CP_g$ is a pairing of these vertices. The notion of
isomorphism is also the same as for $\CX$-graphs except that of course pairings need to be
preserved (but the two elements within a pair can be exchanged since these pairs are unordered). 
This is consistent with our existing graphical notation for elements of 
$\CS$ which are represented as graphs together with a pairing of the vertices of type $\<generic>$.
The existing scalar product on $\CS$ then coincides with the natural scalar product on  $T_{\red,\<diff>}$
for which $\scal{g,g}$ equals the number of its automorphisms.

One also has representations
of the adjoints of multiplication, trace and derivation that are essentially the same as
in Section~\ref{sec:Hilbert} with the obvious modifications. 
(In particular the operator $\Delta$ is not allowed to split pairs.)
\end{remark}

This motivates the introduction of the linear map $P^\acyc\colon  T_{\red,\<diff>} \to T_{\red,\<diff>}$
which maps every acyclic $\CX$-graph in $T_{\red,\<diff>}$ to itself, while it maps those $\CX$-graphs 
containing a cycle to $0$.
A very important remark here is that \textit{in the notion of `acyclic graph', paired
nodes should be identified} so that $\<2I1Xi4bc1s>$ is considered acyclic while $\<cI1Xi4as>$ is not.
Note also that $P^\acyc$ is of course not a morphism of $T_\d$-algebras 
and that it is well-defined since the quotienting
procedure for $\<not>$ does not affect the directed graph structure.

With these definitions in place, we set
\begin{equ}[e:defPIto]
P_\Ito = P^\acyc \circ M_\Ito \colon T_{\red,\<diff>} \to 
T_{\red,\<diff>}\;,
\end{equ}
where $M_\Ito = \phi_{\Ito}^{\<diff>} \circ  m_\Ito$, so that one has for example 
$P_\Ito (\<2I1Xi4bc1s> ) = \frac{1}{2}\<2I1Xi4bc1s> + \frac{1}{2}\<Xi4cbc2s>$, while $P_\Ito(\<cI1Xi4as>) = 0$. Let us remark that, since in the notion of  `acyclic graph', paired
nodes should be identified, one can define a similar projection on $T_{\<g>, \<diff>}$, also denoted by $P^\acyc$, such that $P_{\Ito} = \phi_\Ito^{\<diff>} \circ P^\acyc\circ m_\Ito$: one can disregard non-acyclic graphs either before or after applying $\phi_\Ito^{\<diff>}$. 

Let  $\Vec(\CS,\CS_{\<diff>})$ be the vector space generated by $\CS$ and $\CS_{\<diff>}$. As a consequence of the previous properties, one can deduce the following lemma:

\begin{lemma}\label{lem:proj} \label{P ito page ref}
The linear map $P_\Ito$ is a self-adjoint projection on $\Vec(\CS,\CS_{\<diff>})$ and $\range(P_\Ito)= \Vec(\CS_{\Ito},\CS_{\Ito}^{\<diff>})$.
\end{lemma}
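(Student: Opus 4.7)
The plan is to verify three separate properties of $P_\Ito$: idempotence, range identification, and self-adjointness. Idempotence is the easiest step. I will use the alternative factorisation $P_\Ito = \phi_\Ito^\<diff> \circ P^\acyc \circ m_\Ito$ noted just after the definition, together with the identity $m_\Ito \circ \phi_\Ito^\<diff> = \id$ from \eqref{e:inversem} and the fact that $P^\acyc$ is a coordinate projection onto the subspace spanned by acyclic graphs, so that $(P^\acyc)^2 = P^\acyc$ holds trivially. A one-line computation then gives
\begin{equ}
P_\Ito^2 = \phi_\Ito^\<diff>\, P^\acyc\,(m_\Ito \circ \phi_\Ito^\<diff>)\, P^\acyc\, m_\Ito = \phi_\Ito^\<diff>\,(P^\acyc)^2\, m_\Ito = P_\Ito\;.
\end{equ}

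For the range identification, I will first verify that $P_\Ito$ preserves $\Vec(\CS, \CS_\<diff>)$, which is a consequence of the fact that both $\phi_\Ito^\<diff>$ and $m_\Ito$ respect the number of $\<diff>$-vertices (the generators $\<g>$ and $\<not>$ are mapped to themselves or to pairs of them, and $\<diff>$ is always mapped to $\<diff>$). Since $\range P_\Ito \subset \range \phi_\Ito^\<diff>$, Proposition~\ref{prop:rangeIto} together with its $\<diff>$-analogue immediately yields $\range P_\Ito \subset \Vec(\CS_\Ito, \CS_\Ito^\<diff>)$. For the reverse inclusion, given $\tau \in \CS_\Ito$, I will set $\sigma = m_\Ito(\tau) \in \bar\CS_\<g>$, observe that $\tau = \phi_\Ito^\<diff>(\sigma)$ by \eqref{e:inversem}, and argue that $\sigma$ is acyclic in the sense of $P^\acyc$; the key point is that elements of $\CS_\Ito \subset \CS$ have the restricted tree shapes from page \pageref{listPage} and can be realised as $\phi_\Ito^\<diff>(\sigma)$ only when merging each pair back into a $\<g>$-vertex produces at worst a multi-edge, never a self-loop or proper simple-graph cycle (the latter being exactly the obstruction to lying in the range of $\phi_\Ito$). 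Then $P^\acyc \sigma = \sigma$ yields $P_\Ito(\tau) = \phi_\Ito^\<diff>(\sigma) = \tau$.

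The self-adjointness is the main obstacle. The strategy is to identify an adjoint relationship between $\phi_\Ito^\<diff>$ and $m_\Ito$ on the relevant subspaces, using the explicit formulas for the adjoints of the four $T$-algebra operations derived in Section~\ref{sec:Hilbert} together with the graph-plus-pairing description from Remark~\ref{cycle CS red}. Concretely, $\phi_\Ito^\<diff>$ takes a $\<g>$-vertex and splits it into a paired $\<generic>$-$\<generic>$ pair, while $m_\Ito$ performs the inverse merge operation; the symmetric binomial distribution $2^{-(k+\ell)}$ hidden in $m_\Ito \circ \pi^\<diff>$ through the formula \eqref{e:rightInverse} for $\bar\pi_0$ coming out of Lemma~\ref{lem:section} is precisely what is needed to match the combinatorial multiplicities $|\CG_g|$ produced by the adjoint of the splitting operation. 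I therefore expect to establish the identity
\begin{equ}
(\phi_\Ito^\<diff>)^* \circ P^\acyc = P^\acyc \circ m_\Ito
\end{equ}
on $\Vec(\CS, \CS_\<diff>)$ by a direct check on paired graphs, block by block according to each pair. Combined with the obvious self-adjointness of $P^\acyc$ (diagonal in the graph basis), this immediately yields
\begin{equ}
P_\Ito^* = (\phi_\Ito^\<diff> P^\acyc m_\Ito)^* = m_\Ito^* \,P^\acyc\, (\phi_\Ito^\<diff>)^* = \phi_\Ito^\<diff>\, P^\acyc\, m_\Ito = P_\Ito\;.
\end{equ}
The main difficulty lies in tracking the combinatorial factors consistently: each pair contributes a symmetry factor of $2$, each $\<g>$-vertex a symmetry factor of $2$, and the derivative-distribution binomial coefficients must cancel precisely against the automorphism counts of the resulting graphs, which is exactly the content of the normalisation built into $\iota^\<diff>$.
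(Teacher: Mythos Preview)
Your idempotence argument is correct and in fact slightly slicker than the paper's, which instead establishes that $M_\Ito = \phi_\Ito^{\<diff>} \circ m_\Ito$ commutes with $P^\acyc$ and then uses that both factors are idempotent.

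However, there are two genuine gaps.

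First, your argument that $P_\Ito$ preserves $\Vec(\CS,\CS_{\<diff>})$ is insufficient. Preserving the number of $\<diff>$-vertices only shows that the image lies in the correct degree, not that it lies in $\CS$ or $\CS_{\<diff>}$. In fact $M_\Ito$ does \emph{not} preserve $\CS$: as the paper itself notes, $M_\Ito(\<cI1Xi4as>)$ lands outside $\CS$ because the redistribution of derivatives among paired vertices can produce graphs that are disconnected in the underlying-tree sense. The paper's argument is that such disconnection can only occur via a connected component of degree $(0,0)$, which necessarily contains a cycle; hence $P^\acyc$ kills precisely the problematic elements, and it is only the composite $M_\Ito \circ P^\acyc = P^\acyc \circ M_\Ito$ that stays inside $\Vec(\CS,\CS_{\<diff>})$.

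Second, and more seriously, your proposed identity $(\phi_\Ito^{\<diff>})^* P^\acyc = P^\acyc m_\Ito$ is not correct under the natural scalar product on $\bar\CS_{\<g>,\<diff>}$. On the block $E_m$ (paired graphs with a single pair and $m$ derivatives, all acyclic so $P^\acyc$ acts trivially) one has $m_\Ito(h) = 2^{-m}\,\d^m \<g>$, whereas $\scal{(\phi_\Ito^{\<diff>})^* h, \d^m \<g>} = \scal{h, \sum_{h'\in E_m} h'} = 1$; with $|\d^m\<g>|^2 = 1$ this forces the coefficient of $(\phi_\Ito^{\<diff>})^*(h)$ on $\d^m\<g>$ to be $1$, not $2^{-m}$. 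If instead you equip $\bar\CS_{\<g>,\<diff>}$ with the scalar product pulled back along $\phi_\Ito^{\<diff>}$, then the identity $(\phi_\Ito^{\<diff>})^* = m_\Ito$ becomes \emph{equivalent} to the self-adjointness of $M_\Ito$ (both say that $\range M_\Ito = (\ker M_\Ito)^\perp$), so the argument is circular. The paper proves $M_\Ito^* = M_\Ito$ by a different route: it first checks the identity on the orthonormal blocks $\scal{E_m}$, where $M_\Ito$ is the rank-one averaging projection and hence manifestly symmetric, and then shows that $M_\Ito^*$ is itself a morphism of $T$-algebras by verifying that $M_\Ito$ commutes with the adjoints $\Delta$, $\tr^*$, and $\d^*$ computed in Section~\ref{sec:Hilbert}. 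This morphism property is what propagates self-adjointness from the generators to all of $\bar\CS_{\red,\<diff>}$, and it is the ingredient your sketch does not supply.
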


\begin{proof}
For $m \ge 0$, write $E_m \subset T_{\red,\<diff>}$ for the set of elements of the form $\alpha \gen k \ell$
for $k+\ell = m$ and $\alpha \in \Sym(2,m)$. This set is of cardinality $2^m$ since by the correspondence
indicated in Remark~\ref{cycle CS red} we can interpret its elements as precisely those graphs with 
$m$ inputs, two outputs, and two nodes of type $\<generic>$, so that that there are 
$2^m$ ways of connecting the inputs to the two nodes. (There are of course also two ways of connecting these
nodes to the two outputs, but these are isomorphic to the graphs obtained by a suitable permutation of the inputs.)

The map $M_\Ito$ is then  given by 
\begin{equ}
M_\Ito h = 2^{-m} \sum_{g \in E_{m}} g\;,\qquad \forall h \in E_m\;.
\end{equ}
Since all elements of $E_m$ have the same directed graph structure (since for the purpose
of that structure we identify the two vertices of type $\<generic>$), 
it follows that $M_\Ito$ does not change that structure. This immediately shows that 
$M_\Ito$ commutes with $P^\acyc$, so that $P_\Ito^2 = P_\Ito$,
since both $P^\acyc$ and $M_\Ito$ are idempotent. To show that $P_\Ito$ is self-adjoint, 
it therefore suffices to show that both $P^\acyc$ and $M_\Ito$ are self-adjoint.
The fact that this is the case for $P^\acyc$ is obvious since it is diagonal in the 
basis given by $\CX$-graphs with pairings, which is also orthogonal for our scalar product.

Since elements of $E_m$ have no internal symmetries, they are orthonormal, which immediately
implies that $M_\Ito = M_\Ito^*$ on $\bigoplus_{m \ge 0} \scal{E_m}$. (As a matrix, it is given on
each $\scal{E_m}$ by the matrix with all entries identical and equal to $2^{-m}$.)
Since one also has $M_\Ito^*\<not> = M_\Ito\<not>$ and $M_\Ito^*\<diff> = M_\Ito\<diff>$, it remains to show 
that $M_\Ito^*$ is a morphism of $T_\d$-algebras to then conclude that $M_\Ito^* = M_\Ito$.
For this, we need to show that $M_\Ito^*$ commutes with the four defining operations of a $T_\d$-algebra,
namely the action of the symmetric group, multiplication, trace and derivation. 
Equivalently, we need to show that $M_\Ito$ commutes with the adjoints of these three operations,
as described in Section~\ref{sec:Hilbert} and Remark~\ref{cycle CS red}. 

The fact that $(M_\Ito \otimes M_\Ito)\Delta \tau = \Delta M_\Ito\tau$ follows immediately
from the fact that, since it does not change the directed graph structure, 
$M_\Ito$ maps irreducible elements to irreducible elements.
Similarly, the fact that $M_\Ito \alpha^* \tau = \alpha^* M_\Ito \tau$
is immediate from the fact that $\alpha^* = \alpha^{-1}$ and $M_\Ito$ 
itself is a morphism of $T_\d$-algebras.

We now show that $M_\Ito \tr^* g = \tr^* M_\Ito g$ for any
$\CX$-graph $g = (V_g, \mft,\phi,\CP_g)^\upper_\low \in T_{\red,\<diff>}$. 
Note that $M_\Ito g$ is the average over all graphs 
of the type $\hat g = (V_g, \mft,\hat \phi,\CP_g)^\upper_\low$ where, whenever 
$\phi(e) = (v,\star)$ for some $v$ of type $\<generic>$, one has
$\hat \phi(e) \in \{(v,\star),(\bar v,\star)\}$, where $\bar v$ is the unique
vertex such that $\{v,\bar v\} \in \CP_g$. If $\phi(e)$ is not of this type, then
one has $\hat \phi(e) = \phi(e)$.
Note also that for any of the graphs $\hat g$ appearing in the description of $M_\Ito g$,
one has a natural identification of $\Out(\hat g)$ with $\Out(g)$ and, for any $\{v,\bar v\} \in \CP_g$.
it is still the case that if $\cod(e) \in \{v,\bar v\}$ in $g$, then the same is true in $\hat g$.
It is then straightforward to see that
\begin{equ}
\Cut_e M_\Ito g = M_\Ito \Cut_e g\;,
\end{equ}
which immediately implies the claimed commutation.

The fact that $M_\Ito \d^* g = \d^*(M_\Ito g)$ follows in a similar way, thus completing the
proof that $M_\Ito$ is self-adjoint. 
%\yvainText{Maybe, we can give an alternative proof of this. I think we can define $ \tr^* $ and $ \partial^* $ recursively. For instance, they satisfy some morphism properties:
%\begin{equs}
%\tr^* \partial = \partial \tr^{*}, \quad \tr^{*} A \cdot B = (\tr^{*} A) \cdot B + 
%A \cdot (\tr^{*} B) \\
%\partial^* \tr = \tr \partial^{*}, \quad \partial^{*} A \cdot B = (\partial^{*} A) \cdot B + 
%A \cdot (\partial^{*} B)
%\end{equs}
%With this formulation, the two proofs are straightforward.
%}
%\martinText{
%That might be a good idea, but you have to be careful. The identity $\tr^{*} A \cdot B = (\tr^{*} A) \cdot B + 
%A \cdot (\tr^{*} B)$ isn't quite true, there's a permutation acting on the first term.
%The identity $\partial^{*} A \cdot B = (\partial^{*} A) \cdot B + 
%A \cdot (\partial^{*} B)$ is also wrong. In most cases one actually has $\partial^{*} A \cdot B = (\partial^{*} A) \cdot B$, unless $A$ is of type $(\upper,0)$ in which case it is equal to $A \cdot (\partial^{*} B)$. This might make
%the recursion a bit messier.
%}

It remains to show that, when restricted to $\Vec(\CS,\CS_{\<diff>})$, $P_\Ito$ is indeed the projection 
onto $\Vec(\CS_\Ito,\CS_\Ito^{\<diff>})$.
Recall that $\Vec(\CS,\CS_{\<diff>})$ is spanned by the irreducible graphs of degree $(1,0)$
with at most two pairs of vertices of type $\<generic>$, which are furthermore connected
when viewed as $\CX$-graphs \textit{without} identifying the paired vertices.
The image of the map $M_\Ito$ is \textit{not} in general contained in this space since,
although it preserves the directed graph structure with pairs of vertices identified, it
does not preserve the structure without this identification. 
We claim however that $M_\Ito$ maps the image of $P^\acyc$ into $\Vec(\CS,\CS_{\<diff>})$.
This is because, since we consider only objects of type $(1,0)$, 
the only way the graph obtained without identifying paired vertices can be disconnected is if it has a connected
component of type $(0,0)$. Since every generator has exactly one output, such a component 
has necessarily a cycle and therefore cannot appear in the image of $P^\acyc$.

The fact that $\range(P_\Ito)= \Vec(\CS_{\Ito},\CS_{\Ito}^{\<diff>})$ then follows from Proposition~\ref{prop:rangeIto},
noting that since $\phi_{\Ito}^{\<diff>}$ is injective and it maps 
cyclic graphs out of $\Vec(\CS,\CS_{\<diff>})$, restricting its domain to acyclic graphs does not 
affect the intersection of its image with $\Vec(\CS_{\Ito},\CS_{\Ito}^{\<diff>})$.
\end{proof}

\begin{theorem}\label{theo:mainSum} \label{CB Ito page ref1}
One has $\CS_\Ito + \CS_\geo = \CS_\both$ and $\CS_\Ito \cap \CS_\geo = \scal{\{\tau_\star,\tau_c\}}$.
Furthermore, $\CS_\Ito = \scal{\CB_\Ito}$.
\end{theorem}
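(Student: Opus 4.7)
The inclusion $\CS_\Ito + \CS_\geo \subset \CS_\both$ is immediate from the definitions: for $\tau \in \CS_\Ito$ the difference $(\Upsilon_{\Gamma,\bar\sigma} - \Upsilon_{\Gamma,\sigma})\tau$ vanishes identically, so \eqref{e:proptau} is trivial, while for $\tau \in \CS_\geo$ both $\Upsilon_{\Gamma,\sigma}\tau$ and $\Upsilon_{\Gamma,\bar\sigma}\tau$ transform as vector fields and hence so does their difference. The reverse inclusion, together with the two remaining identities, is the content of the theorem and will be obtained by a dimension-counting argument.

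Four lower bounds are already in hand: $\dim\CS_\geo \ge 15$ from Proposition~\ref{prop:geoDim}, $\dim\CS_\Ito \ge 19$ from Proposition~\ref{prop:CB}, $\dim(\CS_\Ito \cap \CS_\geo) \ge 2$ via the pair $\{\tau_\star,\tau_c\}$, and $\dim(\CS_\Ito + \CS_\geo) \ge 32$ from Corollary~\ref{ge32}. The membership $\tau_\star, \tau_c \in \CS_\Ito$ is visible by inspection of the explicit expansions \eqref{eq:valeurR}--\eqref{eq:valeurC} against $\CB_\Ito$, while $\tau_\star, \tau_c \in \CS_\geo$ follows because, by Lemma~\ref{lem:exprtaucstar}, both elements are built from covariant derivatives of the metric $g$ and of the Riemann tensor, which transform tensorially. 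The inclusion-exclusion identity gives $19 + 15 = 34 \le \dim(\CS_\Ito + \CS_\geo) + \dim(\CS_\Ito \cap \CS_\geo)$ with $32 + 2 = 34$ on the right, so all four of these inequalities will become equalities simultaneously once a matching upper bound is produced for any one of them.

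The core of the argument is then to prove the upper bound $\CS_\both \subset \CS_\Ito + \CS_\geo$. Given $\tau \in \CS_\both$, I set $\tau_I := P_\Ito \tau$, where $P_\Ito$ is the self-adjoint projection from Lemma~\ref{lem:proj}; since $P_\Ito$ preserves the absence of $\<diff>$-factors, $\tau_I \in P_\Ito(\CS) \subset \CS_\Ito \subset \CS_\both$. It then suffices to show that $\tau - \tau_I \in \CS_\geo$, equivalently $\hat\phi_\geo(\tau - \tau_I) = 0$. Applying Proposition~\ref{prop:both} to both $\tau$ and $\tau_I$ shows that $\hat\phi_\geo(\tau - \tau_I) \in \CS_\Ito^{\<diff>}$, on which $P_\Ito$ acts as the identity. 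The technical heart of the proof, and the source of the ad hoc character flagged in the introduction, is to establish the intertwining relation $\hat\phi_\geo \circ P_\Ito = P_\Ito \circ \hat\phi_\geo$ on $\CS$; given this, $\hat\phi_\geo(\tau_I) = P_\Ito \hat\phi_\geo(\tau) = \hat\phi_\geo(\tau)$, yielding the desired vanishing. The intertwining itself will be verified by a finite case analysis combining the decomposition $P_\Ito = P^\acyc \circ M_\Ito$ from Lemma~\ref{lem:proj} with the explicit formula \eqref{e:phigeo} for $\hat\phi_\geo$ on the generators, exploiting that $M_\Ito$ averages over the two attachment choices for each pair of noise edges and $P^\acyc$ kills configurations that become cyclic after identifying paired noises, whereas $\hat\phi_\geo$ acts by local insertion of $\<diff>$-nodes at the existing vertices and does not touch the noise-pair structure.

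The hardest part will be the combinatorial verification of the intertwining identity, since the operations $P^\acyc$ and $\hat\phi_\geo$ do not individually respect the underlying graded structure in a particularly clean way; once it is in place, however, the rest of the proof is automatic. The resulting inclusion $\CS_\both \subset \CS_\Ito + \CS_\geo$ combined with the trivial reverse inclusion gives $\CS_\both = \CS_\Ito + \CS_\geo$, and combined with Corollary~\ref{ge32} forces $\dim(\CS_\Ito + \CS_\geo) = 32$. Inclusion-exclusion then forces all four lower bounds to be saturated: in particular $\dim\CS_\Ito = 19$, so that $\CS_\Ito = \scal{\CB_\Ito}$ since the $19$ elements of $\CB_\Ito$ are linearly independent in $\CS_\Ito$ by Proposition~\ref{prop:CB}, and $\dim(\CS_\Ito \cap \CS_\geo) = 2$, so that $\CS_\Ito \cap \CS_\geo = \scal{\{\tau_\star,\tau_c\}}$ since $\tau_\star, \tau_c$ are linearly independent.
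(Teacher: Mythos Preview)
Your overall architecture is sound up to and including the dimension-counting reduction, but the claimed intertwining $\hat\phi_\geo \circ P_\Ito = P_\Ito \circ \hat\phi_\geo$ on $\CS$ is \emph{false}, and with it the decomposition $\tau = P_\Ito\tau + (\tau - P_\Ito\tau)$ into $\CS_\Ito + \CS_\geo$ fails. A minimal counterexample is $\tau = \<Xi2>$ (the single-pair tree). After identifying the two paired noises, one obtains a self-loop, so $P^\acyc$ kills it and $P_\Ito(\<Xi2>) = 0$; hence $\hat\phi_\geo\bigl(P_\Ito(\<Xi2>)\bigr) = 0$. On the other hand, since $\Nabla_{\<generic>}\<generic> = \<Xi2> + \tfrac12\,\<I1Xitwo> \in \CS_\geo$ and $\CS_\geo \cap \CS_\Ito \cap \CS^{(2)} = \{0\}$, one has $\hat\phi_\geo(\<Xi2>) = -\tfrac12\,\hat\phi_\geo(\<I1Xitwo>) \neq 0$; moreover $\<I1Xitwo>\in\CS_\Ito$ forces $\hat\phi_\geo(\<I1Xitwo>) \in \range P_\Ito$, so $P_\Ito\bigl(\hat\phi_\geo(\<Xi2>)\bigr) = -\tfrac12\,\hat\phi_\geo(\<I1Xitwo>) \neq 0$. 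Note that $\<Xi2>\in\CS_\both$ by Remark~\ref{rem:SDEs}, so the failure occurs precisely where you need the identity. In short, $P_\Ito$ is the orthogonal projection onto $\CS_\Ito$, but the decomposition $\CS_\both = \CS_\Ito \oplus \CS_\geo$ is \emph{not} orthogonal, so $(1-P_\Ito)\CS_\both \not\subset \CS_\geo$.

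The paper avoids this by \emph{not} attempting a direct decomposition. Instead it proves the upper bound $\dim\CS_\both \le 32$ by exhibiting $22$ explicit linear functionals $\ell_\tau = \hat\phi_\geo^*\,\phi_{\<diff>}(\tau)$ annihilating $\CS_\both$, where $\tau$ ranges over a carefully chosen set $\mathcal{T}_c^{\<not>}\subset\ker P_\Ito$ and $\phi_{\<diff>}$ is the much simpler infinitesimal morphism sending $\<not>\mapsto \d^2\<diff>$ and the $\gen{k}{\ell}$ to $0$. The commutation $P_\Ito\circ\phi_{\<diff>} = \phi_{\<diff>}\circ P_\Ito$ \emph{does} hold (because $\phi_{\<diff>}$ leaves the directed graph structure and the noise pairing untouched), and the main work is then the linear-independence of the $\ell_\tau$, established via a triangularity argument in the degree $\mathsf{dg}$ counting $\<not>$ and $\<diff>$ nodes.
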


\begin{proof}
Since $\CS_\Ito + \CS_\geo \subset \CS_\both$ and using Corollary \ref{ge32}, it suffices to show 
that $\dim \CS_\both \le 32$ in order to prove that $\CS_\Ito + \CS_\geo = \CS_\both$ and $\dim(\CS_\Ito \cap \CS_\geo)=2$. Since $\dim \CS = 54$, this means that it suffices to find a collection $\mathcal{E}^*$ of $22$ linearly independent elements of $\CS^*$ such that $\CS_\both \subset \bigcap_{e\in {\mathcal E}^*} \ker(e)$. 
A possible choice for ${\mathcal E}^*$ will be obtained using the set $\mathcal{T}_{c}^{\<not>}\subset \CS$ given by
\begin{align*}
\mathcal{T}_{c}^{\<not>} = \left\{\!\!
\begin{array}{c}
 \<cI1Xi4as> \,,   \<I1Xi4ac1s> \,, \<I1Xi4ac2s> \,, \<cI1Xi4bs> \,,  \<I1Xi4abc1s> \,,  \<cI1Xi4bcs> \,, \<I1Xi4bcc1s> \,, \<cI1Xi4acs> \,,     \<2I1Xi4bc3s> \,,  \<2I1Xi4bc2s> \,,  \<2I1Xi4cc2s>  \,,  \<I1Xi4bc1s>  \,,\<cI1Xi4cs> \,,  \<I1Xi4cc1s>\\   \<I1Xi4cc2s> \,,  \<Xi4eac2s>+\<Xi4eabisc3s> \,,  \<Xi4ebc1s> \,, \<Xi4ebc2s> \,,  \<Xi4ca1s>+\<Xi4ca2s>  \,,   \<Xi4cbc1s>\,, \<I1Xi4acc1s> -  \<2I1Xi4cc1s>, \<2I1Xi4bc1s> - \<Xi4cbc2s>
\end{array}
\!\!\right\}\;,
\end{align*}
which has cardinality $22$. For any $\tau \in \CS$, using the correspondence $\CS\approx \CS^*$ given by the scalar product, we define the linear form $\ell_\tau=\hat{\phi}_\geo^*\phi_{\<diff>}(\tau)$ where $\hat{\phi}_\geo^*$ is the adjoint of $\hat{\phi}_\geo$. We show that $\CS_\both \subset \bigcap_{\tau \in \mathcal{T}_{c}^{\<not>}} \ker(\ell_\tau)$, then, we explain why the elements $\{\ell_\tau: \tau\in\mathcal{T}_c^{\<not>}\}\subset\CS^*$ are linearly independent.

For all $\tau \in \mathcal{T}_{c}^{\<not>}$, except the two last elements, 
$m_\Ito (\tau)$ is a linear combination of cyclic graphs, while for the two last elements, $m_\Ito(\tau)=0$, so that
$P_\Ito(\tau) = P^\acyc \circ \phi_{\Ito}^{\<diff>} \circ  m_\Ito (\tau)=0$ for all $\tau\in \mathcal{T}_c^{\<not>}$. Let $\phi_{\<diff>}: T_{\red} \to T_{\red,\<diff>}$ be the infinitesimal morphism of 
$T_\d$-algebras with respect to the canonical injection, mapping $\<not>$ to $\d^2 \<diff>$ and $\gen{k}{l}$ to $0$ 
(this is well-defined since $S_{1,1}^1 \d^2 \<diff> = \d^2 \<diff>$ by \eqref{e:d2} and
since all other generators of $I$ in \eqref{e:groupkl} and \eqref{e:dergen} involve linear combinations of
the generators $\gen k \ell$). Since $\phi_{\<diff>}$ maps any $\CX$-graph to a linear
combination of $\CX$-graphs having the same directed graph structure, we get that $P_{\Ito}\circ \phi_{\<diff>}=\phi_{\<diff>}\circ P_{\Ito}$ and thus, for any $\tau\in \mathcal{T}_c^{\<not>}$, $\phi_{\<diff>}(\tau)\in \ker P_\Ito$. 

Using Proposition~\ref{prop:both} and Lemma~\ref{lem:proj}, for any $v\in \mathcal{S}_\both$, $\hat{\phi}_\geo(v) \in \range{P_\Ito}$, and, since $P_\Ito$ is self-adjoint, $\hat{\phi}_\geo(v)$ is orthogonal to $\ker{P_\Ito}$. Thus for $\tau\in \mathcal{T}_c^{\<not>}$, $\scal{\phi_{\<diff>}(\tau),\hat \phi_\geo(v)}=0$, hence $\scal{\hat{\phi}_\geo^*\phi_{\<diff>}(\tau),v}=0$. 
Since $\ell_\tau=\hat{\phi}_\geo^*\phi_{\<diff>}(\tau)$, we have shown that $\CS_\both \subset \bigcap_{\tau \in \mathcal{T}_{c}^{\<not>}} \ker(\ell_\tau)$. Let us remark that we omitted $\<Xi4eac1s> - \<Xi4eabisc1s>$ in the definition of $\mathcal{T}_c^{\<not>}$ since $\phi_{\<diff>}(\<Xi4eac1s> - \<Xi4eabisc1s>)=0$ hence $\ell_{\<Xi4eac1s> - \<Xi4eabisc1s>} = 0$. 
It remains to prove that $\{\ell_\tau: \tau\in\mathcal{T}_c^{\<not>}\}\subset\CS^*$ is a linearly independent family. 

Instead of working with the usual basis of $\CS$ consisting of trees, we change the family $\{\<Xi4eac2s>,\<Xi4eabisc3s>,\<Xi4ca1s>,\<Xi4ca2s> ,\<I1Xi4acc1s>,  \<2I1Xi4cc1s>, \<2I1Xi4bc1s> , \<Xi4cbc2s>\}$ into the family $\{\<Xi4eac2s>+\<Xi4eabisc3s>,\<Xi4eac2s>-\<Xi4eabisc3s>,\<Xi4ca1s>+\<Xi4ca2s>,\<Xi4ca1s>-\<Xi4ca2s>,\<I1Xi4acc1s> -  \<2I1Xi4cc1s>, \<I1Xi4acc1s> +  \<2I1Xi4cc1s>, \<2I1Xi4bc1s> - \<Xi4cbc2s>, \<2I1Xi4bc1s> + \<Xi4cbc2s>\}$. Moreover, for any tree $\tau$ in $\CS_{\<diff>}$, let ${\sf dg}(\tau)$ denote the total number of
vertices of type $\<not>$ and $\<diff>$ in $\tau$. A non-zero element $\eta \in \CS_{\<diff>}$ is homogeneous of degree $d$ if it is a linear combination of trees of degree $d$.

Let us show that for any $\tau \in \mathcal{T}_{c}^{\<not>}$, any element $\tilde{\tau}$ of this new basis such that ${\sf dg}(\tilde{\tau})\geq {\sf dg}(\tau)$, $\scal{\ell_\tau,\tilde{\tau}}\neq 0$ if and only if $\tau = \tilde{\tau}$. Using \eqref{e:phigeo}, one gets that for any homogeneous $\eta \in \CS$ such that $ \hat{\phi}_\geo(\eta)+ 2\phi_{\<diff>}(\eta) \neq 0 $, ${\sf dg}(\hat{\phi}_\geo(\eta)+ 2\phi_{\<diff>}(\eta))={\sf dg}(\eta)+1$. Since trees of different degrees are orthogonal, for any homogeneous $\eta$, $\tilde{\eta} \in \CS$ such that ${\sf dg}(\tilde{\eta})\geq {\sf dg}(\eta)$, 
\begin{equs}
\label{eq:elltau}
\scal{\ell_\eta,\tilde{\eta}}=\scal{\phi_{\<diff>} (\eta), \hat{\phi}_\geo(\tilde{\eta})}=- 2 \scal{\phi_{\<diff>} (\eta), \phi_{\<diff>}(\tilde{\eta})}.
\end{equs}
In particular, this easily implies that for any tree $\tau$ in $\mathcal{T}_{c}^{\<not>}$, any $\tilde{\tau}$ in the new basis such that ${\sf dg}(\tilde{\tau})\geq {\sf dg}(\tau)$, $\scal{\ell_\tau,\tilde{\tau}} \neq 0$ if and only if  $\tau = \tilde{\tau}$. Let us show that this holds also for the remaining elements in $\{\<Xi4eac2s>+\<Xi4eabisc3s>, \<Xi4ca1s>+\<Xi4ca2s> , \<I1Xi4acc1s> -  \<2I1Xi4cc1s>, \<2I1Xi4bc1s> - \<Xi4cbc2s>\} \subset \mathcal{T}_{c}^{\<not>}$ by showing that:  
\begin{align*}
\ell_{\<Xi4eac2s>+\<Xi4eabisc3s>}(\<Xi4eac2s>-\<Xi4eabisc3s>) = \ell_{\<Xi4ca1s>+\<Xi4ca2s>} (\<Xi4ca1s>-\<Xi4ca2s>) =\ell_{\<I1Xi4acc1s> - \<2I1Xi4cc1s>} (\<I1Xi4acc1s> + \<2I1Xi4cc1s>) = \ell_{\<2I1Xi4bc1s> - \<Xi4cbc2s>}(\<2I1Xi4bc1s> + \<Xi4cbc2s>) =0.\end{align*}
Using  \eqref{eq:elltau}, we have to show that: 
\begin{align*}
\scal{\phi_{\<diff>} (\<Xi4eac2s>+\<Xi4eabisc3s>), \phi_{\<diff>}(\<Xi4eac2s>-\<Xi4eabisc3s>)} &= \scal{\phi_{\<diff>} (\<Xi4ca1s>+\<Xi4ca2s>), \phi_{\<diff>}(\<Xi4ca1s>-\<Xi4ca2s>)} =0, \text{ and }\\
\scal{\phi_{\<diff>} (\<I1Xi4acc1s> - \<2I1Xi4cc1s>), \phi_{\<diff>}(\<I1Xi4acc1s> + \<2I1Xi4cc1s>)} &= \scal{\phi_{\<diff>} (\<2I1Xi4bc1s> - \<Xi4cbc2s>), \phi_{\<diff>}(\<2I1Xi4bc1s> + \<Xi4cbc2s>)} =0. 
\end{align*}
The first assertion is a consequence of the fact that $\phi_{\<diff>}(\<Xi4eac2s>-\<Xi4eabisc3s>) = 0 = \phi_{\<diff>} (\<Xi4ca1s>-\<Xi4ca2s>)$. As for the 
second, it can be deduced from the fact that for any $\tau$ appearing in $\phi_{\<diff>} (\<I1Xi4acc1s> + \<2I1Xi4cc1s>)$, $\|\tau\|=1$ and for any $\tau$ appearing in $\phi_{\<diff>} ( \<2I1Xi4bc1s> + \<Xi4cbc2s>)$, $\|\tau\|^2=2$.

Thus, we have proven that for any $\tau \in \mathcal{T}_{c}^{\<not>}$, any element $\tilde{\tau}$ of this new basis such that ${\sf dg}(\tilde{\tau})\geq {\sf dg}(\tau)$, $\scal{\ell_\tau,\tilde{\tau}}\neq 0$ if and only if $\tau = \tilde{\tau}$. In particular, this proves that $\ell_\tau = \alpha_\tau \tau+\sigma$ with ${\sf dg(\sigma)}<{\sf dg(\tau)}$ and $\alpha_\tau$ a non-zero real: the elements $\{\ell_\tau: \tau\in\mathcal{T}_c^{\<not>}\}\subset\CS^*$ are linearly independent thanks to the triangular structure given by counting the number of $\<not>$ vertices.

Let us study the intersection of $ \CS_\Ito$ and $\CS_\geo$. Using the previous results, we get that $\dim(\CS)=54$ and also $\dim(\CS_\Ito \cap \CS_\geo)=2$. In order to conclude, it remains to prove that $\tau_\star, \tau_c \subset \CS_\Ito \cap \CS_\geo$. Using Lemma \ref{Nabla preserved by phigeo} and \eqref{e:taustar}, \eqref{e:tauc}, we get that $\tau_\star, \tau_c\in \CS_\geo$. Besides, one can check that $\tau_\star, \tau_c\in \CS_\Ito$ using the explicit formulas \eqref{eq:valeurR} and \eqref{eq:valeurC}. 
\end{proof}

\begin{remark}\label{rem:geo}
This theorem implies that $\dim \CS_\geo = 15$ and that the family
 \begin{equ}(\phi_{\flt}(\tau))_{\tau \in \mathcal{B}_\geo\cap \mathcal{S}_{\flt}} \cup \{\Nabla_{\<generic>}(R(\<generic>,\<genericb>)\<genericb>),\ \Nabla_{R(\<generic>,\<genericb>)\<genericb>}\<generic>,  R(\<generic>,\Nabla_{\<genericb>}\<genericb>)\<generic>,\ 
R(\<generic>,\Nabla_{\<genericb>}\<generic>)\<genericb>,\ 
R(\<generic>,\Nabla_{\<genericb>}\<generic>- 2\Nabla_{\<generic>}\<genericb>)\<genericb>\} \end{equ} 
provided by the proof of Proposition \ref{prop:geoDim} is a basis of $\CS_\geo$. After a change of basis, 
one can verify that a simpler one is given by $\VV\cup \left\{\Nabla_{\<generic>}\<generic>\right\}$, where $\VV$ was defined in \eqref{eq:covderiv}. In particular, since $\Upsilon_{ \Gamma, \sigma}(\Nabla_{\tau}\bar \tau) = \Nabla_{\Upsilon_{ \Gamma, \sigma }\tau} \Upsilon_{\Gamma,\sigma} \bar\tau$, this proves that for any $\tau \in \CS_\geo$,
the identity \eqref{e:idendiff} automatically holds for all diffeomorphisms. Let us remark that the only element missing from the list of combinatorial covariant derivatives is $\Nabla_{\<genericb>}\Nabla_{\<generic>}\Nabla_{\<genericb>}\<generic>$,
the reason being that it is equal to
\begin{equ}\label{relationV}
%\Nabla_{\<genericb>}\Nabla_{\<generic>}\Nabla_{\<genericb>} \<generic>=
\Nabla_{\<generic>} \Nabla_{\<genericb>} \Nabla_{\<genericb>} \<generic> + \Nabla_{\<genericb>} \Nabla_{\Nabla_{\<genericb>}\<generic>} \<generic> - \Nabla_{\Nabla_{\<genericb>}\Nabla_{\<genericb>}\<generic>}\<generic> - \Nabla_{\Nabla_{\Nabla_{\<genericb>}\<generic>}\<generic>}\<genericb> + 
\Nabla_{\Nabla_{\Nabla_{\<genericb>}\<generic>}\<genericb>}\<generic> -
\Nabla_{\<generic>}\Nabla_{\Nabla_{\<genericb>}\<generic>} \<genericb>+
\Nabla_{\Nabla_{\<generic>}\Nabla_{\<genericb>}\<generic>}\<genericb>. 
\end{equ}
Finally, we conclude from the previous theorem that $\dim (\CS_\Ito) = 19$ and that $\mathcal{B}_\Ito$ is a basis of $\CS_\Ito$. 
\end{remark}

%\newpage
\appendix
\section{Symbolic index}
In this appendix, we collect the most used symbols of the article, together
with their meaning and the page where they were first introduced.
 \begin{center}
\renewcommand{\arraystretch}{1.1}
\begin{longtable}{lll}
\toprule
Symbol & Meaning & Page\\
\midrule
\endfirsthead
\toprule
Symbol & Meaning & Page\\
\midrule
\endhead
\bottomrule
\endfoot
\bottomrule
\endlastfoot
 $[\upper]$ & Set $\{i\in\N: 1\leq i\leq \upper\}$ & \pageref{upper page ref}
\\ $ \CB_\geo $ & Linear independent set over $\CS_\geo$ &  \pageref{B geo page ref}
\\ $ \CB_{\Ito} $ & Collection of trees generating $ \CS_{\Ito} $ &
\pageref{CB Ito page ref}
\\
$\CB_\star^a$ & Space in which the laws of the solutions take values
 &  \pageref{law space page ref} 
 \\
 $C_{\eps,\geo}^\BPHZ $ & Element of $ \CS $ renormalising $ U_\eps^\geo $ &  \pageref{constant geo page ref} 
 \\
 $C_{\eps,\Ito}^\BPHZ$ &  Element of $ \CS $ renormalising $ U_\eps^\Ito $  & \pageref{constant ito page ref}
 \\
 $\hat C_{\eps,\geo}^\BPHZ $ &  Element of $ \CS_{\geo} $ such that $\hat C_{\eps,\geo}^\BPHZ -  C_{\eps,\geo}^\BPHZ $ converges &  \pageref{constant hat geo page ref}
 \\
 $\hat C_{\eps,\Ito}^\BPHZ $ &  Element of $ \CS_{\Ito} $ such that $\hat C_{\eps,\Ito}^\BPHZ -  C_{\eps,\Ito}^\BPHZ $ converges &  \pageref{constant hat geo page ref}
 \\
 $\CC_\star^a$ & Space in which the solutions take values  & \pageref{space solution page ref}
 \\
  $ \d $ &  Abstract derivation from $ \CV_\low^\upper $  to $ \CV_{\low+1}^{\upper}$ & \pageref{derivation page ref}
\\
$\Gamma$ & Christoffel symbols for the Levi-Civita connection & \pageref{gamma page ref}
\\
$g$ & Inverse metric tensor of $\CM$ given by $\sigma \sigma^\top$ & \pageref{g page ref}
\\
$H_{\Gamma,\sigma}$ & Vector field statisfying $ H_{\Gamma,\sigma} =  \Upsilon_{\Gamma,\sigma} \tau_{\star} $ & \pageref{vector field H page ref}
\\
 $ m_\Ito $ & Linear map from $  T_{\red,\<diff>} $ to $   T_{\<g>, \<diff>}$ replacing a pairing by $ \<g> $ & \pageref{Merging map m ito page ref} 
\\
$ \CM $ & Compact Riemannian manifold & \pageref{manifold page ref}
\\
$\Moll$ & Space of mollifiers & \pageref{Moll page ref} \\
$P_\Ito$ & Self-adjoint projection on $\Vec(\CS,\CS_{\<diff>})$, $\range{P_\Ito}=\Vec(\CS_{\Ito},\CS_{\Ito}^{\<diff>})$ & \pageref{P ito page ref}
\\ $\PPi^\BPHZ$ & BPHZ model & 
\pageref{BPHZ Model  page ref}
\\ $ \PPi_\geo^{(\eps)} $ & Model of the geometric regularisation & 
 \pageref{Ito and geo approximations  page ref}
 \\
  $ \PPi_\Ito^{(\eps)} $ & Model of the Itô regularisation & 
 \pageref{Ito and geo approximations  page ref}
\\
$ \hPPi^{(\eps)}_\geo $ & BPHZ renormalisation
of $ \PPi_\geo^{(\eps)} $  &
\pageref{BPHZ renormalisation of ito geo model page ref}
\\
$ \hPPi^{(\eps)}_\Ito $ & BPHZ renormalisation
of $ \PPi_\Ito^{(\eps)} $  &
\pageref{BPHZ renormalisation of ito geo model page ref}
\\
 $\phi_\geo $ & Geometric infinitesimal morphism from $T_{\<generic>}$ to $T_{\<generic>,\<diff>}$  & \pageref{phi geo page ref}
 \\
$ \hat \phi_{\geo} $ & Linear map defined on $(T_{\<generic>})_k^1$ by $  \phi_\geo - [\cdot,\<diff>] $ &  \pageref{hat phi geo page ref}
 \\
  $\phi_\Ito$  & Itô  morphism from $T_{\<g>}$ to $T_{\<generic>}$  & \pageref{phi ito page ref} 
\\   $ \phi_{\Ito}^{\<diff>} $    & Itô  morphism from $T_{\<g>,\<diff>}$ to $T_{\<generic>,\<diff>}$ & \pageref{phi ito page ref}
\\
 $\psi $ & Morphism from $ \hat T_{\red}$ to $ T_{\<generic>}$ & \pageref{psi morpishm page ref} 
 \\
  $\psi_{\<diff>} $ & Morphism from $ \hat T_{\red,\<diff>} $ to $ T_{\<generic>,\<diff>}$ & \pageref{psi diff page ref}
 \\
$\CQ$ & Set of pairs $(\sigma, \bar \sigma)$
such that $\sigma \sigma^\top = \bar \sigma \bar \sigma^\top$ & \pageref{CQ}
\\ 
 $ R $ &  Riemannian curvature tensor  & \pageref{Riemannian curvature tensor page ref}
\\
 $\sigma_i$  & Collection
of vector fields on $\CM$ & \pageref{sigma page ref} \\
$\SS_{0}$ & Symbols with one noise type& \pageref{one noise type  page ref} 
\\
 $\SS_{\<generic>}^{(k)}$ & Collection of trees with $k$ noises  & \pageref{SS2 page ref}\\
 $\SS_{\<generic>}$ & Collection of trees $\SS_{\<generic>}^{(2)} \cup \SS_{\<generic>}^{(4)}$ & \pageref{SS page ref}\\
 $\SS$ & Trees endowed with partition of noises into pairs  & \pageref{SShat page ref}\\
 $\CS_{\<generic>}$ & Linear span of $\SS_{\<generic>}$ & \pageref{CS page ref}\\
  $\CS$ & Linear span of elements of $\SS$  & \pageref{CS page ref2} \\
 $ \CS_{\<diff>}$ & Subspace of $ T_{\<generic>,\<diff>} $ spanned by trees with $ 2$ or $4 $ noises and one $\<diff>$ & \pageref{cs diff page ref}
 \\
 $\CS^\nice$ & Subspace of $ \CS $ consisting of `minimal' counterterms  & \pageref{S nice page ref}
  \\
 $\CS_\geo $ & Set of geometric elements (vector fields) in $\CS$ & \pageref{def:geoIto}\\
 $ \CS_\Ito $ & Set of It\^o elements (depending on $\sigma$ only through $g$) in $\CS$ &\pageref{def:geoIto} \\
  $\CS_\Ito^{\<diff>} $& Subspace of Itô elements in $  \CS_{\<diff>}$ & \pageref{CS ito diff page ref} 
 \\
$ \CS_\geo^\nice $ & Subspace of  $\CS^\nice$ such that $ \CS_\geo^\nice = \CS^\nice \cap \CS_\geo  $  & \pageref{subspace of CS nice} 
 \\
 $ \CS_\Ito^\nice $ & Subspace of  $\CS^\nice$ such that $ \CS_\Ito^\nice = \CS^\nice \cap \CS_\Ito  $  & \pageref{subspace of CS nice}
 \\
 $\CS_\both$ & Subspace of $ \CS $ such that $  \CS_\both = \CS_{\Ito} + \CS_{\geo}$  &  \pageref{S both page ref}
 \\
 $\swap_{\low_1,\low_2}^{\upper_1,\upper_2}$ &
 Element of $\Sym(\upper_1+\upper_2,\low_1+\low_2)$  & \pageref{swapping map page ref} 
 \\
  $\Sym(\upper)$ & Symmetric group on $[\upper]$  & \pageref{sym page ref} \\
   $\Sym(\upper,\low)$ & Direct product of groups $   \Sym(\upper) \times \Sym(\low)$ & \pageref{product sym page ref} 
 \\
$T_{\<generic>}$ & Extension of $\CS_{\<generic>}$ with disconnected graphs and loops & \pageref{S generic page ref}
 \\
 $\hat T_{\red}$ & Free $T_\d$-algebra with generators $\<not>$ and $\gen{k}{\ell}$ & \pageref{hat CS red page ref}
 \\
  $\hat T_{\red,\<diff>}$ &  Extension of $\hat T_{\red} $ with the generator $ \<diff> $ & \pageref{CS red diff page ref}
 \\
 $T_{\red} $ &  Quotiented space given by  $T_{\red} = \hat T_{\red} / \ker \psi$ & \pageref{bar CS red page ref}
 \\
  $T_{\red,\<diff>}$ & Quotiented space given by $T_{\red,\<diff>} = \hat T_{\red,\<diff>} / \ker \psi_{\<diff>}$ & \pageref{Quotient space CS red diff page ref}
 \\
  $ T_{\<generic>,\<diff>} $ & Extension of $ T_{\<generic>} $ that includes the generator $ \<diff> $ & \pageref{S diff page ref}
 \\
 $ T_{\<g>} $  & Same definition as for
 $ T_{\<generic>} $ but with $ \<g> $  instead of the $ \<generic>_i $ &  \pageref{bar CS g page ref}
 \\
  $ T_{\<g>, \<diff>} $ &  Extension of $ T_{\<g>} $ that includes the generator $ \<diff> $ &  \pageref{bar CS g page ref}
 \\
 $T_\d(\CX)$ & Free $T_\d$-algebra generated by the set $\CX$ & \pageref{page:TdCX}\\
 $\tau_\star$ & Degree of freedom of canonical family of solutions & \pageref{def taustar}, \pageref{def taustar2}\\
 $\tau_c$ & Additional degree of freedom for non-canonical family & \pageref{def tauc}\\
 $ \tr $ &  Abstract partial trace from $ \CV_{\low+1}^{\upper+1}$ to $ \CV_{\low}^{\upper}$ & \pageref{trace page ref}
 \\
 $\Tr(W)$ & $T$-algebra generated by the collection of spaces $\{W^u_\ell\}$ & \pageref{page:TrW}\\
 $\Tr_g(W)$ & Subspace of $\Tr(W)$ corresponding to the $T$-graph $g$ & \pageref{page:TrWg}\\
 $U^\family$ & Law of the canonical family of solutions  & \pageref{family solutions page ref}
 \\
  $U^\geo$ & Law of the geometric solution  & 
  \pageref{U geo page ref} 
   \\
 $U^\Ito$ & Law of the solution satisfying the Itô isometry & \pageref{U ito page ref} 
\\
$U^\BPHZ$ & Law of the BPHZ solution &
\pageref{U solutions page ref}
 \\
  $ \VV $ & Vector fields appearing as counterterms & \pageref{VV page ref}\\ 
 $ \CV $ & Linear span of $ \VV $ & \pageref{V page ref}\\ 
 $\CV^\nice$ &  Subspace of $ \CV $ consisting of `minimal' counterterms & \pageref{V nice page ref}
 \\
 $\CV_\star$ & Subspace of $ \CV^{\nice} $ generated by $ \tau_{\star} $  & \pageref{V star page ref}
 \\
 $\CV_\star^\perp$ & An arbitrary complement of $ \CV_{\star} $ in $ \CV^{\nice} $ &  \pageref{V start perp page ref}
 \\
 $\CX$ &  A finite number of types  & \pageref{set of types page ref}
 \\
 $\Nabla_X Y$ & Covariant derivative of $ Y $ in the direction of $ X $ & \pageref{covariant derivative page ref}
\\
 $\Upsilon_{\Gamma,\sigma}$ & Valuation map &  \pageref{Evaluationmap1 page ref}, \pageref{Evaluationmap2 page ref}
 \\
 \end{longtable}
 \end{center}
\endappendix

\bibliographystyle{./Martin}

\bibliography{refs}

\end{document}